\documentclass[preprint,11pt]{elsarticle}

\makeatletter
\def\ps@pprintTitle{%
 \let\@oddhead\@empty
 \let\@evenhead\@empty
 \def\@oddfoot{}%
 \let\@evenfoot\@oddfoot}
\makeatother

\date{}

\usepackage[english]{babel}
\usepackage{comment}
\usepackage{arydshln}
\usepackage{multirow}
\usepackage{subcaption}
\usepackage[letterpaper,top=2.5cm,bottom=2.5cm,left=2.5cm,right=2.5cm]{geometry}
\usepackage{float}
\usepackage{tikz}
\usepackage{tkz-graph}

\usepackage{amsmath,amssymb,amsthm,mathtools}
\usepackage{graphicx}
\usepackage[colorlinks=true, allcolors=blue]{hyperref}
\usepackage{tikz}
\usepackage{natbib}

\newtheorem{defi}{Definition}[section]
\theoremstyle{definition}

\newtheorem*{conj*}{Conjecture}
\newtheorem{coro}[defi]{Corollary}
\newtheorem{prop}[defi]{Proposition}
\newtheorem{lemma}[defi]{Lemma}
\newtheorem{theorem}[defi]{Theorem}

\newtheorem{example}[defi]{Example}
\newtheorem{examples}[defi]{Examples}
\newtheorem{remark}[defi]{Remark}

\newtheorem*{note1}{Note}
\newtheorem*{note2}{Notes}

\newcommand{\Gins}[1]{%
	\mathcal{G}\left[#1\right]%
}
\newcommand{\Cins}[1]{%
	\mathcal{C}^B{[#1]}%
}

\newcommand{\Chain}[1]{\mathcal{C}_*\left(#1\right)}
\newcommand{\ChainSub}[2]{\mathcal{C}_{#1}\left(#2\right)}

\newcommand{\ChainBlock}[1]{\mathcal{C}^B_*\left[#1\right]}
\newcommand{\ChainBlockSub}[2]{\mathcal{C}^B_{#1}\left[#2\right]}

\newcommand{\R}{\mathbb{R}}
\renewcommand{\S}{\mathbb{S}}
\newcommand{\Z}{\mathbb{Z}}
\newcommand{\N}{\mathbb{N}}
\newcommand{\E}{\mathcal{E}}
\newcommand{\Ev}{\mathcal{E}^\square}

\newcommand{\C}{\mathcal{C}}
\newcommand{\calF}{\mathcal{F}}

\newcommand{\sub}[2]{\mathcal{S}\!ub_{#1}\left(#2\right)}

\renewcommand{\star}{\text{star}}
\newcommand{\link}{\text{link}}

\newcommand{\SolVertices}[4]{\left\{
		u=#1, \quad v_1=#2, \quad  v_2=#3, \quad w=#4\right.}

\title{The Insertion Chain Complex: \\A Topological Approach to the Structure of Word Sets }
\author{Nata\v sa Jonoska, Francisco Martinez-Figueroa, Masahico Saito}

\begin{document}
	\include{Graphics}

\begin{abstract}
We introduce the Insertion Chain Complex, a higher-dimensional extension of insertion graphs, as a new framework for analyzing finite sets of words. We study its topological and combinatorial properties, in particular its homology groups, as measures of the complexity for word sets. After establishing its theoretical foundations, we investigate the computational and combinatorial aspects of these complexes. Among other results, we classify minimal 1-dimensional cycles and prove that every finitely generated abelian group can be realized as the homology of the insertion complex for some set of words. We also identify conditions that guarantee vanishing homology. These results provide new invariants for characterizing finite sets of words through word-based topological structures and their properties. 
\end{abstract}
    \maketitle



{\small
\tableofcontents
}

\section{Introduction}

    Multiple approaches have been developed to measure the complexity of a given sequence of symbols (e.g. \cite{ilie2004word,ivanyi1987,KOLMOGOROV}), with Kolmogorov complexity being the most notable. Other techniques have been proposed to explore the complexity and evolution of words across different languages \cite{covington1996algorithm,Mufwene2013}. In these approaches, the `randomness' of the symbols within individual words is of interest. More recently, methods based on natural language processing (NLP) and large language models (LLMs) have also been applied to study sets of words, though these focus on semantic meaning and rely on large corpora of text for training \cite{mahajan2025revisiting,reimers2019sentence,devlin2019bert}. In contrast to these approaches, there are numerous situations in which understanding the way words `interact' within a collection of words is necessary, i.e., the complexity of sets of words or the randomness of words within a collection, is of interest. These kind of questions often arise 
    when dealing with code words or genetic sequences. For instance, approaches to conduct comparative analyses of sets of DNA sequences across diverse organisms \cite{YU2011,webb2001} are currently relevant.
  
    A specific instance where sets of words need to be analyzed and compared arises in sequencing data (words over the alphabet ${A, C, T, G}$) obtained from experiments on DNA double strand break repair. During the repair process, which occurs simultaneously in millions of copies of the DNA sequence, many nucleotide insertions and deletions appear at the breaking point, producing a collection of DNA strings~\cite{Storici2022,DSB-repair-review2021}.  Different breaking points, or different repair mechanisms, generate distinct sets of DNA strings, so understanding the structure of each set of words, as well as differences across them, is of great interest. These collections of strings consist of words that differ from the reference (originally uncut) DNA by sequences of insertions and deletions of nucleotides (single symbols). 
    In~\cite{Storici2022}, these sets of words are represented via the newly introduced \textit{variation-distance graphs} (see also \cite{channagiri2024}), which are closely related to the graphs studied in \cite{Eckler1987} and to the Levenshtein graphs of \cite{RUTH2023}. 
    All of these graphs, in which two words are connected by an edge whenever they differ by a single nucleotide insertion or deletion, provide a one-dimensional representation of the set of words. 
    Although insertions and deletions at the break points were expected to be inherently random, the resulting graphs representing the experimental data exhibit visually distinguishable patterns in the different experimental designs. This suggests that insertion/deletion of symbols at the repair sites may not be completely random, and motivates a closer examination of the topological properties of these representations, as a useful way to better understand the differences among the collections of DNA words. 
   
	In this work, we extend these graph ideas to higher dimensions by introducing the \textit{Insertion Chain Complex} of a set of words. Topological properties of the complex, such as the homology groups, provide a measure of the {\it complexity of the set of words}. In our construction, homology is expected to vanish when the set of words is `complete', implying randomness within the set. Conversely, when many words that would naturally appear with random insertion of symbols are absent, the insertion complex contains `holes', producing non-zero homology in some dimension. Non-zero homology in higher dimensions indicates the absence of larger collections of words within the set. Throughout this paper, we restrict our attention to finite sets of words. Our constructions are particularly useful when single symbol insertions or deletions are the primary source of variation within the set of words. 

    We begin by reviewing common notation in combinatorics of words, and define the insertion graph. In Section~\ref{sec:blocks} we introduce blocks and explore their properties, including a classification of block isomorphism classes up to dimension 4. In Section~\ref{sec:insertion_chain_complex}, we introduce the Insertion Block Complex, the main focus of this paper, and show a few properties.
    Section~\ref{sec:homology} contains our primary results on homology of Insertion Chain Complexes. In Theorem~\ref{thm:classification_cycles_1dim}, we classify the shortest 1-dimensional cycles while Theorem~\ref{thm:realizable_homology} demonstrates that all finitely generated abelian groups can be realized as the homology of the Insertion Chain Complex for some set of words. Theorem~\ref{thm:min_cycles_d} examines the size of minimal cycles in each dimension, and Theorem~\ref{thm:null_homology_one_dim} 
    provides conditions under which sets of words define complexes for which all homology vanishes. We close with short concluding remarks posing open questions and possible lines for further study of complexes arising in other insertion/deletion contexts.  
    In order to maintain a smoother flow in the main body of the paper, some of the more tedious and lengthy proofs are placed in an  Appendix after the main text.


	\section{Preliminaries}\label{sec:prelims}
	\subsection{Words Notation and Equations}
	
	We introduce some definitions and notation from word combinatorics that we use throughout the rest of this paper. For a more comprehensive overview, we refer the reader to~\cite{harrison1978,lothaire1997}.
    
For a natural number $n\in\N$, we denote $[n]=\{1,2,\dots, n\}$. For a finite alphabet of symbols $\Sigma$, its corresponding set of words $\Sigma^*$ is the collection of all finite concatenations (words) of symbols in $\Sigma$, including the \textit{empty word}, which we denote 1. 
 We use letters $a, b, c \ldots $ for elements of $\Sigma$, and $x,y,z$ for words in $\Sigma^*$. 
	
Let $x\in\Sigma^*\setminus\{1\}$ be a word, so     $x=a_1a_2\cdots a_n$ for $a_i\in\Sigma$.  
The {\it length} of $x$ is  $|x|=n$, and we set $|1|=0$. If $i\in [n]$, we denote 
    $x[i]=a_i$,  its $i$-th symbol. We define 
    $\ell(x)=a_n$ to be the last symbol of $x$. 
    Similarly, for a set of indices $I=\{i_1<i_2<\dots< i_r\}\subset[n]$, we denote by $x[I]=a_{i_1}a_{i_2}\dots a_{i_r}$. In the special case when $I$ is an interval $I=[i,j]\cap\N$, we denote $x[i,j]=x[I]=a_ia_{i+1}\dots a_{j-1}a_j$ and we say $x[i,j]$ is a {\it factor} of $x$.  
    Denote by $x^R$ the {\it reverse} word, that is $x^R=a_na_{n-1}\cdots a_2a_1$. 
	
The set $\Sigma^*$ equipped with concatenation forms a monoid with the empty word as the identity element. 
For any symbol $a \in \Sigma$ and integer $n \in \N$, we 
use 
notation $a^n = a \cdots a$.
In particular, $a^0=1$.
As a result, any word $x \in \Sigma^*$ can be uniquely expressed as $x=a_1^{r_1}a_2^{r_2}\cdots a_k^{r_k}$ for symbols $a_i\in\Sigma$ and integers $r_i\in\N$, such that $r_i>0$ and $a_i\neq a_{i+1}$.
	
	Given two words $x,y\in\Sigma^*$, where $x=a_1\cdots a_n$,
    we say that $y$ is a \textit{subword} of $x$, denoted $y\leq x$, if there exists a set of indices $I\subset[n]$ such that $y=x[I]$. It follows that 
    if $x=a_1^{r_1}\cdots a_k^{r_k}$ and $y\leq x$, then there is 
    a sequence of powers $s_1, \dots, s_r$ with $0\leq s_i\leq r_i$, such that $y=a_1^{s_1}\cdots a_k^{s_k}$, although this sequence may not be unique. 
Moreover,  
    the relation $\leq$ defines a partial order on $\Sigma^*$, 
    and the length function $|\cdot|$, is 
    monotonic with respect to this partial order: if $x\leq y$ then $|x|\leq|y|$. 
	
	The following lemma, 
    proposed by Lyndon and Sch\H{u}tzenberger in~\cite{schutzenberger1962} (see 
    also Lemma 1 in~\cite{petersen1996}), along with its corollary, establishes 
    common word equations we use in the rest of the paper.
	
	\begin{lemma}\label{lemma:commutative_words}
		Given two words $v, w \in\Sigma^*$, they commute $vw=wv$ if and only if, they are both powers of the same word $z\in\Sigma^*$, that is, if there are non-negative integers $s,t\in\N$ such that $v=z^s$ and $w=z^t$. 
	\end{lemma}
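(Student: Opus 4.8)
The plan is to prove the nontrivial (``only if'') direction by strong induction on $|v|+|w|$; the converse is immediate, since if $v=z^s$ and $w=z^t$ then $vw=z^{s+t}=z^{t+s}=wv$.

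For the base of the induction, suppose one of $v,w$ is the empty word, say $v=1$. Then $v=w^0$ and $w=w^1$, so we may take $z=w$ (and symmetrically if $w=1$). Now assume $v,w\neq 1$ and $vw=wv$, and without loss of generality that $|v|\le|w|$. The key observation is a prefix comparison: $v$ is a prefix of $vw$ and $w$ is a prefix of $wv$, and since $vw=wv$ these are prefixes of one and the same word; because $|v|\le|w|$ this forces $v$ to be a prefix of $w$. Hence $w=vu$ for some (possibly empty) word $u$, and $|u|=|w|-|v|<|w|$ since $v\neq 1$.

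Substituting $w=vu$ into $vw=wv$ gives $v(vu)=(vu)v$, i.e.\ $vvu=vuv$, and cancelling the common prefix $v$ (using that $\Sigma^*$ is a free monoid, hence left cancellative) yields $vu=uv$. Since $|v|+|u|=|w|<|v|+|w|$, the induction hypothesis applies to the pair $(v,u)$: there exist a word $z$ and integers $p,q\ge 0$ with $v=z^p$ and $u=z^q$. Then $w=vu=z^{p+q}$, so $v$ and $w$ are both powers of $z$, which completes the induction. The degenerate case $u=1$ is harmless here, giving simply $w=v=z^1$.

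I do not expect a genuine obstacle in this argument; it is a clean induction. The only two points that need care are the well-foundedness of the recursion — which is exactly where we use $v\neq 1$ to make the drop $|u|<|w|$ strict — and the cancellation step, which relies on $\Sigma^*$ being free so that $vX=vY$ implies $X=Y$. Everything else is routine bookkeeping on prefixes and exponents.
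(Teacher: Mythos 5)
Your proof is correct: the induction on $|v|+|w|$, the prefix comparison forcing $v$ to be a prefix of $w$, the left cancellation $vvu=vuv\Rightarrow vu=uv$, and the strict decrease $|v|+|u|<|v|+|w|$ are all sound, and this is the standard Euclidean-style argument for the Lyndon--Sch\"utzenberger commutation lemma. Note, however, that the paper does not prove this lemma at all; it is quoted from the literature (Lyndon--Sch\"utzenberger, see also Petersen), so there is no in-paper proof to compare against — your argument simply supplies, correctly, the classical proof that the paper outsources to its references.
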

	
	\begin{coro}\label{cor:simple_word_equations}
		Given symbols $a,b\in\Sigma$, 
        and $x\in\Sigma^*$
        the following hold:
		\begin{enumerate}
			\item $ax=xa$, iff $x=a^t$, for some natural $t\geq 0$.
			\item If $a\neq b$, $abx=xab$ iff $x=(ab)^t$, for some natural $t\geq 0$.
			\item If $a\neq b$, $abx=xba$ iff $x=(ab)^ta$, for some natural $t\geq 0$.
            \item $ax=xb$, iff $a=b$ and $x=a^t$ for some natural $t\geq 0$.
		\end{enumerate}
	\end{coro}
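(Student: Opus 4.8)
The plan is to deduce all four equivalences from Lemma~\ref{lemma:commutative_words}, verifying the (easy) converse implications in each case by direct substitution; the forward directions are where the work is.

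For item (1), the equation $ax=xa$ is exactly the statement that $a$ and $x$ commute, so by Lemma~\ref{lemma:commutative_words} there are $s,t\in\N$ and $z\in\Sigma^*$ with $a=z^s$ and $x=z^t$; since $|a|=1$ this forces $s=1$ and $z=a$, whence $x=a^t$. Item (2) is identical in spirit: $abx=xab$ says $ab$ commutes with $x$, and because $a\neq b$ the word $ab$ is primitive (a length-two word with two distinct letters cannot be a proper power), so the common root $z$ provided by Lemma~\ref{lemma:commutative_words} must be $ab$ itself, giving $x=(ab)^t$.

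Item (3) is the first that is not literally a commutation relation, and the key step is to turn it into one: right-multiplying $abx=xba$ by $b$ yields $ab\cdot(xb)=(xb)\cdot ab$, so $xb$ commutes with $ab$. Applying item (2) with $xb$ in the role of $x$ gives $xb=(ab)^m$ for some $m\in\N$, and since $xb\neq 1$ we must have $m\geq 1$; deleting the final letter $b$ from both sides of $xb=(ab)^m$ then leaves $x=(ab)^{m-1}a$, which is the claimed form with $t=m-1$. Item (4) has no commutation to exploit directly, so instead I would argue positionally. Set $n=|x|$ and let $w$ be the common value of $ax$ and $xb$, a word of length $n+1$. Reading $ax=w$ gives $w[1]=a$ and $w[2,n+1]=x$, while $xb=w$ gives $w[1,n]=x$ and $w[n+1]=b$; comparing the two factors $w[1,n]=x=w[2,n+1]$ of $w$ shows $w[i]=w[i+1]$ for every $i\in[n]$, so all letters of $w$ coincide. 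In particular $a=w[1]=w[n+1]=b$, and then $w=a^{n+1}$ gives $x=a^{n}$. (Equivalently one can iterate $ax=xb$ to $a^{n}x=xb^{n}$ and read off $x=a^{n}=b^{n}$.)

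The only genuinely non-mechanical points are spotting the right-multiplication trick in (3) and the shift/periodicity comparison in (4); once these are in hand, everything else reduces to length bookkeeping and one-line substitutions for the converses, so I do not expect a serious obstacle.
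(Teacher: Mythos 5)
Your proof is correct, and its core is the same as the paper's: everything is driven by Lemma~\ref{lemma:commutative_words} plus the observation that $ab$ is primitive when $a\neq b$. The one place you genuinely diverge is part (3): the paper peels the trailing letter off $x$ (since the right-hand side ends in $a$, write $x=x'a$, cancel to get $abx'=x'ab$, then apply Lemma~\ref{lemma:commutative_words} and primitivity of $ab$), whereas you right-multiply the equation by $b$ to make $xb$ commute with $ab$ and then quote part (2); both reductions are two lines long and equally valid, yours having the small aesthetic advantage of reusing item (2) rather than re-invoking the lemma. For part (4) the paper gives no argument at all (it is folded into ``follows straightforwardly from Lemma~\ref{lemma:commutative_words}''), and your positional/periodicity comparison of $w[1,n]$ with $w[2,n+1]$ is a clean, self-contained way to settle it without the lemma; just note for completeness that the degenerate case $x=1$ is handled trivially (then $ax=xb$ reads $a=b$ directly), which your argument accommodates.
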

	
	\begin{proof}
		These results follow straightforwardly from Lemma \ref{lemma:commutative_words},  we prove part 3 as an illustrative example. Note that we must have $|x|\geq 1$, and since the RHS ends in the symbol $a$, we must have $x=x'a$, for some $x'\in\Sigma^*$. Then, we get the equation  $abx'a=x'aba$, which implies $abx'=x'ab$, so $x'$ and $y=ab$ commute. By Lemma~\ref{lemma:commutative_words} 
        they are powers of a common word. Since $ab$  cannot be a power of any other word, we must have $x'=(ab)^t$ for some non-negative integer $t$. Then $x=x'a=(ab)^ta$,  as desired. 
	\end{proof}

	\subsection{The Insertion Graph}\label{sec:insertion-graph}
	One way 
    to study the complexity of a set of words is by assessing how similar
    the words are among 
    each other. This similarity can be gauged, among other methods, by computing the \textit{edit} or \textit{Levenshtein distance} between them, that is, the minimum number of insertions, deletions, and subsitutions to get from one word to another. 
	We focus our attention on insertions and deletions only. We  say that two words $w$ and $w'$ are \textit{connected},  $w\sim w'$, if they differ by a single symbol insertion, i.e., 
    if there exist words $ x, y\in \Sigma^*$, and a single symbol $a\in\Sigma$, such that $$w= x y\text{ and } w'= x a y.$$ 
	
	Given this binary relation, 
    we 
    consider the corresponding directed graph, which 
    can be understood as a 1-dimensional topological object: 
	
	\begin{defi}[Insertion Graph]\label{def:insertion_graph}
	Let 
    $W\subset \Sigma^*$ be a set of 
    words. 
    graph consisting of:
		\begin{itemize}
			\item Vertices: The elements of $W$. 
			\item Edges: $(w_1,w_2)\in E(\Gins{W})$ if and only if $w_1\sim w_2$ and $w_2$ is obtained by a single insertion from $w_1$.
		\end{itemize}
	\end{defi}
	
	Different variations of this graph have been extensively studied, see for instance \cite{channagiri2024,Eckler1987,RUTH2023}. 
 The following lemma establishes a couple of 
 useful facts about these graphs.   
	
	\begin{figure}
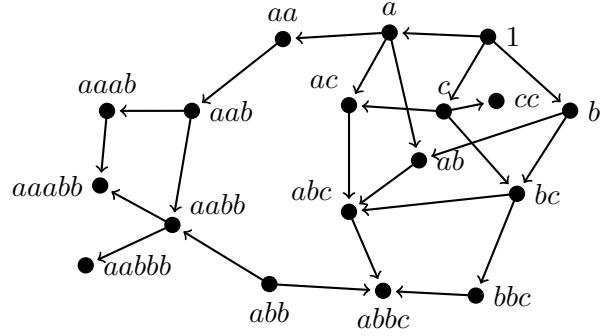

		\centering
		\ExampleGIns
		\caption{$\Gins{W}$ for a set of words in the alphabet $\Sigma=\{a,b,c\}$.
        Directions of edges not depicted.}		\label{fig:examplegraph}
	\end{figure}

	\begin{lemma}\label{lemma:g_bipartite}
		For any set of words $W$, $\Gins{W}$ is bipartite, and 
        acyclic. 
	\end{lemma}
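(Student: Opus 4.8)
The plan is to observe that $\Gins{W}$ is \emph{graded} by word length and to read off both properties from that single fact. Concretely, I would first note that the edge relation strictly increases length by exactly one: if $(w_1,w_2)\in E(\Gins{W})$ then, unwinding Definition~\ref{def:insertion_graph}, there are words $x,y\in\Sigma^*$ and a symbol $a\in\Sigma$ with $w_1=xy$ and $w_2=xay$, so $|w_2|=|w_1|+1$. Thus every directed edge runs from a vertex of some length $n$ to a vertex of length $n+1$.

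For bipartiteness, I would take the partition $W=W_0\sqcup W_1$ with $W_i=\{w\in W:\ |w|\equiv i\pmod 2\}$. Since the two endpoints of any edge have lengths differing by one, they lie in different classes, so this is a proper $2$-coloring and $\Gins{W}$ is bipartite. For acyclicity (meaning the absence of directed cycles), I would argue by contradiction: a directed cycle $w_0\to w_1\to\cdots\to w_{k-1}\to w_0$ would force $|w_{i+1}|=|w_i|+1$ along each edge (indices modulo $k$), and summing these equalities around the cycle gives $0=k$, which is impossible for $k\ge 1$. Equivalently, $|\cdot|$ is a topological order on the vertex set, which is the same statement.

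There is essentially no obstacle here; the argument is elementary once the length-increment observation is isolated. The only point worth flagging is the interpretation of ``acyclic'': it must refer to \emph{directed} cycles, since undirected cycles certainly occur (for instance the square on $1,\,a,\,b,\,ab$), and the length grading dispatches exactly that version while simultaneously giving the bipartition.
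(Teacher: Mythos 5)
Your proof is correct and follows essentially the same route as the paper: the parity-of-length partition establishes bipartiteness, and the length-increment argument around a hypothetical directed cycle yields the contradiction for acyclicity. The remark that "acyclic" must mean directed cycles (since undirected $4$-cycles exist) is a sensible clarification consistent with the paper's intent.
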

	
	\begin{proof}
		Note that if $(w_1,w_2)$ is a directed edge of $\Gins{W}$, we  have $|w_2|=|w_1|+1$. Then, the vertices can partition $V(\Gins{W})=A \sqcup B$, where $A=\{w\in V: |w|\text{ is even}\}$, and $B=\{w\in V: |w|\text{ is odd}\}$.
        For the acyclic claim, suppose by way of contradiction $w_0w_1\dots w_n=w_0$ is a directed cycle. Then $|w_0|=|w_n|=|w_{n-1}|+1=|w_{n-2}|+2=\dots=|w_0|+n$, which is absurd. Thus, 
        $\Gins{W}$ is acyclic. 
	\end{proof}

   Figure~\ref{fig:examplegraph} illustrates the insertion graph for a set of words $W$. We can distinguish that certain subsets of words exhibit a more organized internal structure, and the overall graph naturally suggests the 1-dimensional skeleton of a higher-dimensional object. To motivate the introduction of blocks in the next section, we explain how one can attach 2-dimensional cells to $\Gins{W}$ by \textit{filling in} squares.
    
    Suppose $w_0,w_1,\dots, w_n=w_0$ 
    are such that $w_i\neq w_j$ for all $0\leq i<j<n$ and $w_i\sim w_{i+1}$. 
    Assume that 
    $u=w_0,w_1,\dots, w_i=v$ forms a path 
    from $u$ to $v$, and $u=w_n,w_{n-1},\dots w_{i+1},w_i=v$ is a different path from $u$ to $v$. We are interested in characterizing how similar or `compatible' these two paths are.
	In particular, consider the simplest case of 
    length 4 (the shortest possible) of the form $u,w_1,v,w_2$, with $|u|<|w_1|=|w_2|<|v|$. This gives two paths from $u$ to $v$: $u,w_1,v$ and $u,w_2,v$. Intuitively, we consider these paths `compatible' if the corresponding symbol insertions are independent and commutative. We aim to introduce a topological object where the existence of nontrivial homology will help us identify incompatible paths. Thus, if both paths are compatible, we expect a `filled-in' square; otherwise, it should remain empty. 
    Figure~\ref{fig:emptyfilledsquares} illustrates two cycles of length 4 with $u=ab$ and $v=abab$. In the first cycle, the insertions of the letters $a$ and $b$ occur at the same relative position, independently of which one occurs first, so the square is filled in. This is not the case in the second cycle, since the $a$-insertion in the edge $ab\sim aba$ occurs at the end of the string, while it happens at the start of the string in the edge $bab\sim abab$,  once the $b$ is present, so the cycle remains unfilled.  We define precisely 2-dimensional blocks in the next section. 
	
	\begin{figure}
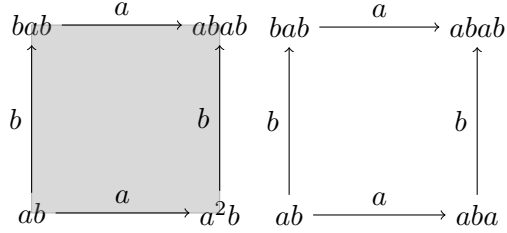

		\centering
		\FigureFilledSquare
		\caption{A filled-in square and a square that doesn't need to be filled in}
		\label{fig:emptyfilledsquares}
	\end{figure}
	
	The following examples illustrate some of the properties that a generalization of the insertion graph should satisfy. We point out that even though in some cases \textit{cubical complexes} arise (see \cite{hatcher2002algebraic, mischaikow2004computational, savvidou2010face}), they are not enough for our purpose. 
	
	\begin{examples}\leavevmode
		\begin{itemize}
			\item  If we consider $W=\{1,a,b,c,ab,ac,bc,abc\}$, $\Gins{W}$ is isomorphic to the 1-skeleton of a cube, in which each 2-face is a 4-cycle with two compatible paths. Moreover, the three insertions $a$, $b$, and $c$, to get from $1$ to $abc$ are also compatible, so the whole cube should be regarded as a solid. In this case, a cubical complex of $[0,1]^3$ has the desired structure.
			
            \item Consider the set of words $W=\{1,a,b,ab,ba\}$. Let $K$ be a topological structure that generalizes $\Gins{W}$, and `fills in' compatible blocks as in the previous example. Note that the cycles $\gamma_1:1\sim a\sim ab\sim b$ and $\gamma_2: 1\sim a\sim ba\sim b$ have two compatible insertions, so they must be filled, resulting in two corresponding squares $Q_1, Q_2\in K$. Hence $K=Q_1\cup Q_2$ satisfies our requirements. Note, however, that the intersection $Q_1\cap Q_2$ consists of the two edges $(1,a)\cup (1,b)$, meaning $K$ can't be a cubical complex, since non-empty intersections of squares in a cubical complex must always result in a single face (edge or vertex). 
		\end{itemize}
	\end{examples}
	
	\begin{figure}
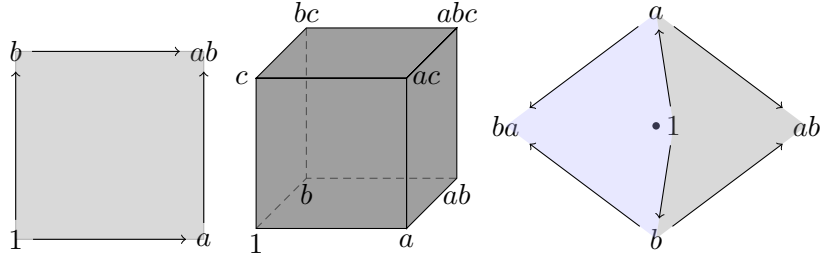

		\centering
		\CubicalExamples
		\caption{Spaces associated to different sets of words.}
		\label{fig:cubical_examples}
	\end{figure}
	
	\section{Blocks}\label{sec:blocks}
    \subsection{Definitions}
	In this section, we formalize the idea of \textit{compatible paths} and introduce blocks, as a topological  structure to understand them. While we illustrate the main definitions with examples, the first part of this section focuses primarily on establishing precise and well-defined notation for working with blocks.
    
	\begin{defi}\label{def:block}
		Given $\Sigma=\{a,b,c, \dots\}$ a finite set of symbols. Let $E=\left\{(1,a):a\in\Sigma\right\}$. Then, for each $m=0,1,\dots$ we define
		\begin{align*}
			\E_m&:= \Sigma^*\times E\times\Sigma^*\times\dots\times E\times\Sigma^*
		\end{align*}
		where there are $m$ factors of $E$ in $\E_m$. We refer to the elements of $\E_m$ as \textbf{$m$ dimensional blocks} (or just \textbf{$m$-blocks}). For any $\sigma\in \E_m$, we denote its dimension as $\dim(\sigma)=m$. 
	\end{defi}
	
    If $\sigma\in \E_m$ is an $m$-block, we will write it as 
	$$x_0(1,a_1)x_1(1,a_2)\cdots x_{m-1}(1,a_m)x_m,$$ where each $a_i\in \Sigma$, and $x_i\in\Sigma^*$.
	
	\begin{note2}\leavevmode
		\begin{itemize}
			\item Note $\E_0=\Sigma^*$, so 0-blocks are simply \textbf{words} in $\Sigma$. We may also refer to them as \textbf{vertices}, as an analogy to the insertion graph $\Gins{\Sigma^*}$.  
            
			\item $1$-blocks can be considered 
            as edges in the insertion graph $\Gins{\Sigma^*}$. Indeed, if $(w_1,w_2)$ is an edge in $\Gins{\Sigma^*}$, $w_1=x_0x_1$ and $w_2=x_0ax_1$, for some $x_0,x_1\in\Sigma^*$ and some $a\in\Sigma$. Then, by `factoring' common words out we get $(w_1,w_2)=(x_0x_1,x_0ax_1)``="x_0(1,a_0)x_1$ which is an element of $\E_1$. 
		\end{itemize}
	\end{note2}
	
Note that the representation 
of edges in $\Gins{\Sigma^*}$ as 1-blocks is not unique. For example, consider the edge $(ab, aab)$, which corresponds to both $(1, a)ab$ and $a(1, a)b$. These represent the insertion of the symbol $a$ in two different positions. However, since the words that 
are source and  target of the edge 
are identical in both representations, we 
regard these $1$-blocks 
as equivalent. 
Hence we 
work with equivalence classes of blocks.
	
	\begin{defi}\label{def:equivalent_blocks}\leavevmode
		\begin{enumerate}
			\item Given $\sigma_1,\sigma_2\in \E_0$, we say they are \textbf{equivalent}, $\sigma_1\equiv \sigma_2$, if $\sigma_1=\sigma_2$ as words in $\E_0=\Sigma^*$. 
			\item For $m\geq 1$, given two $m$-blocks $\sigma_1,\sigma_2\in\E_m$, we may write $\sigma_i=\hat{\sigma_i}(1,a_i)x_i$ for $i=1,2$ and  $\hat{\sigma_i}\in\E_{m-1}$, $a_i\in\Sigma$ and $x_i\in\Sigma^*$. We say $\sigma_1$ is \textbf{equivalent} to $\sigma_2$, and write $\sigma_1\equiv \sigma_2$, if 
            $\hat{\sigma_1}a_1x_1\equiv \hat{\sigma_2}a_2x_2$ and $\hat{\sigma_1}x_1\equiv\hat{\sigma_2}x_2$ as $(m-1)-$blocks. 
		\end{enumerate}
	\end{defi}

 	Suppose $(w_1,w_2)$ is an edge in $\Gins{\Sigma^*}$ that can be associated to more than one 1-block. That is, we can write $w_1=x_1x_2$ and $w_2=x_1ax_2$, but also $w_1=y_1y_2$ and $w_2=y_1by_2$. Thus $(w_1, w_2)$ corresponds to both 1-blocks: $x_1(1,a)x_2$ and $y_1(1,b)y_2$. However,  since $x_1x_2=w_1=y_1y_2$ and $x_1ax_2=w_2=y_1by_2$, both blocks are equivalent, according to Definition \ref{def:equivalent_blocks}. Therefore, to each edge we can associate exactly one equivalence class of 1-blocks. Hence, to each equivalence class of 1-blocks, we can also associate the vertices of the edge, that is: %
$V(x_0(1,a_1)x_1)=\{x_0x_1,x_0ax_1\}$. We extend such 
assignment of vertices to blocks of higher 
dimensions.
	
	\begin{defi}\label{def:vertices}
		Let $\sigma\in \E_m$ be an $m-$block given by $\sigma=x_0(1,a_1)x_1\cdots(1,a_m)x_m$.
		\begin{itemize}
			\item For a subset of indices $I\subset [m]$, we define the \textbf{vertex} $v_I(\sigma)$ of $\sigma$ as 
			$$v_I(\sigma)=x_0\xi_1 x_1\xi_2\cdots \xi_m x_m,  \text{ where } \xi_i=\begin{cases}
				a_i\text{ if } i\in I\\
				1\text{ if } i\not\in I
			\end{cases}$$
			\item We define the  \textbf{set of vertices of $\sigma$}, as 
			$$V(\sigma)=\left\{ v_I(\sigma): I\subset [m]\right\}.$$
		\end{itemize}
	\end{defi}
	
	\begin{examples}\leavevmode
		\begin{itemize}
			\item For a 0-block $w\in\E_0=\Sigma^*$, $[0]=\emptyset$, and $v_\emptyset(w)=w$, so $V(w)=\{w\}$.
			\item For a 1-block $\sigma=x_0(1,a)x_1$, $$V(\sigma)=\left\{v_\emptyset(\sigma), v_{\{1\}}(\sigma)\right\}=\left\{x_0x_1,x_0ax_1\right\}.$$ 
			\item For a 2-block $\sigma=x_0(1,a_1)x_1(1,a_2)x_2$, 
			$$V(\sigma)=\left\{ v_\emptyset(\sigma), v_{\{1\}}(\sigma),v_{\{2\}}(\sigma), v_{\{1,2\}}(\sigma)\right\}=\left\{x_0x_1x_2,x_0a_1x_1x_2,x_0x_1a_2x_2,x_0a_1x_1a_2x_2\right\}.$$
		\end{itemize}
	\end{examples}

Observe that the inclusion of subsets of $[m]$ induces an ordering on the vertices of $\sigma$. Specifically, if $I_1 \subset I_2 \subset [m]$, then $v_{I_1}(\sigma) \leq v_{I_2}(\sigma)$. In particular, the word $w_m = v_\emptyset(\sigma) = x_0 x_1 \cdots x_m$ is the minimum, and $w_M = v_{[m]}(\sigma) = x_0 a_1 x_1 \cdots a_m x_m$, the maximum  of $V(\sigma)$. Therefore, $w_m \leq v \leq w_M$ for any $v \in V(\sigma)$.

Each equivalence class of 1-blocks corresponds to exactly one edge in $\Gins{\Sigma^*}$, as equivalent 1-blocks have the same vertices. This holds for higher dimensional blocks as well.
 \begin{lemma}\label{lemma:vertices_equivalence_class}
		If $\sigma_1\equiv \sigma_2$ are two equivalent $m$-blocks and $I\subset [m]$ then $v_I(\sigma_1)=v_I(\sigma_2)$. In particular, $V(\sigma_1)=V(\sigma_2)$. 
\end{lemma}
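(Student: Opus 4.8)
The plan is to induct on the common dimension $m=\dim(\sigma_1)=\dim(\sigma_2)$, following the recursive structure of Definition~\ref{def:equivalent_blocks}. The base case $m=0$ is immediate: equivalence of $0$-blocks means equality of words, and $v_\emptyset(\sigma_j)=\sigma_j$ by Definition~\ref{def:vertices}, so $v_\emptyset(\sigma_1)=v_\emptyset(\sigma_2)$ and there is nothing more to check.

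For the inductive step ($m\geq 1$), the auxiliary fact I would isolate first is a pair of \emph{factoring identities} for vertices. Using the (unique) last-insertion decomposition $\sigma_j=\hat{\sigma_j}(1,a_j)x_j$ with $\hat{\sigma_j}\in\E_{m-1}$, $a_j\in\Sigma$ and $x_j\in\Sigma^*$, write $\hat{\sigma_j}x_j\in\E_{m-1}$ (respectively $\hat{\sigma_j}a_jx_j\in\E_{m-1}$) for the $(m-1)$-block obtained from $\hat{\sigma_j}$ by concatenating the word $x_j$ (respectively $a_jx_j$) onto its last word-coordinate. Then I claim that for every $I\subset[m]$,
\[
v_I(\sigma_j)=
\begin{cases}
 v_I(\hat{\sigma_j}x_j) & \text{if } m\notin I,\\
 v_{I\setminus\{m\}}(\hat{\sigma_j}a_jx_j) & \text{if } m\in I.
\end{cases}
\]
Both cases follow by directly unwinding Definition~\ref{def:vertices}: substituting $\xi_i=a_i$ for $i\in I$ and $\xi_i=1$ otherwise into $x_0(1,a_1)x_1\cdots(1,a_m)x_m$ yields the word $x_0\xi_1x_1\cdots\xi_{m-1}x_{m-1}\xi_mx_m$, and one regroups the trailing segment $\xi_mx_m$ (which equals $x_m$ when $m\notin I$ and $a_mx_m$ when $m\in I$) together with $x_{m-1}$ as the final word-coordinate of the shorter block.

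Granted these identities, the inductive step is short. By Definition~\ref{def:equivalent_blocks}, the hypothesis $\sigma_1\equiv\sigma_2$ gives two equivalences of $(m-1)$-blocks: $\hat{\sigma_1}x_1\equiv\hat{\sigma_2}x_2$ and $\hat{\sigma_1}a_1x_1\equiv\hat{\sigma_2}a_2x_2$. Fix $I\subset[m]$. If $m\notin I$, then $I\subset[m-1]$ and the inductive hypothesis applied to $\hat{\sigma_1}x_1\equiv\hat{\sigma_2}x_2$ gives $v_I(\hat{\sigma_1}x_1)=v_I(\hat{\sigma_2}x_2)$, which by the first factoring identity is precisely $v_I(\sigma_1)=v_I(\sigma_2)$. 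If $m\in I$, then $I\setminus\{m\}\subset[m-1]$, and the inductive hypothesis applied to $\hat{\sigma_1}a_1x_1\equiv\hat{\sigma_2}a_2x_2$ at the index set $I\setminus\{m\}$ gives $v_{I\setminus\{m\}}(\hat{\sigma_1}a_1x_1)=v_{I\setminus\{m\}}(\hat{\sigma_2}a_2x_2)$, which by the second identity equals $v_I(\sigma_1)=v_I(\sigma_2)$. Since $I$ was arbitrary, $V(\sigma_1)=\{v_I(\sigma_1):I\subset[m]\}=\{v_I(\sigma_2):I\subset[m]\}=V(\sigma_2)$, completing the induction.

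The only real content is the verification of the two factoring identities, and that is pure notational bookkeeping rather than mathematics; I do not anticipate any conceptual obstacle, only the need to keep careful track of which word-slot each piece occupies when a block is concatenated with a word on the right.
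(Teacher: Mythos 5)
Your proof is correct and follows essentially the same route as the paper: the paper establishes this lemma as the special case $I^+=I$, $I^-=[m]\setminus I$ of the more general sub-block statement (Lemma~\ref{lemma:equivalence_subblocks}), whose proof is exactly your induction on $m$ --- peel off the last factor $\hat{\sigma_j}(1,a_j)x_j$, invoke the two $(m-1)$-block equivalences $\hat{\sigma_1}x_1\equiv\hat{\sigma_2}x_2$ and $\hat{\sigma_1}a_1x_1\equiv\hat{\sigma_2}a_2x_2$ from Definition~\ref{def:equivalent_blocks}, and split on whether $m$ lies in the index set. The only difference is that you specialize directly to vertices via your two factoring identities, while the paper keeps arbitrary disjoint $I^+,I^-$ so the same induction also serves its later needs; both are sound.
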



\begin{remark}
An $m$-block $\sigma = x_0(1, a_1) \cdots (1, a_m)x_m$ can then be understood as the structure that encapsulates the set of words (or vertices) $V(\sigma)$ generated by transitioning from its minimum word $w_m = v_\emptyset(\sigma) = x_0 x_1 \cdots x_{m-1} x_m$ to its maximum word $w_M = v_{[m]}(\sigma) = x_0 a_1 x_1 \cdots x_{m-1} a_m x_m$, by incorporating the symbols $a_1, \cdots, a_m$ independently, producing $m$ degrees of freedom. That is, $V(\sigma)$ includes all vertices $v$ satisfying $w_m \leq v \leq w_M$ that are obtained through all possible insertions of the symbols $a_i$ at their designated positions. In this sense, we have a collection of ``compatible paths'' from $w_m$ to $w_M$, as mentioned in the previous section, and an $m$-block is the higher dimensional object \textit{filling in} the space between the paths.

By Lemma \ref{lemma:vertices_equivalence_class}, equivalent blocks represent the same set of vertices, and Theorem \ref{thm:unique_block_by_vertices} will establish the converse for valid blocks.
\end{remark}

Instead of proving this lemma directly, we present a generalization to higher-dimensional sub-blocks in Lemma \ref{lemma:equivalence_subblocks}. The concept of vertices, which can be seen as 0-dimensional sub-blocks, can be extended to define sub-blocks of any dimension. 

\begin{defi}\label{def:sub_blocks}
    Let $\sigma\in \E_m$ be an $m-$block given by $\sigma=x_0(1,a_1)x_1\cdots(1,a_m)x_m$. For $I^+, I^-\subset [m]$, disjoint subsets of indices, we define the $k$-block $\sigma(I^+,I^-)$, where $k=m-|I^+|-|I^-|$, as $$\sigma(I^+,I^-)=x_0\lambda_1 x_1\lambda_2\cdots x_{m-1}\lambda_m x_m,$$ where the $\lambda_i$'s are 0- or 1-blocks given by:
	$$\lambda_i=\begin{cases}
		a_i & \text{ if } i\in I^+\\
		1 & \text{ if } i \in I^-\\
		(1,a_i) & \text{ if } i\not\in I^+\cup I^-
	\end{cases}.$$
	We refer to the index sets as \textbf{upper} indices for $I^+$ and \textbf{lower} indices for $I^-$.
\end{defi}

Note that this indeed generalizes definition \ref{def:vertices} of vertices, since $v_I(\sigma)=\sigma(I,[m]\setminus I)$, for any $I\subset [m]$.

\begin{examples}\label{ex:sub-blocks}\leavevmode
\begin{itemize}
    \item Let $\sigma=abc(1,a)(1,b)x(1,c)(1,d)$. A few sub-blocks of $\sigma$ are $\sigma(\{2\},\emptyset)=abc(1,a)bx(1,c)(1,d)$, $\sigma(\{1,2\},\{4\})=abcabx(1,c)$ and $\sigma(\emptyset, \{2,3\})=abc(1,a)x(1,d)$. 
    \item Let $\sigma = (1,a)(1,b)(1,a)$ be a $3$-block. There are 27 possible disjoint subsets $I^+, I^- \subset [3]$, each producing a sub-block, as shown in Table \ref{tale:ex_subblocks} in the appendix. Notice that all 0-dimensional sub-blocks correspond to the vertices of $\sigma$, and we obtain sub-blocks of all dimensions from 0 up to 3, with $\sigma$ itself being a sub-block. Additionally, some sub-blocks can be generated by multiple index sets: for instance, $\sigma(\emptyset, \{1,2\}) = (1,a) = \sigma(\emptyset, \{2,3\})$. Moreover, different index sets may produce equivalent sub-blocks: for example, $\sigma(\{1\}, \{2\}) = a(1,a) \equiv (1,a)a = \sigma(\{3\}, \{2\})$.
\end{itemize}

\end{examples}

 Generalizing Lemma \ref{lemma:vertices_equivalence_class}, sub-blocks of equivalent blocks are themselves equivalent. 
\begin{lemma}\label{lemma:equivalence_subblocks}
	Let $\sigma_1\equiv\sigma_2\in\E_m$ be two equivalent $m$-blocks. Then, for any $I^+, I^-\subset [m]$ with $I^+\cap I^-=\emptyset$ we have $\sigma_1(I^+,I^-)\equiv\sigma_2(I^+,I^-)$. 
\end{lemma}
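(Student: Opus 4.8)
The plan is to prove the statement by induction on the dimension $m$. In the base case $m = 0$ the only admissible pair is $I^+ = I^- = \emptyset$, for which $\sigma_j(\emptyset,\emptyset) = \sigma_j$, so the conclusion is just the hypothesis $\sigma_1 \equiv \sigma_2$. For the inductive step, assume the statement holds for all $(m-1)$-blocks and let $\sigma_1 \equiv \sigma_2 \in \E_m$ with $m \geq 1$. Write $\sigma_j = \hat{\sigma_j}(1,a_j)x_j$ with $\hat{\sigma_j} \in \E_{m-1}$, $a_j \in \Sigma$, $x_j \in \Sigma^*$, as in Definition~\ref{def:equivalent_blocks}; by that definition the $(m-1)$-blocks $\hat{\sigma_1} a_1 x_1$ and $\hat{\sigma_2} a_2 x_2$ are equivalent, and so are $\hat{\sigma_1} x_1$ and $\hat{\sigma_2} x_2$. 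Fix disjoint $I^+, I^- \subset [m]$ and set $J^+ = I^+ \cap [m-1]$ and $J^- = I^- \cap [m-1]$. The one computational ingredient is the bookkeeping identity
\[
\bigl(\hat{\sigma_j}\, w\bigr)(J^+,J^-) \;=\; \hat{\sigma_j}(J^+,J^-)\, w \qquad (w \in \Sigma^*),
\]
which is immediate from Definition~\ref{def:sub_blocks}: appending $w$ only enlarges the last word component of $\hat{\sigma_j}$, whereas the sub-block operation with indices drawn from $[m-1]$ modifies only the earlier slots.

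Next I would split according to the membership of the top index $m$. If $m \in I^+$, then $\sigma_j(I^+,I^-) = \hat{\sigma_j}(J^+,J^-)\,a_j x_j = (\hat{\sigma_j} a_j x_j)(J^+,J^-)$ by the identity above, so the inductive hypothesis applied to $\hat{\sigma_1} a_1 x_1 \equiv \hat{\sigma_2} a_2 x_2$ yields $\sigma_1(I^+,I^-) \equiv \sigma_2(I^+,I^-)$. If $m \in I^-$, the same argument works with $x_j$ in place of $a_j x_j$ and with $\hat{\sigma_1} x_1 \equiv \hat{\sigma_2} x_2$. If $m \notin I^+ \cup I^-$, then $\sigma_j(I^+,I^-) = \hat{\sigma_j}(J^+,J^-)\,(1,a_j)\,x_j$ is a $k$-block with $k = m - |I^+| - |I^-| \geq 1$, and to compare the two blocks I would peel off the last slot using Definition~\ref{def:equivalent_blocks} once more; this reduces the claim to $(\hat{\sigma_1} a_1 x_1)(J^+,J^-) \equiv (\hat{\sigma_2} a_2 x_2)(J^+,J^-)$ together with $(\hat{\sigma_1} x_1)(J^+,J^-) \equiv (\hat{\sigma_2} x_2)(J^+,J^-)$ — where I have again used the bookkeeping identity to rewrite the peeled-off $(k-1)$-blocks — and both equivalences hold by the inductive hypothesis.

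The main difficulty here is organizational rather than mathematical: one must unfold the recursive definition of $\equiv$ far enough to recognize that the correct pair of lower-dimensional blocks to feed to the inductive hypothesis is exactly the one delivered by the bookkeeping identity, and in the last case the definition has to be unfolded a second time at the reduced dimension $k$. No word equations are involved — the whole argument is formal manipulation of block expressions. Finally, specializing to $I^+ = I$ and $I^- = [m]\setminus I$ gives $v_I(\sigma_1) = v_I(\sigma_2)$ for every $I \subset [m]$, and hence $V(\sigma_1) = V(\sigma_2)$, which is the content of Lemma~\ref{lemma:vertices_equivalence_class}.
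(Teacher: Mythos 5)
Your proof is correct and follows essentially the same route as the paper: induction on $m$, peeling off the last factor $(1,a_j)x_j$, using the identity $(\hat{\sigma_j}w)(J^+,J^-)=\hat{\sigma_j}(J^+,J^-)w$ to feed $\hat{\sigma_1}x_1\equiv\hat{\sigma_2}x_2$ and $\hat{\sigma_1}a_1x_1\equiv\hat{\sigma_2}a_2x_2$ to the inductive hypothesis, and splitting on whether $m$ lies in $I^+$, $I^-$, or neither. The only cosmetic difference is that the paper merges your first two cases into one via the symbol $\lambda_i$, while you also unfold the definition of $\equiv$ once more explicitly in the remaining case, exactly as the paper does implicitly.
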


\begin{proof}
	We proceed by induction on $m$. Note that when $m=0$ we have $\sigma_1=x=\sigma_2\in\Sigma^*$ and the only possible sets of indices are $I^+=I^-=\emptyset$ for which we have $\sigma_1(I^+,I^-)=x=\sigma_2(I^+,I^-)$. 
	
	Suppose now $m\geq 1$ and that the lemma holds for blocks of any dimension $k<m$. 
	
	Let $\hat{\sigma_1},\hat{\sigma_2}\in\E_{m-1}$ be $(m-1)$-blocks,  $x_1,x_2\in\Sigma^*$ be words, and $a_1,a_2\in\Sigma$ be symbols, such that $\sigma_i=\hat{\sigma_i}(1,a_i)x_i$, for $i=1,2$. Since $\sigma_1\equiv\sigma_2$, we have $\hat{\sigma}_1x_1\equiv\hat{\sigma}_2x_2$ and $\hat{\sigma}_1a_1x_1\equiv\hat{\sigma}_2a_2x_2$. All of these blocks are of dimension $m-1$, so by the induction hypothesis, for any disjoint sets $J^+,J^-\subset[m-1]$, we will have 
\begin{align}
	\hat{\sigma}_1(J^+,J^-)x_1=(\hat{\sigma}_1x_1)(J^+,J^-) &\equiv (\hat{\sigma}_2x_2)(J^+,J^-)=\hat{\sigma}_2(J^+,J^-)x_2 \text{, and}\label{eq:equiv_1}\\
	\hat{\sigma}_1(J^+,J^-)x_1a_1=\left(\hat{\sigma}_1a_1x_1\right)(J^+,J^-) &\equiv\left(\hat{\sigma}_2a_2x_2\right)(J^+,J^-)=\hat{\sigma}_2(J^+,J^-)a_2x_2 \label{eq:equiv_2}
\end{align}

Let now $I^+,I^-\subset [m]$ be disjoint sets. If $m\not\in I^+\cup I^-$, then $I^+, I^-\subset[m-1]$ and $$\sigma_i(I^+,I^-)=\left(\hat{\sigma_i}(1,a_i)\right)(I^+,I^-)=\hat{\sigma_i}(I^+,I^-)(1,a_i)x_i$$
By the equivalences (\ref{eq:equiv_1}) and (\ref{eq:equiv_2}), $\sigma_1(I^+,I^-)\equiv \sigma_2 (I^+,I^-)$. 

If instead $m\in I^+\cup I^-$, let $J^\pm=I^\pm\cap[m-1]$, and note that $\sigma_i(I^+,I^-)=\hat{\sigma_i}(J^+,J^-)\lambda_ix_i$ where $$\lambda_i=\begin{cases}
	a_i &\text{ if }m\in I^+\\
	1 &\text{ if }m\in I^-
\end{cases}.$$ 
Here again we get, $\hat{\sigma_1}(J^+,J^-)\lambda_1x_1 \equiv \hat{\sigma_2}(J^+,J^-)\lambda_2x_2$, because of equivalence (\ref{eq:equiv_1}) if $m\in I^-$ and because of equivalence (\ref{eq:equiv_2}) if $m\in I^+$. This completes the proof. 

\end{proof}

	The following propositions allow for 
    better understanding of the relationship between equivalent blocks and 
    ease 
    the comparison of 
    blocks. 

 \begin{prop}\label{prop:comparing_eq_blocks}
 Let $\lambda, \tau, \sigma_1, \sigma_2$ be blocks such that $\dim(\sigma_1)=\dim(\sigma_2)=m$. Then, 
 $$\lambda\sigma_1\tau\equiv \lambda\sigma_2\tau \text{ if and only if }\sigma_1\equiv\sigma_2.$$
 \end{prop}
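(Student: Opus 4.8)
The plan is to reduce this two-sided cancellation statement to two one-sided versions and compose them. First note that concatenation of blocks is associative — it is governed entirely by concatenation of the words at the junction points, and $\Sigma^*$ is a monoid — so $\lambda\sigma_1\tau$ is unambiguous and $\sigma_1\tau,\sigma_2\tau$ are blocks of the same dimension. I would isolate two lemmas: \textbf{(L)}, that for every block $\lambda$ and every pair of equidimensional blocks $\rho_1,\rho_2$ one has $\lambda\rho_1\equiv\lambda\rho_2$ if and only if $\rho_1\equiv\rho_2$; and \textbf{(R)}, that for every block $\nu$ and every pair of equidimensional blocks $\rho_1,\rho_2$ one has $\rho_1\nu\equiv\rho_2\nu$ if and only if $\rho_1\equiv\rho_2$. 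Granting these, the proposition is immediate: $\lambda\sigma_1\tau=\lambda(\sigma_1\tau)\equiv\lambda(\sigma_2\tau)=\lambda\sigma_2\tau$ is equivalent, by (L), to $\sigma_1\tau\equiv\sigma_2\tau$, which by (R) is equivalent to $\sigma_1\equiv\sigma_2$.

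Both lemmas are proved by induction, and the mechanism is the same: Definition~\ref{def:equivalent_blocks} declares $\sigma_1\equiv\sigma_2$ to be equivalent to a conjunction of two relations among blocks of dimension one less, so each ``if and only if'' can be pushed down to lower dimension in one stroke, with no need to separate the ``preservation'' and ``cancellation'' directions. For \textbf{(R)} I would induct on $\dim\nu$. If $\dim\nu=0$, say $\nu=w\in\Sigma^*$, I run an inner induction on $\dim\rho_i$; the base case is cancellation of $w$ in the free monoid $\Sigma^*$, and for the step I write $\rho_i=\hat\rho_i(1,\alpha_i)\xi_i$, note $\rho_i w=\hat\rho_i(1,\alpha_i)(\xi_i w)$, and invoke Definition~\ref{def:equivalent_blocks} together with the inner hypothesis. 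If $\dim\nu\ge 1$, write $\nu=\hat\nu(1,\gamma)\zeta$ with $\hat\nu$ of dimension one less; then $\rho_i\nu=(\rho_i\hat\nu)(1,\gamma)\zeta$, so Definition~\ref{def:equivalent_blocks} turns $\rho_1\nu\equiv\rho_2\nu$ into the conjunction $\rho_1(\hat\nu\gamma\zeta)\equiv\rho_2(\hat\nu\gamma\zeta)$ and $\rho_1(\hat\nu\zeta)\equiv\rho_2(\hat\nu\zeta)$, and since $\hat\nu\gamma\zeta$ and $\hat\nu\zeta$ both have dimension $\dim\nu-1$, the outer hypothesis rewrites each as $\rho_1\equiv\rho_2$. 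For \textbf{(L)} I would induct on $\dim\rho_i$ instead. The base case $\rho_i=x_i\in\Sigma^*$ follows by reading off the minimal vertex: by Lemma~\ref{lemma:vertices_equivalence_class}, $\lambda x_1\equiv\lambda x_2$ forces $v_\emptyset(\lambda x_1)=v_\emptyset(\lambda x_2)$, i.e.\ $w_\lambda x_1=w_\lambda x_2$ where $w_\lambda$ is the product of the word-factors of $\lambda$, hence $x_1=x_2$. For $\dim\rho_i\ge 1$, writing $\rho_i=\hat\rho_i(1,\alpha_i)\xi_i$ gives $\lambda\rho_i=(\lambda\hat\rho_i)(1,\alpha_i)\xi_i$, and Definition~\ref{def:equivalent_blocks} reduces $\lambda\rho_1\equiv\lambda\rho_2$ to the pair $\lambda(\hat\rho_1\alpha_1\xi_1)\equiv\lambda(\hat\rho_2\alpha_2\xi_2)$ and $\lambda(\hat\rho_1\xi_1)\equiv\lambda(\hat\rho_2\xi_2)$, which the hypothesis (same $\lambda$, dimension one less) rewrites as $\rho_1\equiv\rho_2$.

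I do not expect a deep obstacle here; the one thing requiring care is the asymmetry of Definition~\ref{def:equivalent_blocks}, which strips the \emph{last} factor of a block. This is exactly why right-multiplication should be set up as an induction on $\dim\nu$ (peeling $\nu$ from the right, plus a short inner induction to absorb a trailing word) while left-multiplication must be set up as an induction on $\dim\rho$ (peeling $\rho$ from the right, with $\lambda$ left untouched): choosing the wrong induction variable — for instance trying to induct on $\dim\lambda$ in (L) — yields a recursion that does not close. The remaining work is the routine verification of how appending or prepending a single word or a single edge interacts with the canonical decomposition $\sigma=\hat\sigma(1,a)x$ used in Definition~\ref{def:equivalent_blocks}.
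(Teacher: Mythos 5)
Your proof is correct, and it takes a noticeably different route from the paper's. The paper's first move is to prove a \emph{dual} form of Definition~\ref{def:equivalent_blocks} that strips the \emph{first} edge (i.e.\ $x_1(1,a_1)\sigma_1\equiv x_2(1,a_2)\sigma_2$ iff $x_1\sigma_1\equiv x_2\sigma_2$ and $x_1a_1\sigma_1\equiv x_2a_2\sigma_2$); it then peels the $k$ edges of $\lambda$ from the left and the $r$ edges of $\tau$ from the right to reduce the statement to the family of equivalences $v_I(\lambda)\sigma_1 v_J(\tau)\equiv v_I(\lambda)\sigma_2 v_J(\tau)$ over all vertices of $\lambda$ and $\tau$, and finally disposes of the word-affix case by an induction on $\dim\sigma_i$ whose details it omits. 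You instead never touch the first-edge reformulation: by factoring the claim into left- and right-cancellation lemmas and choosing the induction variable in each so that only the last-edge definition is ever applied (inducting on $\dim\nu$ for right multiplication, on $\dim\rho$ for left multiplication, with the base case of the left lemma settled by comparing minimal vertices via Lemma~\ref{lemma:vertices_equivalence_class}/\ref{lemma:equivalence_subblocks}, which is available at this point), you push each ``iff'' down a dimension in a single stroke. What the paper's route buys is the symmetric first-edge characterization of $\equiv$, which is of independent use; what yours buys is a self-contained argument that avoids that auxiliary lemma and, incidentally, supplies in full the word-affix induction that the paper only sketches. Your warning about the asymmetry of the definition — that inducting on $\dim\lambda$ in the left lemma would not close — is exactly the point where the two strategies diverge: the paper resolves that asymmetry by proving the dual definition, you resolve it by reorganizing the induction.
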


Thus, in general, we can determine whether two blocks are equivalent by focusing only on the parts of their expressions that differ.

 \begin{proof}
    We first note that Definition \ref{def:equivalent_blocks} can also be stated by reducing the first edge $(1,a_i)$ instead of the last, that is if $\dim(\sigma_1)=\dim(\sigma_2)=m$, we get the $(m+1)$-block equivalence $x_1(1,a_1)\sigma_1\equiv x_2(1,a_2)\sigma_2$ if and only if $x_1\sigma_1\equiv x_2\sigma_2$ and $x_1a_1\sigma_1\equiv x_2a_2\sigma_2$ as $m$-blocks. Indeed, this is trivial when $m=0$. Then, proceeding by induction, write $\sigma_i=\hat{\sigma}_i(1,b_i)y_i$ and observe that applying both the definition and the induction hypothesis, we get that $x_1(1,a_1)\sigma_1\equiv x_2(1,a_2)\sigma_2$ if and only if we have the four $(m-1)$-block equivalences:
    \begin{align*}
        x_1\hat{\sigma}_1y_1\equiv x_2\hat{\sigma}_2y_2, \quad& x_1a_1\hat{\sigma}_1y_1\equiv x_2a_2\hat{\sigma}_2y_2, \\
        x_1\hat{\sigma}_1b_1y_1\equiv x_2\hat{\sigma}_2y_2b_2, \quad&\text{and}\quad x_1a_1\hat{\sigma}_1b_1y_1\equiv x_2a_2\hat{\sigma}_2b_2y_2.
    \end{align*}
    By the definition, these hold if and only if $x_1\sigma_1\equiv x_2\sigma_2$ and 
    $x_1a_1\sigma_1\equiv x_2a_2\sigma_2$, as desired. 

    For $\lambda=x, \tau=y\in\Sigma^*$, the proposition follows straightforward from an inductive argument on $\dim(\sigma_i)$. However, we omit the details here.

    Then, if $\lambda=x_0(1,a_1)\cdots (1,a_k)x_k$ and $\tau=y_0(1,b_1)\cdots (1,b_r)y_r$, applying the definition $k$ times on the first edge, and $r$ times on the last edge, we observe that $\lambda\sigma_1\tau\equiv \lambda\sigma_2\tau$ if and only if $v_I(\lambda)\sigma_1v_J(\tau)\equiv v_I(\lambda)\sigma_2v_J(\tau)$ for all $I\subset [k]$ and all $J\subset [r]$. Then, because all vertices $v_I(\lambda)$ and $v_J(\tau)$ are words in $\Sigma^*$, by the case $m=0$, these equivalences hold if and only if $\sigma_1\equiv\sigma_2$, as desired. 
 \end{proof}
	
	\begin{prop}\label{prop:simple_equation_words}
		Let $a\in\Sigma$ and $x,x',y,y'\in\Sigma^*$ be such that they satisfy $xx'=yy'$ and $xax'=yay'$. If $|y|\geq|x|$, then there is a natural $t\geq 0$ such that $y=xa^t$ and $x'=a^ty'$. 
	\end{prop}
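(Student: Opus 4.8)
The plan is to exploit the fact that $\Sigma^*$ is both left- and right-cancellative in order to reduce the pair of word equations to a single commutation relation, and then to invoke Corollary~\ref{cor:simple_word_equations}(1).

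First I would use the hypothesis $|y|\geq|x|$ together with $xx'=yy'$. Set $w:=xx'=yy'$. Both $x$ and $y$ are prefixes of $w$, and since $x$ is the shorter of the two, $x$ is a prefix of $y$; write $y=xz$ for some $z\in\Sigma^*$. Substituting into $xx'=yy'=xzy'$ and cancelling the common prefix $x$ on the left gives $x'=zy'$. Next I would substitute $y=xz$ and $x'=zy'$ into the second equation $xax'=yay'$, obtaining $xazy'=xzay'$. Cancelling $x$ on the left and $y'$ on the right yields $az=za$. By Corollary~\ref{cor:simple_word_equations}(1), this forces $z=a^t$ for some $t\in\N$ with $t\geq 0$, and hence $y=xz=xa^t$ and $x'=zy'=a^ty'$, which is precisely the claimed conclusion.

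The argument is essentially routine; the only points requiring a little care are the observation that among two words that are both prefixes of a common word $w$ the shorter one is a prefix of the longer, and the repeated appeal to cancellativity of the free monoid, which lets common factors be deleted from either end of an equation. I do not anticipate any genuine obstacle beyond bookkeeping.
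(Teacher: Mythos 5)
Your proof is correct and follows essentially the same route as the paper's: decompose $y=x z$ (the paper writes $y=x\hat y$, $x'=\hat x y'$), cancel common factors to reduce the two equations to the single commutation $az=za$, and conclude $z=a^t$ via Corollary~\ref{cor:simple_word_equations}(1). No substantive difference.
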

	We may interpret the proposition as follows: Suppose we are given an edge $(w_1,w_2)$ and we know a decomposition $x,x'$, such that $w_1=xx'$ and $w_2=xax'$. Then, the only way we can produce a different decomposition $y, y'$, is if we \textit{move} some copies of $a$ at the start of $x'$ to the end of $x$.  
	
	\begin{proof}
		Since $|y|\geq|x|$ it must be that $|x'|\geq|y'|$.  Since $xx'=yy'$, we must have $y=x\hat{y}$ and $x'=\hat{x}y'$. Thus, simplifying the common affixes, we get the equations $\hat{x}=\hat{y}$ and $\hat{x}a=a\hat{y}$. Thus $\hat{x}a=a\hat{x}$, and by Corollary~\ref{cor:simple_word_equations} there is a natural $t\geq 0$ such that $\hat{x}=a^t$. The result then follows. 
	\end{proof}
	
	Applying this result to 1-blocks, we see that $x_0(1,a)x_1\equiv y_0(1,a)y_1$ if and only if $x_0=y_0a^t$ and $y_1=a^tx_1$,  or $y_0=x_0a^t$ and $x_1=a^ty_1$. So we can only produce equivalent edges by \textit{moving} copies of $a$ around the edge $(1,a)$. This is also the case for $m$-blocks, so we can collect all copies of each $a_i$ in front of the corresponding $(1,a_i)$, one edge at a time, to get a \textbf{canonical block} as the representative for each equivalent class of blocks. We formalize this idea in the following definition and theorem. 
	
	\begin{defi}\label{def:canonical_form}
		An $m-$block $\sigma=x_0(1,a_1)\cdots(1,a_m)x_m$ is said to be in \textbf{canonical form} if for each $i=1,\dots, m$, $x_i[1]\neq a_i$. 
	\end{defi}
	
	\begin{theorem}\label{thm:unique_canonical_form}
		Each equivalence class of $m-$blocks, has a unique representative in canonical form. 
	\end{theorem}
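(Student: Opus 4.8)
The plan is to prove existence and uniqueness separately, in each case inducting on the dimension $m$. For existence, I would start from an arbitrary $m$-block $\sigma = x_0(1,a_1)x_1\cdots(1,a_m)x_m$ and show that by a sequence of equivalence-preserving moves we can reach a block in canonical form. The key tool is the observation (already recorded in the paragraph before Definition~\ref{def:canonical_form}, derived from Proposition~\ref{prop:simple_equation_words}) that for $1$-blocks $x_0(1,a)x_1 \equiv y_0(1,a)y_1$ precisely when one of $x_0,y_0$ is obtained from the other by appending a power of $a$ moved off the front of the corresponding $x_1$. Concretely, if $x_i = a_i^{t}x_i'$ with $x_i'[1]\neq a_i$ (or $x_i' = 1$), then by Proposition~\ref{prop:comparing_eq_blocks} I may replace the factor $(1,a_i)x_i$ by $a_i^{t}(1,a_i)x_i'$ without changing the equivalence class, i.e. push the offending prefix of $x_i$ to the left across the edge $(1,a_i)$, absorbing it into $x_{i-1}$. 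Doing this for $i = m, m-1, \dots, 1$ in turn — or, cleaner for the induction, handling the last edge first and then invoking the inductive hypothesis on the $(m-1)$-block $x_0(1,a_1)\cdots(1,a_{m-1})x_{m-1}'$ where $x_{m-1}'$ is the modified penultimate word — produces a representative in canonical form. One has to check that fixing later edges does not disturb the canonical condition at earlier edges; since each move only appends symbols to $x_{i-1}$ on the \emph{right}, it does not change $x_{i-1}[1]$, so once an edge is made canonical it stays canonical, and the process terminates.

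For uniqueness, suppose $\sigma = x_0(1,a_1)\cdots(1,a_m)x_m$ and $\tau = y_0(1,b_1)\cdots(1,b_m)y_m$ are both in canonical form and $\sigma \equiv \tau$. I would induct on $m$; the base case $m=0$ is immediate since equivalence of $0$-blocks is literal equality of words. For the inductive step, unfold Definition~\ref{def:equivalent_blocks} at the last edge: $\sigma \equiv \tau$ gives the two $(m-1)$-block equivalences $\hat\sigma\, x_m \equiv \hat\tau\, y_m$ and $\hat\sigma\, a_m x_m \equiv \hat\tau\, b_m y_m$, where $\hat\sigma = x_0(1,a_1)\cdots(1,a_{m-1})x_{m-1}$ and similarly $\hat\tau$. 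Taking $v_\emptyset$ (the minimum vertex) of the first of these, and of the second, yields the word equations $x_0x_1\cdots x_m = y_0y_1\cdots y_m$ and $x_0x_1\cdots x_{m-1}a_mx_m = y_0y_1\cdots y_{m-1}b_my_m$; applying Proposition~\ref{prop:simple_equation_words} to these (with $x = x_0\cdots x_{m-1}$, $x' = x_m$, etc., and the analogous decomposition on the $\tau$ side, WLOG $|y_0\cdots y_{m-1}| \ge |x_0\cdots x_{m-1}|$) forces $a_m = b_m$ and $y_0\cdots y_{m-1} = (x_0\cdots x_{m-1})a_m^{t}$, $x_m = a_m^{t}y_m$ for some $t\ge 0$. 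But $\sigma$ canonical means $x_m[1]\neq a_m$ and $\tau$ canonical means $y_m[1]\neq b_m = a_m$; since $x_m = a_m^t y_m$, the only way both hold is $t = 0$, whence $x_m = y_m$. Then the first $(m-1)$-block equivalence becomes $\hat\sigma\, x_m \equiv \hat\tau\, x_m$, and by Proposition~\ref{prop:comparing_eq_blocks} (cancelling the common suffix $x_m$) we get $\hat\sigma \equiv \hat\tau$. Both $\hat\sigma$ and $\hat\tau$ are in canonical form (the canonical condition for indices $1,\dots,m-1$ is inherited), so the inductive hypothesis gives $\hat\sigma = \hat\tau$, i.e. $x_i = y_i$ and $a_i = b_i$ for all $i < m$; combined with $a_m = b_m$ and $x_m = y_m$ this gives $\sigma = \tau$.

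I expect the main obstacle to be bookkeeping rather than a conceptual difficulty: in the existence half, making precise that the left-pushing moves on successive edges do not interfere, and in the uniqueness half, correctly extracting scalar word equations from block equivalences (one must be a little careful that $v_\emptyset$ is compatible with the recursive structure of Definition~\ref{def:equivalent_blocks}, which is the content of the ordering remark after the examples following Definition~\ref{def:vertices}). A secondary subtlety is the asymmetry $|y_0\cdots y_{m-1}|\ge|x_0\cdots x_{m-1}|$ assumed for Proposition~\ref{prop:simple_equation_words}: the other case is handled by symmetry (swap the roles of $\sigma$ and $\tau$), which should be stated explicitly. Everything else — the appeals to Proposition~\ref{prop:comparing_eq_blocks} for cancelling common prefixes/suffixes, and to Corollary~\ref{cor:simple_word_equations} via Proposition~\ref{prop:simple_equation_words} — is routine once the inductive framework is set up.
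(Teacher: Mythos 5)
Your proposal is correct and follows essentially the same route as the paper: existence by pushing the maximal power of $a_m$ leftward across the last edge and inducting on $m$, and uniqueness by extracting the word equations $xx'=yy'$ and $xa_mx'=ya_my'$, then applying Proposition~\ref{prop:simple_equation_words} together with the canonical condition $x_m[1]\neq a_m$. The only difference is cosmetic: the paper phrases uniqueness as a contradiction at the maximal index $i$ with $x_i\neq y_i$ (comparing the top vertices $v_{[m]}$ and $v_{[m]\setminus\{i\}}$ after cancelling common suffixes), whereas you induct on $m$, peel off the last edge using the bottom vertices, and cancel the common suffix via Proposition~\ref{prop:comparing_eq_blocks} before invoking the inductive hypothesis — the same mechanism in inductive packaging.
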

	\begin{proof}
		We start by proving its existence. For $m=0$ there's nothing to prove. Suppose a canonical representative always exist for $(m-1)-$blocks, for some $m\geq 1$. Let $\sigma=x_0(1,a_1)x_1\cdots (1,a_m)x_m$ be an $m-$block. Write $x_m=a_m^t\hat{x}_m$, for some $t\geq 0$ maximal, it is then straightforward to check $\sigma\equiv x_0(1,a_1)x_1\cdots x_{m-1}a_m^t(1,a_m)\hat{x}_m$. Consider then $\Bar{\sigma}= x_0(1,a_1)\cdots (1,a_{m-1})x_{m-1}a_m^t$, so $\sigma\equiv \Bar{\sigma}(1,a_m)\hat{x}_m$. Because $\Bar{\sigma}$ is an $(m-1)$-block, by the induction hypothesis there is a canonical representative $\hat{\sigma}=\hat{x}_0(1,b_1)\cdots (1,b_{m-1})\hat{x}_{m-1}$ with $\Bar{\sigma}\equiv\hat{\sigma}$. Since $v_\emptyset(\Bar{\sigma})=v_\emptyset(\hat{\sigma})$ and for each $i=1, \dots, m-1$ , whe have $v_{\{i\}}(\Bar{\sigma})=v_{\{i\}}(\hat{\sigma})$, we must have $a_i=b_i$. Thus $\sigma\equiv \hat{x}_0(1,a_1)\hat{x}_1\dots (1,a_{m-1})\hat{x}_{m-1}(1,a_m)\hat{x}_m$, where for each $i=1,\dots, m-1$, $\hat{x}_i[1]\neq a_i$ since $\hat{\sigma}$ is in canonical form, and also $\hat{x}_m[1]\neq a_m$, by construction.\\
		
		Suppose now that $\sigma_1\equiv\sigma_2$ are both in canonical form, and write
		\begin{align*}
			\sigma_1&=x_0(1,a_1)x_1(1,a_2)\cdots x_{m-1}(1,a_m)x_m\\
			\sigma_2&=y_0(1,b_1)y_1(1,b_2)\cdots y_{m-1}(1,b_m)y_m.
		\end{align*}
		Since $v_\emptyset(\sigma_1)=v_\emptyset(\sigma_2)$ and for each $i=1, \dots, m-1$, we have $v_{\{i\}}(\sigma_1)=v_{\{i\}}(\sigma_2)$, we must have $a_i=b_i$ (by counting symbols).  Suppose by way of contradiction that there is an index $i$ such that $x_{i}\neq y_{i}$, and fix the maximal such index, so $x_j=y_j$ for all $j>i$. Since we have $v_{[m]\setminus\{i\}}(\sigma_1)=v_{[m]\setminus\{i\}}(\sigma_2)$ and $v_{[m]}(\sigma_1)=v_{[m]}(\sigma_2)$, we get the set of equations $xa_ix'=ya_iy'$ and $xx'=yy'$, where $x=x_0a_1x_1\cdots x_{i-1}$, $x'=x_ia_{i+1}\cdots a_mx_m$ and $y=y_0a_1\cdots y_{i-1}$, $y'=y_ia_{i+1}\cdots a_my_m$. Moreover, since $a_{i+1}x_{i+1}\cdots a_mx_m=a_{i+1}y_{i+1}\cdots a_my_m$, we can delete the common suffixes to get the equations $xa_ix_i=ya_iy_i$ and $xx_i=yy_i$. Assuming $|y_i|>|x_i|\geq 0$, by Proposition \ref{prop:simple_equation_words}, $x_i=a_i^ty_i$ for some $t\geq 1$. But then $x_i[1]=a_i$ which can't be since $\sigma_1$ is in canonical form. Therefore each $x_i=y_i$, and $\sigma_1=\sigma_2$. 
	\end{proof}
	
	\begin{note1}
		To ease 
        notation, in the remaining sections, 
        an $m$-block 
        refers to its equivalence class as defined above. Similarly, instead of saying two blocks are equivalent, we 
        simply say they are the same and write $\sigma_1=\sigma_2$. 
	\end{note1}
	
	\subsection{Valid Blocks}
	
	\begin{example}
		Consider the 2-block $\sigma=(1,a)(1,b)$, for symbols $a,b\in\Sigma$. Then $V=V(\sigma)=\{1,a,b,ab\}$, so the insertion graph $\Gins{V}$ is a 4-cycle, where we can distinguish two different paths from $1$ to $ab$, that are \textit{compatible} in the same sense discussed in section \ref{sec:prelims}. We can see the 2-block as \textit{attaching  a square} to the graph, so we get a 2-dimensional topological space with no holes. 
		
	We would like the same intuition to hold for other blocks, however, consider the 2-block $\sigma=(1,a)(1,a)$. In this case, $V=V(\sigma)=\{1,a,a^2\}$, so $\Gins{V}$ is the union of the two edges $(1,a)$ and $(a,a^2)$. Attaching a square in the same fashion as before would produce a 2-sphere, which has a two-dimensional hole. Even more problematic, computing the boundary of this block, as we will define it in Section~\ref{sec:insertion_chain_complex}, yields zero. Allowing single blocks to have vanishing boundaries would undermine the intuition we are trying to capture. To prevent such situations, we exclude 2-blocks like $\sigma$, and call any block in which a pair $(1,a)(1,a)$ appears \textbf{invalid}.

	\end{example}
	
	\begin{defi}\label{defi:invalid}
		Given a block $\sigma\in\E_m$, with canonical form $\sigma=x_0(1,a_1)x_1\cdots (1,a_m)x_m$, we say $\sigma$ is a \textbf{valid block} if whenever $x_i=1$, for any $i=1,\cdots, m-1$, then $a_i\neq a_{i+1}$. Otherwise, we say $\sigma$ is \textbf{invalid} or \textbf{degenerate}. We denote by $\Ev_m$ the set of all valid $m$-blocks. 
	\end{defi}
	
	\begin{remark}
		If $\sigma$ is a valid block, then $a_i=a_{i+1}$ only if $x_i$ includes a symbol different to $a_i$. Otherwise, $x_i=a_i^r$ for some $r\geq 0$, and writing the canonical form of $\sigma$ we can collect $a_i^r$ in front of $(1,a_i)$, so we are left with only the trivial word between $(1,a_i)$ and $(1,a_{i+1})$, which would be invalid according to Definition \ref{defi:invalid}. 
	\end{remark}

   The following lemma gives some characterizations for invalid blocks based on their vertices and sub-blocks.  Its proof is straightforward; thus, we omit it here but include it in the \hyperlink{proof:charac_valid}{appendix} for completeness.
  
  \begin{lemma}\label{lemma:charac_valid}
  	Let $\sigma\in \E_m$ be an $m$-block. Then, the following are equivalent:
   \begin{enumerate}
   \item $\sigma$ is invalid. 
    \item There exists an $1\leq i<m$, such that $v_{\{i\}}(\sigma)= v_{\{i+1\}}(\sigma)$. 
    \item There exists an $1\leq i<m$, such that $\sigma(\{i\},\emptyset)=\sigma(\{i+1\},\emptyset)$.
    \item There are indices $1\leq j<i\leq m$, such that $v_{[m]\setminus\{i\}}(\sigma)= v_{[m]\setminus\{j\}}(\sigma)$.
   \end{enumerate}
  \end{lemma}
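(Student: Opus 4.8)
The plan is to fix the canonical form $\sigma=x_0(1,a_1)x_1\cdots(1,a_m)x_m$ and prove that each of (2), (3), (4) is equivalent to (1) on its own. In every case the implication starting from (1) is immediate: if $\sigma$ is invalid there is an index $i_0$ with $1\le i_0<m$, $x_{i_0}=1$ and $a_{i_0}=a_{i_0+1}$, and one simply reads off the witnessing data ($i=i_0$ for (2) and (3); the pair $j=i_0,\ i=i_0+1$ for (4)), checking the required equality by substituting $x_{i_0}=1$ and $a_{i_0}=a_{i_0+1}$. All of the content is in the reverse implications, and they share one mechanism: cancel a maximal common prefix and a maximal common suffix from the two words (or the two blocks) being compared; this reduces the equality to one of the elementary equations of Corollary~\ref{cor:simple_word_equations}, which forces the relevant internal word $x_i$ to be a power of $a_i$; then the canonical-form condition $x_i[1]\neq a_i$ forces $x_i=1$, i.e.\ invalidity.

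For $(2)\Rightarrow(1)$ I would compute $v_{\{i\}}(\sigma)=x_0\cdots x_{i-1}a_ix_ix_{i+1}\cdots x_m$ and $v_{\{i+1\}}(\sigma)=x_0\cdots x_{i-1}x_ia_{i+1}x_{i+1}\cdots x_m$; cancelling the common prefix $x_0\cdots x_{i-1}$ and suffix $x_{i+1}\cdots x_m$, the equality becomes $a_ix_i=x_ia_{i+1}$, and Corollary~\ref{cor:simple_word_equations}(4) yields $a_i=a_{i+1}$ together with $x_i=a_i^{\,t}$; canonicity forces $t=0$, so $x_i=1$ and $\sigma$ is invalid at $i$.

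For (3), the first step is to record that the sequence of inserted symbols $(a_1,\dots,a_m)$ is an invariant of the equivalence class of a block: it is recovered from $v_\emptyset(\sigma)$ and the vertices $v_{\{k\}}(\sigma)$ by counting symbols, and these vertices are equivalence-invariant by Lemma~\ref{lemma:vertices_equivalence_class}. Since $\sigma(\{i\},\emptyset)$ and $\sigma(\{i+1\},\emptyset)$ are $(m-1)$-blocks whose symbol sequences are $(a_1,\dots,a_m)$ with $a_i$, respectively $a_{i+1}$, deleted, their equality already forces $a_i=a_{i+1}=:a$. I would then write both $(m-1)$-blocks explicitly and use Proposition~\ref{prop:comparing_eq_blocks} to discard the common prefix $x_0(1,a_1)\cdots(1,a_{i-1})$ and the common suffix $(1,a_{i+2})x_{i+2}\cdots(1,a_m)x_m$, reducing the identity to the $1$-block equation $x_{i-1}\,a\,x_i(1,a)x_{i+1}=x_{i-1}(1,a)\,x_i\,a\,x_{i+1}$; comparing $v_\emptyset$ of the two sides gives $a x_i=x_i a$, whence $x_i=a^{\,t}$ by Corollary~\ref{cor:simple_word_equations}(1) and $x_i=1$ by canonicity. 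The converse, $(1)\Rightarrow(3)$, uses the same factorization together with the elementary identity $a(1,a)=(1,a)a$ to see that the two sub-blocks coincide once $x_i=1$ and $a_i=a_{i+1}$.

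For (4), writing $P=x_0a_1x_1\cdots a_{j-1}x_{j-1}$, $Q=x_ja_{j+1}x_{j+1}\cdots a_{i-1}x_{i-1}$ and $R=x_ia_{i+1}x_{i+1}\cdots a_mx_m$, one has $v_{[m]\setminus\{j\}}(\sigma)=PQa_iR$ and $v_{[m]\setminus\{i\}}(\sigma)=Pa_jQR$, so the asserted equality is $a_jQ=Qa_i$; Corollary~\ref{cor:simple_word_equations}(4) gives $a_j=a_i$ and $Q=a_j^{\,t}$. Since $x_j$ is a prefix of $Q$, canonicity forces $x_j=1$, and then $Q=a_{j+1}x_{j+1}\cdots a_{i-1}x_{i-1}$ is a power of $a_j$: if $i=j+1$ this says $Q=1$ outright, and if $i>j+1$ the first letter $a_{j+1}$ of $Q$ must equal $a_j$ — in either case $x_j=1$ and $a_j=a_{j+1}$, so $\sigma$ is invalid. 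The only points that need genuine care, rather than being purely mechanical, are this last case split and, in part (3), establishing symbol-sequence invariance before deducing $a_i=a_{i+1}$ and keeping track of which prefix and suffix are common when the sub-blocks are written in their generally non-canonical forms; everything else is a direct application of Corollary~\ref{cor:simple_word_equations} and Definition~\ref{def:canonical_form}.
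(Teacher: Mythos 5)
Your proof is correct, and all four of your reverse implications go through: each cancellation step is legitimate (the common prefixes and suffixes have matching lengths on both sides), the appeals to Corollary~\ref{cor:simple_word_equations} are the right ones, and the canonical-form condition $x_i[1]\neq a_i$ does close each case, including the $i=j+1$ versus $i>j+1$ split in (4)$\Rightarrow$(1). The difference from the paper is organizational rather than conceptual: the paper proves the single cycle $(1)\Rightarrow(2)\Rightarrow(3)\Rightarrow(4)\Rightarrow(1)$, deriving each condition's word or block equation from the previous one and invoking canonicity only once, in the final leg $(4)\Rightarrow(1)$ (which is essentially your argument for that implication); you instead prove three stand-alone biconditionals with (1), so the same ``cancel affixes, apply Corollary~\ref{cor:simple_word_equations}, use canonicity'' mechanism is repeated three times. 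Your hub-and-spoke layout buys independence of the three equivalences but is somewhat more redundant, and your $(3)\Rightarrow(1)$ is heavier than needed: since $v_\emptyset\bigl(\sigma(\{i\},\emptyset)\bigr)=v_{\{i\}}(\sigma)$ and $v_\emptyset\bigl(\sigma(\{i+1\},\emptyset)\bigr)=v_{\{i+1\}}(\sigma)$, Lemma~\ref{lemma:vertices_equivalence_class} gives $(3)\Rightarrow(2)$ immediately, making the symbol-sequence invariance and the reduction via Proposition~\ref{prop:comparing_eq_blocks} unnecessary (though they are correctly used; the paper needs Proposition~\ref{prop:comparing_eq_blocks} only in the forward direction $(2)\Rightarrow(3)$, the analogue of your $(1)\Rightarrow(3)$).
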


	The reciprocal of Lemma \ref{lemma:vertices_equivalence_class} holds for valid blocks, that is, each valid $m$-block is uniquely determined (up to equivalence) by its set of vertices. 
	
	\begin{theorem}\label{thm:unique_block_by_vertices}
	Let  $\sigma_1\in \Ev_m$ and $\sigma_2\in\Ev_n$ be valid blocks such that $V(\sigma_2)= V(\sigma_1)$, then $\sigma_1=\sigma_2$. 
	\end{theorem}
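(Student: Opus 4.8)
The plan is to reduce to a comparison of canonical forms and then induct on the dimension, reconstructing the canonical form of a valid block one edge at a time from its vertex set.

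\medskip
\noindent\emph{Step 1 (dimensions agree, and reduction to canonical form).} The subword order restricted to $V(\sigma)$ has unique minimum $v_\emptyset(\sigma)$ and unique maximum $v_{[m]}(\sigma)$, with $|v_{[m]}(\sigma)|-|v_\emptyset(\sigma)|=m$; since $V(\sigma_1)=V(\sigma_2)$ these words — and hence $m$ and $n$ — coincide. By Theorem~\ref{thm:unique_canonical_form} I may assume $\sigma_1,\sigma_2$ are both in canonical form, and I will argue by induction on $m$. The case $m=0$ is immediate, since then $\sigma_i$ is the unique element of $V(\sigma_i)$.

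\medskip
\noindent\emph{Step 2 (recovering $x_0$ and $a_1$).} For $m\ge 1$ write $\sigma_1=x_0(1,a_1)\sigma_1'$ and $\sigma_2=y_0(1,b_1)\sigma_2'$, where $\sigma_1',\sigma_2'$ are the $(m-1)$-blocks obtained by deleting the first edge; one checks these are again valid and in canonical form, and that $V(\sigma_1)=x_0V(\sigma_1')\cup x_0a_1V(\sigma_1')$, and similarly for $\sigma_2$ (here $x_0S:=\{x_0w:w\in S\}$). I would first show that $x_0$ equals the longest common prefix of $V(\sigma_1)$; equivalently, that $V(\sigma_1')\cup a_1V(\sigma_1')$ has trivial longest common prefix. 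This holds because that set contains either the empty word or two vertices with distinct first letters: canonicity forces the word after $(1,a_1)$ not to start with $a_1$, and when that word is empty, validity forces the next inserted symbol to differ from $a_1$. Thus $x_0=y_0$, and then $a_1$ is simply the $(|x_0|+1)$-st letter of $\max V(\sigma_1)=v_{[m]}(\sigma_1)$, so $a_1=b_1$.

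\medskip
\noindent\emph{Step 3 (recovering the sub-block's vertex set — the crux).} Put $\bar V:=\{v:x_0v\in V(\sigma_1)\}=\{v:x_0v\in V(\sigma_2)\}$, using $x_0=y_0$. I claim
\[
V(\sigma_1')=\{\,v\in\bar V:a_1v\in\bar V\,\}=V(\sigma_2').
\]
The inclusion $\subseteq$ is immediate from $\bar V=V(\sigma_i')\cup a_1V(\sigma_i')$. For $\supseteq$, suppose $v\in\bar V$, $a_1v\in\bar V$, but $v\notin V(\sigma_i')$; then $v=a_1u$ with $u\in V(\sigma_i')$, and $a_1v=a_1^2u\in\bar V$ forces either $a_1u\in V(\sigma_i')$ (impossible, since $v=a_1u\notin V(\sigma_i')$) or $a_1^2u\in V(\sigma_i')$. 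I would rule out the latter by a secondary induction on $\dim\sigma_i'$: if the first word of $\sigma_i'$ is nonempty then every vertex of $\sigma_i'$ begins with a letter $\ne a_1$ by canonicity, contradicting that $a_1^2u$ begins with $a_1$; if that first word is empty then validity forces the first inserted symbol of $\sigma_i'$ to be some $c\ne a_1$, and since $a_1^2u$ and $a_1u$ both begin with $a_1\ne c$ one passes to the sub-block with that first edge removed (which has exactly the same vertices beginning with $a_1$) and recurses. Granting this, $V(\sigma_1')=V(\sigma_2')$, so the inductive hypothesis gives $\sigma_1'=\sigma_2'$, whence $\sigma_1=x_0(1,a_1)\sigma_1'=y_0(1,b_1)\sigma_2'=\sigma_2$ by Proposition~\ref{prop:comparing_eq_blocks}.

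\medskip
\noindent\emph{Expected obstacle.} The delicate point is precisely this reconstruction of $V(\sigma_i')$ from $V(\sigma_i)$. Because a valid block may contain a symbol that is re-inserted past nontrivial intervening material — as in $(1,a)(1,b)(1,a)$, where $v_{\{1\}}=v_{\{3\}}$ — a vertex of $\sigma_i$ can lie in both $x_0V(\sigma_i')$ and $x_0a_1V(\sigma_i')$, so $\bar V$ cannot be split by first letter alone. The leading-edge peeling in Step~3, powered by the fact (built into the definition of validity, cf.\ Lemma~\ref{lemma:charac_valid}) that two consecutive inserted symbols separated only by the empty word must differ, is what makes the stray case $v\notin V(\sigma_i')$ collapse; verifying that this secondary induction really terminates against validity is the part that will require the most care.
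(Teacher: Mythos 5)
Your Steps 1--2 are fine: the dimensions match, $x_0$ is indeed the longest common prefix of $V(\sigma_1)$ (canonicity gives a vertex of $\sigma_1'$ starting with a letter $\neq a_1$ when $x_1\neq 1$, and validity gives one starting with $a_2\neq a_1$ when $x_1=1$), and $a_1$ is then read off the maximal word. The genuine gap is exactly where you flagged it, in Step 3, and it is not a matter of "care" but of the argument not closing. Your identity $V(\sigma_i')=\{v\in\bar V: a_1v\in\bar V\}$ reduces to ruling out the configuration $u\in V(\sigma_i')$, $a_1^2u\in V(\sigma_i')$, $a_1u\notin V(\sigma_i')$, and your secondary induction does not rule it out: (i) when you pass to the sub-block $\sigma_i''$ obtained by deleting the first edge $(1,c)$, the memberships $a_1^2u\in V(\sigma_i'')$ and $a_1u\notin V(\sigma_i'')$ do descend, but $u\in V(\sigma_i')$ does not --- $u$ may begin with $c$, i.e.\ $u\in cV(\sigma_i'')\setminus V(\sigma_i'')$, and your sketch says nothing about this branch; (ii) the hypothesis that powers your base case ("the first word does not begin with $a_1$") comes from canonicity/validity \emph{relative to the deleted edge} $(1,a_1)$ and is not inherited one level down: after peeling $(1,c)$ the constraint is relative to $c$, so the next block's first word may well begin with $a_1$, and then "first word nonempty $\Rightarrow$ no vertex begins with $a_1$" is unavailable and no contradiction is produced. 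Moreover, if you try to recurse on the weaker statement that drops $u\in V$, that statement is simply false under your standing hypotheses: for $\tau=(1,b)a^2(1,b)(1,a)$ (valid, canonical, empty first word, first symbol $b\neq a$) one has $aaba\in V(\tau)$ but $aba\notin V(\tau)$. So the crux claim is left unproven, and proving it appears to require an analysis of essentially the same depth as the paper's argument.

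For comparison, the paper does not attempt any such reconstruction of $V(\sigma')$ from $V(\sigma)$. It proves a slightly stronger statement (Theorem~\ref{thm:unique_block_by_vertices2}, with $V(\sigma_2)\subset V(\sigma_1)$ and matching minimal and maximal words), writes both blocks in canonical form, and runs a single induction on the position index, showing $x_i=y_i$ and $a_{i+1}=b_{i+1}$ by comparing the maximal word, the minimal word, and the $m$ vertices of length $|w_M|-1$ (which validity, via Lemma~\ref{lemma:charac_valid}, forces to be pairwise distinct), with Theorem~\ref{thm:unique_canonical_form} and Proposition~\ref{prop:simple_equation_words} supplying the word-equation steps. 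Your peeling strategy is a genuinely different and appealing route, but as written it rests on an unestablished combinatorial identity, so the proposal has a real gap.
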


    Rather than proving this Theorem directly, we prove a slightly more general version stated below:

    \begin{theorem}\label{thm:unique_block_by_vertices2}
    Let  $\sigma_1\in \Ev_m$ and $\sigma_2\in\Ev_n$ be valid blocks such that $V(\sigma_2)\subset V(\sigma_1)$,  $v_\emptyset(\sigma_1)=v_\emptyset(\sigma_2)$, and  $v_{[m]}(\sigma_1)=v_{[n]}(\sigma_2)$,  then $\sigma_1=\sigma_2$.   
	\end{theorem}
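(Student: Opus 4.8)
The plan is to recover, for a valid block, the precise positions inside its maximal word at which its insertions sit, from the purely combinatorial data consisting of its minimal word, its maximal word and its vertex set; once these positions agree for $\sigma_1$ and $\sigma_2$, the canonical forms coincide and we conclude by Theorem~\ref{thm:unique_canonical_form}.

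First I would settle the dimension: writing $w=v_\emptyset(\sigma_1)=v_\emptyset(\sigma_2)$ and $W=v_{[m]}(\sigma_1)=v_{[n]}(\sigma_2)$, the identity $|v_{[k]}(\sigma)|=|v_\emptyset(\sigma)|+\dim(\sigma)$ forces $m=n$. Put $L=|W|$ and take canonical forms $\sigma_1=x_0(1,a_1)x_1\cdots(1,a_m)x_m$ and $\sigma_2=y_0(1,b_1)y_1\cdots(1,b_m)y_m$; let $p_i=i+\sum_{j<i}|x_j|$ be the position of $a_i$ inside $W=x_0a_1x_1\cdots a_mx_m$ (so $p_1<\cdots<p_m$ and $W[p_i]=a_i$), and let $q_1<\cdots<q_m$ be the corresponding positions for $\sigma_2$. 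It suffices to show $p_i=q_i$ for all $i$: then $a_i=W[p_i]=b_i$, each $x_i$ is the factor of $W$ strictly between consecutive insertion positions and hence equals $y_i$, so $\sigma_1=\sigma_2$.

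The heart of the argument is a statement about one valid canonical block $\sigma$: the co-atom $v_{[m]\setminus\{i\}}(\sigma)$ is $W$ with the single letter in position $p_i$ deleted, and it shares with $W$ a common prefix of length \emph{exactly} $p_i-1$. The two words agree on $W[1,p_i-1]$; at position $p_i$, $W$ has $a_i$ while the co-atom has $W[p_i+1]$, and these are distinct --- when $x_i\neq 1$ this is the canonical-form condition $x_i[1]\neq a_i$, when $x_i=1$ and $i<m$ it is the validity condition $a_i\neq a_{i+1}$, and when $x_i=1$ with $i=m$ we have $p_m=L$ and the co-atom is $W[1,L-1]$, again of common-prefix length $p_m-1$. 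Hence the function sending a co-atom of $\sigma$ to the length of its longest common prefix with $W$ realises $v_{[m]\setminus\{i\}}(\sigma)\mapsto p_i-1$; combined with the fact that a valid block has $m$ \emph{distinct} co-atoms (Lemma~\ref{lemma:charac_valid}), this function is injective, and the set $\{p_1,\dots,p_m\}$ is determined by $W$ together with the co-atom set of $\sigma$.

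It remains to see that $\sigma_1$ and $\sigma_2$ have the same co-atom set. The co-atoms of $\sigma_k$ are exactly the vertices of length $L-1$ (since $|v_I(\sigma_k)|=|w|+|I|$), and there are $m$ of them by validity; as $V(\sigma_2)\subseteq V(\sigma_1)$, each co-atom of $\sigma_2$ is a length-$(L-1)$ vertex of $\sigma_1$, hence a co-atom of $\sigma_1$, and a containment of $m$-element sets into $m$-element sets is an equality. Applying the previous paragraph to both blocks, the common-prefix-length function on this shared co-atom set has value set $\{p_i-1\}_i$ and also $\{q_i-1\}_i$; these coincide, and being listed increasingly we conclude $p_i=q_i$. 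The one genuinely delicate point is the inequality $W[p_i]\neq W[p_i+1]$, which really does split into the canonical-form case, the validity case, and the boundary case $p_i=L$; everything else is bookkeeping with positions in $W$.
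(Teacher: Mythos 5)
Your proof is correct, and after the two opening reductions it takes a genuinely different route from the paper. Both arguments begin identically: the length formula $|v_{[k]}(\sigma)|=|v_\emptyset(\sigma)|+\dim(\sigma)$ forces $m=n$, and then validity (Lemma~\ref{lemma:charac_valid}) plus $V(\sigma_2)\subseteq V(\sigma_1)$ shows the two blocks have the same $m$-element set of co-atoms (the length-$(L-1)$ vertices). From there the paper runs an induction on the index $r$, assuming $|y_r|<|x_r|$ and splitting into the cases $y_{r+1}\neq 1$ and $y_{r+1}=1$; the second case is the delicate one, requiring them to locate which co-atom $v_{[m]\setminus\{i_0\}}(\sigma_1)$ equals $v_{[m]\setminus\{r+1\}}(\sigma_2)$ and to derive contradictions with canonical form and validity. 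You instead prove a reconstruction statement: for a valid block in canonical form, the longest common prefix of the co-atom $v_{[m]\setminus\{i\}}(\sigma)$ with the maximal word $W$ has length exactly $p_i-1$, so the insertion positions $\{p_1<\cdots<p_m\}$ --- and hence the entire canonical form --- are determined by $W$ together with the co-atom set. Your three-way check that $W$ and the co-atom genuinely differ at position $p_i$ (canonical form when $x_i\neq 1$, validity when $x_i=1$ and $i<m$, the boundary case $p_i=L$) is exactly where the same hypotheses enter the paper's induction, but your packaging eliminates the induction and the paper's Case~2 entirely. What each approach buys: the paper's is a self-contained word-equation manipulation in the style of its other proofs (leaning on Proposition~\ref{prop:simple_equation_words}), while yours isolates a cleaner invariant --- a valid block is recoverable from its maximal word and its co-atoms --- which is arguably more transparent and reusable. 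Your argument is complete as written.
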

	
    	\begin{proof}
		Consider the canonical representation for $\sigma_1$ and $\sigma_2$ as follows
		\begin{align*}
			\sigma_1=& x_0(1,a_1) x_1\dots x_{m-1}(1,a_m) x_m\\
			\sigma_2=& y_0(1,b_1) y_1\dots y_{n-1}(1,b_n) y_n
		\end{align*}
		
		Let $w_m = v_\emptyset(\sigma_1) = v_\emptyset(\sigma_2)$ and $w_M = v_{[m]}(\sigma_1) = v_{[n]}(\sigma_2)$. In $\sigma_1$,  we have $w_m = x_0 x_1 \dots x_m$  and  $w_M = x_0 a_1 x_1 \dots x_{m-1} a_m x_m$, so  $|w_M|-|w_m|=m$.  The same computation  for $\sigma_2$ gives $|w_M|-|w_m|=n$, which implies $n = m$. Thus $\dim(\sigma_1) = \dim(\sigma_2) = m$.
		
		Let $W_i=\{w\in V(\sigma_i): |w|=m-1\}$, for $i=1, 2$. Clearly $W_2\subset W_1$, since  $V(\sigma_2)\subset V(\sigma_1)$. Also note  that if $w\in W_i$, then $w=v_{[m]\setminus\{j\}}(\sigma_i)$, for some $j\in[m]$. Then, because $\sigma_1$ and $\sigma_2$ are valid, by Lemma \ref{lemma:charac_valid}, we must have $v_{[m]\setminus\{j\}}(\sigma_i)\neq v_{[m]\setminus\{k\}}(\sigma_i)$ for all $1\leq j<k\leq m$, and $i=1,2$. Thus $|W_1|=|W_2|=m$, and hence  $W_1=W_2=W$. 
		
		We proceed by induction to prove $ x_{i-1}= y_{i-1}$ and $a_i=b_i$ for all $0\leq i\leq m$. Note that the base case $i=0$ holds vacuously. 
  
		Suppose we have $1\leq r\leq m-1$, such that the claim holds for any $i\leq r$. We will prove $ x_r= y_r$. Suppose by way of contradiction that they are different, and suppose, without loss of generality, that $s=|y_r|<| x_r|$. Consider the  vertex $w_M$, written using both blocks:
		\begin{align*}
			w_M=x_0a_1 x_1\cdots  x_{r-1}a_r &\phantom{|}|\phantom{|}  x_ra_{r+1}\cdots a_m x_m\in V(\sigma_1) \\
			& \shortparallel\\
			w_M=y_0b_1 y_1\cdots  y_{r-1}b_r &\phantom{|}|\phantom{|}  y_rb_{r+1}\cdots b_m y_m\in V(\sigma_2)
		\end{align*}
		Where we use a vertical line as  a visual aid to denote that corresponding $ x$'s and $ y$'s, and, $a$'s and $b$'s, agree up to that point. Note we must have $ x_r[1,s+1]= y_rb_{r+1}$, so $b_{r+1}= x_{r}[s+1]$. 
		
		Now consider the shortest vertex $w_m$, written using both cells:
		\begin{align*}
			x_0 x_1\cdots  x_{r-1} &\phantom{|}|\phantom{|}  x_r x_{r+1}\cdots  x_m\in V(\sigma_1) \\
			& \shortparallel\\
			y_0 y_1\cdots  y_{r-1} &\phantom{|}|\phantom{|}  y_r y_{r+1}\cdots  y_m\in V(\sigma_2)
		\end{align*}
		
		We observe two cases:
		
		\textbf{Case 1:} $ y_{r+1}\neq 1$, then we must have $ y_{r+1}[1]= x_r[s+1]=b_{r+1}$. But since $\sigma_2$ is in canonical form, this can't be. 
		
		\textbf{Case 2:} $ y_{r+1}=1$.  Consider the vertex $w=v_{[m]\setminus\{r+1\}}(\sigma_2)\in W$. Thus, there must be an index $i_0\in[m]$, such that $w=v_{[m]\setminus\{i_0\}}(\sigma_1)$. Suppose by way of contradiction that $i_0\leq r$, then we can align the representation of $w$ from both cells as follows:
		\begin{align*}
			w=  x_0a_1 x_1a_2\dots x_{i_0-1} \phantom{|} & |\phantom{|b_{i_0}}x_{i_0}a_{i_0+1}\cdots a_m x_m\\
			&\shortparallel\\
			w=  y_0b_1 y_1b_2\dots y_{i_0-1} \phantom{|} & | \phantom{|}b_{i_0} y_{i_0}\cdots\cdot\cdot\cdots b_m y_m
		\end{align*}
		Again, we use the vertical bar as a visual aid to show where the $a$'s and $ x$'s are equal to the $b$'s and $ y$'s, respectively. Since the symbols in both representations of $w$ must coincide, we must have $b_{i_0}= x_{i_0}[1]$ if $ x_{i_0}\neq 1$, or $b_{i_0}=a_{i_0+1}$ if $ x_{i_0}=1$. In the former case, since $i_0\leq r$, $a_{i_0}=b_{i_0}= x_{i_0}[1]$, which can't be since $\sigma_1$ is in canonical form. In the later, $a_{i_0}=b_{i_0}=a_{i_0+1}$, and $ x_{i_0}=1$, but this is also a contradiction since $\sigma_1$ is a valid cell. Therefore, $i_0>r$. Which means that all the $a$'s up to $a_r$ must appear in the representation of $w$. Aligning both expressions for $w$, we get
		\begin{align*}
			w= x_0a_1 x_1\cdots  x_{r-1}a_r \phantom{|}&|\phantom{|}  x_r\cdots\cdots a_m x_m \\
			& \shortparallel\\
			w=  y_0b_1 y_1b_2\cdots  y_{r-1}b_r\phantom{|}& | \phantom{|}  y_r\phantom{|}b_{r+2}\cdots b_m y_m.
		\end{align*}
		Thus we observe $ x_r[s+1]=b_{r+2}$, but since $b_{r+1}= x_r[s+1]=b_{r+2}$ and $ y_{r+1}=1$, this contradicts $\sigma_2$ being a valid cell. Therefore, we must have $| x_r|=| y_r|$, and thus $ x_r= y_r$. By then comparing the two ways of writing the longest word $w_M$, since they coincide up to $ x_r= y_r$, we must also have $a_{r+1}=b_{r+1}$, finishing with the induction. 
		
		We have now proven that $ x_i= y_i$ and $a_{i+1}=b_{i+1}$ for all $0\leq i\leq (m-1)$. Therefore, the longest word in both representations coincides up to $a_m$ and $b_m$, from which we can trivially deduce that also $ x_m= y_m$, finishing the proof. 
	\end{proof}

	\subsection{Faces of Valid Blocks}

	\begin{defi}[Faces]\label{def:faces} Given a valid $m$-block $\sigma\in\Ev_m$, and a valid $r$-block $\tau\in\Ev_r$, with $r\leq m$. We say that  $\tau$,  is a \textbf{face} of $\sigma$, denoted by $\tau\leq \sigma$, if there exist disjoint sets $I^+, I^-\subset [m]$, such that $\tau=\sigma(I^+,I^-)$. Furthermore, the \textbf{faces} of $\sigma$, $\calF(\sigma)$ is the set of all such valid blocks, that is:
		$$\calF(\sigma)=\{\tau: \tau\text{ is valid}, \tau=\sigma(I^+,I^-)\text{, } I^+, I^-\subset[m] \text{ and } I^+\cap I^-=\emptyset\}.$$
		If $\tau$ is a face of $\sigma$, and $\dim(\tau)=\dim(\sigma)-1$, we say that $\tau$ is a \textbf{facet} of $\sigma$, denoted by $\tau\prec\sigma$. 
	\end{defi}

     We emphasize that faces are defined exclusively for \textbf{valid} blocks, and, by definition, these faces are also valid blocks. In particular, note that the vertices $ V(\sigma) $ correspond to the 0-dimensional faces of $ \sigma $, which are determined by the subsets $ I \subset [m] $, where each vertex is given by $ v_I(\sigma) = \sigma(I, [m] \setminus I) $. Since all 0-blocks are valid by definition, these vertices are guaranteed to be valid as well. Additionally, note that $\sigma=\sigma(\emptyset,\emptyset)$ is always a face of itself, and it is the only face of dimension $m$. 
	
	 We note that the ``being a face of" relation defines a partial order on valid blocks, justifying the use of the notation $\leq$. While the proof of this is straightforward, we include it in the appendix as Lemma \ref{lemma:faces_ordering} for completeness.

 If $\sigma\in \Ev_m$ is a valid block, its facets are all the valid $(m-1)$-blocks that can be written as $\sigma(I^+,I^-)$. Because its dimension is given by $m-|I^+|-|I^-|=m-1$, all facets are either of the form $\sigma(\{i\},\emptyset)$ or $\sigma(\emptyset,\{i\})$, for some $i\in[m]$. We distinguish between these two types as follows. 

  \begin{defi}\label{def:facets}
     Let $\sigma\in\Ev_m$ be a valid $m$-block. Given $\tau\prec\sigma$ a facet of $\sigma$, we say $\tau$ is an \textbf{upper facet} if $\tau=\sigma(\{i\},\emptyset)$, and we say it is a \textbf{lower facet} if instead $\tau=\sigma(\emptyset,\{i\})$, for some $i\in[m]$. 
 \end{defi}
 
\begin{example}
    Consider $\sigma = (1,a)(1,b)(1,a)b \in \Ev_3$. Its upper facets are:
    $$\sigma(\{1\},\emptyset) = a(1,b)(1,a)b,\quad  \sigma(\{2\},\emptyset) = (1,a)b(1,a)b,\quad  \text{and}\quad  \sigma(\{3\},\emptyset) = (1,a)(1,b)ab.$$
    
    Meanwhile, it has only two lower facets:
    $$\sigma(\emptyset,\{1\}) = (1,b)(1,a)b, \quad \text{and} \quad \sigma(\emptyset,\{3\}) = (1,a)(1,b)b = (1,a)b(1,b),$$
    because the sub-block $\sigma(\emptyset,\{2\}) = (1,a)(1,a)b$ is not a valid block. 
    
    Note that all upper facets are valid, distinct, and already in canonical form. Moreover, their maximal vertex $aba^2$ is also the maximal vertex of $\sigma$. In contrast,  not all sub-blocks of the form $\sigma(\emptyset,\{i\})$ are valid or in canonical form, but when they do, they are all distinct. These observations hold for all valid cells, as we prove below, along with some additional properties for facets.
\end{example}

 \begin{prop}\label{prop:facets} Let $\sigma\in\Ev_m$ be a valid $m$-block, and let $\tau\prec\sigma$ be  a facet. Let $\eta\leq\sigma$ be a $k$-face, with $k<m$. Let $w=v_{[m]}(\sigma)$ be the maximum word of $\sigma$. Then, the following hold:
     \begin{enumerate}
         \item If $\tau$ is an upper facet, then its maximum word is also $w$, that is, $v_{[m-1]}(\tau)=w$, and there exists a unique $i\in[m]$ such that $\tau=\sigma(\{i\},\emptyset)$. Moreover, the sub-block $\sigma(\{i\},\emptyset)$ is already in canonical form. 
         \item If $\tau$ is a lower facet, then its maximum word is a proper subword of $w$, that is, $v_{[m-1]}(\tau)\lneq w$, and there exists a unique $i\in[m]$ such that $\tau=\sigma(\emptyset,\{i\})$.
        \item If the maximum word of $\eta$ is $v_{[k]}(\eta)=w$, then there exists some upper facet $\tau$, such that $\eta\leq\tau$.
        \item In general, if $\eta\leq\sigma$ is a face of dimension $k<m$, then there exists a facet $\tau$, such that $\eta\leq\tau$.  
     \end{enumerate}
 \end{prop}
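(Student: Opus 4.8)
Throughout the argument fix the canonical form $\sigma = x_0(1,a_1)x_1\cdots(1,a_m)x_m$ and write $w = v_{[m]}(\sigma)$. The plan for Parts~1 and~2 is to compute each facet explicitly and read off its canonical form. For an upper facet, the substitution defining $\sigma(\{i\},\emptyset)$ merges the factors $x_{i-1}$ and $x_i$ into the single word $x_{i-1}a_ix_i$; I would then check that the resulting expression is already canonical by verifying $x_k[1]\neq a_k$ at every remaining edge, which at the merged slot uses either canonicity of $\sigma$ (when $x_{i-1}\neq 1$) or validity of $\sigma$, namely $a_{i-1}\neq a_i$ when $x_{i-1}=1$ (Definition~\ref{defi:invalid}). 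Filling in all remaining edges of $\sigma(\{i\},\emptyset)$ reproduces $w$, giving $v_{[m-1]}(\sigma(\{i\},\emptyset))=w$. Uniqueness of $i$ then follows from uniqueness of canonical representatives (Theorem~\ref{thm:unique_canonical_form}): the minimum vertex of $\sigma(\{i\},\emptyset)$ is $v_{\{i\}}(\sigma)$, and validity of $\sigma$ together with Lemma~\ref{lemma:charac_valid} rules out $v_{\{i\}}(\sigma)=v_{\{j\}}(\sigma)$ for $i\neq j$. For a lower facet, the substitution defining $\sigma(\emptyset,\{i\})$ deletes the symbol $a_i$, so its maximum vertex is $v_{[m-1]}(\sigma(\emptyset,\{i\}))=v_{[m]\setminus\{i\}}(\sigma)$, which is $\leq w$ by the monotonicity of vertices under inclusion of index sets and strictly shorter, hence a proper subword of $w$; and if $\sigma(\emptyset,\{i\})=\sigma(\emptyset,\{j\})$ their maximum vertices coincide, i.e. $v_{[m]\setminus\{i\}}(\sigma)=v_{[m]\setminus\{j\}}(\sigma)$, forcing $i=j$ by Lemma~\ref{lemma:charac_valid}(4) since $\sigma$ is valid.

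For Part~3 I would first deduce $I^-=\emptyset$ from the hypothesis: writing $\eta=\sigma(I^+,I^-)$, its maximum vertex is $v_{[k]}(\eta)=v_{[m]\setminus I^-}(\sigma)$, whose length is $|w|-|I^-|$, so equality with $w$ forces $|I^-|=0$. Since $|I^+|+|I^-|=m-k\geq 1$ we get $I^+\neq\emptyset$; picking any $i\in I^+$, the block $\sigma(\{i\},\emptyset)$ is an upper facet by Part~1, and $\eta=\sigma(I^+,\emptyset)$ is obtained from $\sigma(\{i\},\emptyset)$ by the further sub-block operation that fixes the symbols indexed by $I^+\setminus\{i\}$, so $\eta\leq\sigma(\{i\},\emptyset)$; here I use that the sub-block operation composes, which is exactly what is established along the way in the proof of Lemma~\ref{lemma:faces_ordering}.

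For Part~4, split on whether $I^+=\emptyset$. If $I^+\neq\emptyset$, the Part~3 argument applies verbatim: $\eta\leq\sigma(\{i\},\emptyset)$ for any $i\in I^+$. If $I^+=\emptyset$, then $\eta=\sigma(\emptyset,I^-)$ with $I^-\neq\emptyset$, and I need a \emph{valid} lower facet containing $\eta$. The key preliminary is a characterization of validity of lower facets: applying Lemma~\ref{lemma:charac_valid} to $\sigma(\emptyset,\{i\})$, the only edge-adjacency not already present in $\sigma$ is the one between $a_{i-1}$ and $a_{i+1}$, so, using validity of $\sigma$ to discard the rest, $\sigma(\emptyset,\{i\})$ is invalid iff $1<i<m$ and $v_{\{i-1\}}(\sigma)=v_{\{i+1\}}(\sigma)$; cancelling common affixes turns this into the word equation $a_{i-1}(x_{i-1}x_i)=(x_{i-1}x_i)a_{i+1}$, and Corollary~\ref{cor:simple_word_equations}(4) together with canonicity shows it holds precisely when $x_{i-1}=1$, $a_{i-1}=a_{i+1}$, and $x_i$ is a power of $a_{i-1}$. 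In particular $\sigma(\emptyset,\{1\})$ and $\sigma(\emptyset,\{m\})$ are always valid. If some $i\in I^-$ has $\sigma(\emptyset,\{i\})$ valid, then $\eta=\sigma(\emptyset,\{i\})(\emptyset, I^-\setminus\{i\})$ (after reindexing), so $\eta\leq\sigma(\emptyset,\{i\})$ and we are done. Otherwise every index of $I^-$ is "bad" in the above sense, and the plan is to re-present $\eta$: using the strong structural constraints forced on the bad indices ($x_{i-1}=1$, $a_{i-1}=a_{i+1}$, etc.), one shows $\sigma(\emptyset,I^-)=\sigma(\emptyset,J)$ for an index set $J$ containing a non-bad index — in the worst case with $1\in J$ — each such re-presentation being justified by the word equations of Corollary~\ref{cor:simple_word_equations}, and a strictly-decreasing smallest-index argument guaranteeing the process terminates at a valid lower facet $\sigma(\emptyset,\{j\})$ with $\eta\leq\sigma(\emptyset,\{j\})$.

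The genuine difficulty is this last step of Part~4: a valid lower-only face all of whose singleton deletions are invalid must nonetheless be re-presentable through a valid lower facet. This is precisely the non-uniqueness of index sets flagged in Examples~\ref{ex:sub-blocks}, and I expect the main work to be a careful case analysis of the words $x_{i-1},x_i$ surrounding each bad index, combined with repeated appeals to Corollary~\ref{cor:simple_word_equations}, to show that at least one available re-presentation frees up a non-bad deletable edge. Parts~1--3 and the $I^+\neq\emptyset$ half of Part~4, by contrast, are essentially bookkeeping with canonical forms on top of the already-proved lemmas.
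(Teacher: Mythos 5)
Parts~1--3 of your proposal, and the $I^+\neq\emptyset$ half of Part~4, are correct and essentially coincide with the paper's argument: canonicity plus validity gives that $\sigma(\{i\},\emptyset)$ is already canonical with maximum word $w$, uniqueness in Parts~1 and~2 comes from Theorem~\ref{thm:unique_canonical_form} and Lemma~\ref{lemma:charac_valid} (your uniqueness step for upper facets via the minimum vertices $v_{\{i\}}(\sigma)=v_{\{j\}}(\sigma)$ needs one extra line reducing a non-consecutive coincidence to invalidity via Corollary~\ref{cor:simple_word_equations}, but that is routine), and Part~3 is the same length count forcing $I^-=\emptyset$ followed by $\eta\leq\sigma(\{i\},\emptyset)$ for any $i\in I^+$.

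The genuine gap is the last case of Part~4: $I^+=\emptyset$ with every $i\in I^-$ ``bad'' (i.e.\ $\sigma(\emptyset,\{i\})$ invalid). There you only state a plan (``one shows $\sigma(\emptyset,I^-)=\sigma(\emptyset,J)$ for an index set $J$ containing a non-bad index \dots a strictly-decreasing smallest-index argument'') and explicitly defer the case analysis; this is precisely the hardest and only non-bookkeeping part of the proposition, and it genuinely occurs: for $\sigma=(1,a)(1,b)(1,a)(1,b)$ and $\eta=\sigma(\emptyset,\{2,3\})=(1,a)(1,b)$, both deleted indices are bad, and one must discover the re-presentation $\eta=\sigma(\emptyset,\{1,2\})\leq\sigma(\emptyset,\{1\})$ to conclude. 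So as written the proposal does not prove Part~4. The paper avoids your case split altogether: it fixes a representation $\eta=\sigma(I^+,I^-)$ minimizing the sum of the indices in $I^+\cup I^-$, and shows one can always delete a \emph{single} index $i$ from $I^+\cup I^-$ keeping validity --- taking $i$ just after the last free edge, just before the first free edge, or just after a gap in the free edges, with the minimality of the index sum used (via an index swap justified by the commutations $(1,a)a^t=a^t(1,a)$) to rule out the bad configuration in the gap case. Iterating yields a chain $\eta\prec\eta_1\prec\cdots\prec\sigma$ and hence the desired facet, treating upper and lower indices uniformly. Adopting that minimal-index-sum device (or carrying out in full the re-presentation induction you sketch, which amounts to the same combinatorial content) is what is needed to close your argument.
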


 \begin{proof} Write $\sigma=x_0(1,a_1)\cdots (1,a_m)x_m$ in canonical form. Then $w=v_{[m]}(\sigma)=x_0a_1x_1\cdots x_{m-1}a_mx_m$.
 \begin{enumerate}
     \item  Let $\tau = \sigma(\{i\}, \emptyset)$ for some $i \in [m]$. Since $\sigma$ is in canonical form, we have $x_j[1] \neq a_j$ for all $j$. In the representation $\sigma(\{i\}, \emptyset)$ of $\tau$, the word following each edge $(1, a_j)$ is $x_j$ when $j \neq i - 1$, and $x_{i-1}a_ix_i$ when $j = i - 1$. Thus, $\tau$ is in canonical form if and only if the first symbol of $x = x_{i-1}a_ix_i$ is not $a_{i-1}$. If $x_{i-1} \neq 1$, then $x[1] = x_{i-1} \neq a_{i-1}$. On the other hand, if $x_{i-1} = 1$, then $x[1] = a_i$, and we must have $a_i \neq a_{i-1}$, as otherwise, $\sigma$ would be invalid. To prove the uniqueness of $i$, suppose, by way of contradiction, that there exists an index $j < i$ such that $\tau = \sigma(\{j\}, \emptyset)$. As shown above, both $\sigma(\{i\}, \emptyset)$ and $\sigma(\{j\}, \emptyset)$ are already in canonical form, and by Theorem \ref{thm:unique_canonical_form}, the canonical form is unique. Comparing the word before the $j$-th edge in both representations yields the equation $x_{j-1} = x_{j-1}a_jx_j$, which is a contradiction since $a_j \neq 1$. Therefore, we conclude that $j = i$, as desired.

    \item Let $\tau=\sigma(\emptyset, \{i\})$ for some $i\in [m]$. Its maximum word then is $v=v_{[m-1]}(\tau)=$ \\$x_0a_1x_1\cdots x_{i-1}x_i\cdots x_{m-1}a_mx_m$, which clearly is a proper subword of $w$, since it is a symbol shorter. Suppose, by way of contradiction, that there exists an index $j\neq i$ such that $\tau = \sigma(\emptyset, \{j\})$. Thus, they musth have the same maximal word $w$, which can be obtained as $v_{[m]\setminus\{i\}}(\sigma)=w=v_{[m]\setminus\{j\}}$. But then, by Lemma \ref{lemma:charac_valid}, $\sigma$ would be invalid. Therefore, it must be $j=i$, as desired. 
 
    \item  Since $\eta \leq \sigma$, there exist index sets such that $\eta = \sigma(I^+, I^-)$. It is straightforward to observe that the maximum word of $\eta$ can be written as $v_K(\sigma)$, where $K = [m] \setminus I^-$, and moreover, $|w| - |v_K(\sigma)| = |I^-|$. Since that the maximum word of $\eta$ is the same as that of $\sigma$, we must have $I^- = \emptyset$. Also since $\eta \neq \sigma$, we have $I^+ \neq \emptyset$. Let $i \in I^+$, and consider the upper facet $\tau = \sigma(\{i\}, \emptyset)$. Then, we have $\eta = \sigma(I^+, \emptyset) \leq \tau = \sigma(\{i\}, \emptyset)$, as can be easily verified.

    \item Suppose $k\leq m-2$, as otherwise $\eta$ is itself a facet, and there is nothing to prove. As before, we can write $\eta = \sigma(I^+, I^-)$, where $I^+\cap I^-=\emptyset$ and $|I^+|+|I^-|=m-k\geq 2$. Given that there may be multiple choices of indices $I^+$ and $I^-$ that yield the same (equivalent) face $\eta$, we select the one where the sum of the indices in $I^+\cup I^-$, denoted by $\text{sum}(I^+\cup I^-)$, is minimized. First, we will prove that there exists an index $i\in I^+\cup I^-$, such that $\eta_1=\sigma(I^+\setminus\{i\},I^-\setminus\{i\})$ is a valid block. Since $i$ can only belong to either $I^+$ or $I^-$, we have $|I^+ \cup I^- \setminus \{i\}| = m - k - 1$. Therefore, $\dim(\eta_1) = k + 1$, and we have $\eta \prec \eta_1 \leq \sigma$.

    Let $K=[m]\setminus I^+\setminus I^-$, and enumerate its elements as $K=\{i_1<i_2<\cdots<i_k\}$. Then $\eta=y_0(1,a_{i_1})y_1(1,a_{i_2})y_2\cdots (1,a_{i_k})y_k$, with $y_j=x_{i_j}\lambda_{i_j+1}x_{i_j+1}\cdots \lambda_{i_{j+1}-1}x_{i_{j+1}-1}$, where $i_0=0$, $i_{k+1}=m+1$ and $\lambda_j$ are given by 
    $$\lambda_j=\begin{cases}
        a_j &\text{ if } j\in I^+\\
        1 &\text{ if } j\in I^-
    \end{cases}.$$
    We now prove that such index $i$ exist, by considering the three following cases. 
    \begin{enumerate}
        \item If $i_k < m$, let $i = i_k + 1$. Then, $\eta_1$ will have the same expression as $\eta$ up to $(1, a_{i_k})$. Beyond this point, $\eta_1$ will be given by $\eta_1 = \cdots (1, a_{i_k}) x_{i_k} (1, a_{i_k+1}) y$, where $y \in \Sigma^*$. Since $\eta$ is valid, $\eta_1$ could only fail to be valid if $a_{i_k} = a_{i_k+1} = a$ and $x_{i_k} = a^t$ for some $t \geq 0$. However, this would imply that $\sigma$ is invalid, which leads to a contradiction. Therefore, $\eta_1$ must be valid.
        
        \item If $i_k=m$ but $i_1>1$, then let $i=i_1-1$. Then, the expression for $\eta_1$ will match that of $\eta$ starting from $(1, a_{i_1})$. Before this point, $\eta_1$ will be given by $\eta_1=y(1,a_{i_1-1})x_{i_1-1}(1,a_{i_1})\cdots$, with $y\in\Sigma^*$. Again, since $\eta$ is valid, $\eta_1$ could only be invalid if $a_{i_1-1}=a_{i_1}=a$ and $x_{i_1-1}=a^t$, but this is not possible given that $\sigma$ is valid. Thus, $\eta_1$ is valid.

        \item If $i_1 = 1$ and $i_k = m$, then, since $k < m - 1$, there must be a pair of indices $i_s, i_{s+1}$ such that $i_{s+1}$ is not consecutive to $i_s$, i.e., $i_{s+1} > i_s + 1$. Let $i=i_s+1$. The expression of $\eta_1$ will be identical to the expression of $\eta$, except in between $(1,a_{i_s})$ and $(1,a_{i_{s+1}})$, where $\eta_1$ will look like
        $$\eta_1=\cdots (1,a_{i_s})x_{i_s}(1,a_{i_s+1})y(1,a_{i_{s+1}})\cdots,$$
        where $y\in \Sigma^*$ is given by $y=\lambda_{i_s+1}x_{i_s+1}\cdots \lambda_{i_{s+1}-1}x_{i_{s+1}-1}$. Since $\eta$ and $\sigma$ are both valid, the only way $\eta_1$ could be invalid is if $a_{i_s+1}=a_{i_{s+1}}=a$ and $y=a^t$ for some $t\geq 0$. However, this would give a contradiction, since by swapping the index $i_{s+1}$ with $i_s+1$ in the index sets $I^\pm$, we can produce index sets $J^\pm$ such that $\sigma(J^+,J^-)$ is equivalent to $\eta$, but $\text{sum}(J^+\cup J^-)<\text{sum}(I^+\cup I^-)$. 

        Specifically, we define $$J^\pm=\begin{cases}
            I^\pm &\text{ if } i_{s+1}\not\in I^\pm\\
            I^\pm\setminus\{i_{s+1}\}\cup \{i_s+1\} & \text{ if } i_{s+1}\in I^\pm
        \end{cases}.$$
    Then, $\text{sum}(J^+\cup J^-)=\text{sum}(I^+\cup I^-)-i_{s+1}+i_s+1< \text{sum}(I^+\cup I^-)$, since $i_s+1<i_{s+1}$. Then, clearly $\eta=\sigma(I^+, I^-)=\sigma(J^+,J^-)$ since the corresponding terms $(1,a_{s+1})y(1,a_{s+1})$ commute, that is:
    \begin{align}
        (1,a_{i_s+1})y=(1,a)a^t &=a^t(1,a)=y(1,a_{i_{s+1}})\label{eq:comm1}\\
        (1,a_{i_s+1})ya_{i_{s+1}}=(1,a) a^{t+1} &= a^{t+1}(1,a)= a_{i_s+1}y(1,a_{i_{s+1}}) \label{eq:comm2}
    \end{align}
    So the corresponding sub-blocs are equivalent because of equality (\ref{eq:comm1}) if $i_s+1\in I^-$ and because of equality (\ref{eq:comm2}) if $i_s+1\in I^+$.     
    \end{enumerate}

    To finish the proof, now that we know that for any valid face $\eta$, there exists a valid face $\eta\prec\eta_1\leq\sigma$, we can construct a chain of faces $\eta\prec\eta_1\prec\eta_2\cdots\prec \eta_{m-k-1}\prec \sigma$, and thus $\tau=\eta_{m-k-1}$ gives the desired facet. 
 \end{enumerate}
 \end{proof}

\subsection{Isomorphic Blocks}
Theorem \ref{thm:unique_block_by_vertices}, together with Lemmas \ref{lemma:charac_valid} and \ref{lemma:equivalence_subblocks}, and Proposition \ref{prop:facets}, demonstrate that we can gain significant insight into a block structure through its vertices and the relationships between its sub-blocks. We use these relationships to provide a combinatorial definition of when two blocks are isomorphic. As we will see in the next section, block isomorphism will imply isomorphism of chain complexes with coefficients in $\Z_2$, and thus, they will also be isomorphic in a topological sense. With this definition in place, we will conclude this section by classifying all valid blocks in dimensions 0 through 4, up to isomorphism.

\begin{defi}[Block Isomorphism]\label{defi:block_isomorphism}
Given $\sigma_1, \sigma_2\in \Ev_m$ be two valid blocks, we say that they are \textbf{isomorphic}, denoted by $\sigma_1\simeq \sigma_2$ if there exists a permutation of the indices $\pi:[m]\to[m]$ such that the following conditions holds:
\begin{align*}
    \sigma_1(I^+,I^-) \text{ is valid }\quad &\text{if and only if}\quad \sigma_2(\pi(I^+),\pi(I^-))\text{ is valid, and}\\
    \sigma_1(I^+, I^-)=\sigma_1(J^+, J^-)\text{and valid}\quad &\text{if and only if}\quad \sigma_2(\pi(I^+), \pi(I^-))=\sigma_2(\pi(J^+), \pi(J^-))\text{and valid}.
\end{align*}

For any index sets $I^\pm, J^\pm\subset[m]$ with $I^+\cap I^-=\emptyset$ and $J^+\cap J^-=\emptyset$. 

\end{defi}
This means that two blocks are isomorphic if, up to a permutation of their edges, the structure of their sub-blocks is preserved. Checking that this structure holds for all pairs of index sets can quickly become tedious. However, we can reduce the effort required, as we only need to focus on the vertices, as demonstrated in the following theorem.

\begin{theorem}\label{thm:isom_blocks_vertices}
    Given two valid $m$-blocks $\sigma_1, \sigma_2\in\Ev_m$, they are isomorphic $\sigma_1\simeq \sigma_2$ if and only if there exists a permutation $\pi:[m]\to [m]$ such that the following condition holds:
    $$v_I(\sigma_1)=v_J(\sigma_1)\quad\text{if and only if}\quad v_{\pi(I)}(\sigma_2)=v_{\pi(J)}(\sigma_2)$$

    for any $I, J\subset [m]. $
\end{theorem}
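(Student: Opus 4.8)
The plan is to prove the two implications separately; the forward one is immediate and the reverse one carries all the weight. For $(\Rightarrow)$: if $\sigma_1\simeq\sigma_2$ via a permutation $\pi$, apply the second clause of Definition~\ref{defi:block_isomorphism} to the vertex-type index pairs $(I^+,I^-)=(I,[m]\setminus I)$ and $(J^+,J^-)=(J,[m]\setminus J)$. Since $0$-blocks are always valid the ``and valid'' qualifiers are vacuous, $\sigma_k(I,[m]\setminus I)=v_I(\sigma_k)$, and $\pi([m]\setminus I)=[m]\setminus\pi(I)$ because $\pi$ is a bijection of $[m]$; this yields exactly the stated equivalence $v_I(\sigma_1)=v_J(\sigma_1)\iff v_{\pi(I)}(\sigma_2)=v_{\pi(J)}(\sigma_2)$.

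For $(\Leftarrow)$, suppose the vertex condition holds for some permutation $\pi$. I would first record two ``order-free'' criteria expressing validity and equality of sub-blocks purely in terms of which vertices coincide. \emph{Validity criterion:} for disjoint $I^+,I^-\subseteq[m]$ and $K=[m]\setminus I^+\setminus I^-$, the sub-block $\sigma(I^+,I^-)$ is valid if and only if the words $v_{([m]\setminus I^-)\setminus\{i\}}(\sigma)$, as $i$ ranges over $K$, are pairwise distinct. This follows by applying the equivalence of conditions $(1)$ and $(4)$ in Lemma~\ref{lemma:charac_valid} to the $k$-block $\tau=\sigma(I^+,I^-)$, together with the identity $v_{[k]\setminus\{q\}}(\tau)=v_{([m]\setminus I^-)\setminus\{i_q\}}(\sigma)$ for $K=\{i_1<\cdots<i_k\}$, which is read off directly from Definitions~\ref{def:sub_blocks} and~\ref{def:vertices}. \emph{Equality criterion:} for valid sub-blocks, $\sigma(I^+,I^-)=\sigma(J^+,J^-)$ if and only if $V(\sigma(I^+,I^-))=V(\sigma(J^+,J^-))$ — one direction is trivial and the other is Theorem~\ref{thm:unique_block_by_vertices} — where $V(\sigma(I^+,I^-))=\{\,v_L(\sigma):I^+\subseteq L\subseteq[m]\setminus I^-\,\}$, again by Definitions~\ref{def:sub_blocks} and~\ref{def:vertices}.

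It then remains to verify the two clauses of Definition~\ref{defi:block_isomorphism}. Since $\pi$ is a permutation of $[m]$, it sends $K=[m]\setminus I^+\setminus I^-$ to $[m]\setminus\pi(I^+)\setminus\pi(I^-)$, sends $[m]\setminus I^-$ to $[m]\setminus\pi(I^-)$, and, for $i\in K$, sends $([m]\setminus I^-)\setminus\{i\}$ to $([m]\setminus\pi(I^-))\setminus\{\pi(i)\}$. Feeding these into the validity criterion for $\sigma_1$ and for $\sigma_2$ and applying the hypothesis to pairs of sets of the form $([m]\setminus I^-)\setminus\{i\}$ shows that the families $\{v_{([m]\setminus I^-)\setminus\{i\}}(\sigma_1)\}_{i\in K}$ and $\{v_{([m]\setminus\pi(I^-))\setminus\{i'\}}(\sigma_2)\}_{i'\in\pi(K)}$ have pairwise distinct entries simultaneously; hence $\sigma_1(I^+,I^-)$ is valid if and only if $\sigma_2(\pi(I^+),\pi(I^-))$ is. For the equality clause, observe that the hypothesis says precisely that $\pi$ induces a bijection from the partition of $2^{[m]}$ into $\sigma_1$-vertex-equality classes onto the corresponding partition for $\sigma_2$, carrying the class of $L$ to the class of $\pi(L)$; since $\pi$ also carries the interval $\{L:I^+\subseteq L\subseteq[m]\setminus I^-\}$ bijectively onto $\{L':\pi(I^+)\subseteq L'\subseteq[m]\setminus\pi(I^-)\}$, it follows that $V(\sigma_1(I^+,I^-))=V(\sigma_1(J^+,J^-))$ if and only if $V(\sigma_2(\pi(I^+),\pi(I^-)))=V(\sigma_2(\pi(J^+),\pi(J^-)))$. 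Combining this with the validity equivalence and the equality criterion (applied on both sides, which is legitimate since all sub-blocks involved are valid) yields the second clause, so $\sigma_1\simeq\sigma_2$.

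The step I expect to be the crux is not any single computation but the choice of which characterizations to route through: because $\pi$ need not preserve the linear order on $[m]$, the ``consecutive-index'' descriptions of invalidity in Lemma~\ref{lemma:charac_valid}$(2)$–$(3)$ are \emph{not} $\pi$-equivariant (for instance $\sigma=(1,a)(1,b)(1,a)$ is valid yet $v_{\{1\}}(\sigma)=v_{\{3\}}(\sigma)$, so one cannot drop the word ``consecutive''), and the same applies to equalities of neighboring facets. The argument only goes through by expressing validity via Lemma~\ref{lemma:charac_valid}$(4)$ and equality via Theorem~\ref{thm:unique_block_by_vertices}, both of which refer only to coincidences among vertices indexed by arbitrary subsets of $[m]$ and are therefore preserved by any index permutation.
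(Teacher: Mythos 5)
Your proposal is correct and follows essentially the same route as the paper's proof: the forward direction is read off directly from Definition~\ref{defi:block_isomorphism} applied to the pairs $(I,[m]\setminus I)$, and the reverse direction transfers invalidity through the permutation-invariant criterion of Lemma~\ref{lemma:charac_valid}(4) and transfers equality of valid sub-blocks through their vertex sets via Theorem~\ref{thm:unique_block_by_vertices}. Your phrasing of the vertex-set step as $\pi$ carrying the interval $\{L: I^+\subseteq L\subseteq[m]\setminus I^-\}$ onto the corresponding interval for $\sigma_2$ is just a cleaner packaging of the explicit index bookkeeping (the maps $\psi_i,\rho_i$) that the paper performs, and your observation that the consecutive-index characterizations (2)–(3) are not $\pi$-equivariant is precisely why the paper, too, routes through condition (4).
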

\begin{proof}\leavevmode
\begin{itemize}
    \item $(\Rightarrow)$: Suppose $\sigma_1\simeq \sigma_2$. For any subset $I\subset[m]$, we have $\pi([m]\setminus I)=[m]\setminus\pi(I)$. Since $v_I(\sigma_i)=\sigma_i(I,[m]\setminus I)$ we get:
    \begin{align*}
        v_I(\sigma_1)=v_J(\sigma_1) & \Leftrightarrow \sigma_1(I,[m]\setminus I)=\sigma_1( J, [m]\setminus J)       \Leftrightarrow \sigma_2(\pi(I),\pi([m]\setminus I))=\sigma_2(\pi(J),\pi( [m]\setminus J))\\ &\Leftrightarrow \sigma_2(\pi(I),[m]\setminus \pi(I))=\sigma_2(\pi(J), [m]\setminus \pi(J)) \Leftrightarrow v_{\pi(I)}(\sigma_2)=v_{\pi(J)}(\sigma_2). 
    \end{align*}
    \item $(\Leftarrow)$: Suppose that $\sigma_1, \sigma_2\in \Ev_m$ are valid $m$-blocks satisfying the condition of the theorem. Write $\sigma_1=x_0(1,a_1)\cdots (1,a_m)x_m$ and $\sigma_2=y_0(1,b_1)\cdots (1,b_m)y_m$ in canonical form. 
    
    We first prove that if $\tau_1=\sigma_1(I^+, I^-)$ is invalid, then $\tau_2=\sigma_2(\pi(I^+), \pi(I^-))$ is also invalid. The reverse direction follows similarly by switching the role of $\pi$ with $\pi^{-1}$. Let $k=\dim(\tau_1)=\dim(\tau_2)$. By Lemma \ref{lemma:charac_valid}, there exists $i\in [k]$ such that $v_{[k]\setminus\{i\}}(\tau_1)=v_{[k]\setminus\{i+1\}}(\tau_1)$. Then, there exist distinct indices $j, r\in [m]$ such that $$v_{[k]\setminus\{i\}}(\tau_1)=v_{[m]\setminus I^+\setminus\{j\}}(\sigma_1)=v_{[m]\setminus I^+\setminus\{r\}}(\sigma_1)=v_{[k]\setminus\{i+1\}}(\tau_1).$$ Then, we also have $$v_{[k]\setminus\{s\}}(\tau_2)=v_{[m]\setminus\pi(I^+)\setminus\{\pi(j)\}}(\sigma_2)=v_{[m]\setminus\pi(I^+)\setminus\{\pi(s)\}}(\sigma_2)=v_{[k]\setminus\{\ell\}}(\tau_2),$$
    for some distinct indices $s, \ell\in [k]$. Thus, by Lemma \ref{lemma:charac_valid}, $\tau_2$ is also invalid, as desired.

    Consider now the index sets $I_i^\pm$ for $i=1,2$. such that, $\eta=\sigma_1(I_1^+,I_1^-)=\sigma_1(I_2^+,I_2^-)$ and it is a valid block, with $\dim(\eta)=k$. Denote by $J^\pm_i=\pi(I^\pm_i)$ for $i=1,2$. As we proved above, $\tau_1=\sigma_2(J_1^+, J_1^-)$ and $\tau_2=\sigma_2(J_2^+,J_2^-)$ must be valid as well. Clearly $|J_1^+|+|J_1^-|=|I_1^+|+|I_1^-|=|I_2^+|+|I_2^-|=|J_2^+|+|J_2^-|$, so $\dim(\tau_1)=\dim(\tau_2)=k$. 
    Since they are both valid blocks of the same dimension, we just need to prove that $V(\tau_1)=V(\tau_2)$, so by Theorem \ref{thm:unique_block_by_vertices}, we will get $\tau_1=\tau_2$ as needed. We focus on showing $V(\tau_1)\subset V(\tau_2)$ as the other direction is identical. 

    Using the expressions for $\sigma_1$ and $\sigma_2$ already introduced, there must exist bijective functions $\psi_i:[k]\to [m]\setminus I_i^+\setminus I_i^-$ and $\rho_i: [k]\to [m]\setminus J_i^+\setminus J_i^-$, for $i=1, 2$,  such that we can write 
    \begin{align*}
        \eta = \sigma_1(I_i^+, I_i^-)  &= x_0^{(i)}(1,a_{\psi(1)})x_1^{(i)}\cdots x_{k-1}^{(i)}(1,a_{\psi(k)})x_k^{(i)}\text{  and}\\
        \tau_i &=y_0^{(i)}(1,b_{\rho_i(1)})y_1^{(i)}\cdots y_{k-1}^{(i)}(1,b_{\rho_i(k)})y_k^{(i)},
    \end{align*}
    for $i=1, 2$, and some words $x_j^{(i)}, y_j^{(i)}\in\Sigma^*$. 

    Let $v\in V(\tau_1)$, so there exists an index set $J_1\subset [k]$ such that $v=v_{J_1}(\tau_1)$. It is easy to note that $v=v_{J_1}(\tau_1)=v_{J_1^*}(\sigma_2)$, where $J_1^*=J_1^+\sqcup \rho_1(J_1)$, since it is obtained from $\sigma_2$ by substituting the edges $(1,b_j)$ with $b_j$ for $j\in J_1^+$ first, followed by those $j\in\rho_1(J_1)$ next, and substituting all remaining edges with 1. A similar change of indices can be proved in the same manner for all the remainder blocks and sub-blocks. 

    Note that $\rho_1(J_1)\subset [m]\setminus J_1^+\setminus J_1^-$, so $\pi^{-1}\rho_1(J_1)\subset [m]\setminus I_1^+\setminus I_1^-$ and thus $I_1=\psi_1^{-1}\pi^{-1}\rho_1(J_1)\subset [k]$ is well defined. Let $I_1^*=I_1^+\sqcup \psi_1(I_1)$, and note that $\pi(I_1^*)=J_1^*$. Let $w=v_{I_1^*}(\sigma_1)=v_{I_1}(\sigma_1(I_1^+, I_1^-))$. Because $\sigma_1(I_1^+,I_1^-)=\sigma_1(I_2^+,I_2^-)$, there must exist an index set $I_2\subset [k]$ such that $w=v_{I_2}(\sigma_1(I_2^+, I_2^-))$. Thus, letting $I_2^*=I_2^+\sqcup \psi_2(I_2)$ we get $w=v_{I_2^*}(\sigma_1)$. Therefore $v_{I_1^*}(\sigma_1)=v_{I_2^*}(\sigma_1)$, so by the hypothesis $v_{\pi(I_1^*)}(\sigma_2)=v_{\pi(I_2^*)}(\sigma_2)$. Note $\pi(I_2^*)=J_2^+\sqcup\pi\psi_2(I_2)=J_2\sqcup J_2$, if we define $J_2=\pi\psi_2(I_2)\subset [k]$.  Thus $$v=v_{J_1}(\tau_1)=v_{J_1^*}(\sigma_2)=v_{\pi(I_1^*)}(\sigma_2)=v_{\pi(I_2^*)}(\sigma_2)=v_{J_2^+\sqcup J_2}(\sigma_2)=v_{J_2}(\sigma_2(J_2^+,J_2^-))=v_{\rho^{-1}_2(J_2)}(\tau_2).$$
    Therefore, $v\in V(\tau_2)$ as desired, finishing the proof. 
    \end{itemize}    
\end{proof}

\newpage
\begin{examples}\leavevmode
    \begin{enumerate}
        \item $\sigma_1 = (1, a)(1, b)(1, c)$ is isomorphic to $\sigma_2 = (1, a)b(1, a)b(1, a)$. To see this, note that both are valid 3-blocks with $|V(\sigma_1)| = |V(\sigma_2)| = 8$. So each possible index set $I\subset [3]$ produces a different vertex, i.e. $v_I(\sigma_i) = v_J(\sigma_i)$ only when $I = J$. Hence the hypothesis of Theorem \ref{thm:isom_blocks_vertices} is satisfied trivially with $\pi$ as the identity permutation.

        \item $\sigma_1= (1,a)(1,b)(1,a)(1,b)(1,a)(1,b)$ is not isomorphic to $\sigma_2=(1,a)(1,b)(1,c)(1,a)(1,b)(1,c)$. To see this, notice that $a=v_{\{1\}}(\sigma_1)=v_{\{3\}}(\sigma_1)=v_{\{5\}}(\sigma_1)$, however, going through all possible singleton index subsets of $[6]$, we can only produce each vertex of $\sigma_2$ in two ways. So for any possible permutation $\pi$ of $[6]$ we can't produce $v_{\{\pi(1)\}}(\sigma_2)=v_{\{\pi(3)\}}(\sigma_2)=v_{\{\pi(5)\}}(\sigma_2)$. 
    \end{enumerate}
\end{examples}

We point out that an isomorphism of blocks induces a natural bijection between the faces of the blocks, determined entirely by a bijection between their vertices. Suppose $\sigma_1, \sigma_2 \in \Ev_m$ are isomorphic valid blocks. Then the map $\Psi: \calF(\sigma_1) \to \calF(\sigma_2)$ defined by $\Psi(\sigma_1(I^+, I^-)) = \sigma_2(\pi(I^+), \pi(I^-))$, where $\pi$ is the permutation given by the isomorphism, is clearly a well-defined bijection. Moreover, if we restrict it to $\Psi: V(\sigma_1) \to V(\sigma_2)$ by $$\Psi(v_I(\sigma_1))=\Psi(\sigma_1(I,[m]\setminus I))=\sigma_2(\pi(I),[m]\setminus\pi(I))=v_{\pi(I)}(\sigma_2),$$
we get a bijection on the vertices, by Theorem \ref{thm:isom_blocks_vertices}, and it completely determines $\Psi$ on $\calF(\sigma_1)$. This is because for any $\tau_1 \leq \sigma_1$, $\Psi(\tau_1)$ can be defined as the unique valid block $\tau_2 \leq\sigma_2$ such that $\Psi(V(\tau_1)) = V(\tau_2)$. The existence of such $\tau_2$ is clear since $\tau_1 = \sigma_1(I^+, I^-)$ for some index sets, and thus $\tau_2 = \sigma_2(\pi(I^+), \pi(I^-))$ is a valid face with $V(\tau_2) = \Psi(V(\tau_1))$. Uniqueness follows from Theorem \ref{thm:unique_block_by_vertices}, which states that valid blocks are completely determined by their vertices.

The following example characterizes some cases when it is easy to recognize whether two blocks are isomorphic.

 \begin{example}\label{ex:isomorphic_blocks}
    Let $\sigma_1 \in \Ev_m$ be a valid $m$-block given in canonical form by $\sigma_1 = x_0(1, a_1) \cdots (1, a_m) x_m$, where $a_i \in \Sigma$ and $x_i \in \Sigma^*$. For the following definitions of $\sigma_2$, we obtain the isomorphism $\sigma_1 \simeq \sigma_2$:
    \begin{itemize}
        \item \textbf{(Symbol Permutation)}. If $\phi: \Sigma \to \Sigma$ is a permutation of the symbols in the alphabet, then $\sigma_2 = \phi(x_0)(1, \phi(a_1)) \cdots (1, \phi(a_m)) \phi(x_m)$ is clearly isomorphic to $\sigma_1$, with the permutation of indices $\pi$ given by the identity. Here, if $x = b_1 \cdots b_n \in \Sigma^*$ with $b_i \in \Sigma$, we denote $\phi(x) = \phi(b_1) \cdots \phi(b_n)$.
        \item \textbf{(Affixes)}. If $w_0, w_1 \in \Sigma^*$ are fixed words, then defining $\sigma_2 = w_0 \sigma_1 w_1$ produces a block isomorphic to $\sigma_1$. This follows because $(w_0 \sigma_1 w_1)(I^+, I^-) = w_0 \sigma_1(I^+, I^-) w_1$.
        \item \textbf{(Reverse)}. For a word $x = b_1 \cdots b_n$ with $b_i \in \Sigma$, denote its reverse by $x^R = b_n b_{n-1} \cdots b_1$. Define $\sigma_2 = \sigma_1^R = x_m^R(1, a_m) x_{m-1}^R(1, a_{m-1}) \cdots x_1^R(1, a_1) x_0^R$. Then, using the permutation $\pi: i \mapsto m - i$, it is straightforward to see that $\sigma_1 \simeq \sigma_2^R$.
    \end{itemize}
\end{example}

We finish this section with a complete classification of the valid blocks of dimensions 0 through 4, up to isomorphism. 

\begin{theorem}\label{thm:block_classification1_3}
    Let $\sigma\in\Ev_m$ be a valid block of dimension $0\leq m\leq 3$. Then, depending on $m$, $\sigma$ is isomorphic to $\bar{\sigma}$, given as one of the following:
    \begin{enumerate}
        \item If $m=0$, then $\bar{\sigma}=1$.
        \item If $m=1$, then $\bar{\sigma}=(1,a)$.
        \item If $m=2$, then $\bar{\sigma}=(1,a)b(1,a)$.
        \item If $m=3$, then either $\bar{\sigma}=(1,a)(1,b)(1,a)$ or $\bar{\sigma}=(1,a)b(1,a)b(1,a)$. 
    \end{enumerate}
\end{theorem}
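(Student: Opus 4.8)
The plan is to invoke Theorem~\ref{thm:isom_blocks_vertices}, which says that two valid $m$-blocks are isomorphic as soon as there is a permutation $\pi$ of $[m]$ matching their patterns of vertex coincidences. So for each $m\le 3$ I will determine, for an arbitrary valid $m$-block $\sigma$, exactly which pairs of index sets $I,J\subset[m]$ satisfy $v_I(\sigma)=v_J(\sigma)$; show that only finitely many such ``coincidence patterns'' can occur; and for each one exhibit a representative $\bar\sigma$ realizing it. Then any $\sigma$ realizing a given pattern is isomorphic to the corresponding $\bar\sigma$ via the identity permutation $\pi=\mathrm{id}$, since the biconditional of Theorem~\ref{thm:isom_blocks_vertices} holds precisely when $\sigma$ and $\bar\sigma$ have the same coincidence pattern, and the representatives are chosen so that this is the case.

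The only structural facts needed are: (i) $|v_I(\sigma)|=|v_\emptyset(\sigma)|+|I|$, so a coincidence $v_I(\sigma)=v_J(\sigma)$ with $I\ne J$ can occur only between index sets of the same size; and (ii) Lemma~\ref{lemma:charac_valid}, whose vertex formulations say that $\sigma$ being valid is equivalent both to $v_{\{i\}}(\sigma)\ne v_{\{i+1\}}(\sigma)$ for all $i$ (item 2) and to the $v_{[m]\setminus\{i\}}(\sigma)$ being pairwise distinct (item 4). For $m=0$ there is a single vertex, so $\sigma\simeq 1$ trivially. For $m=1$ the two vertices $v_\emptyset(\sigma),v_{\{1\}}(\sigma)$ have different lengths, hence are distinct; the pattern is trivial and $\sigma\simeq(1,a)$. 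For $m=2$ the only size with more than one index set is size $1$, and $v_{\{1\}}(\sigma)=v_{\{2\}}(\sigma)$ is excluded by validity, so all four vertices are distinct and $\sigma$ is isomorphic to the valid canonical block $(1,a)b(1,a)$, which also has four distinct vertices $b,ab,ba,aba$.

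The substantive case is $m=3$, where the sizes with more than one index set are $1$ (singletons $\{1\},\{2\},\{3\}$) and $2$ (pairs $\{1,2\},\{1,3\},\{2,3\}$). Item 4 of Lemma~\ref{lemma:charac_valid} forces the three pair-vertices to be pairwise distinct, and item 2 forces $v_{\{1\}}(\sigma)\ne v_{\{2\}}(\sigma)$ and $v_{\{2\}}(\sigma)\ne v_{\{3\}}(\sigma)$. Hence the single coincidence that can possibly occur among the eight vertices is $v_{\{1\}}(\sigma)=v_{\{3\}}(\sigma)$, leaving exactly two patterns: all eight vertices distinct, or exactly the identification $\{1\}\leftrightarrow\{3\}$. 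I will check that $\bar\sigma=(1,a)b(1,a)b(1,a)$ is a valid $3$-block with eight distinct vertices $b^2,ab^2,bab,b^2a,abab,ab^2a,baba,ababa$, realizing the first pattern; and that $\bar\sigma=(1,a)(1,b)(1,a)$ with $a\ne b$ is a valid $3$-block whose distinct vertices are $1,a,b,ab,a^2,ba,aba$, with $v_{\{1\}}=v_{\{3\}}=a$ and no other coincidence, realizing the second. Applying Theorem~\ref{thm:isom_blocks_vertices} with $\pi=\mathrm{id}$ in each case finishes the proof.

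The main obstacle is simply the $m=3$ bookkeeping: making sure the length argument together with Lemma~\ref{lemma:charac_valid} really excludes every coincidence except $v_{\{1\}}=v_{\{3\}}$ (note that a coincidence at size $1$ neither forces nor forbids anything at size $2$, because validity already pins the size-$2$ level down completely), and verifying by direct computation that the two displayed representatives realize exactly these two patterns. No deeper idea is needed beyond organizing the analysis size by size. One also records that the two $m=3$ representatives are genuinely different, having $8$ and $7$ vertices respectively, so the dichotomy in the statement is real.
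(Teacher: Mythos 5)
Your proposal is correct, and it follows the same overall strategy as the paper (classify the possible vertex-coincidence patterns and invoke Theorem~\ref{thm:isom_blocks_vertices}), but the handling of the $m=3$ case is genuinely different in a way worth noting. The paper splits into two subcases and treats them asymmetrically: when some singleton coincidence occurs, it pins it to $v_{\{1\}}(\sigma)=v_{\{3\}}(\sigma)$ via Lemma~\ref{lemma:charac_valid}, then solves the resulting word equation $axy=xyc$ with Corollary~\ref{cor:simple_word_equations} to get $c=a$, $x=a^t$, $y=a^r$, and exhibits the isomorphism to $(1,a)(1,b)(1,a)$ by explicitly collecting the powers of $a$ as affixes; when all singletons are distinct, it verifies by hand (again via word equations) that the three pair-vertices are distinct before applying Theorem~\ref{thm:isom_blocks_vertices}. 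You instead argue uniformly and purely combinatorially: the length count restricts coincidences to equal-size index sets, item~4 of Lemma~\ref{lemma:charac_valid} already forces the size-$2$ level to be injective (making the paper's explicit check in its case (b) redundant), item~2 forbids $v_{\{1\}}=v_{\{2\}}$ and $v_{\{2\}}=v_{\{3\}}$, so only two patterns survive, and Theorem~\ref{thm:isom_blocks_vertices} with $\pi=\mathrm{id}$ then applies in \emph{both} cases once the two representatives are checked to realize exactly these patterns (your vertex lists for $(1,a)(1,b)(1,a)$ and $(1,a)b(1,a)b(1,a)$ are correct, as is the $7$-versus-$8$ vertex count separating them). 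What your route buys is brevity and no word-equation manipulation at all; what the paper's route buys is the explicit structural information that a degenerate valid $3$-block must have the form $a^t(1,a)(1,b)(1,a)a^r$ up to renaming, which is the style of computation its Appendix proof of Theorem~\ref{thm:block_classification4} then reuses, where pattern-matching alone no longer suffices to rule out coincidences. No gap in your argument.
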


\begin{proof}\leavevmode
\begin{enumerate}
    \item For dimensions $m=0$, $1$, or $2$, it is straightforward to see that all possible choices of index sets $I \subset [m]$ produce distinct vertices for both $v_I(\sigma)$ and $v_I(\bar{\sigma})$. The only potential issue arises when $m=2$, if $v_{\{1\}}(\sigma) = x_0 a_1 x_1 x_2 = x_0 x_1 a_2 x_2 = v_{\{2\}}(\sigma)$, but this is impossible since $\sigma$ is valid. Thus, every choice of index set $I \subset [m]$ yields a different vertex, so the condition of Theorem \ref{thm:isom_blocks_vertices} is trivially satisfied, with $\pi$ being the identity permutation. Therefore, $\sigma \simeq \bar{\sigma}$ as desired.

    \item If $m=3$, let $\sigma=x_0(1,a_1)x_1(1,a_2)x_2(1,a_2)x_3$. As in the example, we can remove affixes and find a correspondence of symbols, so $\sigma\simeq (1,a)x(1,b)y(1,c)$, where $ax\neq xb$ and $by\neq yc$ because it is a valid block. To ease the notation, write $\sigma=(1,a)x(1,b)y(1,c)$. It is clear to note that if $v_I(\sigma)=v_J(\sigma)$, then we  must have $|I|=|J|$, this is because the length of the vertex $v_I(\sigma)$ is $|v_\emptyset(\sigma)|+|I|$. We then consider the following cases:
    \begin{enumerate}
        \item Suppose there are sets $I, J$ with $|I|=|J|=1$ such that $v_I(\sigma)=v_J(\sigma)$. Because $\sigma$ is valid, Lemma \ref{lemma:charac_valid} implies the only choice is $I=\{1\}$ and $J=\{3\}$ or vice-versa. Hence $v_{\{1\}}(\sigma)=axy=xyc=v_{\{3\}}(\sigma)$, so by Corollary~\ref{cor:simple_word_equations}, we must have $c=a$ and $x=a^t, y=a^r$ for some $t, r\geq 0$. Therefore, moving like terms and removing affixes we get:
        $$\sigma\simeq (1,a)x(1,b)y(1,c)=(1,a)a^t(1,b)a^r(1,c)=a^t(1,a)(1,b)(1,a)a^r\simeq (1,a)(1,b)(1,a). $$

        \item Suppose all singleton indices $I\subset[3]$ produce different vertices $v_I(\sigma)$. We can check that all possible tuples $I\subset [3]$ will also produce different vertices. Up to reversing, we only need to check two cases:
        \begin{itemize}
            \item If $v_{\{1,2\}}(\sigma)=v_{\{1,3\}}(\sigma)$, that is $axby=axyc$ but then $by=yc$, which can't happen. 
            \item If $v_{\{1,2\}}(\sigma)=v_{\{2,3\}}(\sigma)$, that is $axby=xbyc$. By Corollary~\ref{cor:simple_word_equations}, that implies $a=c=b$, which can't happen either. 
        \end{itemize}
        Thus, all possible sets $I\subset [3]$ produce different vertices. The same can be easily checked for $(1,a)b(1,a)b(1,a)$. Thus, by Theorem \ref{thm:isom_blocks_vertices}, we get $\sigma\simeq (1,a)b(1,a)b(1,a)$, as desired. 
    \end{enumerate}
     Finally, the fact that the blocks $(1,a)(1,b)(1,a)$ and $(1,a)b(1,a)b(1,a)$ are not isomorphic is straightforward, as they have a different number of vertices.
\end{enumerate}
\end{proof}

\begin{figure}
	\centering
		\begin{subfigure}[a]{0.50\textwidth}
			\centering
		\includegraphics[width=0.6\textwidth]{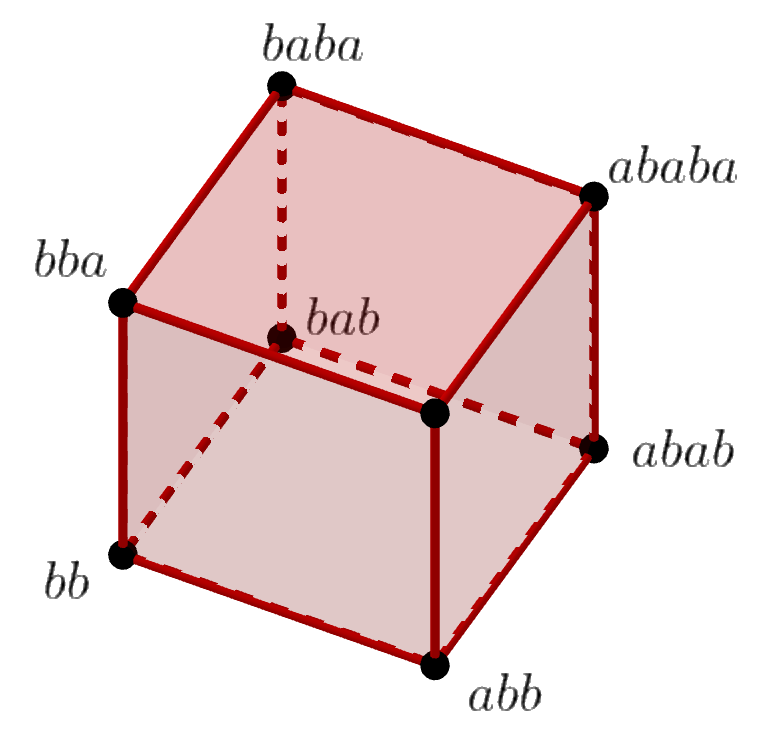}
			\caption{$(1,a)b(1,a)b(1,a)$}
		\end{subfigure}%
		\begin{subfigure}[a]{0.5\textwidth}
			\centering
        \includegraphics[width=0.6\textwidth]{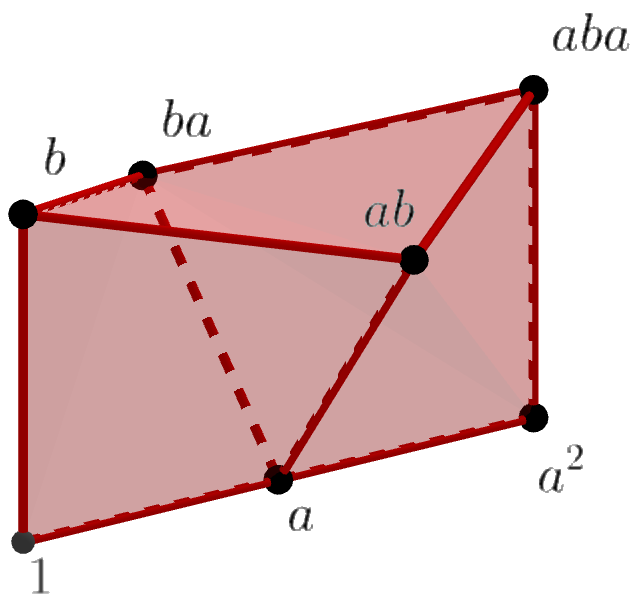}
			\caption{$(1,a)(1,b)(1,a)$}
		\end{subfigure}
		\caption{Possible 3-blocks, up to isomorphism}
\end{figure}

\begin{theorem}\label{thm:block_classification4}
    Let $\sigma\in\Ev_4$ be a valid block of dimension 4. Then, $\sigma$ is isomorphic to exactly one of the following blocks:
    \begin{enumerate}
         \item $\bar{\sigma}_1=(1,a)(1,b)(1,a)(1,b)$
        \item $\bar{\sigma}_2=(1,a)(1,b)(1,a)b(1,a)$
        \item $\bar{\sigma}_3=(1,a)(1,b)a^2(1,b)(1,a)$
        \item $\bar{\sigma}_4=(1,a)(1,b)a(1,b)(1,a)$
        \item $\bar{\sigma}_5=(1,a)(1,b)ab(1,a)(1,b)$
        \item $\bar{\sigma}_6=
(1,a)b(1,a)b(1,a)b(1,a)$
    \end{enumerate}
\end{theorem}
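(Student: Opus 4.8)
\section*{Proof proposal}

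The plan is to mirror the proof of Theorem~\ref{thm:block_classification1_3}: reduce the statement to a purely combinatorial classification of which vertices $v_I(\sigma)$, $I\subseteq[4]$, coincide, and then invoke Theorem~\ref{thm:isom_blocks_vertices}. First I would normalize $\sigma$ via Example~\ref{ex:isomorphic_blocks}: stripping the outer affixes $x_0,x_4$ and applying a permutation of $\Sigma$, we may assume $\sigma=(1,a_1)w_1(1,a_2)w_2(1,a_3)w_3(1,a_4)$ in canonical form, and we are free to replace $\sigma$ by $\sigma^R$ at any point. Since $|v_I(\sigma)|=|v_\emptyset(\sigma)|+|I|$, two vertices can coincide only when $|I|$ agrees, so the coincidence pattern splits by level $k=|I|$. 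Levels $0$ and $4$ each contain a single vertex, and at level $3$ the four vertices $v_{[4]\setminus\{i\}}(\sigma)$ are pairwise distinct because $\sigma$ is valid (Lemma~\ref{lemma:charac_valid}(4)); hence the isomorphism type is governed entirely by the patterns at levels $1$ and $2$.

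Next I would enumerate the level-$1$ patterns. By Lemma~\ref{lemma:charac_valid}(2) no two \emph{adjacent} singletons coincide, and since the path $1\!-\!2\!-\!3\!-\!4$ has no independent set of size $3$, the coincidence relation on $\{1,2,3,4\}$ is one of: (A) $\{1,3\}$ together with $\{2,4\}$; (B) the single pair $\{1,4\}$; (C) the single pair $\{1,3\}$ (and, via $\sigma\mapsto\sigma^R$, also $\{2,4\}$); or (D) trivial. In cases (A), (B), (C) I feed each coincidence $v_{\{i\}}=v_{\{j\}}$ through Corollary~\ref{cor:simple_word_equations}: it always reduces to an equation of the shape $a_i\,x=x\,a_j$ with $x$ a product of some of the $w$'s, forcing $a_i=a_j$ and $x=a_i^{\,t}$, after which canonicality forces the relevant $w$'s to vanish. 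A short computation then collapses $\sigma$: case (A) gives $\sigma\simeq(1,a)(1,b)(1,a)(1,b)=\bar\sigma_1$; case (C) gives, after checking that it forces exactly the single level-$2$ coincidence $v_{\{1,4\}}=v_{\{3,4\}}$ and nothing more, $\sigma\simeq(1,a)(1,b)(1,a)b(1,a)=\bar\sigma_2$; and case (B) reduces to $(1,a_1)(1,a_2)a_1^{\,s}(1,a_3)(1,a_1)$ with $a_1\notin\{a_2,a_3\}$, whose level-$2$ vertices are pairwise distinct (so $\sigma\simeq\bar\sigma_3$) \emph{unless} $a_2=a_3$ and $s=1$, in which case $v_{\{1,2\}}=v_{\{3,4\}}$ is the unique level-$2$ coincidence and $\sigma\simeq(1,a)(1,b)a(1,b)(1,a)=\bar\sigma_4$.

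Case (D) is the delicate one, and the key device is a reduction of level-$2$ coincidences to lower-dimensional data: for indices $p,q,r$ and $\{s\}=[4]\setminus\{p,q,r\}$, one has $v_{\{p,q\}}(\sigma)=v_{\{p,r\}}(\sigma)$ if and only if the $2$-block $\sigma(\{p\},\{s\})$ is invalid, because its two singleton vertices are exactly $v_{\{p,q\}}(\sigma)$ and $v_{\{p,r\}}(\sigma)$, so this is Lemma~\ref{lemma:charac_valid}(2) applied to a $2$-block. Analyzing $\sigma(\{p\},\{s\})$ with Corollary~\ref{cor:simple_word_equations}, its invalidity either contradicts the validity of $\sigma$ (when $p$ lies strictly between $q$ and $r$) or is equivalent to a level-$1$ coincidence of $\sigma$ (when it does not); hence in case (D) there are \emph{no} level-$2$ coincidences among index sets that share an index. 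Among the three complementary pairs, canonicality (specifically $w_1[1]\neq a_1$, together with $w_1=1\Rightarrow a_1\neq a_2$) rules out $v_{\{1,3\}}=v_{\{2,4\}}$ and $v_{\{1,4\}}=v_{\{2,3\}}$ outright, leaving $v_{\{1,2\}}=v_{\{3,4\}}$ as the only possibility. If it fails, all $16$ vertices are distinct and $\sigma\simeq(1,a)b(1,a)b(1,a)b(1,a)=\bar\sigma_6$; if it holds it is the unique coincidence, so by Theorem~\ref{thm:isom_blocks_vertices} $\sigma\simeq(1,a)(1,b)ab(1,a)(1,b)=\bar\sigma_5$.

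Finally I would check that the six blocks are pairwise non-isomorphic. Counting vertices gives $|V(\bar\sigma_1)|,\dots,|V(\bar\sigma_6)|=12,14,15,14,15,16$, so only the pairs $(\bar\sigma_2,\bar\sigma_4)$ and $(\bar\sigma_3,\bar\sigma_5)$ require more: $\bar\sigma_3$ has a level-$1$ coincidence whereas $\bar\sigma_5$ has none, and in $\bar\sigma_2$ the two coincident level-$2$ index sets intersect whereas in $\bar\sigma_4$ they are disjoint---both being properties preserved by the relabeling in Theorem~\ref{thm:isom_blocks_vertices}. The main obstacle throughout is the bookkeeping of level-$2$ coincidences in cases (B) and (D): making the $2$-block reduction precise and carefully tracking which word equations from Corollary~\ref{cor:simple_word_equations} are compatible with canonicality and validity.
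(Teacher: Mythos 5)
Your proposal is correct and follows essentially the same route as the paper's proof: normalize to canonical form without affixes, reduce the isomorphism type to the coincidence pattern of the vertices $v_I(\sigma)$ at levels $|I|=1,2$ (levels $0,3,4$ being forced), case-split on the level-1 pattern, solve the resulting word equations with Corollary~\ref{cor:simple_word_equations} plus canonicality/validity, and conclude with Theorem~\ref{thm:isom_blocks_vertices}. The only deviations are minor: you dispose of shared-index level-2 coincidences by detecting invalidity of the $2$-blocks $\sigma(\{p\},\{s\})$ via Lemma~\ref{lemma:charac_valid} where the paper cancels common prefixes directly, and you separate the six types by vertex counts together with the intersecting-versus-disjoint coincident-pair invariant where the paper compares its vertex tables and runs an explicit permutation argument for the pair $(\bar\sigma_2,\bar\sigma_4)$ --- both variants are sound.
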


The proof of Theorem \ref{thm:block_classification4} follows a similar approach to that of Theorem \ref{thm:block_classification1_3}; however, it requires the consideration of many more cases. To maintain the flow of the text, we have relocated its proof to the \hyperlink{proof:block_classification4}{appendix}.

\begin{remark}  
    Note that the classification of all valid block isomorphism types given by Theorems \ref{thm:block_classification1_3} and \ref{thm:block_classification4} requires blocks using at most two distinct symbols, $\Sigma = \{a, b\}$. Let $\mu_\text{sym}(m)$ denote the minimum number of symbols needed to classify all isomorphic types of valid $m$-blocks. From Theorems \ref{thm:block_classification1_3} and \ref{thm:block_classification4}, it is clear that $\mu_\text{sym}(0) = 0$, $\mu_\text{sym}(1) = 1$, and $\mu_\text{sym}(m) = 2$ for $m = 2, 3$, and 4. It is straightforward to observe that $2 \leq \mu_\text{sym}(m) \leq m$ for any $m \geq 4$. Indeed, at least two distinct symbols are required to form a valid block, while $m$ symbols are sufficient in the case where all vertices $v_I(\sigma)$ are distinct. Any coincidence of vertices $v_I(\sigma) = v_J(\sigma)$ would imply that the sets of symbols $\{a_i: i \in I\}$ and $\{a_j: j \in J\}$ must be equal, thus requiring fewer than $m$ symbols to describe the isomorphism type of $\sigma$. The behavior of $\mu_\text{sym}(m)$ for $m \geq 5$ remains an open question. In particular, we wonder whether $\mu_\text{sym}(m) > 2$ for any $m$, or if two symbols suffice for all dimensions.  
    \end{remark}

\section{Insertion Block and Insertion Chain Complexes}
\label{sec:insertion_chain_complex}

\subsection{Insertion Block Complex}
In section \ref{sec:blocks}, we defined valid $m$-blocks for understanding sets of words that arise transitioning from a minimum word $w_m = v_\emptyset(\sigma) = x_0 x_1 \cdots x_{m-1} x_m$ to a maximum word $w_M = v_{[m]}(\sigma) = x_0 a_1 x_1 \cdots x_{m-1} a_m x_m$, by incorporating the symbols $a_1, \cdots, a_m$ independently. We also introduced a canonical form for valid blocks, what it means for blocks to be isomorphic, and a boundary operator for blocks. With all these tools, we are finally ready to define a topological object associated to any set of words. 
	
\begin{defi}[Insertion Block Complex]\label{def:Insertion_block_complex}
Let $\Sigma$ be a set of symbols, and consider $\Ev_m$ the corresponding sets of valid $m$-blocks in $\Sigma$. Given a set of words $W\subset\Sigma^*$, we define its \textbf{(induced) Insertion Block Complex} $\Cins{W}$, as the collection of all valid cells $\sigma\in \Ev_m$ such that $\sigma$ is supported in $W$, i.e. 
$$\Cins{W}=\left\{\sigma: \sigma\in\Ev_m\text{ for some }m, \text{ and } V(\sigma)\subset W\right\}. $$
We define the \textbf{dimension} of $\Cins{W}$, $\dim(\Cins{W})=n$, to be the maximum dimension of its blocks, that is $n=\max\left\{ \dim(\sigma): \sigma\in\Cins{W}\right\}.$ 
\end{defi}

\begin{remark}
We point out that if $\tau\leq \sigma$ and $\sigma\in \Cins{W}$, that means $V(\sigma)\subset W$, and since $V(\tau)\subset V(\sigma)$, we also have $\tau\in\Cins{W}$. So the Insertion Block Complex is closed under taking faces. We also note that if $W$ is finite, then $\Cins{W}$ will be finite as well, since, by Theorem \ref{thm:unique_block_by_vertices}, we will get at most one block per each subset of vertices, thus $|\Cins{W}|\leq 2^{|W|}$.     
\end{remark}

\newpage
\begin{examples}\label{ex:insertion_blocks}\leavevmode
    \begin{enumerate}
        \item If $W=\{1,a,ab,bab,ba,c,ac,bd,bde\}$, then $\Cins{W}$ is the following set (shown in Figure \ref{fig:examples_Cins}a) \begin{align*}
            \Cins{W}=&\{1,a,ab,bab,ba,c,ac,bd,bde &\text{(0-blocks)}\\
            &(1,a), a(1,b), b(1,a), ba(1,b),(1,b)ab,(1,c), a(1,c), (1,a)c,bd(1,e) & \text{(1-blocks)}\\
            &(1,b)a(1,b),(1,a)(1,c)\} &\text{(2-blocks)}
        \end{align*}
        
        \item Let $\sigma=(1,a)(1,b)aba(1,b)$. Then $\sigma$ is the maximal block in $\Cins{V(\sigma)}$, and we can understand it as representing 3 \textit{independent} insertions to go from the word $aba$ to $ababab$. However, there is an extra relationship among the words in $V(\sigma)$ that is not captured by $\sigma$, since we also get the edge $abab(1,a)\in\Cins{V(\sigma)}$, joining the vertices $abab, ababa\in V(\sigma)$, that is not an edge of $\sigma$. $\Cins{V(\sigma)}$ is depicted in Figure \ref{fig:examples_Cins}b.
    \end{enumerate}
\end{examples}

\begin{note2}\leavevmode
\begin{enumerate}
    \item  Given a valid $m$-block $\sigma$, clearly $\sigma\in \Cins{V(\sigma)}$, and it must be the maximum block in the insertion complex. Indeed, if $\tau\in\Cins{V(\sigma)}$, and if $\dim(\tau)=k$, then the length difference between its maximal and minimal words is $|v_{[k]}(\tau)|-|v_\emptyset(\tau)|=k$, and since both $v_\emptyset(\tau), v_{[k]}(\tau)\in V(\sigma)$, this difference is at most $m$, so $k\leq m$. Moreover, if $\dim(\tau)=m$, it must be that $v_\emptyset(\tau)=v_\emptyset(\sigma)$ and $v_{[m]}(\tau)=v_{[m]}(\sigma)$, and since $V(\tau)\subset V(\sigma)$, by Theorem \ref{thm:unique_block_by_vertices2}, we must have $\tau=\sigma$.
    \item Recall that the $k$-skeleton of $\Cins{W}$, denoted by $\Cins{W}^{(k)}$ is the subset of blocks of dimension at most $k$. In particular, the $1$-skeleton $\Cins{W}^{(1)}$ consists of the vertices in $W$ and (equivalence classes of) valid 1-blocks, which, as we explored in section \ref{sec:blocks}, correspond exactly to the edges of the insertion graph $\Gins{W}$, so we get $\Cins{W}^{(1)}\simeq \Gins{W}$. 
\end{enumerate}
\end{note2}

	\begin{figure}
		\centering
		\begin{subfigure}[b]{0.05\textwidth}
			{}
		\end{subfigure}
		~
		\begin{subfigure}[b]{0.45\textwidth}
			\centering
            \includegraphics[height=2in]{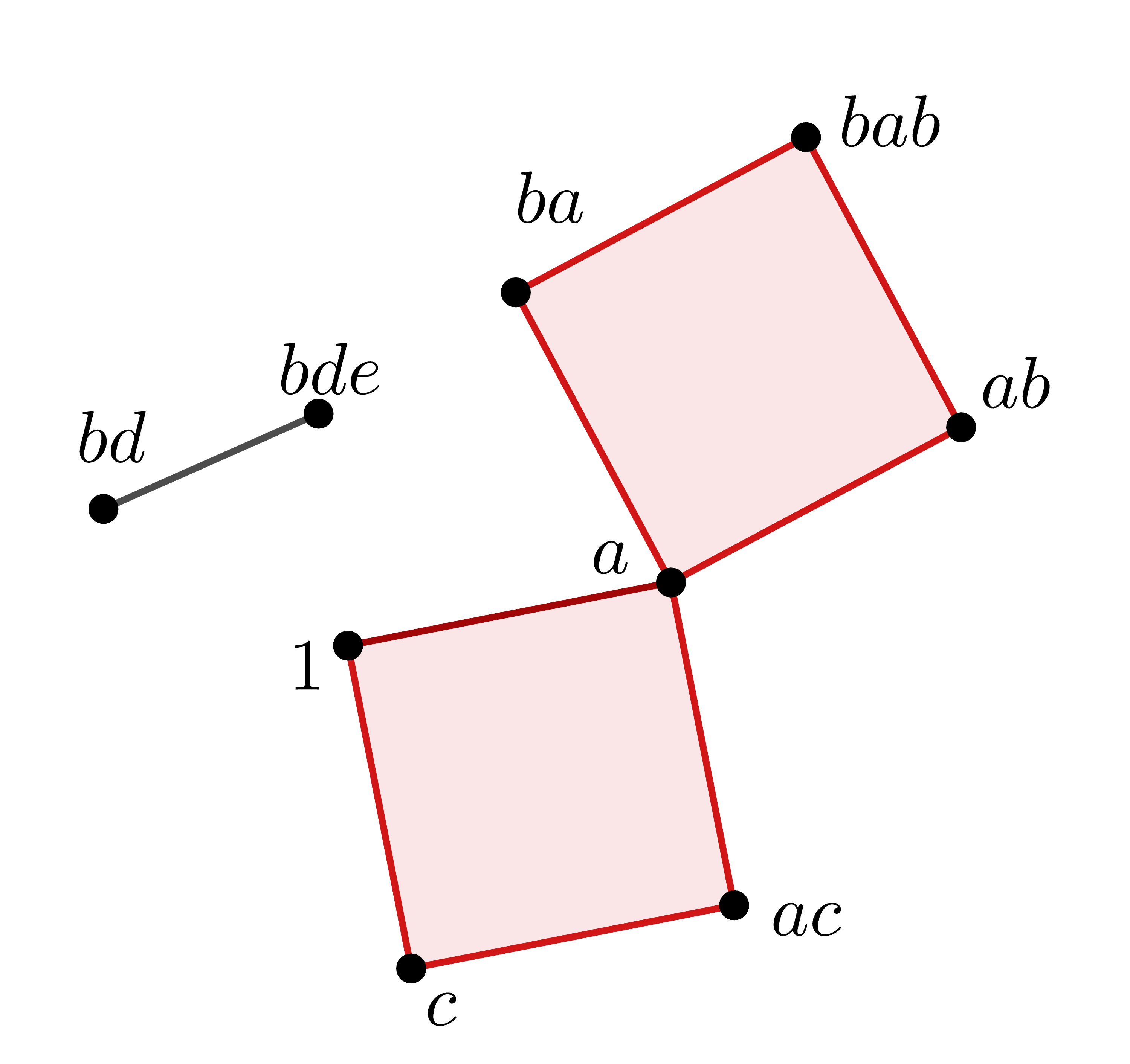}
			\caption{$\Cins{W}$ for $W=\{1,a,ab,bab,ba,c,ac,bd,bde\}$}
		\end{subfigure}%
		~ 
		\begin{subfigure}[b]{0.45\textwidth}
			\centering
			\includegraphics[height=2in]{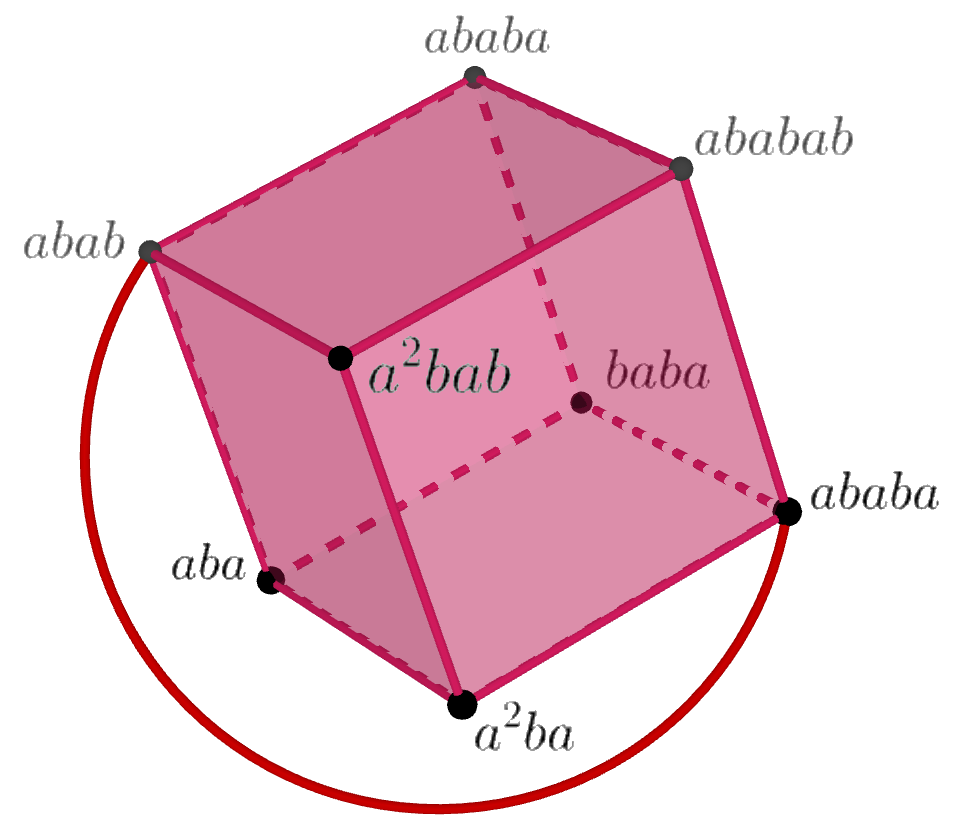}
			\caption{$\Cins{V(\sigma)}$ for $\sigma=(1,a)(1,b)aba(1,b)$}
		\end{subfigure}
		~
		\begin{subfigure}{0.05\textwidth}
			{}
		\end{subfigure}
		\caption{Examples of Insertion Block Complexes $\Cins{W}$. }\label{fig:examples_Cins}
	\end{figure}

\subsection{Boundary Operator}
As introduced earlier, blocks help us study how sets of words are connected by \textit{filling up holes}.  A well-established approach to defining and analyzing such \textit{holes} is through homology (see e.g., \cite{hatcher2002algebraic}). For this, we now define the \textbf{boundary operator}, a key tool for this framework.
 
Fix a finite alphabet $\Sigma$, and consider the sets  $\Ev_m$ of valid $m$-blocks with symbols from $\Sigma$ for $m \geq 0$. Recall that for any set $S$, we can construct the free abelian group generated by $S$, $\Z[S]$, consisting of (finite) formal sums of elements from $S$ with coefficients in $\Z$. In particular, we may consider the modules generated by $\Ev_m$, meaning that any element $c \in \Z\left[\Ev_m\right]$ can be expressed as $c = \sum_{i=1}^n \alpha_i \sigma_i$, where $\alpha_i \in \Z$ and $\sigma_i \in \Ev_m$. A linear map $\lambda: \Z[\Ev_m] \to \Z[\Ev_k]$ is then completely determined by its action on the valid $m$-blocks.

\begin{defi}\label{defi:boundary}
    Given a valid $m$-block $\sigma\in \Ev_m$, with $m\geq 1$, we define its boundary $\partial\sigma$, as the formal sum of its facets in $\Z[\Ev_{m-1}]$ given as follows:
    $$\partial \sigma =\sum_{i=1}^m (-1)^{m+1}\left[ \bar{F}_i(\sigma)-\underline{F}_i(\sigma) \right], $$
    where $\bar{F}_i(\sigma)=\sigma(\{i\},\emptyset)$ and $$\underline{F}_i(\sigma)=\begin{cases}
        \sigma(\emptyset,\{i\}) &\text{ if it is a valid } (m-1)\text{-block}\\
        0 &\text{ otherwise}
    \end{cases}.$$
\end{defi}

By Proposition \ref{prop:facets}, all upper-facets $\bar{F}_i(\sigma)$ are distinct valid $(m-1)$-blocks. For the lower-facets, we specify that $\underline{F}_i(\sigma)$ is a valid block, otherwise we set it to zero. Thus, $\partial\sigma$ is a linear combination of all facets of $\sigma$, with $\pm1$ as coefficients. In particular, this means $\partial\sigma\neq 0$ for any $\sigma\in\Ev_m$ with $m\geq 1$, since $\partial\sigma$ will always include the $m$ different upper-facets of $\sigma$.  

By Proposition \ref{prop:facets}, all upper-facets $\bar{F}_i(\sigma)$ are distinct and valid $(m-1)$-blocks. For the lower-facets, we define $\underline{F}_i(\sigma)$ to be a valid block, otherwise, we set it to zero. Hence, $\partial \sigma$ is a linear combination of all the facets of $\sigma$, with coefficients $\pm1$. In particular, this implies that $\partial \sigma \neq 0$ for any $\sigma \in \Ev_m$ with $m \geq 1$, since $\partial \sigma$ always contains the $m$ distinct upper-facets of $\sigma$.

Since $\partial$ is defined for each valid $m$-block, we can naturally extend it to a linear map $\partial: \mathbb{Z}[\mathcal{E}^\square_m] \to \mathbb{Z}[\mathcal{E}^\square_{m-1}]$. This same definition applies to every dimension $m \geq 1$, giving rise to a family of linear maps $\partial_m$, which we complete by defining $\partial_0 \equiv 0$ as the constant zero map. For simplicity, we adopt the common convention of always using the symbol $\partial$ without subscripts, with its meaning determined by context.

\begin{lemma}\label{lemma:partial_partial_zero}
Let $\sigma\in\Ev_m$ be a valid $m$-block, then
    $$\partial\partial\sigma =0. $$
\end{lemma}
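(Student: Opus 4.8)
The plan is to carry over the classical proof that consecutive boundary maps compose to zero, while keeping track of the rule in Definition~\ref{defi:boundary} that a lower facet is discarded whenever its sub-block is degenerate. Write $\sigma = x_0(1,a_1)x_1\cdots(1,a_m)x_m$ in canonical form and expand $\partial\partial\sigma$ by bilinearity of $\partial$. This yields a $\Z$-linear combination of valid $(m-2)$-blocks, each of the form $G'(G(\sigma))$ with $G, G'$ ranging over the upper- and lower-facet operators $\bar F_\bullet, \underline F_\bullet$, a term being present only when every block it passes through is valid. By Proposition~\ref{prop:facets}, the upper facets $\bar F_i$ of a valid block are always valid, so the only terms that can drop out are those in which a lower-facet operator meets a degenerate sub-block; every surviving term equals $\sigma(I^+, I^-)$ for disjoint $I^+, I^- \subseteq [m]$ with $|I^+| + |I^-| = 2$, reached through a flag $\sigma \succ \tau \succ \sigma(I^+, I^-)$ of valid faces.

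Next, group the surviving terms by the valid $(m-2)$-block $\eta$ they produce and show that the coefficient of each $\eta$ is $0$. The terms contributing to a fixed $\eta \le \sigma$ of codimension $2$ are indexed by the valid facets $\tau$ with $\eta \prec \tau \prec \sigma$, each carrying the sign coming from Definition~\ref{defi:boundary}. Writing $\eta = \sigma(I^+,I^-)$ with underlying indices $\{p,q\}$, $p<q$, the standard bookkeeping shows that removing $p$ and $q$ in the two orders produces opposite signs: deleting the edge at the smaller index lowers the label of the edge at the larger index by one, and this transposition flips the alternating sign of Definition~\ref{defi:boundary}. Thus, once we know that for every valid codimension-$2$ face $\eta$ the valid intermediates $\tau$ occur in an even number (necessarily $0$ or $2$), and that the two signs disagree when there are two, all coefficients cancel. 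In the case $|I^-| = 0$ --- when $\eta$ and $\sigma$ share a maximal word --- both facets in the flag are upper facets, hence automatically valid, and this is exactly the classical cancellation with no validity concern.

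The crux, which I expect to be the main obstacle, is the combinatorial bookkeeping needed when some lower facets degenerate. Two things conspire: a valid codimension-$2$ block typically has several expressions $\sigma(I^+,I^-)$ (as in Example~\ref{ex:sub-blocks}), and the partner of a surviving flag --- obtained by deleting the two indices in the opposite order --- may pass through an invalid block and so be absent. The resolution is that in that situation the surviving flag is paired instead with a flag coming from a \emph{different} representation of the same $\eta$; concretely, one argues by cases on $|I^-| \in \{1,2\}$ (i.e., on how the maximal word of $\eta$ embeds into that of $\sigma$), detecting degeneracy via Lemma~\ref{lemma:charac_valid}, moving between equivalent representations of $\eta$ with the word equations of Corollary~\ref{cor:simple_word_equations} together with Theorem~\ref{thm:unique_canonical_form} and Lemma~\ref{lemma:equivalence_subblocks}, and identifying faces from their vertices via Theorem~\ref{thm:unique_block_by_vertices}. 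The statement to be nailed down is that the valid flags through $\eta$ always pair up with opposite signs, independently of which expression $\sigma(I^+,I^-)$ one starts from; the heaviest case is $|I^-| = 2$, where $\eta$'s maximal word is two symbols shorter than $\sigma$'s and several intervening lower facets can be degenerate simultaneously.

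A possibly cleaner way to organize the write-up is to first establish $\partial\partial\sigma = 0$ over $\Z_2$, where signs vanish and the claim reduces to: for each valid codimension-$2$ face $\eta \le \sigma$, the number of valid blocks $\tau$ with $\eta \prec \tau \prec \sigma$ is even. One then upgrades to $\Z$ by separately verifying that when two such $\tau$ exist they are reached with opposite signs --- the representation-independent transposition computation above. Either way the conclusion is that $\Cins{W}$, with the boundary of Definition~\ref{defi:boundary}, is a genuine chain complex.
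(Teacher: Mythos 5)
Your plan is strategically sound and, at the level of organization, close in spirit to the paper's argument, but it has a genuine gap: the one statement that carries essentially all the content of the lemma --- that whenever a composition survives while its partner (the opposite order of deletion) is killed by an invalid lower facet, the survivor cancels against a composition arising from a \emph{different} index-pair representation of the same codimension-$2$ block, with opposite sign --- is exactly the statement you label ``to be nailed down.'' The auxiliary claims you lean on are also asserted rather than proved: the parenthetical ``necessarily $0$ or $2$'' valid intermediates over a fixed $\eta$ needs an argument, since a codimension-$2$ face of a valid block can have several representations (e.g.\ $\sigma(\emptyset,\{1,2\})=\sigma(\emptyset,\{2,3\})=\sigma(\emptyset,\{3,4\})$ for $\sigma=(1,a)(1,b)(1,a)(1,b)$); and the sign cancellation in the orphaned case is \emph{not} the standard transposition flip you invoke, because the two survivors typically sit at adjacent index pairs $\{i-1,i\}$ and $\{i,i+1\}$, whose prefactors $(-1)^{i+j}$ are equal --- the minus sign comes instead from which slot of the expansion each term occupies. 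Without these verifications the write-up is a plan, not a proof.

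For comparison, the paper expands $\partial\partial\sigma$ and splits the codimension-$2$ terms into four families according to the types (upper/lower) of the two facet operators. The upper--upper family cancels classically, since upper facets of valid blocks are always valid; the delicate family is the lower--lower one, where the paper shows that an orphaned term can only occur for consecutive indices $\{i,i+1\}$ with $\underline{F}_i(\sigma)$ invalid, and that then $\sigma(\emptyset,\{i,i+1\})=\sigma(\emptyset,\{i-1,i\})$, so the orphan cancels against the orphan of the adjacent pair --- precisely the cross-representation pairing you anticipate. So your intuition about the mechanism is correct, and it is in fact needed even for the mixed upper/lower terms: for $\sigma=(1,a)(1,b)(1,a)$ the block $\sigma(\{3\},\{2\})=(1,a)a$ is a valid $1$-block although $\sigma(\emptyset,\{2\})=(1,a)(1,a)$ is invalid, and its cancelling partner is $\sigma(\{1\},\{2\})=a(1,a)$, a different representation of the same face (cf.\ Example~\ref{ex:sub-blocks}). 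But turning your outline into a proof requires exactly the case analysis you defer: for each orphaned composition, identify its partner (using Lemma~\ref{lemma:charac_valid}, Theorem~\ref{thm:unique_canonical_form}, and the word equations of Corollary~\ref{cor:simple_word_equations} to pin down when degeneracy forces adjacency of the two indices), verify the two blocks coincide, and track the signs, none of which is routine.
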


This result follows from a straightforward computation that should be familiar to readers experienced with simplicial or cubical complexes. Although some extra attention is required to address cases involving invalid sub-blocks, the argument itself is simple. For this reason, we have omitted the proof here and move it to the \hyperlink{proof:partial_partial_zero}{appendix}.

\subsection{Insertion Chain Complex}
We will now introduce a \textbf{chain complex}, as is customary in homological algebra (see e.g. \cite{hatcher2002algebraic, cartan1999homological}). Recall that a chain complex $C_*$ is a collection of abelian modules $C_i$ for $i \in \Z$, together with linear boundary maps $\partial_i: C_i \to C_{i-1}$, satisfying $\partial_i \partial_{i-1} \equiv 0$ for all $i \in \Z$. By selecting the abelian modules as subsets of $\mathbb{Z}[\Ev_m]$ and using the boundary operator $\partial$, we can construct a chain complex for any set of blocks that is closed under taking faces.

\begin{defi}\label{def:insertion_chain_complex}\leavevmode
\begin{enumerate}
    \item Let $S$ be a set of valid blocks that is closed under taking faces, that is, if $\sigma\in S$ and $\tau\leq \sigma$, then $\tau\in S$. Let $n=\max\{\dim(\sigma): \sigma\in S\}$, and for each $0\leq k\leq n$, define $S_k=\{\sigma\in S: \dim(\sigma)=k\}$. 
    Then, its associated \textbf{Chain Complex} $\Chain{S}$, is the chain complex with modules $\ChainSub{k}{S}=\Z[S_k]$ for $0\leq k\leq n$, $\ChainSub{k}{S}=0$ otherwise, and $\partial$ the corresponding boundary operator as defined in Definition \ref{defi:boundary}. Note Lemma \ref{lemma:partial_partial_zero} guarantess this is a chain complex. 
    \item Given a set of words $W\subset\Sigma^*$, we define its \textbf{Insertion Chain Complex} $\ChainBlock{W}$ to be the chain complex associated with the collection of blocks $\Cins{W}$, that is $\ChainBlock{W}=\Chain{\Cins{W}}$. 
    \end{enumerate}
\end{defi}

\begin{remark}
    While we have chosen to define Chain Complexes with coefficients in $\Z$, it is important to note that Chain Complexes can also be defined with coefficients in any abelian group, such as $\Z_k$. Unless otherwise specified, we will work with coefficients in $\Z$, but we note that certain results are proved only for $\Z_2$, which will be explicitly stated where applicable.
\end{remark}

With $\ChainBlock{W}$ established as a chain complex, we can apply the standard techniques from homological algebra, including defining cycles, boundaries, and calculating homology. Instead of providing all these definitions here, we direct the reader to \cite{hatcher2002algebraic} and \cite{cartan1999homological} for a detailed treatment.


We observe that isomorphic blocks are topologically equivalent as well, in the sense that their chain complexes are isomorphic. However, as seen in the Example \ref{ex:insertion_blocks}, depending on the specific words at the vertices, constructing the insertion chain complex on the vertices of a block may include additional blocks that are not faces of the original. Therefore, we  prove that the chain complexes of the faces of two equivalent blocks are isomorphic over the ring $\Z_2$.

\begin{lemma}\label{lemma:isom_blocks_to_chains}
    If $\sigma_1\simeq\sigma_2$ are two equivalent valid blocks, then $\ChainBlock{\calF(\sigma_1)}$ is isomorphic to $\ChainBlock{\calF(\sigma_2)}$, as chain complexes with coefficients in $\Z_2$. In the special case where they are equivalent via the identity or a reverse permutation, they are isomorphic with coefficients in $\Z$.
\end{lemma}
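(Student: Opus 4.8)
## Proof Proposal

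\textbf{Overall approach.} The plan is to use the combinatorial isomorphism $\sigma_1 \simeq \sigma_2$ — which by Definition \ref{defi:block_isomorphism} provides a permutation $\pi:[m]\to[m]$ respecting validity and equality of sub-blocks — to build an explicit chain map. As observed in the discussion following Theorem \ref{thm:isom_blocks_vertices}, the isomorphism induces a well-defined bijection $\Psi:\calF(\sigma_1)\to\calF(\sigma_2)$ given by $\Psi(\sigma_1(I^+,I^-)) = \sigma_2(\pi(I^+),\pi(I^-))$, and this bijection is dimension-preserving (since $|I^+|+|I^-|$ is preserved). Extending $\Psi$ linearly over $\Z_2$ to each $\Z_2[\calF(\sigma_i)_k]$ gives a module isomorphism in each degree; the content is to check it commutes with $\partial$. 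Working over $\Z_2$ is what makes this clean: the boundary formula of Definition \ref{defi:boundary} becomes simply $\partial\tau = \sum_i \bar F_i(\tau) + \sum_i \underline F_i(\tau)$ (signs disappear, and each term is present iff the corresponding sub-block is valid), so I only need $\Psi$ to match up the \emph{sets} of facets, not their signs.

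\textbf{Key steps.} First I would record that $\Psi$ is a well-defined, degree-preserving bijection on faces: well-definedness (independence of the choice of index sets representing a face) is exactly the second clause of Definition \ref{defi:block_isomorphism}; bijectivity follows from applying the same argument to $\pi^{-1}$; and dimension preservation is immediate. Second, I would verify the chain-map identity $\Psi\circ\partial = \partial\circ\Psi$ on each generator $\tau = \sigma_1(I^+,I^-) \in \calF(\sigma_1)_k$. Here I note that the facets of $\tau$ within $\sigma_1$ are obtained by moving one more index from $[m]\setminus(I^+\cup I^-)$ into $I^+$ (an upper facet of $\tau$) or into $I^-$ (a lower facet, kept only if valid). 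Under $\pi$, this operation on index sets corresponds exactly to the analogous operation producing facets of $\Psi(\tau) = \sigma_2(\pi(I^+),\pi(I^-))$, and by Definition \ref{defi:block_isomorphism} a lower facet $\sigma_1(I^+, I^-\cup\{j\})$ is valid iff $\sigma_2(\pi(I^+),\pi(I^-)\cup\{\pi(j)\})$ is valid. Hence $\Psi$ sends the facet set of $\tau$ bijectively onto the facet set of $\Psi(\tau)$, preserving the upper/lower distinction and validity, so over $\Z_2$ the two boundaries match term by term. Third, for the $\Z$-coefficient refinement in the identity/reverse cases: for the identity permutation the facet indices are literally unchanged, so signs $(-1)^{m+1}$ and the upper/lower roles are preserved verbatim; for the reverse permutation $\pi:i\mapsto m+1-i$, the map on faces is (by Example \ref{ex:isomorphic_blocks}) induced by $\tau\mapsto\tau^R$ together with the symbol-fixing reversal, which sends $\bar F_i$ to $\bar F_{m+1-i}$ and $\underline F_i$ to $\underline F_{m+1-i}$; one checks the induced sign on $\partial$ is a \emph{global} sign depending only on $m$ (from reversing the order of summation $i\mapsto m+1-i$ in an alternating-free formula, it is in fact $+1$), hence $\Psi$ commutes with $\partial$ on the nose.

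\textbf{Main obstacle.} The crux is the bookkeeping in the chain-map verification: making precise that ``moving an index into $I^+$ or $I^-$'' is the facet-producing operation and that this commutes with relabeling by $\pi$, while correctly tracking which lower facets are discarded for invalidity. The subtlety is that a single face $\tau$ may be represented by several pairs $(I^+,I^-)$ (as in Examples \ref{ex:sub-blocks}), so I should either fix a canonical representative or invoke well-definedness of $\Psi$ to see the facet set is intrinsic to $\tau$; then the validity-preservation clause of Definition \ref{defi:block_isomorphism} does the rest. The $\Z$-coefficient claim adds only the routine sign-tracking above, with the reverse case needing a moment's care to confirm no sign discrepancy arises — which it does not, since the boundary formula carries a uniform coefficient $(-1)^{m+1}$ rather than alternating signs across facets.
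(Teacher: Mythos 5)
Your construction is the same one the paper uses: the map $f(\sigma_1(I^+,I^-))=\sigma_2(\pi(I^+),\pi(I^-))$, extended linearly in each degree, with the chain-map check reduced to matching upper/lower facets and their validity via the isomorphism clause of Definition~\ref{defi:block_isomorphism}. Over $\Z_2$ your argument is correct and essentially identical to the paper's (the paper makes the facet correspondence explicit through the induced permutation $\psi$ of the facet indices of a $k$-face, defined by $\psi(r)=s$ when $\pi(i_r)=j_s$ for $K=\{i_1<\dots<i_k\}$ and $\pi(K)=\{j_1<\dots<j_k\}$), and the identity-permutation case over $\Z$ is also fine.

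The gap is in the reverse case over $\Z$. You argue there is no sign discrepancy because ``the boundary formula carries a uniform coefficient $(-1)^{m+1}$ rather than alternating signs across facets,'' i.e.\ you take Definition~\ref{defi:boundary} literally. That uniform sign is a typo: with it, $\partial\partial\neq 0$ already for the $2$-block $(1,a)b(1,a)$, and every actual use of $\partial$ in the paper (the appendix proof of Lemma~\ref{lemma:partial_partial_zero}, the proof of this very lemma, Theorem~\ref{thm:words_extend_cubical}) uses the alternating coefficient $(-1)^{i+1}$. Note also that with a uniform coefficient your argument would give a $\Z$-isomorphism for \emph{every} permutation $\pi$, making the lemma's restriction to identity/reverse pointless --- another sign this is not the intended reading. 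With the alternating boundary, the reverse permutation induces $\psi(i)=k+1-i$ on the facets of a $k$-face, so $f(\partial\tau)=(-1)^{k+1}\,\partial f(\tau)$: the discrepancy is a sign depending on the dimension $k$, not a global $+1$. To repair this you must twist the map degree by degree, replacing $f_k$ by $\epsilon_k f_k$ with $\epsilon_k\epsilon_{k-1}=(-1)^{k+1}$ (e.g.\ $\epsilon_k=(-1)^{\binom{k}{2}}$); this is exactly what the paper does in the reverse case (its stated sign $(-1)^{k-1}$ is itself slightly off, but the need for a dimension-dependent sign is the point your proposal misses).
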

\begin{proof}
    Suppose $\sigma_1, \sigma_2\in\Ev_m$ are equivalent through a permutation $\pi$ of $[m]$. Then, defining ${f:\calF(\sigma_1)\to\calF(\sigma_2)}$ as $f(\sigma_1(I^+, I^-))=\sigma_2(\pi(I^+), \pi(I^-))$ for all disjoint $I^+, I^-\subset [m]$ clearly produces a bijective map. Moreover, restricting $f$ to $f_k$ between $k$-faces, produces a collection of bijections between $\calF_k(\sigma_1)$ and $\calF_k(\sigma_2)$. Thus, we may extend these bijections linearly on the modules ${f_k:\Z_r\left[\calF_k(\sigma_1)\right]\to \Z_r\left[\calF_k(\sigma_2)\right]}$, obtaining a collection of isomorphisms, for any $r\geq 1$. To finish the proof, we need to show that these degree isomorphisms commute with the boundary maps when $r=2$.  

    Let $\tau \leq \sigma_1$ be a valid $k$-dimensional sub-block. Write $\tau=\sigma_1(I^+, I^-)$, so $\tau'=f_k(\tau)=\sigma_2(\pi(I^+), \pi(I^-))$. Let $K=\{i_1<i_2<\cdots<i_k\}=[m]\setminus I^+\setminus I^-$, and note $\pi(K)=\{j_1<j_2<\cdots<j_k\}=[m]\setminus\pi(I^+)\setminus\pi(I^-)$. Clearly the map $\psi:[k]\to[k]$ given by $\psi(r)=s$ if $\pi(i_r)=j_s$ is a permutation. Moreover, note that for $J^\pm\subset [k]$ disjoint index sets, we have 
    \begin{align*}
        f_k\left(\tau(J^+, J^-)\right)&=f_k\left(\sigma_1(I^+\cup \{i_r:r\in J^+\}, I^-\cup\{i_r: r\in J^-\}), \text{ and}\right)\\
        &=\sigma_2(\pi(I^+)\cup \{\pi(i_r):r\in J^+\}, \pi(I^-)\cup\{\pi(i_r): r\in J^-\}))\\
        &= \sigma_2\left(\pi(I^+)\cup \{j_s: s\in \psi(J^+)\},\pi(I^-)\cup \{j_s: s\in \psi(J^-)\} \right)\\
        &=\tau'\left(\psi(J^+), \psi(J^-) \right).
    \end{align*}
     Thus $f_k(\bar{F}_i(\tau))=f_k(\tau(\{i\},\emptyset))=\tau'(\{\psi(i)\}, \emptyset)=\bar{F}_{\psi(i)}(\tau')$ and similarly $f_k(\underline{F}_i(\tau))=\underline{F}_{\psi(i)}(\tau')$, since they are either both valid facets or both equal to zero. 
    Then, we observe
    \begin{align*}
        f_k(\partial\tau)&=\sum_{i=1}^k (-1)^{i+1} \left[f_k(\bar{F}_i(\tau))-f_k(\underline{F}_i(\tau))\right]=\sum_{i=1}^k (-1)^{i+1}\left[ \bar{F}_{\psi(i)}(\tau')-\underline{F}_{\psi(i)}(\tau')\right]=\\
        &= \sum_{i=1}^k (-1)^{\psi^{-1}(i)+1}\left[\bar{F}_i(\tau')-\underline{F}_i(\tau') \right] \stackrel{*}{=}\partial(\tau')=\partial(f_k(\tau)) & (*)
    \end{align*}
    where the first equality in line $(*)$ is obtained by reordering the terms, and the equality $\stackrel{*}{=}$ holds provided $(-1)^{\psi^{-1}(i)+1}=(-1)^{i+1}$, which will always be the case when our coefficients are computed in $\Z_2$, proving the first part of the theorem. 
    
   In the special case $\pi$ is the identity permutation, $\psi$ will also be the identity, and the equality $(*)$ holds trivially for coefficients in $\Z$. If $\pi$ is the reverse function, $\pi(i)=m+1-i$, then we have $\psi(i)=k+1-i$. Hence defining $f_k(\sigma(I^+, I^-))=(-1)^{k-1}\sigma(I^+, I^-)$ instead, equation $(*)$ will still hold.
\end{proof}

In general, if $\Phi: \ChainBlock{W_1} \to \ChainBlock{W_2}$ is a chain isomorphism, then it induces a bijection on words when restricted to $\Phi: W_1 \to W_2$, and for higher dimensions, it is entirely determined by its mapping on vertices. Specifically, $\Phi(\sigma) = \sigma'$, where $\sigma'$ is the unique valid block such that $V(\sigma') = \Phi(V(\sigma))$. Indeed, since $\partial \Phi(\sigma) = \Phi(\partial \sigma)$, $\Phi$ gives a bijection between the facets of $\sigma$ and those of $\Phi(\sigma)$. If $v \in V(\sigma)$, by Proposition \ref{prop:facets}, there exists a chain of facets $v \prec \tau_1 \prec \tau_2 \prec \dots \prec \tau_{m-1} \prec \sigma$, where each $\tau_i$ is a facet of $\tau_{i+1}$. Thus, we obtain the chain $\Phi(v) \prec \Phi(\tau_1) \prec \cdots \prec \Phi(\sigma)$, and in particular $\Phi(v) \leq \Phi(\sigma)$, so $\Phi(v) \subset V(\Phi(\sigma))$ and thus $\Phi(V(\sigma))\subset V(\Phi(\sigma))$. The reverse direction follows similarly. Therefore, to construct a chain isomorphism, it is sufficient to specify a bijection between the sets of words, as long as the induced map on faces commutes with the boundary operator.

\begin{prop}
   Let $W\subset\Sigma^*$ be a set of words. Then, the following bijections of words $\Phi:W\to \Phi(W)$, provide chain isomorphisms $\Phi:\Cins{W}\to\Cins{W'}$, where $W'=\Phi(W)$.
   \begin{enumerate}
        \item \textbf{(Symbol Permutation)}. Let $\phi: \Sigma \to \Sigma$ be a permutation of the symbols in the alphabet, and for if $x = b_1 \cdots b_n \in W$ with $b_i \in \Sigma$, define $\Phi(x) = \phi(b_1) \cdots \phi(b_n)$.
        \item \textbf{(Affixes)}. If $w_0, w_1 \in \Sigma^*$ are fixed words, then define $\Phi(x)=w_0xw_1$ for all $x\in W$.  
        \item \textbf{(Reverse)}. For a word $x = b_1 \cdots b_n$ with $b_i \in \Sigma$, denote its reverse by $x^R = b_n b_{n-1} \cdots b_1$. Define $\Phi(x) =x^R$. 
   \end{enumerate}
\end{prop}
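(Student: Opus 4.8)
The plan is to treat the three maps uniformly. By the paragraph preceding the statement, a bijection of words extends to a chain isomorphism exactly when the induced map on faces commutes with the boundary operator, so for each $\Phi$ it suffices to check: (i) $\Phi\colon W\to W'$ is a bijection; (ii) $\Phi$ extends to a dimension‑preserving bijection between the block collections $\Cins{W}$ and $\Cins{W'}$, compatible with the face relation; and (iii) the induced map on the free modules $\Z[\Cins{W}]$ commutes with $\partial$. Item (i) is immediate: a symbol permutation and the word reverse are bijections of $\Sigma^*$, and $x\mapsto w_0xw_1$ is injective by cancellation, with surjectivity onto $W'$ holding by definition.

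For (ii), I first record the block‑level action: writing a valid block as $\sigma=x_0(1,a_1)x_1\cdots(1,a_m)x_m$, applying $\Phi$ to each word occurring in this expression yields $\phi(x_0)(1,\phi(a_1))\cdots(1,\phi(a_m))\phi(x_m)$, or $w_0\sigma w_1$, or $x_m^R(1,a_m)x_{m-1}^R\cdots(1,a_1)x_0^R$, respectively; call this block $\sigma^\Phi$. By Example~\ref{ex:isomorphic_blocks}, $\sigma^\Phi$ is again a valid block and $\sigma\simeq\sigma^\Phi$, realized by the permutation $\pi$ of $[m]$ that is the identity in the first two cases and $\pi(i)=m+1-i$ in the third. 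Comparing Definitions~\ref{def:vertices} and~\ref{def:sub_blocks}, applying $\Phi$ to the words of $\sigma(I^+,I^-)$ produces exactly $\sigma^\Phi(\pi(I^+),\pi(I^-))$; in particular $\Phi(v_I(\sigma))=v_{\pi(I)}(\sigma^\Phi)$, hence $\Phi\bigl(V(\sigma)\bigr)=V(\sigma^\Phi)$, and this identification of faces is precisely the map $f$ built in the proof of Lemma~\ref{lemma:isom_blocks_to_chains} for the isomorphism $\sigma\simeq\sigma^\Phi$. Since $V(\sigma)\subset W$ iff $V(\sigma^\Phi)\subset W'$, the assignment $\sigma\mapsto\sigma^\Phi$ carries $\Cins{W}$ into $\Cins{W'}$ without changing dimension.

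Surjectivity of $\sigma\mapsto\sigma^\Phi$ is clear for a symbol permutation (use $\phi^{-1}$) and for the reverse (apply $(\cdot)^R$ again), so the real content is the affix case: I must show every valid block $\tau$ with $V(\tau)\subset W'=\{w_0xw_1:x\in W\}$ has the form $w_0\sigma w_1$ with $V(\sigma)\subset W$. Writing $\tau=y_0(1,c_1)\cdots(1,c_k)y_k$ in canonical form, I argue $w_0$ is a prefix of $y_0$: otherwise $w_0=y_0u$ with $u$ a nonempty prefix of $y_1\cdots y_k$; taking $j$ least with $y_j\neq 1$, for every $i\le j$ the vertex $v_{\{i\}}(\tau)=y_0c_iy_jy_{j+1}\cdots y_k$ must begin with $w_0=y_0u$, forcing $c_i=u[1]=y_j[1]$, and then $i=1,2$ (when $j\ge 2$, so $y_1=1$) or $i=1$ together with canonicity (when $j=1$) contradicts the validity of $\tau$. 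A symmetric argument gives $w_1$ as a suffix of $y_k$, and stripping $w_0,w_1$ from $\tau$ and from each vertex produces the required $\sigma$. I expect this factorization step to be the only part needing genuine work; the rest is bookkeeping on top of Example~\ref{ex:isomorphic_blocks} and Lemma~\ref{lemma:isom_blocks_to_chains}. Finally, for (iii): $\partial\sigma$ is a signed sum of the facets of $\sigma$ and hence depends only on $\calF(\sigma)$, while the restriction of $\Phi$ to $\calF(\sigma)\to\calF(\sigma^\Phi)$ coincides with the chain map $f$ of Lemma~\ref{lemma:isom_blocks_to_chains}; that lemma then yields $\Phi\partial=\partial\Phi$ over $\Z$ when $\pi$ is the identity (symbol permutation and affixes), and over $\Z$ after the dimension‑dependent sign reindexing of the lemma — equivalently over $\Z_2$ with no correction — when $\pi$ is the reverse. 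Assembling (i)--(iii) shows the block bijection $\sigma\mapsto\sigma^\Phi$ induces the claimed chain isomorphism $\ChainBlock{W}\to\ChainBlock{W'}$.
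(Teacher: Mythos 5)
Your core argument is the same as the paper's: define the block-level action $\sigma\mapsto\sigma^\Phi$, invoke Example~\ref{ex:isomorphic_blocks} to see $\sigma\simeq\sigma^\Phi$ via the identity or reverse permutation, and then cite Lemma~\ref{lemma:isom_blocks_to_chains} (with its sign adjustment in the reverse case) to get commutation with $\partial$; that part is fine. The one place where you go beyond the paper --- verifying that $\sigma\mapsto\sigma^\Phi$ is surjective onto $\Cins{W'}$ in the affix case --- is a genuinely useful addition, since the paper leaves this bijectivity implicit, but the suffix half of your argument fails as stated. The canonical form of Definition~\ref{def:canonical_form} is left-normalized (copies of $a_i$ are collected in front of $(1,a_i)$), so your prefix argument for $w_0$ is sound, but there is no ``symmetric argument'' for $w_1$ in that same form. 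Concretely, take $W=\{1,a\}$, $w_0=1$, $w_1=a$, so $W'=\{a,a^2\}$: the unique $1$-block of $\Cins{W'}$ has canonical form $a(1,a)$, whose last word is $y_1=1$, and $w_1=a$ is not a suffix of it --- yet the block does factor, since $a(1,a)\equiv (1,a)a=w_0\,(1,a)\,w_1$.

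The conclusion you need (every valid $\tau$ with $V(\tau)\subset W'$ is equivalent to $w_0\sigma w_1$ with $V(\sigma)\subset W$) is still true, and the repair is easy: either run your prefix argument on the reversed block $\tau^R$ (using the reverse case you have already established) and reverse back, or carry out the same argument with a right-normalized representative in which copies of $c_i$ are pushed after $(1,c_i)$; uniqueness of that representative follows from Theorem~\ref{thm:unique_canonical_form} applied to reverses. With that step fixed, your proposal is correct and otherwise tracks the paper's (much terser) proof, while making explicit the injectivity/surjectivity bookkeeping that the paper does not spell out.
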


\begin{proof}
    Note that for any of these definitions for $\Phi$, if $\tau\in\ChainBlock{W}$, then $\Phi(\tau)$ is isomorphic to $\tau$ as blocks, as we showed in Example \ref{ex:isomorphic_blocks}, with the map between their indices being either the identity or the reverse map. Then, the proof given in Lemma \ref{lemma:isom_blocks_to_chains} also shows that $\partial \Phi(\tau)=\Phi(\partial(\tau))$, after setting the signs of $\text{sgn}(\Phi(\tau))=(-1)^{\dim(\tau)+1}$ for the reverse mapping. Thus, we get the Chain isomorphism as desired. 
\end{proof}

\section{Homology}
\label{sec:homology}

We conclude this paper by examining the meaning and behavior of homology in the Insertion Chain Complex for different sets of words. In this section, we characterize all minimal nontrivial 1-dimensional cycles. We demonstrate that every abelian group can be realized as the $k$-th homology group of the Insertion Chain Complex for some set of words. We also investigate the minimal sets of words that yield the same homology as $k$-spheres. Finally, we prove that for many sets of words consisting of a minimum word, a maximum word, and all intermediate words, the homology vanishes in all dimensions.

\subsection{Nontrivial 1-Dimensional Cycles}

Recall that a chain $c\in\ChainBlock{W}$ is a nontrivial cycle, if $\partial(c)=0$, but $c\neq \partial c'$ for any other chain $c'$ of one dimension higher. Note that 
all single vertices are 0-dimensional cycles, and pairs of vertices are not 0-dimensional cycles if and only if they expand an edge, i.e. if they differ by exactly one symbol insertion or deletion. Classifying 1-dimensional cycles is the the first non-trivial case. Since the 1-skeleton of $\Cins{W}$ can be seen as the graph $\Gins{W}$, all 1-dimensional cycles are cycles in the graph-theoretic sense. Since $\Gins{W}$ is bipartite, the shortest possible cycles are of length 4. As all 2-cells are squares (Theorem \ref{thm:block_classification1_3}), and their boundaries are cycles of length 4, it becomes important to classify all cycles of length 4 that can't be expressed as the boundary of a two cell.

\begin{theorem}\label{thm:classification_cycles_1dim}
    Let $\Sigma$ be a finite alphabet, and $W\subset\Sigma^*$, with $|W|=4$, be a set of 4 vertices. Then, if $H_1(\ChainBlock{W})\neq 0$, then $\ChainBlock{W}$ is isomorphic via reflection, symbol permutation or affixes to $\ChainBlock{V}$ where $V\subset \{a,b\}^*$ is one of the following:
    \begin{enumerate}
        \item $V_1=\left\{(ab)^t, a(ab)^t, (ab)^ta,(ab)^{t+1} \right\}$, for some $t\geq 1$. 
        \item $V_2=\left\{(ab)^t, a(ab)^t, (ab)^tb,(ab)^{t+1} \right\}$, for some $t\geq 2$. 
        \item $V_3=\left\{(ab)^t,(ab)^ta,b(ab)^t,(ab)^{t+1}\right\}$, for some $t\geq 1$. 
         \item $V_4=\left\{(ab)^ta,b(ab)^ta,(ab)^{t+1},b(ab)^t\right\}$, for some $t\geq 0$.
        \item $V_5=\left\{(ab)^{t+1},(ab)^{t+1}a,b(ab)^{t+1},b(ab)^ta\right\}$, for some $t\geq 0$.
         \item $V_6=\left\{(ab)^{t+1},a(ab)^{t+1},(ab)^{t+1}b,a(ab)^b\right\}$, for some $t\geq 1$.              
    \end{enumerate}
\end{theorem}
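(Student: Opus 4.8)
The plan is to reduce the statement to a purely graph-theoretic condition on $\Gins{W}$, and then run a case analysis on the shape of a four-vertex cycle, solving in each case a commutation equation of the type handled by Corollary~\ref{cor:simple_word_equations}.

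\textbf{Step 1 (reduction).} First I would translate the hypothesis $H_1(\ChainBlock{W})\neq 0$. By Lemma~\ref{lemma:g_bipartite}, $\Gins{W}$ is a simple bipartite graph with parts given by length parity; on four vertices it can contain an (undirected) cycle only if the bipartition is $2+2$ and all four possible edges are present, i.e.\ $\Gins{W}\cong C_4$, in which case the group of $1$-cycles $Z_1=\ker\partial$ is infinite cyclic, generated by this $4$-cycle $c$. Next, every valid block of dimension $m\geq 3$ has at least five distinct vertices (for instance $v_\emptyset,v_{\{1\}},v_{\{2\}},v_{\{1,2\}},v_{[m]}$ are pairwise distinct, using Lemma~\ref{lemma:charac_valid} and comparing lengths), and a valid $2$-block has exactly four; hence $\ChainBlockSub{2}{W}$ is free on the valid $2$-blocks $\sigma$ with $V(\sigma)=W$, of which there is at most one by Theorem~\ref{thm:unique_block_by_vertices}. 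If such a $\sigma$ exists then $\partial\sigma$ is a $\pm1$-combination of all four edges of $C_4$, hence equals $\pm c$, so $H_1=0$; if not then $\partial\ChainBlockSub{2}{W}=0$ and $H_1\cong\Z$. Therefore $H_1(\ChainBlock{W})\neq 0$ if and only if $\Gins{W}\cong C_4$ and $W$ is not the vertex set of a valid $2$-block, and it remains to classify such $W$ up to reflection, symbol permutation, and affixes.

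\textbf{Step 2 (normalization and shape).} I would then strip the longest common prefix and suffix of the four words (an affix isomorphism) and fix names for the symbols by a symbol permutation. Along the $4$-cycle the word lengths change by $\pm1$ at each edge, so there are two possibilities: a \emph{Type~1} cycle with a unique shortest word $u$ and a unique longest word $v$, $|v|=|u|+2$, and two intermediate words; or a \emph{Type~2} cycle with two words of some length $\ell$ and two of length $\ell+1$, every short--long pair being adjacent. In each case the two ``sides'' of the cycle are length-two walks, each prescribed by a pair of single-symbol insertions (or deletions), so comparing the two sides records how those insertions overlap, yielding finitely many configurations (enumerated in the case analysis below).

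\textbf{Step 3 (word equations and conclusion).} For each configuration I would equate the two expressions for the common endpoint word, cancel common affixes, and obtain an equation of one of the forms $\alpha x=x\alpha$, $\alpha\beta x=x\alpha\beta$, $\alpha\beta x=x\beta\alpha$, or $\alpha x=x\beta$ for symbols $\alpha,\beta$; Corollary~\ref{cor:simple_word_equations} and Proposition~\ref{prop:simple_equation_words} then force the interposed factor to be a power of a single symbol, of $ab$, or of $ab$ followed by $a$ --- which is exactly where the parameter $t$ and the factors $(ab)^t$ in the $V_i$ come from. For each configuration I would then: discard it if it forces two of the four words to coincide; discard it if the resulting diamond is the vertex set of a valid $2$-block (this is the ``independent insertions'' situation, and also the reason some families exclude the smallest values of $t$, where the validity condition of Definition~\ref{defi:invalid} ceases to be violated); otherwise, reinstating the stripped affixes and relabelling symbols, read off one of the six families $V_1,\dots,V_6$. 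Finally I would check directly that each $V_i$ satisfies $\Gins{\cdot}\cong C_4$ and admits no valid $2$-block, confirming that the list is exhaustive and free of spurious entries.

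\textbf{Expected main obstacle.} The substantive work is the completeness and bookkeeping of Step~3: verifying that the enumerated overlap configurations are exhaustive, tracking precisely which affixes have been cancelled before applying Corollary~\ref{cor:simple_word_equations} (whose hypotheses require knowing which symbols are distinct), and --- most delicately --- deciding for each solved configuration whether the resulting four words already form a valid $2$-block. It is exactly the breakdown of the validity condition at the extreme values of $t$ that pins down the bounds $t\geq 1$ or $t\geq 2$ in the statement, so this point cannot be glossed over; the number of sub-cases involved is what forces a presentation via tables and diagrams rather than running text.
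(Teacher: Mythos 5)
Your plan is essentially the paper's own route: reduce $H_1\neq 0$ to the existence of a $4$-cycle in $\Gins{W}$ that does not bound a valid $2$-block (your Step 1 is in fact a cleaner homological justification than the paper's informal discussion), then split by the two length profiles/insertion patterns, solve the resulting simultaneous word equations via Corollary~\ref{cor:simple_word_equations} and Proposition~\ref{prop:simple_equation_words} (the paper packages this as a separate lemma solved diagrammatically in the appendix), and finally discard solutions with coinciding vertices or with a filled square — which is exactly how the paper obtains the six families and the thresholds on $t$. The only caveat is that the decisive enumeration (the paper's four insertion patterns and six solution families) is described rather than carried out, so the explicit list $V_1,\dots,V_6$ is asserted as the outcome of the method rather than derived, as you yourself flag in the ``main obstacle'' paragraph.
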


The proof of this theorem requires considering numerous cases and solving systems of simultaneous word equations. To maintain the flow of the paper, we have opted to move the proof to the \hyperlink{proof:classification_cycles_1dim}{appendix}.

\begin{figure}
\centering
\begin{tikzpicture}
    \node at (0,0) (A) {$(ab)^t$} ; 
    \node at (2,0) (B) {$a(ab)^t$} ; 
    \node at (0,2) (D) {$(ab)^ta$} ; 
    \node at (2,2) (C) {$(ab)^{t+1}$} ; 
    
    \path[->] (A) edge node[auto=left] {$a$} (B);
    \path[->] (A) edge node[auto=left] {$a$} (D);
    \path[->] (B) edge node[auto=left] {$b$} (C);
    \path[->] (D) edge node[auto=left] {$b$} (C);
\end{tikzpicture}
\begin{tikzpicture}
    \node at (0,0) (A) {$(ab)^t$}; 
    \node at (2,0) (B) {$a(ab)^t$}; 
    \node at (0,2) (D) {$(ab)^tb$}; 
    \node at (2,2) (C) {$(ab)^{t+1}$}; 
    
    \path[->] (A) edge node[auto=left] {$a$} (B);
    \path[->] (A) edge node[auto=left] {$b$} (D);
    \path[->] (B) edge node[auto=left] {$b$} (C);
    \path[->] (D) edge node[auto=left] {$a$} (C);
\end{tikzpicture}
\begin{tikzpicture}
    \node at (0,0) (A) {$(ab)^t$}; 
    \node at (2,0) (B) {$(ab)^ta$}; 
    \node at (0,2) (D) {$b(ab)^t$}; 
    \node at (2,2) (C) {$(ab)^{t+1}$}; 
    
    \path[->] (A) edge node[auto=left] {$a$} (B);
    \path[->] (A) edge node[auto=left] {$b$} (D);
    \path[->] (B) edge node[auto=left] {$b$} (C);
    \path[->] (D) edge node[auto=left] {$a$} (C);
\end{tikzpicture}\\
\begin{tikzpicture}
    \node at (0,0) (A) {$(ab)^ta$}; 
    \node at (2,0) (B) {$b(ab)^ta$}; 
    \node at (0,2) (D) {$(ab)^{t+1}$}; 
    \node at (2,2) (C) {$b(ab)^t$}; 
    
    \path[->] (A) edge node[auto=left] {$b$} (B);
    \path[->] (A) edge node[auto=left] {$b$} (D);
    \path[<-] (B) edge node[auto=left] {$a$} (C);
    \path[<-] (D) edge node[auto=left] {$a$} (C);
\end{tikzpicture}
\begin{tikzpicture}
    \node at (0,0) (A) {$(ab)^{t+1}$}; 
    \node at (2,0) (B) {$(ab)^{t+1}a$}; 
    \node at (0,2) (D) {$b(ab)^{t+1}$}; 
    \node at (2,2) (C) {$b(ab)^{t}a$}; 
    
    \path[->] (A) edge node[auto=left] {$a$} (B);
    \path[->] (A) edge node[auto=left] {$b$} (D);
    \path[<-] (B) edge node[auto=left] {$a$} (C);
    \path[<-] (D) edge node[auto=left] {$b$} (C);
\end{tikzpicture}
\begin{tikzpicture}
    \node at (0,0) (A) {$(ab)^{t+1}$}; 
    \node at (2,0) (B) {$a(ab)^{t+1}$}; 
    \node at (0,2) (D) {$(ab)^{t+1}b$}; 
    \node at (2,2) (C) {$a(ab)^{t}b$}; 
    
    \path[->] (A) edge node[auto=left] {$a$} (B);
    \path[->] (A) edge node[auto=left] {$b$} (D);
    \path[<-] (B) edge node[auto=left] {$a$} (C);
    \path[<-] (D) edge node[auto=left] {$b$} (C);
\end{tikzpicture}
\caption{Nontrivial 1-dimensional Cycles (Theorem \ref{thm:classification_cycles_1dim}.)}
    \label{fig:enter-label}
\end{figure}

	\subsection{Realizing Homology}
	A natural question regarding the Insertion Chain Block construction is the realizability of homology groups; specifically, determining which abelian groups can arise as the homology of the Insertion Chain Block of a set of words. In this section, we demonstrate that, in fact, all finitely generated abelian groups are realizable in this manner. This result follows from Theorem \ref{thm:words_extend_cubical}, where we show that Insertion Chain Blocks generalize cubical complexes, as any cubical complex can be represented as the insertion chain block complex of a set of words in $\{a,b\}^*$. 
	
	We offer now a brief review of Cubical Complexes but direct the unfamiliar reader to additional sources (such as \cite{hatcher2002algebraic, mischaikow2004computational, savvidou2010face}) for a more in-depth treatment.

    Recall that an \textbf{elementary interval} is a closed interval $I\subset\R$ of the form $I=[k,k+1]$ or $I=[k,k]=\{k\}$, for some integer $k\in\Z$. An elementary interval $I=\{k\}$ is said to be \textbf{degenerate}, while an interval $I=[k,k+1]$ is \textbf{nondegenerate}. An elementary cube $Q$ is a finite product of elementary intervals, i.e. $$Q=I_1\times I_2\times\dots\times I_d\subset\R^d,$$ where each $I_i$ is an elementary interval. The dimension of $Q$, $\dim(Q)$ is the number of nondegenerate intervals among $I_1,\cdots, I_d$.
	
	\begin{defi}
		A \textbf{cubical complex} $K$ in $\R^d$ is a collection of elementary cubes in $\R^d$ satisfying:
		\begin{enumerate}
			\item If $Q_1, Q_2\subset\R^d$ are elementary cubes, $Q_1\subset Q_2$ and $Q_2\in K$, then $Q_1\in K$.
			\item If $Q_1, Q_2\in K$ and $Q=Q_1\cap Q_2\neq\emptyset$, then $Q$ is an elementary cube as well. 
		\end{enumerate}
		The \textbf{geometric realization} of $K$, which we will also denote by $K$, is the union of all cubes in $K$ as a subset of $\R^d$. 
	\end{defi}

	Given a cubical complex $K$, its \textbf{cubical chain complex} $\C_*(K)$ is the chain complex where each $C_k(K)$ is a free module generated by the elementary cubes in $K$ of dimension $k$, and the boundary operator is defined on elementary cubes as: 
	\begin{enumerate}
	    \item If $Q=I_1$ is an elementary interval, then $$\partial_k Q=\begin{cases}
        0&\text{ if } Q=\{l\}, \\
        \{l+1\}-\{l\}&\text{ if } Q=[l,l+1].
    \end{cases}$$
			
	\item If $Q=I_1\times I_2\times\dots\times I_d$ for $d\geq 1$. Let $P=I_2\times \dots\times I_d$, so $Q=I_1\times P$.  Then $$\partial_k Q=(\partial_k I_1)\times P + (-1)^{\dim( I_1)}I_1\times\partial_k P.$$
	\end{enumerate}

	
	
	
	
		

	Given a cubical complex $K$, recall that its (cubical) barycentric subdivision $sd(K)$, is the cubical complex with vertices the barycenters of the faces of $K$. The faces of $sd(K)$ are identified by the closed intervals $[F,G]=\{C\in\calF(K): F\leq C\leq G\}$ in the poset $\calF(K)\setminus\{\emptyset\}$ of nonempty faces of $K$. In particular, maximal faces of $sd(K)$ correspond to maximal intervals $[F_0,F_d]$ where $F_0$ is a vertex of a maximal cube $F_d$. It is a well known fact that the geometric realization of $K$ and $sd(K)$ are homeomorphic (see \cite{mischaikow2004computational,savvidou2010face}), and so, in particular, their homology groups are isomorphic. 
		
	We first prove the next lemma, which establishes that given a cubical complex, considering its barycentric subdivision produces a homeomorphic cubical complex that is completely determined by its vertices. 
	\begin{lemma}\label{lemma:vertices_cubical}
		Let $K$ be a cubical complex in $\R^d$, and consider $K'=2sd(K)$ be its cubical barycentric subdivision stretched by a factor of 2. If $C$ is a unit cube in $\R^d$ such that $V(C)\subset V(K')$, then $C\in K'$. 
	\end{lemma}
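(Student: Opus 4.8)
The plan is to exploit the explicit combinatorial description of the barycentric subdivision. Recall that the faces of $sd(K)$ correspond to chains $F_0 \le F_1 \le \cdots \le F_k$ in the poset of nonempty faces of $K$, with the barycenter of a face $F$ of $K$ being the vertex of $sd(K)$ associated to the singleton chain $\{F\}$. After scaling by $2$, the vertices of $K' = 2\,sd(K)$ are the points $2\cdot b(F)$ where $b(F)$ is the barycenter of a face $F$ of $K$; note that since each face $F$ of a cubical complex has a barycenter with coordinates in $\tfrac12\Z$, the scaled barycenter $2b(F)$ has integer coordinates, so it makes sense to ask whether a unit cube spans them. First I would set up this dictionary precisely and record, for a face $F = I_1 \times \cdots \times I_d$ of $K$, exactly which coordinates of $2b(F)$ are even (those coming from degenerate $I_j = \{\ell\}$, giving coordinate $2\ell$) and which are odd (those coming from nondegenerate $I_j = [\ell,\ell+1]$, giving coordinate $2\ell+1$).

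Next, suppose $C$ is a unit cube in $\R^d$ with all $2^d$ (or fewer, if $C$ is lower-dimensional — but ``unit cube'' here I take to be $d$-dimensional, or one handles each dimension uniformly) vertices in $V(K')$. Write $C = \prod_j [c_j, c_j+1]$. I would pick out two distinguished vertices of $C$: the minimum vertex $p = (c_1,\dots,c_d)$ and the maximum vertex $q = (c_1+1,\dots,c_d+1)$, both of which lie in $V(K')$ by hypothesis. Each corresponds to a barycenter: $p = 2b(F)$, $q = 2b(G)$ for faces $F, G$ of $K$. Since $p$ and $q$ differ by $1$ in every coordinate, their parities are opposite in every coordinate; translating through the dictionary above, this says that in every coordinate direction $j$, exactly one of $F, G$ is degenerate and the other nondegenerate, and moreover the degenerate one sits at an endpoint of the nondegenerate one. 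A short case analysis then shows $F$ and $G$ are comparable in the face poset of $K$ — say $F \le G$ after possibly swapping — because in each coordinate the interval of $F$ is either equal to that of $G$ or is one of its two endpoints, consistently enough that $F$ is literally a face of $G$ (here one must check the endpoints chosen across coordinates are mutually consistent with being a single face of $G$; this is where the hypothesis that the intermediate vertices of $C$ also lie in $V(K')$, not just $p$ and $q$, gets used). Having established $F \le G$ with $\dim G - \dim F = $ (number of coordinates where $F$ is degenerate and $G$ nondegenerate) $= d = \dim C$, the cube $C$ is exactly the image under scaling of the elementary cube in $sd(K)$ spanned by the chain $F \le G$ — more precisely, $C$ is a face of the subdivision cell corresponding to a maximal chain refining $F \le G$, hence $C \in K'$.

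The main obstacle I anticipate is the consistency check in the middle step: knowing only that $p = 2b(F)$ and $q = 2b(G)$ are both vertices of $K'$ does not by itself force $F$ and $G$ to be comparable — a priori they could be two incomparable faces whose barycenters happen to be diagonal corners of a unit cube. This is precisely why the lemma's hypothesis demands all vertices of $C$ lie in $V(K')$, and why one works with the intermediate vertices: each intermediate vertex $2b(H)$ of $C$ forces a face $H$ with $F \le H \le G$ (or $F \ge H \ge G$), and collecting all of them forces a full flag, which in particular pins down that $F$ and $G$ are comparable and that the unit cube $C$ is the geometric cell of $sd(K)$ attached to that flag. I would organize this by first proving the two-dimensional ($d=2$) case by hand to see the mechanism — a unit square with corners at four barycenters $b(F), b(G), b(H), b(H')$ must have $F$ a vertex of a square-or-edge $G$ with $H, H'$ the two faces strictly between — and then run the general $d$ by induction on $d$, peeling off one coordinate using the product formula for both $sd$ and for unit cubes. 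The rest (that $C$, once identified as a subdivision cell, lies in $K'$ by closure under faces, property (1) of a cubical complex) is immediate.
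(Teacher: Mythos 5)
Your overall strategy --- the parity dictionary between vertices of $K' = 2\,sd(K)$ and faces of $K$, followed by identifying $C$ with an interval in the face poset --- is the same as the paper's, but the pivotal step of your plan is wrong as stated. You take the distinguished pair of vertices to be the coordinate-wise minimum $p$ and maximum $q$ of $C$ and claim that (with help from the intermediate vertices) the faces $F,G$ of $K$ with $p = 2b(F)$, $q = 2b(G)$ must be comparable. This is false, even for cubes $C$ that genuinely are cells of $K'$. Take $K$ to be the square $[0,1]^2$ with all its faces, and $C = [1,2]\times[0,1]$. All four vertices of $C$ lie in $V(K')$, and $C \in K'$: it is the cell corresponding to the interval from the vertex $\{1\}\times\{0\}$ of $K$ up to the whole square. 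But $p=(1,0)$ has parity (odd, even), so $F$ is the bottom edge $[0,1]\times\{0\}$, while $q=(2,1)$ has parity (even, odd), so $G$ is the right edge $\{1\}\times[0,1]$; these two edges are incomparable, and no flag of faces of $K$ contains both. So the ``consistency check'' you propose with the intermediate vertices cannot rescue the claim --- $(p,q)$ is simply the wrong pair, and the comparability you need never holds in the presence of nondegenerate factors of $C$ of the form $[2m-1,2m]$.

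The fix, which is exactly what the paper does, is to choose the distinguished vertices by parity rather than by coordinate order: let $u$ be the vertex of $C$ that takes the even endpoint of every nondegenerate factor and $w$ the one that takes the odd endpoint (the vertices with fewest and most odd coordinates). Then the associated faces satisfy $C_u \subseteq C_w$ coordinate-by-coordinate automatically, $C_w \in K$ because $w \in V(K')$ (and $C_u \in K$ since cubical complexes are closed under faces), and a direct check shows the vertices of $C$ are precisely the scaled barycenters of the faces $H$ with $C_u \le H \le C_w$, so $C$ is the cell of $K'$ attached to the interval $[C_u, C_w]$. Two smaller points: in the cubical barycentric subdivision used here, cells are indexed by intervals $[F,G]$ of the face poset, not by chains/flags (chains give the simplicial subdivision), so the flag language in your plan conflates the two constructions; and the lemma is later applied to elementary cubes with degenerate factors, so the argument must not silently assume $C$ is full-dimensional --- your parenthetical aside needs to be carried through, though that part is routine.
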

	
	\begin{proof}
		Recall that the vertices of $sd(K)$ are the barycenters of each cube in $K$. In particular, if  $C=I_1\times I_2\times\dots\times I_d\subset K$ with  $I_i=\{m_i\}$ or $I_i=[m_i,m_i+1]$ for some $m_i\in \Z$, then its corresponding vertex in $K'=2sd(K)$ is given by $v_C=2bar(C)=\left(v_1,\dots, v_d\right)$ where $$v_i=\begin{cases}
			2m_i &\text{ if } I_i=\{m_i\}\\
			2m_i+1&\text{ if } I_i=[m_i,m_i+1]
		\end{cases}$$
		Thus, the coordinates of $v_C$ are even if the corresponding elementary interval is 0-dimensional, and odd if it's 1-dimensional. Note that the converse is also true, if $v=(v_1,v_2,\dots, v_d)\in V(K')$, then there is a cube $C_v\in K$ such that $v=2bar(C_v)$, and $C_v=I_1\times\dots\times I_d$ is given by 
		$$I_i=\begin{cases}
			\left\{\dfrac{v_i}{2}\right\} & \text{ if } v_i\text{ is even}\\
			\left[\dfrac{v_i-1}{2},\dfrac{v_i+1}{2}\right] &\text{ if }v_i\text{ is odd}.
		\end{cases} $$
		
		Let now $C\subset\R^d$ be a unit cube such that $V(C)\subset V(K')$. Thus $C=I_1\times \dots\times I_d$, and we partition the indices into $E_0\sqcup O_0 \sqcup E_1\sqcup O_1=[n]$, according to the parity of the endpoints of the intervals $I_i$ as follows:  
		\begin{eqnarray*}
			I_i=\begin{cases}
				\{2m_i\} & \text{ if }i\in E_0\\
				\{2m_i+1\} & \text{ if } i\in O_0\\
				[2m_i,2m_i+1] & \text{ if } i\in E_1\\
				[2m_i-1,2m_i] & \text{ if }\in O_1 
			\end{cases}
		\end{eqnarray*}
		
		Let $u=(u_1,\dots,u_d)$ and $w=(w_1,\dots, w_d)$ be the vertices of $V(C)$ with the least and most odd coordinates, respectively; that is,
		
		\begin{eqnarray*}
			u_i=\begin{cases}
				2m_i&\text{ if }i\in E_0\cup E_1\cup O_1\\
				2m_i+1&\text{ if }i\in O_0
			\end{cases}\text{ and }%
			w_i=\begin{cases}
				2m_i&\text{ if }i\in E_0\\
				2m_i+1&\text{ if }i\in O_0\cup E_1\\
				2m_i-1&\text{ if }i\in O_1	
			\end{cases},
		\end{eqnarray*}
		and let $C_u, C_w\in K$ be their corresponding cubes in $K$. We claim that
		$v\in V(C)$, if and only if,  $C_u\subset C_v\subset C_w$ as sub-cubes, and so, the cube $C$ corresponds to the interval $\left[C_u, C_w\right]$ in the Poset of $K$, which is a cube in $sd(K)$, and thus $C\in K'$ as desired. 
		
		Note that if $v\in V(C)$ and $C_v=I_1\times\dots\times I_d$, then $I_i=\{m_i\}$ if $v_i$ is even, $I_i=[m_i, m_i+1]$ if $v_i=2m_i+1$, and $I_i=[m_i-1,m_i]$ if $v_i=2m_i-1$. In either case, $I_i({C_u})\subset I_i\subset I_i({C_w})$, which means $C_u\subset C_v\subset C_w$. On the other hand, if $C=I_1\times\dots\times I_d\in K$ such that $C_u\subset C\subset C_w$, and $v=v_C$, we get: $\{m_i\}= I_i$ if $i\in E_0$; $I_i=[m_i,m_i+1]$, if $i\in O_0$; $\{m_i\}\subset I_i\subset [m_i,m_i+1]$ if $i\in E_1$; and $\{m_i\}\subset I_i\subset [m_i-1,m_i] $, if $i\in O_1$. Therefore, $v_i=2m_i$ if $i\in E_0$; $v_i=2m_i+1$ if $i\in O_0$; $v_i=2m_i$ or $2m_i+1$ if $i\in E_0$; and $v_i=2m_i$ or $2m_i-1$ if $i\in O_1$. Meaning $v\in V(C)$, thus finishing the proof. 
	\end{proof}
	
	The following theorem establishes that, in a sense, Insertion Chain Complexes are an extension of Cubical complexes. 
	\begin{theorem}\label{thm:words_extend_cubical}
		Let $K$ be a finite cubical complex in $\R^d$. Then, there exist a set of words $W\subset\{a,b\}^*$, such that the chain complex of $K$ and the insertion chain complex $\ChainBlock{W}$ have isomorphic homology groups. 
	\end{theorem}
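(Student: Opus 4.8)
The plan is to pass to the barycentric subdivision and then encode its vertices as words. By the discussion preceding the theorem, the doubled cubical barycentric subdivision $K'=2\,sd(K)$ is homeomorphic to $K$, so $H_*(\C_*(K'))\cong H_*(\C_*(K))$, and by Lemma~\ref{lemma:vertices_cubical} the complex $K'$ has the crucial property that a unit cube lies in $K'$ as soon as all of its vertices do. After translating $K$ at the outset we may assume $K\subset\R_{\ge 0}^d$, so that $V(K')\subset\N^d$. I would then fix $\Sigma=\{a,b\}$ and the encoding $w\colon\N^d\to\{a,b\}^*$ given by $w(v_1,\dots,v_d)=b\,a^{v_1}b\,a^{v_2}b\cdots b\,a^{v_d}b$, and set $W=\{w(v):v\in V(K')\}$. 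Two elementary facts drive everything: the map $w$ is injective, and $w(v)\leq w(v')$ in the subword order if and only if $v\leq v'$ coordinatewise (the $d+1$ separating letters $b$ must be matched in order), in which case $|w(v')|-|w(v)|=\sum_i(v'_i-v_i)$.

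Next I would set up the correspondence between cubes and blocks. To a unit cube $C\in K'$ with corner $m=(m_1,\dots,m_d)$ and set $S\subseteq[d]$ of nondegenerate directions, assign the block $\beta(C)=b\,\lambda_1 b\,\lambda_2 b\cdots b\,\lambda_d b$, where $\lambda_i=a^{m_i}(1,a)$ if $i\in S$ and $\lambda_i=a^{m_i}$ otherwise. One checks that $\beta(C)$ is a valid $|S|$-block already in canonical form --- consecutive edges $(1,a)$ are always separated by a letter $b$, so no invalid pattern $(1,a)(1,a)$ can occur --- with vertex set $V(\beta(C))=\{w(v):v\in V(C)\}\subseteq W$; hence $\beta(C)\in\Cins{W}$, and $\beta$ is manifestly injective. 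For surjectivity, let $\tau\in\Cins{W}$ have dimension $k$. Its minimal and maximal vertices lie in $W$, so they equal $w(u)$ and $w(u')$ with $u\leq u'$; since every word of $W$ contains exactly $d+1$ occurrences of $b$, the block $\tau$ inserts only the symbol $a$, and each of its $k$ edges inserts an $a$ into one of the $d$ runs (``slots'') of $w(u)$. Validity forbids two edges from lying in the same slot (collecting copies of $a$ would leave an invalid pattern $a^t(1,a)(1,a)$), so $k\leq d$ and the $k$ edges occupy a $k$-element set $S\subseteq[d]$; consequently $V(\tau)=\{\,w\!\left(u+e_T\right):T\subseteq S\,\}$, where $e_T\in\{0,1\}^d$ is supported exactly on $T$. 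The unit cube $C$ with corner $u$ and directions $S$ then satisfies $V(C)\subseteq V(K')$, so $C\in K'$ by Lemma~\ref{lemma:vertices_cubical}, and $\beta(C)=\tau$ by Theorem~\ref{thm:unique_block_by_vertices}, since both are valid blocks with the same vertices. Thus $\beta$ is a dimension-preserving bijection between the cells of $K'$ and the blocks of $\Cins{W}$.

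Finally I would check compatibility with the two boundary operators. For $C$ with directions $S$ and the edge of $\beta(C)$ coming from $i\in S$, replacing $(1,a)$ by $a$ (the upper facet $\bar F$) yields exactly $\beta$ of the face $\{x_i=m_i+1\}$ of $C$, while replacing $(1,a)$ by $1$ (the lower facet $\underline F$) yields $\beta$ of the face $\{x_i=m_i\}$; in particular every lower facet is a valid block, matching the fact that a cubical complex contains all faces of each cell. Because the edges of $\beta(C)$ occur in increasing order of their slots, the $r$-th edge corresponds to the $r$-th direction of $S$, so the alternating signs in $\partial$ on blocks coincide with the alternating cubical boundary signs $(-1)^{r-1}$; any residual discrepancy of signs of generators is fixed exactly as in the proof of Lemma~\ref{lemma:isom_blocks_to_chains}. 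Hence $\beta$ (suitably signed) is an isomorphism of chain complexes $\C_*(K')\to\ChainBlock{W}$, and therefore $H_*(\ChainBlock{W})\cong H_*(\C_*(K'))\cong H_*(\C_*(K))$. The main obstacle is the surjectivity step of the previous paragraph: one must rule out spurious high-dimensional blocks in $\Cins{W}$ (the bound $k\leq d$) and recover the cube $C$ from $\tau$; by contrast the sign bookkeeping in this last paragraph is routine.
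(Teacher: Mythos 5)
Your proposal is correct and follows essentially the same route as the paper's proof: pass to $K'=2\,sd(K)$, encode lattice points as words with letter-separated runs (your $b\,a^{v_1}b\cdots b\,a^{v_d}b$ is just the paper's $ab^{m_1}\cdots ab^{m_d}$ with the roles of $a$ and $b$ swapped and a trailing separator), establish a dimension-preserving bijection between cubes of $K'$ and blocks of $\Cins{W}$ using Lemma~\ref{lemma:vertices_cubical} and Theorem~\ref{thm:unique_block_by_vertices} for surjectivity, and verify the boundary operators match. The sign bookkeeping indeed works out with no correction needed, exactly as in the paper's computation.
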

	
	\begin{proof}
		Let $K'=2sd(K)$, so that $K'$ is homeomorphic to $K$ and, in particular, their homology groups are isomorphic. We may assume that all vertices of $K'$ lie in $\N^d$, after a finite translation if necessary. Define the map $\Psi:\N^d\to\{a,b\}^*$ as: $$\Psi(m_1,m_2,\dots, m_d)=ab^{m_1}ab^{m_2}\dots ab^{m_d},$$
		and let $W=\Psi(V(K'))$. We will prove $\C_{*}(K')\cong \ChainBlock{W}$ are chain isomorphic, so the result will immediately follow. 
		
		First, we extend $\Psi$ to elementary cubes as follows:
		
		$$C=I_1\times I_2\times\dots\times I_d\xmapsto{\Psi} a \xi_1 a\xi_2\dots a\xi_d, $$
		where $$\xi_i=\begin{cases}
			b^m &\text{if } I_i=\{m\}\\
			b^m(1,b) &\text{if } I_i=[m_,m+1].
		\end{cases}$$
		
		Note $\Psi(C)$ is indeed a valid block, since between every pair of edges $(1,b)$ we always have a symbol $a$, and it is already in canonical form, with all copies of $b$ in front of their respective edge $(1,b)$. Since $\Psi(C)$ has the same number of edges $b^{m_i}(1,b)$ as nondegenerate intervals $[m_i,m_i+1]$  in $C$, clearly $\dim(C)=\dim(\Psi(C))$. Moreover, it is straightforward to check $V(\Psi(C))=\Psi(V(C))$.
		
		Showing $\Psi$ is a chain map, that is, $\partial\circ\Psi=\Psi\circ\partial$ is straightforward: Take $C=I_1\times\dots\times I_d$ be an elementary cube, and let $i_1, \dots, i_k\in[d]$ be the indices of the one dimensional intervals $I_{i_j}=[m_{i_j},m_{i_j}+1]$. Thus $\Psi(C)=a\xi_1\dots a\xi_d$, where $\xi_{i_j}=b^{m_{i_j}}(1,b)$ and $\xi_i=b^{m_i}$ for the other indices. Then, we can rewrite $\Psi(C)=x_0(1,b)x_1(1,b)\dots x_{k-1}(1,b)x_{k}$, where $x_j\in\{a,b\}^*$. Note we get

  \begin{align*}
      \bar{F}_j(\Psi(C))&=x_0(1,b)\cdots x_j\ b\ x_{j+1}\cdots (1,b) x_k = a\xi_1\dots a\xi_{i_j-1}\ ab^{m_{i_j}+1}\ a\xi_{i_j+1}\dots a\xi_d\\
      &=\Psi\left(I_1\times\dots I_{i_j-1}\times \{m_{i_j}+1\}\times I_{i_j+1}\times\cdots\times I_d\right)\\
      \underline{F}_j(\Psi(C))&=x_0(1,b)\cdots x_j\ \ x_{j+1}\cdots (1,b) x_k = a\xi_1\dots a\xi_{i_j-1}\ ab^{m_{i_j}}\ a\xi_{i_j+1}\dots a\xi_d\\
      &=\Psi\left(I_1\times\dots I_{i_j-1}\times \{m_{i_j}\}\times I_{i_j+1}\times\cdots\times I_d\right)
  \end{align*}%
Therefore %
    \begin{align*}
			\partial\Psi(C)&=\sum_{j=1}^k (-1)^{j+1} \bar{F}_i(\Psi(C))-\underline{F}_i(\Psi(C))=\\
			&=\sum_{j=1}^k(-1)^{j-1} \left(\Psi\left(I_1\!\!\times\!\!\cdots\!\!\times\!\! I_{i_j-1}\!\!\times\!\!\{m_{i_j}+1\}\!\!\times\!\! I_{i_j+1}\!\times\!\cdots\!\!\times\!\! I_d\right)\right.\\
            &\left.-\Psi\left(I_1\!\!\times\!\!\cdots\!\!\times\!\! I_{i_j-1}\!\!\times\!\!\{m_{i_j}\}\!\!\times\!\! I_{i_j+1}\!\times\!\cdots\!\!\times\!\! I_d\right)\right)=\\
			&=\Psi\left( \sum_{j=1}^k(-1)^{j-1}I_1\times\cdots I_{i_j-1}\times\left[\{m_{i_j}+1\}-\{m_{i_j}\}\right]\times I_{i_j+1}\cdots\times I_d \right)=\\
			&=\Psi(\partial C). 
		\end{align*}
		
    To get isomorphisms at each dimension $\C_k(K')\xrightarrow{\Psi}\ChainBlockSub{k}{W}$, we need to show $\Psi$ is a bijection between $k$-dimensional elementary cubes in $K'$ and $k$-dimensional blocks in $\ChainBlockSub{k}{W}$, for each $k$. 
		
		For the injectivity, suppose $\Psi(C_1)=\Psi(C_2)$, with $C_j=I_1(C_j)\times\cdots\times I_d(C_j)$, for $j=1,2$. Then $\Psi(C_j)=a\xi^j_1\dots a\xi^j_d$ as above, and these two blocks are equivalent. By Theorem~\ref{thm:unique_canonical_form}, since they are already in canonical form, they are exactly the same. Then $\xi^1_i=\xi^2_i$ for each $i$, and hence $I_i(C_1)=I_i(C_2)$, so $C_1=C_2$. 
		
		For surjectivity, suppose $\sigma$ is a valid $k$-block in $\ChainBlockSub{k}{W}$. Then $V(\sigma)\subset W$, and, by its definition, all its vertices have exactly $d$ symbols $a$. Suppose $$\sigma=x_0(1,a_1)x_1\dots (1,a_k)x_k, $$ then it must be that all $a_i=b$, for $i=1, \dots, k$, otherwise its minimal and maximal word would have a different amount of symbols $a$.  Since $\sigma$ is valid, each $x_i$ for $i=1, \dots, k-1$ must have at least one symbol $a$, at the same time, $x_0$ must start with $a$ because all vertices of $\sigma$ do. Hence, $\sigma=a\xi_1a\xi_2\dots a\xi_d$, where each $\xi_i=b^{m_i}$ or $b^{m_i}(1,b)$, for some $m_i\in \N$. Consider then the elementary cube $C=I_1\times\dots\times I_d\subset\R^d$, where $I_i=\{m_i\}$ or $[m_i,m_i+1]$, correspondingly with $\xi_i$. Clearly $\Psi(V(C))=V(\sigma)\subset W$, and since $\Psi$ is one-to-one, it must be $V(C)\subset K'$. Then, by Lemma~\ref{lemma:vertices_cubical}, $C\in K'$. And $V(\Psi(C))=V(\sigma)$, so by Theorem \ref{thm:unique_block_by_vertices}, it must be $\Psi(C)=\sigma$ as desired. 
	\end{proof}

 	We now prove the desired result, that any finitely generated abelian group can be realized as the homology of a word block complex. 
	\begin{theorem}\label{thm:realizable_homology}
		Let $A$ be a finitely generated abelian group. For any integer $k>0$, there exists a set of words $W\subset\{a,b\}^*$, such that the $k$-th homology of its Chain Block Complex is $A$, that is $$H_k\left(\ChainBlock{W}\right)=A.$$
	\end{theorem}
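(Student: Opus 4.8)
The plan is to reduce the statement to Theorem~\ref{thm:words_extend_cubical} by producing a finite cubical complex whose $k$-th homology is $A$, and then applying that theorem verbatim. By the structure theorem for finitely generated abelian groups, write $A\cong\Z^{r}\oplus\Z/n_1\oplus\cdots\oplus\Z/n_s$. Since for $k\geq 1$ the $k$-th homology functor carries finite disjoint unions to direct sums (the cubical chain complex splits over connected components), it suffices to exhibit, for each cyclic summand, a finite \emph{elementary} cubical complex having that summand as its $k$-th (reduced) homology and trivial reduced homology in all other degrees; placing all of these in pairwise disjoint regions of a common $\R^{d}$ (padding with degenerate factors and translating by integer vectors, which preserves being an elementary cubical complex) yields a finite cubical complex $K\subset\R^{d}$ with $H_k(K)\cong A$, and Theorem~\ref{thm:words_extend_cubical} then gives $W\subset\{a,b\}^{*}$ with $H_k(\ChainBlock{W})\cong H_k(K)=A$.

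For the free summands this is immediate: the boundary $\partial([0,1]^{k+1})$ of the standard $(k+1)$-cube is an elementary cubical complex in $\R^{k+1}$ homeomorphic to $S^{k}$, so its $k$-th homology is $\Z$. For a torsion summand $\Z/n$ one needs a finite cubical model of the mod-$n$ Moore space $M(\Z/n,k)$, i.e.\ of the mapping cone of a degree-$n$ self-map of $S^{k}$, which has reduced homology $\Z/n$ concentrated in degree $k$. I would obtain such a model by starting from a finite simplicial model of $M(\Z/n,k)$ (the usual CW structure with cells in dimensions $0$, $k$, $k+1$, made simplicial), passing to its cubical subdivision (each $n$-simplex decomposes canonically into $n{+}1$ combinatorial $n$-cubes), and finally realizing the resulting abstract cubical complex as an elementary cubical complex in a Euclidean space by a further cubical barycentric subdivision---exactly the passage $K\rightsquigarrow 2\,sd(K)$ that underlies Lemma~\ref{lemma:vertices_cubical}, after which the complex is a union of unit cubes with integer vertices. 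All of these operations preserve the homeomorphism type of the geometric realization, hence the homology. (For $k=1$ one may instead write down explicitly a "cone on a $2n$-cycle" cubical model and reach the higher cases by iterated cubical suspension; see \cite{savvidou2010face} for the cubical subdivision background.)

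The main obstacle is exactly this last step: turning an abstract simplicial or cubical model of $M(\Z/n,k)$ into an elementary cubical complex inside some $\R^{d}$ in the restrictive sense used in the paper (finite products of unit intervals $[m,m+1]$ and degenerate intervals $\{m\}$). The remaining ingredients---the classification of finitely generated abelian groups, additivity of $H_k$ over disjoint unions, homeomorphism-invariance of homology under subdivision, and the final invocation of Theorem~\ref{thm:words_extend_cubical}---are standard or already available in the paper. I would therefore devote the bulk of the write-up to pinning down the cubical Moore-space construction (cleanest via the $k=1$ case together with cubical suspension, or via the explicit simplicial-to-cubical subdivision argument) and then assemble the disjoint union as described above.
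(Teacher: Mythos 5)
Your overall strategy (reduce to Theorem~\ref{thm:words_extend_cubical} via a finite cubical complex with $k$-th homology $A$) is the same funnel the paper uses, but your reduction is different and, as written, has a genuine gap at its crucial step. You decompose $A$ into cyclic summands and need, for each torsion factor $\Z/n$, a finite \emph{elementary} cubical complex in some $\R^{d}$ (a union of lattice cubes in the paper's restrictive sense) that is a Moore space $M(\Z/n,k)$. You yourself flag this as the main obstacle, and the fix you sketch does not close it: subdividing each simplex into combinatorial cubes produces an \emph{abstract} cubical complex, and the passage $K\rightsquigarrow 2\,sd(K)$ of Lemma~\ref{lemma:vertices_cubical} is only defined for a complex already realized by elementary cubes in $\R^{d}$; taking the cubical barycentric subdivision of an abstract cubical complex does not by itself give an embedding into the integer lattice, and realizing abstract cubical complexes as lattice subcomplexes is precisely the nontrivial point. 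So the torsion case is left unproved. The gap is fillable: either cite, as the paper does, the standard fact that every finite simplicial complex is homeomorphic to a finite (elementary) cubical complex (Davis, Appendix A, or the discussion in \cite{babson1997neighborly}), or realize the cubical subdivision concretely inside $[0,1]^{V}$ by sending the barycenter of each simplex $\sigma$ to its characteristic vector $\chi_\sigma\in\{0,1\}^{V}$, so that each interval $\tau\leq\sigma$ in the face poset spans a lattice face of the cube; but some such argument must actually be supplied rather than deferred.

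For comparison, the paper's proof avoids Moore spaces and the cyclic decomposition altogether: it takes a single finite $2$-dimensional simplicial complex with $H_1=A$, suspends it $(k-1)$ times to push $A$ into degree $k$, converts the result to a homeomorphic finite cubical complex by the cited simplicial-to-cubical fact, and then invokes Theorem~\ref{thm:words_extend_cubical}. Once you grant that same conversion fact, your route also works — the theorem only constrains $H_k$, so the extra $H_0$ coming from taking disjoint unions of components is harmless, and for the free summands the boundary of a $(k+1)$-cube is already elementary — but at that point the Moore-space machinery buys nothing over the paper's one-complex-plus-suspension argument, since the group-realization step is standard either way. Also note that your parenthetical $k=1$ model (``cone on a $2n$-cycle'') is not correct as stated, since a cone is contractible; you mean a cubical model of the mapping cone of a degree-$n$ self-map of a cycle, which again is exactly the construction that needs to be made precise in the elementary-cubical category.
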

	
	\begin{proof}
		It is a standard result that for every finitely generated abelian group $A$, we can construct a finite 2-dimensional simplicial complex $K$, such that $H_i(K)=\begin{cases}
		    A & \text{if } i=1\\
            0 &\text{if }  i\neq 1
		\end{cases},$ (see \cite{hatcher2002algebraic}). For $k>1$, we use the fact that the \textit{suspension} of a simplicial complex, which is again a simplicial complex, \textit{moves} the homology up, that is $H_i(K)=   H_{i+1}(\Sigma K)	$, where $\Sigma K$ is the suspension (see \cite{hatcher2002algebraic,kozlov2007combinatorial}). Thus, applying suspension $(k-1)$ times we get 
        \begin{align*}
            H_k(\Sigma^{k-1}K)&=H_1(K)=A,\text{and}\\
            H_i(\Sigma^{k-1}K)&=H_{i-k+1}(K)=0, \text{ for } i\neq k. 
        \end{align*}
		where $\Sigma^{k-1}K$ is a finite simplicial complex. 
		It is a well known fact that for any finite simplicial complex $\mathcal{S}$, there exists a finite cubical complex which is homeomorphic to $\mathcal{S}$, so in particular, it has the same homology groups (see, for example, Appendix A of \cite{davis2012geometry} , or the discussion in Section 2 of \cite{babson1997neighborly}). 
		Thus, let $\mathcal{C}$ be a finite cubical complex that is homeomorphic to $\Sigma^{k-1}K$. By Theorem \ref{thm:words_extend_cubical}, there is a set of words $W\subset\{a,b\}^*$, such that $$H_k(\ChainBlock{W})=H_k(\mathcal{C})=H_k(\Sigma^{k-1}K)=H_1(K)=A,$$ 
		which finishes the proof. 
	\end{proof}
	
	\subsection{Minimal Homological Spheres}
	Theorem \ref{thm:realizable_homology} provides an straightforward way of finding sets of words that realize the homology $H_k(\ChainBlock{W})=\Z$ and $H_i(\ChainBlock{W})=0$ for $i\neq k, 0$. However, it is an interesting question to find the cardinality of the smallest $W$ that satisfies this property. In particular, for $k=1$, this becomes finding the smallest $W$ that produces a block complex homemorphic to the circle $\mathbb{S}^1$. 
	Using the construction from the theorem, we start with the cubical complex $$\mathcal{C}=\{\{0\}\times[0,1], \{1\}\times[0,1],[0,1]\times\{0\},[0,1]\times\{0\}\}$$
	Then, Theorem \ref{thm:words_extend_cubical} produces the set $W=\{1,a,a^2,b,b^2,ab^2, a^2b, a^2b^2\}$, consisting of 8 words. However, the set of 4 words $W=\{a,ab,ba,b\}$, also produces a 1-cycle using less words, and this is in fact the minimum size of such a cycle. 
	
	\begin{defi}\label{def:min_cycle}
		Given a topological space $X$, let $\mu(X)$ denote the size of the smallest set of words $W,$ such that the homology of its insertion chain complex is isomorphic to the homology of $X$, that is, such that $H_k(\ChainBlock{W})\simeq H_k(X),$ for all $k$.  	\end{defi}
	In particular, we are interested in finding the minimal sets of words that produce nontrivial homology in each dimension, that is, finding $\mu(\S^k)$ for the $k$-th dimensional spheres.  Note this is the same as 
    $$\mu(\S^k)=\min\Bigg\lbrace |W|: H_i(\ChainBlock{W})=\begin{cases}
			\Z & i=0,k\\
			0 & i\neq 0,k
		\end{cases}\, \Bigg\rbrace.$$
	
	\newpage
    The following theorem encompasses our partial results to this question:
	\begin{theorem}\label{thm:min_cycles_d}
		The following bounds hold:
		\begin{enumerate}
			\item $\mu(\S^0)=1$
			\item $\mu(\S^1)=4$
			\item $\mu(\S^2)=8$
			\item For $k\geq 3$, $\mu(\S^k)\leq 3^{k+1}-1$. 
		\end{enumerate}
	\end{theorem}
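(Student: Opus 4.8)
The plan is to establish the three exact values (1)--(3) by matching an explicit word set (the upper bound) against a combinatorial nonexistence argument (the lower bound), and to deduce the general estimate (4) directly from Theorem~\ref{thm:words_extend_cubical}. Parts (1) and (2) are quick. For (1), the singleton $W=\{w\}$ gives $\ChainBlock{W}$ with one $0$-block and no higher blocks, so $H_0=\Z$ and $H_i=0$ for $i\geq1$ --- exactly the required profile --- while $W=\emptyset$ gives $H_0=0$; hence $\mu(\S^0)=1$. For (2), $W=\{a,b,ab,ba\}$ has $\Gins{W}$ equal to the $4$-cycle $a\sim ab\sim b\sim ba\sim a$, and it supports no $2$-block, since its shortest and longest words have lengths $1$ and $2$ while a $2$-block needs this gap to equal $2$; so $\ChainBlock{W}$ is the $4$-cycle, with $H_0=H_1=\Z$, giving $\mu(\S^1)\leq4$. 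For the matching lower bound, if $|W|\leq3$ then $\Gins{W}$ is bipartite (Lemma~\ref{lemma:g_bipartite}) on at most $3$ vertices, hence has at most $2$ edges and is a forest, and no block of dimension $\geq2$ can live on fewer than $4$ vertices; thus $H_1=0$ and the profile of $\S^1$ is unattainable, so $\mu(\S^1)=4$.

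For (3) the upper bound is an explicit set $W$ of $8$ words, for which one writes down the boundary maps $\partial_2,\partial_3$ of $\ChainBlock{W}$ and checks that $H_0=H_2=\Z$ while all other $H_i$ vanish. For the lower bound we must show $|W|\leq7$ forces $H_2(\ChainBlock{W})=0$. A valid $3$-block always has at least $7$ vertices, with $7$ occurring only for blocks of the type $(1,a)(1,b)(1,a)$; so for $|W|\leq6$, and for $|W|=7$ unless such a $3$-block is present, $\ChainBlock{W}$ is a square complex and $H_2=\ker\partial_2$. Given a nonzero integral $2$-cycle $z=\sum_\sigma n_\sigma\sigma$, every $2$-block is a square with exactly four facets (Theorem~\ref{thm:block_classification1_3}) and the coefficient of an edge $e$ in $\partial z$ is a signed sum of the $n_\sigma$ over the squares with facet $e$; hence every edge of the support lies in $\geq2$ of its squares. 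Passing to the support $(V,E,F)$ one gets a connected square complex with $H_2\neq0$, $|V|\leq7$, $|E|\leq2|F|$, and, by examining vertex links, each vertex in $\geq3$ squares so that $4|F|\geq3|V|$; a counting / Euler-characteristic argument in the spirit of the classification of closed square surfaces shows these constraints are incompatible with $|V|\leq7$, the extremal configuration being the $3$-cube ($|V|=8,\ |F|=6$). The residual case $|W|=7$ with a $3$-block $(1,a)(1,b)(1,a)$ is handled by a direct computation showing $H_2=0$ there as well. Together with the construction this gives $\mu(\S^2)=8$.

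For (4), take $K=\partial\bigl([0,1]^{k+1}\bigr)\cong\S^k$. The cube $[0,1]^{k+1}$ has exactly $3^{k+1}$ faces --- one for each assignment of $\{0\}$, $\{1\}$, or $[0,1]$ to its $k+1$ coordinates --- so $K$ has $3^{k+1}-1$ nonempty faces, and hence its cubical barycentric subdivision $2sd(K)$ has exactly $3^{k+1}-1$ vertices. By the proof of Theorem~\ref{thm:words_extend_cubical}, the injective map $\Psi$ carries $V(2sd(K))$ to a set $W\subset\{a,b\}^*$ with $|W|=3^{k+1}-1$ and $\ChainBlock{W}\cong\C_*(2sd(K))$, so $H_*(\ChainBlock{W})\cong H_*(\S^k)$; this gives $\mu(\S^k)\leq 3^{k+1}-1$ for all $k\geq1$, and in particular for $k\geq3$.

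The main obstacle is part (3). On the lower-bound side one must rule out every way a nonzero integral $2$-cycle could sit on at most seven words --- including non-manifold ``pinched'' supports and higher-genus pieces whose extra $1$-cycles are killed elsewhere --- and reduce the minimal case to the vertex set of a $3$-cube. On the upper-bound side one must produce the specific $8$-word set and verify its homology by hand: it cannot be realized as a ``cube of words'', because completing a cube graph with only valid $2$-block faces always forces a $3$-block (yielding a contractible, not spherical, complex), so the cubical machinery of Theorem~\ref{thm:words_extend_cubical}, which for $k=2$ only produces $26$ words, is genuinely far from optimal here.
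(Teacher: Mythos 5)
Parts (1), (2) and (4) of your proposal are fine and essentially coincide with the paper's argument: the same word sets $\{1\}$ and $\{a,b,ab,ba\}$, bipartiteness of $\Gins{W}$ for the lower bound in dimension one, and the boundary of the $(k+1)$-cube fed through Theorem~\ref{thm:words_extend_cubical}, whose image has one word per nonempty face, i.e.\ $3^{k+1}-1$ words. The upper bound in (3) via an explicit $8$-word set is also what the paper does (it uses $W=\{a,a^2,b,b^2,ab,ba,bab,aba\}$).

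The genuine gap is your lower bound $\mu(\S^2)\geq 8$. The counting constraints you list --- $|V|\leq 7$, $|E|\leq 2|F|$, $4|F|\geq 3|V|$ --- are simultaneously satisfiable (take $|V|=7$, $|E|=12$, $|F|=6$), so no Euler-characteristic contradiction follows from them; indeed there \emph{is} an abstract square complex on $7$ vertices with $H_2\neq 0$, namely the boundary of the $3$-cube with two antipodal vertices identified (a pinched sphere: $6$ squares, $12$ edges, $7$ vertices, fundamental class still a cycle since no edge or square is collapsed). Hence no purely combinatorial count of the support can finish the case $|W|=7$; one must use the word structure --- the length grading/bipartiteness of $\Gins{W}$ and, crucially, the word-equation restrictions on how two valid squares can share edges (Lemma~\ref{lemma:squares_sharing_edges} and the forbidden patterns of Lemma~\ref{lemma:forbidden_patterns}). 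Moreover your intermediate claim that every vertex of the support lies in at least $3$ squares is not justified: by Lemma~\ref{lemma:squares_sharing_edges}, two distinct valid $2$-blocks $x(1,a)(1,b)y$ and $x(1,b)(1,a)y$ share both edges at their common bottom vertex, so a vertex can a priori lie in only two squares of a $2$-cycle's support. This is exactly why the paper's proof of this case is not a short count: it rules out $5$ words by a direct word-equation argument and then handles $6$ and $7$ words by an exhaustive, computer-assisted enumeration of admissible $1$- and $2$-skeletons filtered by the forbidden patterns, followed by a manual elimination of the five surviving configurations. As written, your sketch does not supply a substitute for that step (you name the difficulty but do not resolve it), so part (3) is incomplete; the residual ``$|W|=7$ with a $3$-block present'' case you mention is likewise asserted rather than carried out.
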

	
	\begin{remark}
		While Definition \ref{def:min_cycle}  concerns the smallest set of words in an alphabet $\Sigma$ containing any number of symbols, the proof to Theorem \ref{thm:min_cycles_d} produces upper bounds using the alphabet $\{a,b\}$ consisting of only two symbols. Thus,  in dimensions zero, one, and two, the minimum can be achieved with just two symbols. It remains an open question whether the minimum can also be attained with only two symbols for higher dimensions. 
	\end{remark}
	
	\begin{proof}\leavevmode
		\begin{enumerate}
			\item Trivial taking $W=\{1\}$.
			\item Taking $W=\{a,ab,ba,b\}$ produces an upper bound. Because the 1-skeleton of $\ChainBlock{W}$ is the graph $\Gins{W}$, which is bipartite by Lemma \ref{lemma:g_bipartite}, all cycles must be even, and thus a 3-cycle is impossible. 
			
			\item Taking  the set of 8 words $W=\{a,a^2,b,b^2,ab,ba,bab,aba\}$, produces the block complex in Figure \ref{fig:min_2_cycle}, which has the desired homology.  
			
			We prove here that this cannot be achieved with any set of 5 words, i.e. $\mu(\S^2)\geq 6$. The proof that it is still not possible with 6 or 7 words was obtained with the help of a computer program, described in greater detail in Appendix 2. Our program examines all possible 1-skeletons of Block complexes using 6 or 7 vertices and considers all potential ways of attaching 2-faces to achieve the desired homology. We then verify that all these configurations are impossible to realize with words, as they all include sub-patterns that cannot be realized.
			
			Suppose $W=\{v_0,v_1,v_2,v_3,v_4\}$ produces the desired homology and that  $V=\{v_0,v_1,v_2,v_3\}$ are the vertices of a square $\sigma=x(1,a)y(1,b)z$ so $v_0=xyz$, $v_1=xayz$, $v_2=xybz$ and $v_3=xaybz$. In order to have a cycle, each edge of $\Cins{W}$ must appear in at least two squares, and because two vertices can only be connected if they differ by exactly one symbol, its straightforward to see that we must have $v_0$ connect to $v_4$ and $v_4$ connect to $v_3$, forming the squares $\{v_0,v_1,v_4,v_3\}$ and $\{v_0,v_2,v_4,v_3\}$. But then, the edge from $v_0$ to $v_4$ appears parallel and in the same square as both the edges $v_1v_3$ and $v_2v_3$, so we must have $a=b$. Hence, all three squares share initial vertex, final vertex, and all insertions are of the same symbol $a$. However, from lemma \ref{lemma:squares_sharing_edges}, we cannot have two different squares sharing two edges if $a=b$, as in both cases from the lemma, the squares would be invalid. 
			
			\begin{figure}
				\centering
				\includegraphics[width=0.8\linewidth]{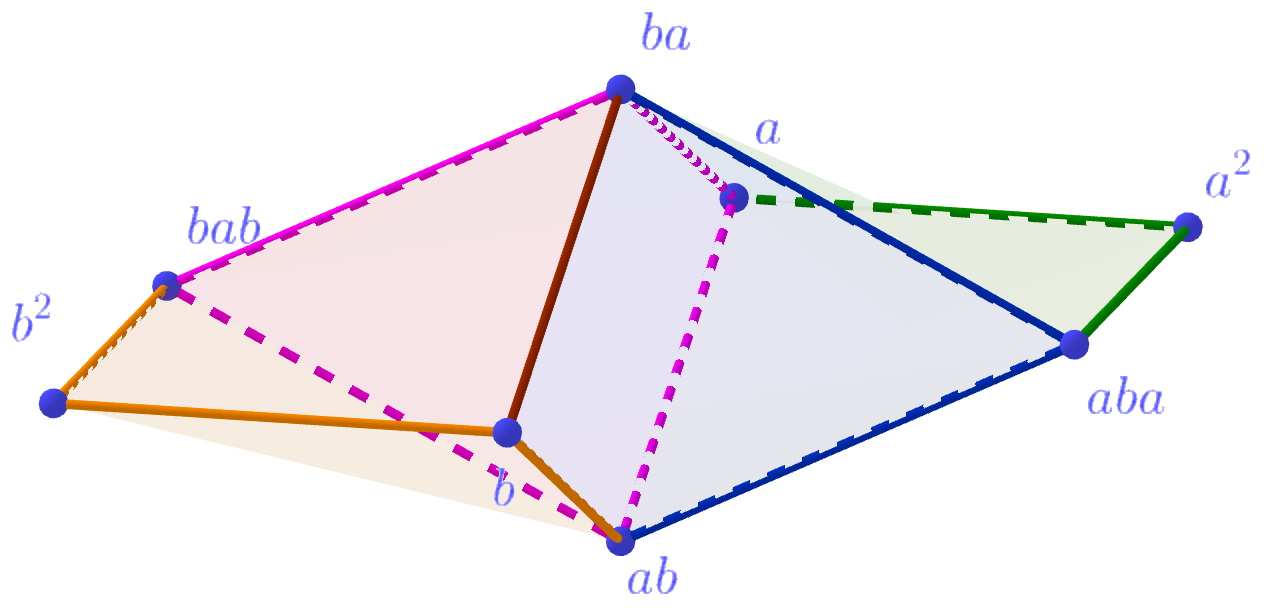}
				\caption{Minimal 2-cycle}
				\label{fig:min_2_cycle}
			\end{figure}
			
			\item We may produce the desired homology by starting with a minimal cubical complex that produces the desired homology. In particular, for any $k$, we may take $\mathcal{C}$ to be the boundary of the $(k+1)$-dimensional cube, which is homeomorphic to $\mathbb{S}^k$. Then, Theorem \ref{thm:words_extend_cubical} produces a set of words $W_0$, such that 
			$$H_i\left(\ChainBlock{W_0}\right)=H_i(\mathcal{C})=H_i(\mathbb{S}^k), \text{ for all } i.$$
			
			So $\mu(\S^k)\leq |W_0|$. Note that $W_0$, as constructed in the theorem, will have the same number of vertices as the barycentric subdivision of $\mathcal{C}$, which, by definition, has one vertex for each face in $\mathcal{C}$. It is well known that the $r$-th dimensional cube has a total of $3^r$ faces, so its boundary has $3^r-1$, and so with $r=k+1$ we obtain the result. \qedhere
		\end{enumerate}
	\end{proof}

    \subsection{Vanishing Homology}
	In our classification of the smaller 1-dimensional cycles, we observe that  types 1, 2 and 3, show two incompatible paths from the minimal word to the maximal word among the vertices, which produces the cycle. If, we had all \textit{missing} words in between the minimal and maximal, we would expect that the cycle would be \textit{filled up}. For example, consider the cycle of type 1 with $t=1$, that is $W=\{ab,aba,aab,abab\}$. If we \textit{complete} the set of words with the missing words between $ab$ and $abab$, we get $W'=\{ab,aba,aab,abab,abb,bab\}$, whose corresponding block complex has no cycles. We would expect this to be the same also for higher dimensional cycles,  and for any pair of initial and final words. While we will prove that this homology indeed vanishes in all dimensions, we are only able to establish it when the  initial word \textit{embeds uniquely} into the maximal word, as we define below. We still consider this result strong, since its contrapositive implies that any homology in block complexes with one minimal and maximal face, is produced by either \textit{missing} paths, or subwords that don't embed uniquely. 
	
	\begin{figure}
		\centering
		\begin{subfigure}[b]{0.20\textwidth}
			{}
		\end{subfigure}
		~
		\begin{subfigure}[b]{0.3\textwidth}
			\centering
			\begin{tikzpicture}
				\node at (0,0) (A) {$ab$} ; 
				\node at (2,0) (B) {$aab$} ; 
				\node at (0,2) (D) {$aba$} ; 
				\node at (2,2) (C) {$abab$} ; 
				
				\path[->] (A) edge node[auto=left] {} (B);
				\path[->] (A) edge node[auto=left] {} (D);
				\path[->] (B) edge node[auto=left] {} (C);
				\path[->] (D) edge node[auto=left] {} (C);
			\end{tikzpicture}
			\caption{$\Cins{W}$}
		\end{subfigure}%
		~ 
		\begin{subfigure}[b]{0.3\textwidth}
			\centering
			\includegraphics[height=1.5in]{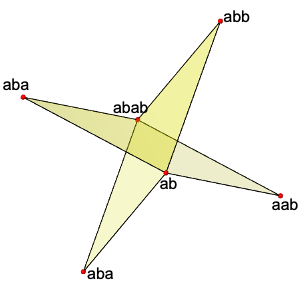}
			\caption{$\Cins{W'}$}
		\end{subfigure}
		~
		\begin{subfigure}{0.20\textwidth}
			{}
		\end{subfigure}
		\caption{Killing a cycle by including missing words in the paths $ab\to abab$. }
	\end{figure}
	
	\begin{defi}
		Let $w_m, w_M$ be words in $\Sigma$ and $w_m\leq w_M$ be a subword of $w_M$.

		\begin{enumerate}
			\item Write $w_M=a_1^{r_1}a_2^{r_2}\cdots a_m^{r_m}$ where $a_i\in\Sigma$, $a_i\neq a_{i+1}$, and $1\leq r_i$. We say that $w_m$ \textbf{embeds uniquely into} $w_M$, if there is only a unique sequence $q_1, \dots, q_m$, with $0\leq q_i\leq r_i$ such that $w_m=a_1^{q_1}a_2^{q_2}\cdots a_m^{q_m}$. 
			
			\item Denote by $\sub{w_m}{w_M}$ the set of all possible subwords of $w_M$  that contain $w_m$, that is  $$\sub{w_m}{w_M}=\{w\in\Sigma^*: w_m\leq w\leq w_M\}.$$
		\end{enumerate}
	\end{defi}
	
	\begin{examples}\leavevmode
		\begin{itemize}
			\item For any word $w\in\Sigma^*$, $1$ embeds uniquely in $w$, since if $w=a_1^{r_1}\dots a_m^{r_m}$, the only  sequence of powers to get 1 is the constant sequence $q_i=0$.  Similarly, $w$ always embeds uniquely into $w$, by choosing $q_i=r_i$. 
			
			\item Let $w_m=abc$, $w=abca$ and $w_M=a^2bcaba$. Note $w_m\leq w\leq w_M$, $w_m$ embeds uniquely into $w_M$, and also into $w$, while $w$ doesn't embeds uniquely into $w_M$. To see this, consider $w_M=a_1^{r_1}\dots a_6^{r_6}$ where $r_1=2$, and $r_i=1$ for $2\leq i\leq 6$. Then the only way to embed $w_m=a_1^{q_1}\dots a_6^{q_6}$ is by choosing the powers $q_1=q_2=q_3=1$ and $q_4=q_5=q_6=0$. Note this is also the only way to embed $w_m$ into $w$. However, $w$ can be embedded into $w_m$ in two ways: by choosing the powers $t_1=t_2=t_3=t_4=1$ and $t_5=t_6=0$, or by choosing $t_1=t_2=t_3=1$, $t_4=t_5=0$ and $t_6=1$. 
		\end{itemize}
	\end{examples}

    We establish the following lemma about uniquely embedded words. 

    \begin{lemma}\label{lemma:uniquely_embedding}
    Let $w_m\leq w_M$ be finite words such that $w_m$ embeds uniquely into $w_M$. If 
    $w\in\sub{w_m}{w_M}$ with $w\neq w_m$, then $w_m$ also embeds uniquely into $w$. 
    \end{lemma}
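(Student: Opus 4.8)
The plan is to reformulate ``$u$ embeds uniquely into $v$'' as the numerical statement $N(u,v)=1$, where $N(u,v)$ counts power‑decompositions of $u$ relative to $v$, and then to establish the monotonicity inequality $N(u,w)\le N(u,v)$ for every triple $u\le w\le v$. Granting this, the lemma is immediate: taking $u=w_m$, middle word $w$, and $v=w_M$, the hypothesis says $N(w_m,w_M)=1$, while $w_m\le w$ forces $N(w_m,w)\ge 1$; hence $N(w_m,w)=1$, i.e.\ $w_m$ embeds uniquely into $w$. (The assumption $w\ne w_m$ plays no role here, the case $w=w_m$ being trivial.)

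Setting up $N$: given $u\le v$ with reduced form $v=b_1^{e_1}\cdots b_n^{e_n}$, every index set $I\subset[|v|]$ with $v[I]=u$ yields a tuple $(d_1,\dots,d_n)$ with $0\le d_i\le e_i$ and $u=b_1^{d_1}\cdots b_n^{d_n}$, by setting $d_i=|I\cap R_i|$, where $R_i$ denotes the block of positions forming the $i$‑th run of $v$; conversely every such tuple is realized by some $I$. Let $N(u,v)$ be the number of distinct tuples so obtained. Then ``$u$ embeds uniquely into $v$'' means exactly $N(u,v)=1$, and $N(u,v)\ge 1$ whenever $u\le v$.

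The core step is to show $N(u,w)\le N(u,v)$ when $u\le w\le v$. First fix one index set witnessing $w\le v$, and view it as an increasing, letter‑preserving injection $\phi$ from the positions of $w$ into those of $v$. Let $R^w_1,\dots,R^w_l$ be the maximal runs of $w$, and put $T_j=\{\,i : R_i\cap\phi(R^w_j)\ne\emptyset\,\}$, the set of runs of $v$ met by the image of the $j$‑th run of $w$. The crucial observation is that $T_1,\dots,T_l$ are pairwise disjoint: since $\phi$ is increasing and consecutive runs of a word in reduced form carry distinct letters, for $j<j'$ the image of the run $R^w_{j+1}$ (which has a letter different from that of $R^w_j$) forces every run of $v$ hit by $\phi(R^w_j)$ to lie strictly below every run of $v$ hit by $\phi(R^w_{j'})$; a single run of $w$ may ``straddle'' several runs of $v$ (as with $aa\le aba$), but two distinct runs of $w$ never touch a common run of $v$. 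Now let $(d_j)$ be any power‑decomposition of $u$ inside $w$, realized by an index set $S$ among the positions of $w$. Then $\phi(S)$ realizes a power‑decomposition $(e_i)$ of $u$ inside $v$, and since $\phi$ is injective and $\phi(S\cap R^w_j)\subseteq\bigcup_{i\in T_j}R_i$ with the $T_j$ pairwise disjoint, one gets $\sum_{i\in T_j}e_i=|S\cap R^w_j|=d_j$ for every $j$. Thus $(d_j)$ is recovered from $(e_i)$, so choosing one realizing $S$ per $w$‑decomposition produces an injection $(d_j)\mapsto(e_i)$ from power‑decompositions of $u$ in $w$ into those of $u$ in $v$, giving $N(u,w)\le N(u,v)$ and hence the lemma.

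I expect the only genuine obstacle to be making the pairwise disjointness of the sets $T_j$ precise: one must argue carefully that, although a run of $w$ can spread over several runs of $v$, the monotonicity of $\phi$ together with the alternation of letters in the reduced form of $v$ prevents two distinct runs of $w$ from landing in a common run of $v$. This is exactly where the ``reduced form'' hypothesis enters essentially; once it is in hand, the remaining verifications (that $\phi(S)$ indeed witnesses $u\le v$, and that the sums over the $T_j$ reproduce the $d_j$) are routine bookkeeping with index sets.
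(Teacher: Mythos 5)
Your proposal is correct, but it takes a genuinely different and heavier route than the paper. The paper writes everything in terms of the maximal runs of $w_M$: since $w\leq w_M$, it writes $w=a_1^{t_1}\cdots a_m^{t_m}$ with $0\leq t_i\leq r_i$ (the decomposition of $w$ \emph{induced} by $w_M$'s runs, not necessarily reduced), and then observes that any sequence $q_i'$ with $0\leq q_i'\leq t_i$ expressing $w_m$ inside $w$ automatically satisfies $q_i'\leq r_i$, i.e.\ it is already an embedding of $w_m$ into $w_M$; uniqueness in $w_M$ then forces $q_i'=q_i$ with no further work, because the two decompositions share the same index set. You instead work with $w$'s \emph{own} reduced runs, fix a position-level embedding $\phi$ of $w$ into $w_M$, and build an injection from power-decompositions of $w_m$ in $w$ to those in $w_M$, the crux being the pairwise disjointness of the sets $T_j$ of $w_M$-runs met by $\phi$ of each $w$-run; that claim is true (for runs of $w$ with distinct letters it is immediate, and for two same-letter runs the image of an intermediate run of a different letter would have to land strictly inside a single run of $w_M$, which is impossible), so your injection $d_j=\sum_{i\in T_j}e_i$ is sound and yields the stronger monotonicity statement $N(u,w)\leq N(u,v)$ for $u\leq w\leq v$. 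What each approach buys: yours is more faithful to the letter of the definition (uniqueness with respect to $w$'s reduced form) and gives a reusable counting inequality; the paper's is far shorter precisely because it compares embeddings through the common index set inherited from $w_M$, at the cost of implicitly identifying uniqueness in the induced decomposition with uniqueness in the reduced one. You are also right that the hypothesis $w\neq w_m$ is not needed; the paper's proof does not use it either.
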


    \begin{proof}
        Suppose $w_M=a_1^{r_1}\cdots a_m^{r_m}$ and $w_m=a_1^{q_1}\cdots a_m^{q_m}$, where $a_i\in\Sigma$, $a_i\neq a_{i+1}$ for all $i$, and $0\leq q_i\leq r_i$.  Since $w\leq w_M$, there exist integers $0\leq t_i\leq r_i$, such that $w=a_1^{t_1}\cdots a_m^{t_m}$. Moreover, since $w_m\leq w$, there must exist integers $0\leq q'_i\leq t_i$ such that $w_m=a_1^{q_1'}\cdots a_m^{q_m'}$, with $0\leq q_i'\leq t_i$, but since $w_m$ embeds uniquely in $w_M$, and the integers $q_i'$ also provide such an embedding, so $q_i'=q_i$ for all $i$, and it is unique. Therefore, $w_m$ also embeds uniquely into $w$.  
    \end{proof}

    Our main theorem shows that the homology of the Insertion Chain Complex of $\sub{w_m}{w_M}$ vanishes whenever $w_m$ embeds uniquely into $w_M$. We conjecture that this result continues to hold even without the unique embedding requirement.
	\begin{theorem}\label{thm:null_homology_one_dim}
		Let $\Sigma$ be a finite alphabet and $w_M,w_m\in\Sigma^*$ words such that $w_m$ embeds uniquely into $w_M$.  Then $\ChainBlock{\sub{w_m}{w_M}}$ is connected and $$H_i\left(\ChainBlock{\sub{w_m}{w_M}}\right)=0\ \text{ for all }i\geq 1$$
	\end{theorem}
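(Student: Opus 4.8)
The plan is to prove the stronger statement that the insertion block complex $\ChainBlock{\sub{w_m}{w_M}}$ is collapsible, from which connectedness and $H_i = 0$ for $i \geq 1$ follow at once. I would argue by induction on $d = |w_M| - |w_m|$. When $d = 0$ we have $\sub{w_m}{w_M} = \{w_m\}$, and when $d = 1$ we have $\sub{w_m}{w_M} = \{w_m, w_M\}$ joined by a single $1$-block, so the base cases are immediate. (Connectedness in general is in any case easy to see directly: for $w \in \sub{w_m}{w_M}$ with $w \neq w_m$, since $w_m \leq w$ and $|w| > |w_m|$, some symbol occurrence of $w$ is not used by an embedding of $w_m$ into $w$, and deleting it gives a word $w' \sim w$ still in $\sub{w_m}{w_M}$; iterating reaches $w_m$.)

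For the inductive step, $d \geq 1$, write $w_M = a_1^{r_1}\cdots a_m^{r_m}$ and $w_m = a_1^{q_1}\cdots a_m^{q_m}$ in canonical form, where by hypothesis this is the unique power sequence with $0 \leq q_i \leq r_i$. Let $i_0$ be the least index with $q_{i_0} < r_{i_0}$ and set $w_M' = a_1^{r_1}\cdots a_{i_0}^{r_{i_0}-1}\cdots a_m^{r_m}$. One checks that $w_m \leq w_M'$ and that $w_m$ still embeds uniquely into $w_M'$; this is straightforward except when $r_{i_0} = 1$, where the $i_0$-th run disappears and the neighbouring runs of $w_M'$ may merge, but then the unique-embedding hypothesis on $w_M$ precisely rules out the resulting ambiguities. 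Since $|w_M'| - |w_m| = d - 1$, the induction hypothesis applies to $\ChainBlock{\sub{w_m}{w_M'}}$.

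The heart of the proof is to collapse $\ChainBlock{\sub{w_m}{w_M}}$ onto the subcomplex $\ChainBlock{\sub{w_m}{w_M'}}$; composing with the collapse supplied by the induction hypothesis then finishes the argument. The words $w \in \sub{w_m}{w_M}\setminus\sub{w_m}{w_M'}$ are exactly those forced to use the full $i_0$-th run of $w_M$; for such $w$, let $\rho(w)$ be $w$ with one copy of $a_{i_0}$ deleted from that run, so that $\rho(w) \in \sub{w_m}{w_M'}$, $w_m \leq \rho(w)$, and $\rho(w) \sim w$. Using $\rho$ as a guide, I would build an acyclic matching on the blocks of $\ChainBlock{\sub{w_m}{w_M}}$ that are not already blocks of $\ChainBlock{\sub{w_m}{w_M'}}$, pairing each such block with the one obtained by toggling (inserting or deleting) a distinguished $(1,a_{i_0})$-edge placed at the $\rho$-position, and then invoke the fundamental theorem of discrete Morse theory. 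The main obstacle is the verification that this matching is well defined — that toggling the pivot edge preserves validity of blocks, and that the resulting discrete vector field has no closed gradient path — and this is exactly where the unique-embedding hypothesis is needed: by Lemma \ref{lemma:uniquely_embedding} every word in the range embeds $w_m$ uniquely, so it carries an unambiguous distinguished $a_{i_0}$-occurrence and the pivot, hence the matching, is well defined. When embeddings are not unique an intermediate word may realise the $i_0$-th run of $w_M$ in several ways, the distinguished occurrence is lost, and this approach no longer applies.
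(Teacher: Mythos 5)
Your overall strategy (induction on $|w_M|-|w_m|$, collapsing $\Cins{\sub{w_m}{w_M}}$ onto $\Cins{\sub{w_m}{w_M'}}$ by a discrete Morse matching) is genuinely different from the paper's argument, which never uses induction or collapsibility: the paper takes an arbitrary $d$-cycle, looks at a vertex $w$ of maximal length, maps the star of $w$ by an explicit chain map $\Psi$ onto the simplicial chain complex of a simplex $\Delta(J)$, uses contractibility of the simplex to produce a filling $\tau$, and replaces the cycle by $\gamma-\partial\tau$, thereby eliminating maximal-length vertices one at a time. The difference of route is not the problem; the problem is that the step your whole proof rests on is only announced, not carried out. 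Constructing the matching, showing it is a perfect involution on the blocks outside $\ChainBlock{\sub{w_m}{w_M'}}$, and proving the gradient field is acyclic is the entire content of the inductive step, and you explicitly defer exactly these verifications.

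Two concrete points show this is a real gap and not a routine check. First, the ``distinguished $a_{i_0}$-occurrence'' is not supplied by Lemma~\ref{lemma:uniquely_embedding}: that lemma gives a unique power sequence for $w_m$ inside an intermediate word $w$, but it says nothing about which maximal run of $w$ corresponds to the $i_0$-th run of $w_M$. The hypothesis of the theorem is unique embedding of $w_m$ into $w_M$, not unique embedding of the intermediate words into $w_M$, and a word $w$ that is forced to use the full $i_0$-th run can still embed into $w_M$ in several ways; so the pivot position on words, and a fortiori on blocks, needs an independent argument (it may be true under the hypothesis, but you have not proved it). Second, even granting a well-defined pivot, toggling a $(1,a_{i_0})$-edge can destroy validity (for instance when the block already carries an $a_{i_0}$-edge adjacent to the pivot run with only powers of $a_{i_0}$ in between, insertion would create a degenerate pattern $(1,a_{i_0})(1,a_{i_0})$ in canonical form), and a block lies outside the subcomplex as soon as a single one of its vertices does, so the complement you must match is not simply ``blocks all of whose vertices use the full run''; you must check the proposed partner is again outside the subcomplex, lies in $\Cins{\sub{w_m}{w_M}}$, and that no gradient cycles arise. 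Finally, since the paper only equips blocks with a chain complex rather than a CW structure, you would need the algebraic (chain-level) version of discrete Morse theory — harmless, since incidence coefficients are $\pm1$, but it should be said. As it stands, the proposal is a plausible plan whose decisive step is missing, so it does not yet constitute a proof of Theorem~\ref{thm:null_homology_one_dim}.
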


	\begin{proof}
		Denote $W=\sub{w_m}{w_M}$, and $\C=\ChainBlock{W}$. Suppose $w_M=a_1^{r_1}\cdots a_m^{r_m}$ and $w_m=a_1^{q_1}\cdots a_m^{q_m}$, where $a_i\in\Sigma$, $a_i\neq a_{i+1}$ for all $i$, and $0\leq q_i\leq r_i$. Let $w\in W$ be any vertex, and let $t_i$ be a sequence of powers such that $w=a_1^{t_1}\cdots a_m^{t_m}$, $q_i\leq t_i\leq r_i$. Let $N=\sum_{i=1}^m (r_i-t_i)$. Note that for each $j=0, 1, \dots, N$ we can define $w_j=a_1^{t_{1,j}}a_2^{t_{2,j}}\cdots a_m^{t_{m,j}}$ where $t_{i,0}=q_i$, $t_{i,N}=t_i$ for all $i$, and for each $0<j<M$ there is a unique index $k_j$, such that $t_{k_j,j}\neq t_{k_j,j-1}$, and instead $t_{k_j,j}=t_{k_j,j-1}+1$. Thus, $w_{j-1}$ and $w_j$ differ only by one insertion of the symbol $a_{k_j}$, so $w_{j-1}\to w_j$ is an edge in $\C$. Hence, we get a path $w_m=w_0\to w_1\to\cdots w_{N-1}\to w_N=w$ in $\C$. Therefore, $\C$ is connected.

		Fix $d\geq 1$, and let $\gamma$ be a $d$-dimensional cycle in $\C$, that is, $\gamma=\sum_i \xi_i \sigma_i$, for some integers $\xi_i\in \Z$, $\xi_i\neq 0$, and  distinct $d$-blocks $\sigma_i\in \C^{(d)}$. Denote by $L(\gamma)$ the length of the longest word among the vertices of $\gamma$, and, for any integer $\ell$, denote by $V_\ell(\gamma)$ the set of  vertices in $\gamma$ of length at least $\ell$. That is 
		\begin{align*}
			L(\gamma)&=\max\left\{ |w| : w\in V(\sigma_i), \text{ for some } \sigma_i\text{ in }\gamma\right\}\\
			V_\ell(\gamma) &= \left\{w: |w|\geq\ell \text{ and } w\in V(\sigma_i), \text{ for some } \sigma_i\text{ in }\gamma\right\}.
		\end{align*}
		
		We claim that for any $w\in V_{L}(\gamma)$ with $L=L(\gamma)$,  there exists a $d$-dimensional cycle $\gamma'$ that is homologous to $\gamma$, and such that $V_{L}(\gamma')=V_L(\gamma)\setminus\{w\}$. Applying this claim repeatedly, we will show that $\gamma$ is homologous to the trivial cycle. Since this can be done for any $d$-dimensional cycle, $H_d(\C)=0$ as desired. 
        
        Indeed, if $\gamma\neq 0$, write $V_L(\gamma)=\{w_1, \dots, w_k\}$, then there is a sequence of homologous $d$-dimensional cycles $$\gamma=\gamma_0 \sim \gamma_1\sim \cdots \sim\gamma_k=\gamma'$$ where $V_L(\gamma_i)=V_L(\gamma_{i-1})\setminus\{w_i\}$, and so $V_L(\gamma')=\emptyset$ making $L(\gamma')< L(\gamma)$. But then,  further repeating this process, there are homologous $d$-dimensional cycles $$\gamma\sim \gamma'\sim \gamma''\sim \dots$$ such that $L(\gamma)>L(\gamma')>L(\gamma'')>\dots$. Since $L(\cdot)$ is a non-negative integer, this process must eventually end. The process ends when we get the trivial (empty) cycle, as otherwise $V_{L(\gamma^{(k)})}(\gamma^{(k)})\neq \emptyset$ and the process could be continued. 

        Before proving the claim, we introduce the following notation. Given any chain $\gamma=\sum_i\xi_i\sigma_i$ in $\C$ and a vertex $w\in \cup_i V(\sigma_i)$, we define the star and link of $w$ in $\gamma$ as the following chains: 
        		\begin{align*}
	\star_\gamma(w)&=\sum_{\substack{i\\w\in V(\sigma_i)}}\xi_i\sigma_i & \link_\gamma(w)&=\partial \star_\gamma(w). 
		\end{align*}
        
    Readers familiar with simplicial complexes will notice similarities between our definitions of the star and link and the corresponding notions for simplices. However, there are important differences: in our setting, these notions are defined for vertices with respect to chains, rather than for arbitrary simplices in a complex. Furthermore, the outcome of our definitions is a chain, not a sub-complex. 
		
		We now proceed to prove the claim. Let $\gamma$ be a non-trivial $d$-dimensional cycle as above, and let $w=a_1^{t_1}\dots a_m^{t_m}$ be a vertex in $V_{L(\gamma)}(\gamma)$, where $a_i\in\Sigma$, $t_i\geq1$, and $a_i\neq a_{i+1}$ for all $i$.   Because $\gamma$ is a cycle we get 		
		\begin{align*}
			0&=\partial \gamma=\partial\left(  \sum_{\substack{i\\ w\in V(\sigma_i)}}\xi_i\sigma_i + \sum_{\substack{i\\ w\not\in V(\sigma_i)}}\xi_i\sigma_i \right) = \link_\gamma(w) + \sum_{\substack{i\\ w\not\in V(\sigma_i)}}\xi_i\partial\sigma_i
		\end{align*}
		
		Note that if $w\not\in\sigma_i$, then $w\not\in\tau$ for any $\tau\prec\sigma_i$, so $w$ is not a vertex of any block in $\partial\sigma_i$, and hence it is not a vertex of any block in the second summation. Since the whole sum equals zero, all blocks in $\link_\gamma(w)$ cancel out with the blocks in the sum $\displaystyle\sum_{ w\not\in V(\sigma_i)}\xi_i\partial\sigma_i$, so $w$ is not a vertex of any block in $\link_\gamma(w)$, that is,  $\star_{\link_\gamma(w)}(w)=0$. This also means that in $\star_\gamma(w)$, all facets that contain $w$ cancel each other out when computing the boundary.  In particular, $\star_\gamma(w)$ must have at least two $d$-blocks, so that their facets containing $w$ cancel out. 
			
		Since each $\sigma_i$ in $\star_\gamma(w)$ is a $d$-block with $w$ as its maximal word, there exists a set of $d$ indices $I_i\subset [m]$ such that $$\sigma_i=\lambda_1\lambda_2\dots \lambda_m,\quad \text{where }\lambda_j=\begin{cases}
			a_j^{t_j} & \text{ if } j\not\in I_i\\
			a_j^{t_j-1}(1,a_j)& \text{ if }j\in I_i
		\end{cases}. $$
		Moreover, because $a_i\neq a_{i+1}$ and $t_j\geq 1$, this is already in canonical form,  and by Theorem~\ref{thm:unique_canonical_form} each set of indices is unique.
		
		Let $J=\bigcup_i I_i$, be the union of the indices corresponding to the blocks in $\star_\gamma(w)$. Note $|J|\geq d+1$, since there are at least two different blocks $\sigma_1$, $\sigma_2$, and their corresponding sets $I_1, I_2$ are distinct. For any $I\subset J$, define  $$\sigma_I(w)=\lambda_1\cdots \lambda_m, \quad \text{where } \lambda_j=\begin{cases}
			a_j^{t_j} & \text{ if } j\not\in I\\
			a_j^{t_j-1}(1,a_j)& \text{ if }j\in I
		\end{cases}. $$
		In particular, $\sigma_{I_i}(w)=\sigma_i$, and all of these are also already in canonical form. 

         Let $\C'$ be the chain complex of all the blocks $\sigma_I(w)$ and their faces, that is, $\sigma$ is a block in $\C'$ if and only if there is a set of indices $I\subset J$, such that $\sigma$ is a face of $\sigma_I(w)$. Since we include all faces, $\partial$ can be restricted to $\C'$, and both $\star_\gamma(w)$ and $\link_\gamma(w)$ are chains in this complex $\C'$. Moreover, we get $\C'\subset\C$ as a subcomplex, since the vertices of all the blocks $\sigma_I(w)$ are contained in $\sub{w_m}{w}\subset V(\C)$.    Indeed, if $v\in\sigma_I(w)$, then $v$ has the form $v=a_1^{s_1}a_2^{s_2}\dots a_m^{s_m}$, where $s_j\in \{t_j, t_j-1\}$ if $j\in I$ and $s_j=t_j$ otherwise. Clearly, $v\leq w$. Furthermore, for each $j\in J$, there exists an $i_0$ such that $j\in I_{i_0}$, note the vertex $v_j=a_1^{t_1}\dots a_{j-1}^{t_{j-1}}a_j^{t_j-1}a_{j+1}^{t_{j+1}}\dots a_m^{t_m}$ is in $\sigma_{i_0}$, and since $V(\sigma_{i_0})\subset\sub{w_m}{w_M}$, then $w_m\leq v_j\leq w_M$. By Lemma~\ref{lemma:uniquely_embedding}, $w_m$ embeds uniquely into $v_j$, so if $w_m=a_1^{q_1}\cdots a_m^{q_m}$, where $q_i\leq t_i$ for all $i$, furthermore we have  $q_j\leq t_{j}-1$. Hence, for either possible choice of $s_j$, we have $q_j\leq s_j$, for all $j\in I$ and thus $w_m\leq v$ as well, so $v\in \sub{w_m}{w}$ for all $v\in V(\sigma_I(w))$. 
        	
		Let $\Delta(J)$ be the simplex with vertex set $J$, where the order of the vertices is given by their numerical value. Define the functor $\Psi: \C'\to \C(\Delta(J))$ onto the simplicial chain complex of $\Delta (J)$, as follows:
		
		Let $\sigma\in \C'$ be an $r$-block, then there must exist a set of indices $I\subset J$ such that $\sigma\leq\sigma_I(w)$. If $w\in V(\sigma)$, it is its maximal word and $I$ can be chosen so that $\sigma=\sigma_I(w)$,  write $I=\{k_1, k_2, \dots, k_r\}$ with $k_i<k_{i+1}$ and define $\Psi(\sigma)$ to be the $(r-1)$-simplex given by  $$\Psi(\sigma)=\Psi(\sigma_I(w))=[k_1,k_2, \dots, k_r].$$ If instead $w\not\in V(\sigma)$, define $\Psi(\sigma)=0$. Note $\Psi(\sigma)$ is well defined since $\sigma_I(w)$ is in canonical form, and so it is unique due to Theorem \ref{thm:unique_canonical_form}. Also, $\Psi$ is onto, since all simplices in $\C(\Delta(J))$ correspond to subsets of $J$, and any subset $I\subset J$ has a preimage $\Psi(\sigma_I(w))=I$.  Moreover, it is indeed a chain map that commutes with $\partial$, as we can verify for $I=[k_1, \dots, k_r]$. Indeed, let its corresponding block be $\sigma_I(w)=x_0(1,a_{k_1})x_1(1,a_{k_2})\cdots(1,a_{k_r})x_r$,  let $\Omega\subset[k]$ be the set of indices such that $a_{k_{i-1}}\neq a_{k_{i+1}}$ or $x_{i-1}x_i\neq 1$, then
        
		\begin{align*}
			\Psi(\partial\sigma_I(w))&=\Psi\left( \sum_{i=1}^r (-1)^{i+1}x_0(1,a_{k_1})\cdots x_{i-1}a_{k_i}x_i\cdots (1,a_{k_r})x_r\right.\\
			&- \left.\sum_{i\in\Omega} x_0(1,a_{k_1})\cdots x_{i-1}x_i\cdots (1,a_{k_r})x_r\right)\\
			&= \sum_{i=1}^r (-1)^{i+1}\Psi\left(x_0(1,a_{k_1})\cdots x_{i-1}a_{k_i}x_i\cdots (1,a_{k_r})x_r\right) +0\\
			&=\sum_{i=1}^r (-1)^{i+1}[k_1,k_2,\dots, k_{i-1}, k_{i+1},\dots, k_r]\\
			&=\partial [k_1, \dots, k_r]=\partial\Psi\left(x_0(1,a_{k_1})\cdots (1,a_{k_r})x_r\right)
		\end{align*}
		where $\Psi$ applied to the sum indexed by $\Omega$ is zero, since none of those blocks include $w$ as a vertex (since we remove one instance of the symbol $a_{k_i})$. 
		
		Note $\Psi(\star_\gamma(w))$ is a $(d-1)$-dimensional cycle in $\C(\Delta(J))$, since $\partial \Psi\left( \star_\gamma(w)\right)=\Psi(\partial\star_\gamma(w))=\Psi(\link_\gamma(w))=0$, given that $\link_\gamma(w)$ doesn't have any block with $w$ as a vertex so they are mapped to zero.  Recall $\Delta(J)$ is contractible and of dimension $|J|-1\geq d$, so there exists a $d$-chain in $\C(\Delta(J))$, whose boundary is $\Psi(\star_\gamma(w)))$. Let one such $d$-chain be $\tau_\text{simp}=\sum_i \alpha_i c_{\text{simp},i}$, where each $c_{\text{simp},i}$ is a $d$-simplex in $\Delta(J)$, and take $J_i\subset J$, such that  $\Psi(\sigma_{J_i}(w))=c_{\text{simp},i}$. Let $\tau=\sum_i \alpha_i \sigma_{J_i}(w)$, then
		\begin{align*}
			\Psi\left( \partial \tau -\star_\gamma(w) \right) &= \partial\Psi(\tau)-\Psi(\star_\gamma(w))\\
			&= \partial \tau_{\text{simp}} -\Psi(\star_\gamma(w)) =0
		\end{align*}
		
		So in the chain $\partial \tau -\star_\gamma(w)$ there are no blocks with $w$ as a  vertex, and since it is contained in $\C'$, all its vertices must have a length strictly less than $|w|$. Then, setting $\gamma'=\gamma-\partial\tau$,  clearly $\gamma$ is homologous to $\gamma'$, and we observe $$\gamma'=\left( \gamma -\star_\gamma(w)\right)-\left(\partial\tau-\star_\gamma(w)\right),$$
		where both parenthesis are $d$-chains in $\C$. In the former, we have removed from $\gamma$ all blocks containing $w$, and the later is a combination of blocks whose vertices have length less than $|w|=L$. Therefore, $V_L(\gamma')=V_L(\gamma)\setminus\{w\}$ as desired. 
	\end{proof}

\section{Concluding Remarks}
Symbol insertions and deletions in words have been studied in formal language theory for decades~\cite{Lila-phd}, often focused on the generative power of these operations and the closure properties of the sets of words with respect to the operations~\cite{alhazov2022regulated,Ito-1997,Verlan2007}. In particular, it is known that single symbol contextual insertion can generate any recursively enumerable language, therefore it has universal computational power~\cite{RE-ins-2002}. 
In most of those studies, the interest has been on the insertion/deletion types that generate certain classes of sets of words, rather than the differences among the obtained words within the set. 

With the notion of the letter insertion complex and its topological properties, we introduced a topological measure of complexity for sets of words. This measure captures how ``dense'' a set is, in the sense that non-trivial homology typically indicates that there are sufficiently many ``missing'' words from the set, which would be complete if all possible symbol insertions were included. If one generates words up to a given length by randomly adding symbols, all words would eventually appear, and in that case all homology of the corresponding complex would vanish.  Thus, homology can also be regarded as a measure of ``non-randomness.''  

We showed that when the embedding of the ``minimal'' subword within the maximal subword is unique, and the set contains all words ``in between,'' then the corresponding complex has vanishing homology (Theorem~\ref{thm:null_homology_one_dim}). We conjecture that this result remains valid even without the unique embedding, but this question remains open.  

Unlike simplicial complexes, determining the size of minimal $k$-spheres in the word insertion complex is not straightforward. Theorem~\ref{thm:min_cycles_d} gives the precise size of the minimal set of words that produce non-zero homology for dimensions $0,1,$ and $2$, while for dimension $\ge 3$ we provide only a bound.  We believe this bound is not tight,and providing a better estimate—particularly determining whether minimal sets of words producing $k$-spheres can be achieved using only two symbols—remains an open problem.  

Finally, we note that another promising direction for future research is to combine our ideas with previously studied insertion/deletion systems, as in~\cite{alhazov2022regulated,Ito-1997,Verlan2007}.  
Since insertion/deletion systems are defined by a finite set of rules iteratively applied to a finite set of words, it would be interesting to investigate how such rules affect the topology of the insertion chain complex, that is, to understand what structural changes in the insertion word complexes are induced by these rules and how the corresponding homology varies.

\section*{Acknowledgments}
  This research was under auspices of the Southeast Center for Mathematics and Biology, an NSF-Simons Research Center for Mathematics of Complex Biological Systems, under National Science Foundation Grant No. DMS-1764406 and Simons Foundation Grant No. 594594 as well as W.M. Keck Foundation.

\bibliographystyle{abbrvnat}
\bibliography{References}

\appendix
\renewcommand{\thesection}{\Alph{section}} 
\makeatletter
\renewcommand\@seccntformat[1]{\appendixname\ \csname the#1\endcsname.\hspace{0.5em}}
\makeatother

\newpage
\section{Additional Proofs and Supporting Lemmas}\label{appendix}
\renewcommand{\thelemma}{A.\arabic{lemma}}
\renewcommand{\thetheorem}{A.\arabic{lemma}}

This appendix contains the remaining proofs from the paper, which were relocated either due to being straightforward or because they required the analysis of many similar cases. Additionally, we provide some auxiliary lemmas specifically needed for these proofs, as well as a table from Example \ref{ex:sub-blocks}.

\subsection{Blocks}
\begin{table}
    \centering
    \begin{tabular}{|c||cc|c|}\hline
     dim&  $I^+$&  $I^-$& $\sigma(I^+,I^-)$\\ \hline\hline
        &$\emptyset$&$\{1,2,3\}$&$1$\\
        &$\{1\}$&$\{2,3\}$&$a$\\
        &$\{2\}$&$\{1,3\}$&$b$\\
        $0$&$\{3\}$&$\{1,2\}$&$a$\\
        &$\{1,2\}$&$\{3\}$&$ab$\\
        &$\{1,3\}$&$\{2\}$&$aa$\\
        &$\{2,3\}$&$\{1\}$&$ba$\\
        &$\{1,2,3\}$&$\emptyset$&$aba$\\ \hline
        &$\emptyset$&$\{1,2\}$&$(1,a)$\\
        &$\emptyset$&$\{1,3\}$&$(1,b)$\\
        1&$\emptyset$&$\{2,3\}$&$(1,a)$\\
        &$\{1,2\}$&$\emptyset$&$ab(1,a)$\\
        &$\{1,3\}$&$\emptyset$&$a(1,b)a$\\
        &$\{2,3\}$&$\emptyset$&$(1,a)ba$\\ \hline
    \end{tabular}\hspace{2em}%
    \begin{tabular}{|c||cc|c|}\hline
     dim&  $I^+$&  $I^-$& $\sigma(I^+,I^-)$\\  \hline\hline 
     &$\{1\}$&$\{2\}$&$a(1,a)$\\
        &$\{1\}$&$\{3\}$&$a(1,b)$\\
        $1$&$\{2\}$&$\{1\}$&$b(1,a)$\\
        &$\{2\}$&$\{3\}$&$(1,a)b$\\
        &$\{3\}$&$\{1\}$&$(1,b)a$\\
        &$\{3\}$&$\{2\}$&$(1,a)a$\\ \hline
        &$\emptyset$&$\{1\}$&$(1,b)(1,a)$\\
        &$\emptyset$&$\{2\}$&$(1,a)(1,a)$\\
        $2$&$\emptyset$&$\{3\}$&$(1,a)(1,b)$\\
        &$\{1\}$&$\emptyset$&$a(1,b)(1,a)$\\
        &$\{2\}$&$\emptyset$&$(1,a)b(1,a)$\\
        &$\{3\}$&$\emptyset$&$(1,a)(1,b)a$\\ \hline
        $3$&$\emptyset$&$\emptyset$&$(1,a)(1,b)(1,a)$\\ \hline
    \end{tabular}
   \caption{All sub-blocks of $(1,a)(1,b)(1,a)$ }
    \label{tale:ex_subblocks}
\end{table}

\begin{lemma}[Restatement of Lemma \ref{lemma:charac_valid}]\hypertarget{proof:charac_valid}
  	Let $\sigma\in \E_m$ be an $m$-block. Then, the following are equivalent:
   \begin{enumerate}
   \item $\sigma$ is invalid. 
    \item There exists an $1\leq i<m$, such that $v_{\{i\}}(\sigma)= v_{\{i+1\}}(\sigma)$. 
    \item There exists an $1\leq i<m$, such that $\sigma(\{i\},\emptyset)=\sigma(\{i+1\},\emptyset)$.
    \item There are indices $1\leq j<i\leq m$, such that $v_{[m]\setminus\{i\}}(\sigma)= v_{[m]\setminus\{j\}}(\sigma)$.
   \end{enumerate}
  \end{lemma}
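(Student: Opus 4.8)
The plan is to prove the chain of equivalences $(1)\Rightarrow(2)\Rightarrow(3)\Rightarrow(1)$ together with the additional equivalence with $(4)$, by passing to the canonical form of $\sigma$ and computing vertices directly from the definition. Write $\sigma = x_0(1,a_1)x_1\cdots(1,a_m)x_m$ in canonical form (Theorem~\ref{thm:unique_canonical_form}), so that $x_j[1]\neq a_j$ for all $j$. Recall that for $I\subset[m]$, $v_I(\sigma) = x_0\xi_1x_1\cdots\xi_mx_m$ with $\xi_i = a_i$ if $i\in I$ and $\xi_i=1$ otherwise, and also that $\sigma(\{i\},\emptyset)$ is the $(m-1)$-block obtained by replacing the $i$-th edge $(1,a_i)$ by the literal symbol $a_i$. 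These two descriptions make all four statements into explicit word/block identities which I can compare symbol by symbol.

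First I would show $(1)\Rightarrow(2)$. If $\sigma$ is invalid, then by Definition~\ref{defi:invalid} there is some $1\le i<m$ with $x_i=1$ and $a_i=a_{i+1}=:a$. Then $v_{\{i\}}(\sigma) = x_0\cdots x_{i-1}\,a\,x_{i+1}\cdots x_m$ (the inserted $a$ sits in position $i$, the trivial word $x_i$ contributing nothing), while $v_{\{i+1\}}(\sigma) = x_0\cdots x_{i-1}\,a\,x_{i+1}\cdots x_m$ as well (here the inserted $a$ sits in position $i+1$, again with $x_i=1$ in between). Since $a_i=a_{i+1}$ these two words are literally the same, so $v_{\{i\}}(\sigma)=v_{\{i+1\}}(\sigma)$. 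For $(2)\Rightarrow(1)$ I would argue contrapositively: if $\sigma$ is valid, I claim all the $v_{\{i\}}(\sigma)$ are distinct (and more generally all $v_I(\sigma)$ with fixed $|I|$ are distinct, which is the content one also needs for $(4)$). Suppose $v_{\{i\}}(\sigma)=v_{\{j\}}(\sigma)$ with $i<j$. Both words have the form "$v_\emptyset(\sigma)$ with one extra symbol inserted"; comparing them as words of equal length and using that they agree outside the region between positions $i$ and $j$, one gets an equation forcing $a_i = a_{i+1} = \cdots = a_j$ and $x_i = x_{i+1} = \cdots = x_{j-1} = 1$ (this is where a short Lyndon--Sch\"utzenberger-type argument via Corollary~\ref{cor:simple_word_equations}, already used in the proof of Theorem~\ref{thm:unique_canonical_form}, enters). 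In particular $x_i=1$ and $a_i=a_{i+1}$, so $\sigma$ is invalid. The equivalence $(4)\Leftrightarrow(1)$ is essentially the "co-dimension one" mirror of this: $v_{[m]\setminus\{i\}}(\sigma)$ is the maximal word $v_{[m]}(\sigma)$ with the $i$-th symbol $a_i$ deleted, so $v_{[m]\setminus\{i\}}(\sigma)=v_{[m]\setminus\{j\}}(\sigma)$ with $j<i$ again forces a run $a_j=\cdots=a_i$ with the intervening $x$'s trivial, i.e. invalidity; conversely invalidity at index $i$ gives the coincidence with $j=i$ adjacent, namely $v_{[m]\setminus\{i\}}(\sigma)=v_{[m]\setminus\{i+1\}}(\sigma)$ by the same cancellation as above.

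For $(2)\Leftrightarrow(3)$ I would use that $\sigma(\{i\},\emptyset)$ and $\sigma(\{i+1\},\emptyset)$ are facets whose vertex sets and maximal/minimal words are controlled: by Proposition~\ref{prop:facets}(1), each upper facet $\sigma(\{i\},\emptyset)$ is itself in canonical form and has the same maximal word $v_{[m]}(\sigma)$. If $\sigma(\{i\},\emptyset)=\sigma(\{i+1\},\emptyset)$ as (canonical) $(m-1)$-blocks, then taking the vertex indexed by the full index set of that $(m-1)$-block but omitting the coordinate corresponding to the "frozen" edge yields $v_{\{i\}}(\sigma)=v_{\{i+1\}}(\sigma)$ — more directly, evaluating both $(m-1)$-blocks at the index set that keeps only the newly-literal symbol gives exactly $v_{\{i\}}(\sigma)$ on one side and $v_{\{i+1\}}(\sigma)$ on the other, so $(3)\Rightarrow(2)$. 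Conversely, if $v_{\{i\}}(\sigma)=v_{\{i+1\}}(\sigma)$ then by $(2)\Rightarrow(1)$ we know $x_i=1$, $a_i=a_{i+1}=a$; but then in the word $x_0(1,a_1)\cdots x_{i-1}\,a\,(1,a_{i+1})x_{i+1}\cdots$ (obtained by making the $i$-th edge literal) the literal $a$ can be absorbed into the $(i+1)$-st edge, and symmetrically for making the $(i+1)$-st edge literal, and Proposition~\ref{prop:comparing_eq_blocks} together with the commutation $a(1,a)=(1,a)a$ shows these two facets coincide, giving $(2)\Rightarrow(3)$.

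I expect the main obstacle to be the careful bookkeeping in the word-cancellation step: verifying that $v_{\{i\}}(\sigma)=v_{\{j\}}(\sigma)$ (or the co-dimension-one version) in a \emph{canonical} block forces the intervening $x_k$'s to be trivial and the $a_k$'s to agree. This needs the same kind of affix-stripping and appeal to Corollary~\ref{cor:simple_word_equations} that appears in the uniqueness half of Theorem~\ref{thm:unique_canonical_form}, and one has to handle the boundary cases $x_k=a^r$ versus $x_k$ containing a foreign symbol correctly — this is exactly why the paper calls the proof "straightforward" and defers it, since it is routine but fiddly. Everything else ($(1)\Rightarrow(2)$, $(2)\Leftrightarrow(3)$, and assembling the cycle of implications) is a short direct computation once the canonical form is fixed.
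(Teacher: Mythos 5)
Your overall route is essentially the paper's: pass to the canonical form, turn each statement into a word identity, cancel common prefixes/suffixes, and invoke Corollary~\ref{cor:simple_word_equations} together with the commutation $a^t(1,a)=(1,a)a^t$ and Proposition~\ref{prop:comparing_eq_blocks} to move between vertex coincidences and facet coincidences. The paper organizes this as the cycle $(1)\Rightarrow(2)\Rightarrow(3)\Rightarrow(4)\Rightarrow(1)$, while you prove $(1)\Leftrightarrow(2)\Leftrightarrow(3)$ and $(1)\Leftrightarrow(4)$ separately, but the computations are the same, so there is no substantive methodological difference.

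One intermediate claim you make is false and should be removed (it is, fortunately, not needed). You assert that if $\sigma$ is valid then all the vertices $v_{\{i\}}(\sigma)$ are distinct, ``and more generally all $v_I(\sigma)$ with fixed $|I|$ are distinct.'' This fails already for the paper's own valid $3$-block $(1,a)(1,b)(1,a)$, where $v_{\{1\}}(\sigma)=v_{\{3\}}(\sigma)=a$ (and the valid $4$-blocks $\bar{\sigma}_2,\bar{\sigma}_4$ of Theorem~\ref{thm:block_classification4} give higher-dimensional examples). The reason your cancellation argument does not prove it is that for non-adjacent $i<j$ the equation coming from $v_{\{i\}}(\sigma)=v_{\{j\}}(\sigma)$ is $a_i\,x_i\cdots x_{j-1}=x_i\cdots x_{j-1}\,a_j$, in which the intermediate letters $a_{i+1},\dots,a_{j-1}$ simply do not occur, so nothing forces them to equal $a_i$; you only get $a_i=a_j$ and $x_i\cdots x_{j-1}=a_i^t$, which is compatible with validity. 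What saves your proof is that statement~(2) involves only \emph{consecutive} indices, where the same equation reads $a_ix_i=x_ia_{i+1}$ and canonical form does force $x_i=1$, $a_i=a_{i+1}$, hence invalidity; and in the codimension-one case needed for~(4) the equation $a_j\,x_ja_{j+1}\cdots a_{i-1}x_{i-1}=x_ja_{j+1}\cdots a_{i-1}x_{i-1}\,a_i$ does contain the intermediate letters, so there the run $a_j=\cdots=a_i$ with trivial intervening words (and hence invalidity, as in Definition~\ref{defi:invalid}) genuinely follows. So restrict the distinctness claim to these two situations and the argument is correct; as stated, the blanket claim is wrong. The minor vagueness in your $(3)\Rightarrow(2)$ step is harmless: the clean formulation is that equal (equivalent) blocks have equal vertices, so comparing the \emph{minimal} vertices of $\sigma(\{i\},\emptyset)$ and $\sigma(\{i+1\},\emptyset)$ gives exactly $v_{\{i\}}(\sigma)=v_{\{i+1\}}(\sigma)$.
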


\begin{proof}[Proof of Lemma \ref{lemma:charac_valid}]
        Write $\sigma=x_0(1,a_1)\cdots (1,a_m)x_m$ in canonical form. 
        \begin{itemize}
            \item $(1\Rightarrow 2):$ Suppose $\sigma$ is invalid. Thus, there is an index $1\leq i< m$, such that $a_i=a_{i+1}$ and $x_i=1$. Hence $a_ix_i=a=x_ia_{i+1}$, so we get
            \begin{align}
			v_{\{i\}}(\sigma) = x_0x_1\cdots x_{i-1}&(a_ix_i)x_{i+1}\cdots x_m\label{eq:invalid_charac}\\
			 = x_0x_1\cdots x_{i-1}&(x_ia_{i+1})x_{i+1}\cdots x_m=v_{\{i+1\}}(\sigma).\nonumber
            \end{align}
            \item $(2\Rightarrow 3):$ Suppose there is an index $1 \leq i < m$ such that $v_{\{i\}}(\sigma) = v_{\{i+1\}}(\sigma)$. Then, we get again equation (\ref{eq:invalid_charac}), and thus, canceling out all the common terms on both sides, we obtain that $a_i x_i = x_i a_{i+1}$. By Lemma \ref{prop:simple_equation_words}, we must have $a_i=a_{i+1}=a$ and $x_i=a^t$ for some $t\geq 0$. Note then that $a_ix_i(1,a_{i+1})=a^{t+1}(1,a)=(1,a)a^{t+1}=(1,a_i)x_ia_{i+1}$. Thus, by Property \ref{prop:comparing_eq_blocks}, we obtain the equation
             \begin{align}
            \sigma(\{i\},\emptyset)=x_0(1,a_1)x_1\cdots (1,a_{i-1}) x_{i-1} &\,\large[a_ix_i(1,a_{i+1})\large]\,x_{i+1}(1,a_{i+2})\cdots (1,a_m)x_m \label{eq:invalid_charac2}\\
            =x_0(1,a_1)x_1\cdots (1,a_{i-1}) x_{i-1} &\,\large[(1,a_i)x_ia_{i+1}\large]\,x_{i+1}(1,a_{i+2})\cdots (1,a_m)x_m =\sigma(\{i+1\},\emptyset).\nonumber 
        \end{align}%
        \item $(3\Rightarrow 4):$ Suppose $\sigma(\{i\},\emptyset)=\sigma(\{i+1\},\emptyset)$, then, we get equation (\ref{eq:invalid_charac2}) which implies, by Property \ref{prop:comparing_eq_blocks},  $a_ix_i(1,a_{i+1})=(1,a_i)x_ia_{i+1}$, so $a_ix_i=x_ia_{i+1}$. Thus we obtain the equation 
        \begin{align}
            v_{[m]\setminus\{i\}}(\sigma)=x_0a_1x_1\cdots a_{i-1}x_{i-1}&(x_ia_{i+1})x_{i+1}a_{i+2}\cdots a_mx_m=\label{eq:invalid_charac3}\\
                =x_0a_1x_1\cdots a_{i-1}x_{i-1}&(a_ix_i)x_{i+1}a_{i+2}\cdots a_mx_m=v_{[m]\setminus\{i+1\}}(\sigma).\nonumber
                \vspace*{-0.5em}
        \end{align}%
        \item $(4\Rightarrow 1):$ Suppose there is an index $j<i$, such that $v_{[m]\setminus\{i\}}(\sigma)=v_{[m]\setminus\{j\}}(\sigma)$. Thus, we get the equation         
         \begin{align}
            v_{[m]\setminus\{i\}}(\sigma)=  x_0a_1x_1 \cdots &x_{j-1}a_jx_j \cdots x_{i-1}x_i \cdots x_{m-1}a_mx_m\\
            = x_0a_1x_1 \cdots &x_{j-1}x_j \cdots x_{i-1}a_ix_i \cdots x_{m-1}a_mx_m=v_{[m]\setminus\{j\}}(\sigma)\nonumber
        \end{align}%
    which simplifies to $a_jx_j \cdots x_{i-1} = x_j \cdots x_{i-1}a_i$. Which is $a_jx = xa_i$, where $x = x_ja_{j+1} \cdots a_{i-1}x_i$. By Corollary \ref{cor:simple_word_equations}, we must have $a_j = a_i = a$ and $x = a^t$ for some $t \geq 0$. However, this implies that $a_{j+1} = a_{j+2} = \cdots = a_i = a$, and all the words $x_{j+1} = \cdots = x_{i-1} = 1$, since $\sigma$ is in canonical form. Then, by definition, $\sigma$ is invalid. \qedhere
        \end{itemize}
	\end{proof}

   \begin{lemma}\label{lemma:faces_ordering}
		Let $\tau\in\Ev_k$, $\sigma\in\Ev_m$ and $\Delta\in\Ev_n$ be valid blocks, then, the following hold:
		\begin{enumerate}
			\item $\sigma\leq\sigma$
			\item If $\tau\leq\sigma$ and $\sigma\leq\tau$, then $\sigma=\tau$. 
			\item If $\tau\leq\sigma$ and $\sigma\leq\Delta$, then $\tau\leq\Delta$.
		\end{enumerate}
    \end{lemma}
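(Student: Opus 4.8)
The statement to prove is Lemma~\ref{lemma:faces_ordering}: that the ``being a face of'' relation $\leq$ on valid blocks is a partial order (reflexive, antisymmetric, transitive).

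\textbf{Proof plan.} The plan is to verify the three poset axioms directly from Definition~\ref{def:faces}, leaning on the machinery already built up, especially the fact (Theorem~\ref{thm:unique_block_by_vertices}) that a valid block is determined by its vertex set, and the behaviour of vertices under the sub-block operation $\sigma \mapsto \sigma(I^+,I^-)$.

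For reflexivity, one takes $I^+ = I^- = \emptyset$; then $\sigma(\emptyset,\emptyset) = \sigma$ by inspection of Definition~\ref{def:sub_blocks}, and $\sigma$ is valid by hypothesis, so $\sigma \leq \sigma$. For antisymmetry, suppose $\tau \leq \sigma$ and $\sigma \leq \tau$ with $\tau \in \Ev_k$, $\sigma \in \Ev_m$. Comparing dimensions: if $\tau = \sigma(I^+,I^-)$ then $k = m - |I^+| - |I^-| \le m$, and symmetrically $m \le k$, forcing $k = m$ and hence $I^+ = I^- = \emptyset$, so $\tau = \sigma(\emptyset,\emptyset) = \sigma$. (Alternatively, one can observe $V(\tau) \subseteq V(\sigma)$ and $V(\sigma) \subseteq V(\tau)$ give $V(\tau) = V(\sigma)$, and then invoke Theorem~\ref{thm:unique_block_by_vertices}; the dimension count is the cleaner route.) The only mild point to check is that $V(\sigma(I^+,I^-)) \subseteq V(\sigma)$, which follows since each vertex $v_K(\sigma(I^+,I^-))$ is obtained by further substituting symbols/units into an already-substituted expression, hence equals $v_{K'}(\sigma)$ for a suitable $K' \subseteq [m]$ — this is exactly the kind of index-bookkeeping used in the proof of Theorem~\ref{thm:isom_blocks_vertices}.

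For transitivity, suppose $\tau \leq \sigma$ and $\sigma \leq \Delta$, say $\sigma = \Delta(J^+,J^-)$ with $\Delta \in \Ev_n$ and $\tau = \sigma(I^+,I^-)$ with $I^+,I^- \subseteq [m]$. Writing $\Delta$ in canonical form and letting $K = [n]\setminus J^+\setminus J^-$ enumerated as $k_1 < \cdots < k_m$, the sub-block $\sigma$ has its $m$ edges indexed by $K$; applying $(I^+,I^-)$ to $\sigma$ then corresponds to declaring the indices $\{k_i : i \in I^+\}$ ``upper,'' $\{k_i : i \in I^-\}$ ``lower,'' on top of $J^\pm$. Concretely, set $\tilde I^+ = J^+ \cup \{k_i : i \in I^+\}$ and $\tilde I^- = J^- \cup \{k_i : i \in I^-\}$; these are disjoint subsets of $[n]$, and one checks directly that $\Delta(\tilde I^+, \tilde I^-) = \sigma(I^+,I^-) = \tau$ as blocks (the expressions match termwise since substituting a symbol or a unit commutes with the surrounding factors). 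Since $\tau$ is valid by hypothesis, Definition~\ref{def:faces} gives $\tau \leq \Delta$. I expect the main (and only) obstacle to be the purely notational one of correctly tracking how the two index-set pairs compose — verifying that the composite operation $\Delta \mapsto \Delta(J^+,J^-) \mapsto (\Delta(J^+,J^-))(I^+,I^-)$ is itself a single sub-block operation $\Delta \mapsto \Delta(\tilde I^+, \tilde I^-)$ — which is routine but must be written carefully.
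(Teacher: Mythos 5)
Your proof is correct and follows essentially the same route as the paper's: reflexivity via $\sigma(\emptyset,\emptyset)=\sigma$, antisymmetry via the dimension count forcing $I^+=I^-=\emptyset$, and transitivity by composing the two index-set pairs into a single pair of disjoint subsets of $[n]$ exactly as the paper does. The only difference is that the paper explicitly invokes Lemma~\ref{lemma:equivalence_subblocks} to justify applying $(I^+,I^-)$ to a non-canonical representation of $\sigma$, a point you cover informally with your ``termwise'' remark.
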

  
	\begin{proof}\leavevmode\vspace*{-0.8em}
		\begin{enumerate}
			\item Clearly $\sigma=\sigma(\emptyset,\emptyset)$, so $\sigma\leq\sigma$. 
			\item Since $\tau\leq\sigma$, then there are disjoint sets $I^+, I^-\subset [m]$ such that $\tau=\sigma(I^+, I^-)$, and $k=m-|I^+|-|I^-|$. Since $\sigma\leq\tau$ then also $m\leq k$, so $k=m$. Then $|I^+|=|I^-|=0$, and $I^+=I^-=\emptyset$. Therefore $\tau=\sigma(I^+,I^-)=\sigma(\emptyset,\emptyset)=\sigma$. 
			\item Note $\tau\leq\sigma$ and $\sigma\leq\Delta$ imply $k\leq m\leq n$. Suppose $\Delta=x_0(1,a_1)\cdots (1,a_n)x_n$, and let $I^+, I^-\subset [n]$ and $J^+, J^-\subset[m]$ sets with $I^+\cap I^-=\emptyset$ and $J^+\cap J^-=\emptyset$, such that $\sigma=\Delta(I^+,I^-)$ and $\tau=\sigma(J^+,J^-)$. Let $\bar{I}=[n]\setminus I^+\setminus I^-$ and write its elements as $\bar{I}=\{i_1<i_2<\dots < i_m\}$. Then we can write $\sigma=y_0(1,a_{i_1})y_1(1,a_{i_1})\cdots (1,a_{i_m})y_m$, and thus $\tau=\sigma(J^+,J^-)=y_0\lambda_1\cdots \lambda_my_m$ where $$y_j=\begin{cases}
				a_{i_j} &\text{ if } j\in J^+\\
				1 &\text{ if } j\in J^-\\
				(1,a_{i_j}) &\text{ if } j\not\in J^+\cup J^-.\\
			\end{cases}$$
	It is important to note that this assertion relies on Lemma \ref{lemma:equivalence_subblocks}, which allows us to use the index sets $ J^+ $ and $ J^- $ even if this description of $\tau=\sigma(J^+, J^-)$ is not yet in canonical form. Then, clearly $\tau=\Delta(K^+,K^-)$ where $K^\pm=I^\pm\cup\{i_j: j\in J^\pm\}$, so $\tau\leq\Delta$ as desired. \qedhere
	\end{enumerate}
	\end{proof}

\begin{theorem}[Restatement of Theorem \ref{thm:block_classification4}]\hypertarget{proof:block_classification4}
    Let $\sigma\in\Ev_4$ be a valid block of dimension 4. Then, $\sigma$ is isomorphic to exactly one of the following blocks:
    \begin{enumerate}
         \item $\bar{\sigma}_1=(1,a)(1,b)(1,a)(1,b)$
        \item $\bar{\sigma}_2=(1,a)(1,b)(1,a)b(1,a)$
        \item $\bar{\sigma}_3=(1,a)(1,b)a^2(1,b)(1,a)$
        \item $\bar{\sigma}_4=(1,a)(1,b)a(1,b)(1,a)$
        \item $\bar{\sigma}_5=(1,a)(1,b)ab(1,a)(1,b)$
        \item $\bar{\sigma}_6=
(1,a)b(1,a)b(1,a)b(1,a)$
    \end{enumerate}
\end{theorem}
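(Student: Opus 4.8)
The plan is to mimic the strategy of Theorem~\ref{thm:block_classification1_3}: reduce an arbitrary valid $4$-block to a normal form using the operations from Example~\ref{ex:isomorphic_blocks} (symbol permutation, removing affixes, reversal), then analyze which vertex-coincidence patterns can occur and use Theorem~\ref{thm:isom_blocks_vertices} to identify the isomorphism type from the pattern of equalities $v_I(\sigma)=v_J(\sigma)$. Concretely, first write $\sigma = x_0(1,a_1)x_1(1,a_2)x_2(1,a_3)x_3(1,a_4)x_4$ in canonical form; by stripping the affix $x_0$ on the left and $x_4$ on the right we may assume $x_0=x_4=1$, and since the classification needs at most two symbols (as the final remark notes) we can reduce to $\Sigma=\{a,b\}$ once we understand the possible coincidences among the $a_i$. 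The key organizing invariant is that $v_I(\sigma)=v_J(\sigma)$ forces $|I|=|J|$ (lengths $|v_\emptyset(\sigma)|+|I|$), so I would stratify by the number of coincidences at each level $|I|=1,2,3$.

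The main case split is on the singleton level: for $1 \le i < j \le 4$, when can $v_{\{i\}}(\sigma)=v_{\{j\}}(\sigma)$? By Lemma~\ref{lemma:charac_valid} adjacent indices ($j=i+1$) are ruled out by validity, so the only candidate equalities are $v_{\{1\}}=v_{\{3\}}$, $v_{\{1\}}=v_{\{4\}}$, $v_{\{2\}}=v_{\{4\}}$ (and not all three simultaneously without extra degeneration). Each such equality is a word equation solvable by Corollary~\ref{cor:simple_word_equations}: e.g. $v_{\{1\}}(\sigma)=v_{\{3\}}(\sigma)$ becomes $a_1 x_1 x_2 x_3 = x_1 x_2 a_3 x_3$-type equation forcing $a_1=a_3$ and the intervening factor to be a power of that symbol. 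Pushing such powers to canonical position collapses $\sigma$ toward one of $\bar\sigma_2,\bar\sigma_3,\bar\sigma_4$, while the ``all singletons distinct'' branch, after a parallel but longer check that higher-level vertices are then also distinct (or coincide in exactly the pattern of $\bar\sigma_5$ or $\bar\sigma_1$), yields the remaining types. The block $\bar\sigma_6=(1,a)b(1,a)b(1,a)b(1,a)$ arises as the case with the maximal number of coincidences — $v_{\{1\}}=v_{\{3\}}$, $v_{\{2\}}=v_{\{4\}}$ together, forcing both ``gaps'' to be single symbols $b$; one must check this is consistent with validity and gives a genuinely new type. Throughout, after narrowing the form, I invoke Theorem~\ref{thm:isom_blocks_vertices}: it suffices to match the equality pattern of vertices under some permutation $\pi$, which is routine once the candidate normal form is pinned down.

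Finally I would verify the list is irredundant, i.e. no two of $\bar\sigma_1,\dots,\bar\sigma_6$ are isomorphic. The cheapest separators: $|V(\bar\sigma_i)|$ distinguishes several (e.g. $\bar\sigma_1$ has $16$ vertices, $\bar\sigma_6$ has fewer), and for those with equal vertex counts one exhibits a structural invariant preserved by isomorphism — for instance the multiset of sizes of the fibers $\{I : v_I(\sigma) = v\}$ over vertices $v$, or the number of valid facets (lower facets can be killed by invalidity, and this count is an isomorphism invariant by Definition~\ref{defi:block_isomorphism}). I expect the main obstacle to be the sheer bookkeeping in the ``singletons distinct'' branch: ruling out, or classifying, all the ways pairs and triples $v_I(\sigma)$ with $|I|\in\{2,3\}$ can coincide requires solving a moderately large family of simultaneous word equations via Corollary~\ref{cor:simple_word_equations} and Proposition~\ref{prop:simple_equation_words}, and organizing these so that each surviving configuration maps to exactly one $\bar\sigma_i$ without gaps or overlaps. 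This is why, as stated, the full argument is deferred to the appendix; the conceptual content is entirely in the reduction-plus-Theorem~\ref{thm:isom_blocks_vertices} template, and the difficulty is combinatorial case volume rather than any single hard step.
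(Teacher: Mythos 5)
Your overall template is the same as the paper's: strip affixes to put $\sigma$ in canonical form, case on which coincidences $v_I(\sigma)=v_J(\sigma)$ can occur (using Lemma~\ref{lemma:charac_valid} to exclude adjacent singletons and Corollary~\ref{cor:simple_word_equations} / Proposition~\ref{prop:simple_equation_words} to solve the resulting word equations), identify the type via Theorem~\ref{thm:isom_blocks_vertices}, and finally check irredundancy. The paper additionally observes at the outset that coincidences at level $|I|=m-1$ are impossible for valid blocks, so only levels $|I|=1,2$ need to be compared; your plan to also track level $3$ is harmless but wasted effort. One genuine flaw in your setup: you cannot reduce to $\Sigma=\{a,b\}$ up front by citing the remark that two symbols suffice — that remark is a \emph{consequence} of Theorems~\ref{thm:block_classification1_3} and~\ref{thm:block_classification4}, so invoking it here is circular. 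The paper instead works with general symbols $a,b,c,d$ and arbitrary words $x,y,z$ and lets the word equations force the identifications.

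The more serious problem is that you have swapped the roles of $\bar\sigma_1$ and $\bar\sigma_6$. The case with the maximal coincidences, $v_{\{1\}}(\sigma)=v_{\{3\}}(\sigma)$ together with $v_{\{2\}}(\sigma)=v_{\{4\}}(\sigma)$, collapses (after solving $ax y z = xy cz$ etc.\ and pushing powers around) to $\bar\sigma_1=(1,a)(1,b)(1,a)(1,b)$, not to $\bar\sigma_6$: indeed for $\bar\sigma_1$ one has $v_{\{1\}}=v_{\{3\}}=a$ and $v_{\{2\}}=v_{\{4\}}=b$. By contrast, $\bar\sigma_6=(1,a)b(1,a)b(1,a)b(1,a)$ has $v_{\{1\}}=ab^3$, $v_{\{2\}}=bab^2$, $v_{\{3\}}=b^2ab$, $v_{\{4\}}=b^3a$ all distinct, and in fact \emph{all} $16$ vertices $v_I(\bar\sigma_6)$ are distinct: it is the generic, no-coincidence case, which is exactly how the paper obtains it (its Case~4). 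Your irredundancy separator inherits the same inversion: $\bar\sigma_1$ has only $12$ distinct vertices while $\bar\sigma_6$ has $16$, the opposite of what you assert. As written, your case enumeration would therefore misclassify both of these types (and leave the no-coincidence branch producing nothing), so the argument does not close; the fix is to redo the two endpoint cases with the correct vertex tables, after which the remaining cases (only singleton coincidence $v_{\{1\}}=v_{\{3\}}$ giving $\bar\sigma_2$; $v_{\{1\}}=v_{\{4\}}$ giving $\bar\sigma_3$ or $\bar\sigma_4$ according to whether the middle factor is $a^s$ with $s\ge 2$ or $b\ne c$ versus $s=1$ with $b=c$; singletons distinct but $v_{\{1,2\}}=v_{\{3,4\}}$ giving $\bar\sigma_5$) proceed essentially as you outline and as the paper carries out.
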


\begin{proof}[Proof of Theorem \ref{thm:block_classification4}]
    Notice that for any $I \subset [m]$, we have $|v_I(\sigma)| = |v_\emptyset(\sigma)| + |I|$. Therefore, if $I$ and $J$ are subsets of $[m]$ such that $v_I(\sigma) = v_J(\sigma)$, it must follow that $|I| = |J|$. Note that if $|I| = |J| = m-1$, meaning $I = [m] \setminus \{i\}$ and $J = [m] \setminus \{j\}$, then, assuming without loss of generality that $i < j$, the equality $v_I(\sigma) = v_J(\sigma)$ reduces to the equation $xa_j = a_ix$, as in the proof of Theorem \ref{thm:isom_blocks_vertices}, where $x = x_ia_{i+1}x_{i+1} \cdots x_j$. By Corollary~\ref{cor:simple_word_equations}, this implies $a_i = a_j = a$ and $x = a^t$ for some $t \geq 0$, and so $x_i = a^r$ and $a_{i+1} = a$ for some $r \geq 0$. However, this is impossible because $\sigma$ is valid. Thus, for $I \subset [m]$ with $|I| = m-1$, the vertices $v_I(\sigma)$ are always distinct. Consequently, to determine the isomorphism type of a block, we only need to compare $v_I(\sigma)$ among subsets $I$ of the same size, for $1 \leq |I| \leq m-2$.

    In the specific case where $m=4$, we only need to check the vertices corresponding to subsets $I$ where $|I| = 1$ or $|I| = 2$. Therefore, we begin by computing all such vertices for the six blocks $\bar{\sigma}_1, \dots, \bar{\sigma}_6$, as shown in the following two tables. In each column, we use $(*)$ or $(**)$ to indicate equal vertices.   

\begin{table}
\begin{center}
\begin{tabular}{|c||c|c|c|c|c|c|} \hline 
     I &\phantom{x}$\bar{\sigma}_1$\phantom{x} &\phantom{x}$\bar{\sigma}_2\phantom{x}$ &\phantom{x}$\bar{\sigma}_3$\phantom{x} &\phantom{x}$\bar{\sigma}_4\phantom{x}$ &\phantom{x}$\bar{\sigma}_5\phantom{x}$ &\phantom{x}$\bar{\sigma}_6$\phantom{x}\\ \hline\hline
     $\{1\}$& $a$ (*)& $ab$(*)& $a^2$(*)& $a^3$(*)& $a^2b$&$ab^3$\\ \hline 
     $\{2\}$& $b$ (**)& $b^2$& $ba$& $ba^2$& $bab$&$bab^2$\\ \hline 
     $\{3\}$& $a$ (*)& $ab$(*)& $ab$& $a^2b$& $aba$&$b^2ab$\\ \hline 
     $\{4\}$& $b$ (**)& $ba$& $a^2$(*)& $a^3$(*)& $ab^2$&$b^3a$\\ \hline
\end{tabular}

\begin{tabular}{|c||c|c|c|c|c|c|} \hline 
     I &\phantom{x}$\bar{\sigma}_1$\phantom{x} & \phantom{x}$\bar{\sigma}_2\phantom{x}$ &\phantom{x}$\bar{\sigma}_3$\phantom{x} &\phantom{x}$\bar{\sigma}_4\phantom{x}$ &\phantom{x}$\bar{\sigma}_5\phantom{x}$ &\phantom{x}$\bar{\sigma}_6$\phantom{x}\\\hline
 $\{1,2\}$& $ab$(*)& $ab^2$& $aba^2$& $aba$(*)& $abab$(*)&$abab^2$\\\hline
 $\{1,3\}$& $a^2$& $a^2b$& $a^3b$& $a^2b$& $a^2ba$&$ab^2ba$\\\hline
 $\{1,4\}$& $ab$& $aba$(*)& $a^5$& $a^3$& $a^2b^2$&$ab^3a$\\\hline
 $\{2,3\}$& $ba$& $bab$& $ba^2b$& $bab$& $baba$&$babab$\\\hline
 $\{2,4\}$& $b^2$& $b^2a$& $ba^3$& $ba^2$& $bab^2$&$bab^2a$\\\hline
 $\{3,4\}$& $ab$(*)& $aba$(*)& $a^2ba$& $aba$(*)& $abab$(*)&$b^2aba$\\\hline
\end{tabular}
\end{center}
    \caption{Vertices $v_I(\bar{\sigma})$ when $|I|=1$ or 2, for blocks in Theorem \ref{thm:block_classification4}. }
    \label{table:vertices_4_blocks_bar}
\end{table}

By counting the coincidences of vertices $v_I(\bar{\sigma})$ in each of the different blocks, by Theorem \ref{thm:isom_blocks_vertices} we can already conclude that most of the blocks must be non-isomorphic, with the only potential exception being $\bar{\sigma}_2$ and $\bar{\sigma}_4$. However, suppose for contradiction that $\bar{\sigma}_2 \simeq \bar{\sigma}_4$. Then there must be a permutation $\pi: [4] \to [4]$ such that $v_{\pi(1)}(\bar{\sigma}_4) = v_{\pi(3)}(\bar{\sigma}_4)$ and $v_{\pi(\{1,4\})}(\bar{\sigma}_4) = v_{\pi(\{3,4\})}(\bar{\sigma}_4)$. This implies $\pi(\{1,3\}) = \{1,4\}$ and $\{\pi(\{1,4\}), \pi(\{3,4\})\} = \{\{1,2\}, \{3,4\}\}$.
Now, assuming $\pi(1) = 1$, we must have $\pi(\{1,4\}) = \{1,2\}$ and $\pi(\{3,4\}) = \{3,4\}$, but then $\pi(4) = 2$ and $\pi(4) = 3$, which is a contradiction. If instead we take $\pi(1) = 4$, we must get $\pi(\{1,4\}) = \{3,4\}$, but this leads to $\pi(4) = 4$ and $\pi(3) = 4$, which is also impossible. Therefore, $\bar{\sigma}_2 \not\simeq \bar{\sigma}_4$.

Let now $\sigma\in\Ev_4$ be a valid $4$-block. Removing affixes, we may assume $\sigma=(1,a)x(1,b)y(1,c)z(1,d)$ in canonical form, where $a,b,c,d\in\Sigma$ and $x,y,z\in\Sigma^*$. To find the isomorphism type of $\sigma$, we need to consider all possible coincidences of vertices amount the index sets $I\subset[4]$ of size 1 and 2. Removing the coincidences that produce invalid blocks as in Lemma \ref{lemma:charac_valid}, and up to reversing, we just need to consider the following four cases. 
\begin{enumerate}
    \item If $v_{\{1\}}(\sigma)=v_{\{3\}}(\sigma)$. That is $axyz=xycz$ so $axy=xyc$, and by Corollary \ref{cor:simple_word_equations}, $a=c$, $x=a^r$ and $y=a^t$ for some $r, t\geq 0$. Thus $\sigma=a^r(1,a)(1,b)a^t(1,a)z(1,d)\simeq (1,a)(1,b)a^t(1,a)z(1,d)$, with $a\neq b$ and $a\neq d$. Now need to consider two sub-cases:
    \begin{enumerate}
        \item If $v_{\{2\}}(\sigma)=v_{\{4\}}(\sigma)$. Then $ba^tz=a^tzd$, so by Corollary $\ref{cor:simple_word_equations}$, we have $b=d$,  $z=b^s$ for some $s\geq 0$, and $t=0$. Thus $\sigma\simeq (1,a)(1,b)(1,a)b^s(1,b)\simeq \bar{\sigma}_1$. 
        \item If $v_{\{2\}}(\sigma) \neq v_{\{4\}}(\sigma)$, we claim that $\sigma \simeq \bar{\sigma}_2$, with $\pi$ being the identity permutation. Table \ref{table:vertices_4_blocks} already shows that $\sigma$ shares the same vertex coincidences as $\bar{\sigma}_2$. Furthermore, these are the only coincidences, as we now demonstrate.

        For $|I|=1$, we are assuming $v_{\{2\}}(\sigma) \neq v_{\{4\}}(\sigma)$, so the only possible coincidence that would give a valid block is $v_{\{1\}}(\sigma) = v_{\{4\}}(\sigma)$. This would imply the equation $a^{t+1}z = a^tzd$, which forces $a = d$ and $z = a^s$ for some $s \geq 0$. However, this would make $\sigma$ invalid, so this coincidence can't occur.

        For $|I|=2$, since $a \neq b$, any possible coincidences would have to start either with $a$ or $b$. Starting with $b$, we would need $ba^{t+1}z = ba^tzd$, but canceling the common $b$ gives $a^{t+1}z = a^tzd$, which cannot hold since $v_{\{3\}}(\sigma) \neq v_{\{4\}}(\sigma)$. Coincidences starting with $a$ would be among $\{a^{t+2}z, aba^tz, a^{t+1}zd\}$, but canceling the common starting $a$ would mean a coincidence in $\{a^{t+1}z, ba^tz, a^tzd\} = \{v_{\{1\}}(\sigma), v_{\{2\}}(\sigma), v_{\{4\}}(\sigma)\}$, which we just proved cannot happen. Therefore, the only vertex coincidences occur for $v_{\{1\}}(\sigma) = v_{\{3\}}(\sigma)$ and $v_{\{1,4\}}(\sigma) = v_{\{3,4\}}(\sigma)$, which is the same for $\bar{\sigma}_2$, so by \ref{thm:isom_blocks_vertices},  $\sigma\simeq \bar{\sigma}_2$ as desired. 
    \end{enumerate}
    \item If $v_{\{1\}}(\sigma)=v_{\{4\}}(\sigma)$. That is $axyz=xyzd$, so by Corollary \ref{cor:simple_word_equations}, we have $a=d$, $x=a^r$, $y=a^s$, and $z=a^t$ for some $r,s,t\geq 0$. Thus $\sigma=(1,a)a^r(1,b)a^s(1,c)a^t(1,d)\simeq (1,a)(1,b)a^s(1,c)(1,d)$, where $a\neq b$ and $c\neq d$. Since having $s=0$ and $b=c$ simultaneously would result in an invalid block, then $v_{\{1\}}(\sigma)=v_{\{4\}}(\sigma)$ is the only possible coincidence of vertices $v_I(\sigma)$ with $|I|=1$. To analyze the coincidences when $|I|=2$, we need to consider two sub-cases.
    \begin{enumerate}
        \item If $s \geq 2$ or $b \neq c$, we claim that $\sigma \simeq \bar{\sigma}_3$, with $\pi$ as the identity permutation. Both blocks already share the same vertex table for $|I|=1$ (see Tables \ref{table:vertices_4_blocks_bar} and \ref{table:vertices_4_blocks}), and we just need to confirm that all vertices $v_I(\sigma)$ are distinct for $|I|=2$. This is straightforward if $b \neq c$. If instead $b = c$, the only possible coincidence would be $v_{\{1,2\}}(\sigma) = v_{\{3,4\}}(\sigma)$, meaning $aba^s = a^sba = a^sca$, but this is impossible since $s \geq 2$. Therefore, $\sigma \simeq \bar{\sigma}_3$, as desired.
        \item If $s=1$ and $b=c$, then the vertices $v_I(\sigma)$ for $|I|=2$ become $\{aba,a^2b,a^3, bab, ba^2\}$, with the only coincidence being $v_{\{1,2\}}(\sigma)=aba=v_{\{3,4\}}(\sigma)$. Thus, the coincidence of vertices for $\sigma$ are the exact same as those for $\bar{\sigma}_4$, and so by Theorem \ref{thm:isom_blocks_vertices}, we get $\sigma\simeq \bar{\sigma}_4$. 
    \end{enumerate}
    \item If all $v_I(\sigma)$ are distinct when $|I|=1$, but there are sets $|I_1|=|I_2|=2$, $I_1\neq I_2$ such that $v_{I_1}(\sigma)=v_{I_2}(\sigma)$. 
    \begin{enumerate}
        \item If $I_1\cap I_2\neq \emptyset$, say $I_1=\{i,j\}$ and $I_2=\{i,k\}$. In the case $i<j, k$, it is straightforward to see this implies $v_{\{j\}}(\sigma)=v_{\{k\}}(\sigma)$, which can't happen. The same holds if $j,k <i$. Thus, the only remainder case is, without loss of generality, $j<i<k$. If $i=2$, let $s=|x|$ and note that the first $s+1$ symbols of both vertices are $v_{\{1,2\}}(\sigma)[1,s+1]=ax=xb=v_{\{2,k\}}(\sigma)[1,s+1]$. By Corollary \ref{cor:simple_word_equations}, this would imply $\sigma$ is invalid. If instead $i=3$, we arise to a similar contradiction by considering the last $s+1$  symbols of $v_{\{j,3\}}(\sigma)=v_{\{3,4\}}(\sigma)$, with $s=|z|$ in this case. Therefore, this sub-case is impossible. 

        \item If $v_{\{1,i\}}(\sigma)=v_{\{2,j\}}(\sigma)$ where $\{i,j\}=\{3,4\}$. Let $s=|x|$ and consider the first $s+1$ symbols of the vertices, so $v_{\{1,i\}}(\sigma)[1,s+1]=ax=xb=v_{\{2,j\}}(\sigma)[1,s+1]$. Then, once again by Corollary~\ref{cor:simple_word_equations}, we would have $a=b$ and $x=a^s$, which is impossible since $\sigma$ is a valid block.

        \item If $v_{\{1,2\}}(\sigma)=v_{\{3,4\}}(\sigma)$, the previous two sub-cases imply that no other coincidences can occur when $|I|=2$. Since we have also assumed no coincidences occur when $|I|=1$, $\sigma$ has the exact same coincidences as $\bar{\sigma}_5$, as shown in Table \ref{table:vertices_4_blocks_bar}. Therefore, by Theorem \ref{thm:isom_blocks_vertices}, we have $\sigma\simeq\bar{\sigma}_5$. 
    \end{enumerate}

    \item If all $v_I(\sigma)$ are distinct, then since $\bar{\sigma}_6$ also has all distinct vertices, as shown in Table \ref{table:vertices_4_blocks_bar}, it follows from Theorem \ref{thm:isom_blocks_vertices} that $\sigma\simeq\bar{\sigma}_6$. \qedhere
\end{enumerate}

\begin{table}[]
\begin{center}
\begin{tabular}{|c||c|c|} \hline 
     I &$(1,a)(1,b)a^t(1,a)z(1,d)$&$(1,a)(1,b)a^s(1,c)(1,a)$\\ \hline\hline
     $\{1\}$&$a^{t+1}z$ (*)& $a^{s+1}$(*)\\ \hline 
     $\{2\}$&$ba^tz$& $ba^s$\\ \hline 
     $\{3\}$&$a^{t+1}z$ (*)& $a^sc$\\ \hline 
     $\{4\}$&$a^tzd$& $a^{s+1}$(*)\\ \hline
\end{tabular}
\begin{tabular}{|c||c|c|} \hline 
     I &$(1,a)(1,b)a^t(1,a)z(1,d)$& $(1,a)(1,b)a^s(1,c)(1,a)$\\\hline
 $\{1,2\}$&$aba^tz$& $aba^s$\\\hline
 $\{1,3\}$&$a^{t+2}z$& $a^{s+1}c$\\\hline
 $\{1,4\}$&$a^{t+1}zd$ (*)& $a^{s+2}$\\\hline
 $\{2,3\}$&$ba^{t+1}z$& $ba^sc$\\\hline
 $\{2,4\}$&$ba^tzd$& $ba^{s+1}$\\\hline
 $\{3,4\}$&$a^{t+1}zd$ (*)& $a^sca$\\\hline
\end{tabular}
\end{center}
    \caption{Vertices $v_I(\sigma)$ when $|I|=1$ or 2, for blocks in the proof of Theorem \ref{thm:block_classification4}.}
    \label{table:vertices_4_blocks}
\end{table}
\end{proof}

\subsection{Insertion Block and Insertion Chain Complexes}
\begin{theorem}[Restatement of Lemma \ref{lemma:partial_partial_zero}]\hypertarget{proof:partial_partial_zero}
Let $\sigma\in\Ev_m$ be a valid $m$-block, then
    $$\partial\partial\sigma =0. $$
\end{theorem}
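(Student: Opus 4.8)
The plan is to carry out the standard ``$\partial^2 = 0$'' cancellation argument, but adapted to our two types of facets (upper and lower) and with care for the cases where lower facets degenerate to $0$. Write $\sigma = x_0(1,a_1)x_1\cdots(1,a_m)x_m$ in canonical form and expand
$$\partial\partial\sigma = (-1)^{m+1}\sum_{i=1}^m \left[\partial\bar F_i(\sigma) - \partial\underline F_i(\sigma)\right],$$
where each inner boundary is again an alternating sum of upper and lower facets (of dimension $m-2$). The key combinatorial observation is that every $(m-2)$-face of $\sigma$ obtained by prescribing the status of two indices $i<j$ --- say index $j$ becomes ``present'' or ``absent'' (i.e.\ contributes $a_j$ or $1$) and likewise index $i$ --- arises in exactly two ways: either we first take the $i$-facet and then the ``$j$-facet of that'', or we first take the $j$-facet and then the ``$i$-facet of that''. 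So I would group the terms of $\partial\partial\sigma$ by the unordered pair of indices $\{i,j\}$ and by the chosen statuses, and show each such pair of contributions cancels.

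The heart of the computation is a sign bookkeeping. When we delete/promote index $i$ first (with $i<j$), the block $\bar F_i(\sigma)$ or $\underline F_i(\sigma)$ still has $m-1$ edges, and the edge originally labelled $a_j$ now sits in position $j-1$; conversely, deleting/promoting index $j$ first leaves the edge $a_i$ in position $i$. Thus in $\partial\partial\sigma$ the face with $i$ handled first picks up a sign $(-1)^{m+1}(-1)^{i+1}\cdot(-1)^{(m-1)+1}(-1)^{(j-1)+1}$, while the face with $j$ handled first picks up $(-1)^{m+1}(-1)^{j+1}\cdot(-1)^{(m-1)+1}(-1)^{i+1}$; these differ by the factor $(-1)^{-1} = -1$, so the two contributions cancel --- exactly as in the simplicial/cubical case. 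The extra wrinkle relative to a textbook proof is the $\pm$ structure coming from the upper/lower distinction: for the four sub-cases (both indices promoted, both demoted, mixed), one verifies that $\bar F$ and $\underline F$ ``commute'' in the sense that $\bar F_j(\bar F_i(\sigma)) = \bar F_i(\bar F_j(\sigma))$ as $(m-2)$-blocks (after reindexing), and similarly for the other combinations, using Lemma~\ref{lemma:equivalence_subblocks} and the fact that $\sigma(I^+,I^-)$ depends only on the pair of index sets, not on the order of removal. This identification of iterated facets is where Proposition~\ref{prop:facets} and the uniqueness of canonical forms (Theorem~\ref{thm:unique_canonical_form}) do the real work.

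The main obstacle --- and the reason the lemma is not literally the cubical computation --- is handling the degenerate lower facets. When $\underline F_i(\sigma) = 0$ because $\sigma(\emptyset,\{i\})$ is invalid, its boundary contributes nothing, so one must check that whatever term would have cancelled it \emph{also} vanishes. Concretely, if $\sigma(\emptyset,\{i\})$ is invalid then (by the characterization in Lemma~\ref{lemma:charac_valid}, or directly from Definition~\ref{defi:invalid}) demoting index $i$ forces a coincidence $a_{i-1} = a_{i+1}$ with $x_{i-1}x_i = 1$, and one checks that for every $j$ the ``partner'' term $\underline F_i(\underline F_j(\sigma))$ or $\underline F_j(\underline F_i(\sigma))$ is likewise $0$ (the same coincidence persists after removing another index), and that no \emph{valid} face is lost in the process. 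So the cancellation still pairs up correctly: valid partner with valid partner, zero with zero. I would organize this as a short case analysis on whether $j = i\pm 1$ (adjacent, where a new invalidity can be created or destroyed) versus $|i-j|\ge 2$ (where validity of each singleton demotion is unaffected by the other), invoking the remark after Definition~\ref{defi:invalid} to control the adjacent case. Once these degenerate cases are dispatched, summing over all pairs $\{i,j\}$ gives $\partial\partial\sigma = 0$.
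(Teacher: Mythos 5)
Your overall strategy (pairwise cancellation of iterated facets, with signs as in the cubical case, and a separate check for degenerate lower facets) is the same skeleton the paper uses, and your treatment of the upper--upper and mixed upper/lower pairs matches the paper's: for the mixed terms one indeed shows that $\underline{F}_i\bar{F}_j(\sigma)$ and $\bar{F}_{j-1}\underline{F}_i(\sigma)$ are simultaneously equal to $\sigma(\{j\},\{i\})$ or simultaneously zero, so those pairs cancel term-by-term.

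The gap is in the lower--lower case, which is exactly the part that makes this lemma more than the textbook computation. You claim that when $\sigma(\emptyset,\{i\})$ is invalid ``the same coincidence persists after removing another index,'' so that the cancellation ``still pairs up correctly: valid partner with valid partner, zero with zero.'' This is false precisely in the adjacent case you flag but do not resolve. If $\sigma(\emptyset,\{i\})$ is invalid (which forces $a_{i-1}=a_{i+1}=a$, $x_{i-1}=1$ and $x_i=a^s$ --- note $x_{i-1}x_i$ need not be empty, only a power of $a$), the double removal $\sigma(\emptyset,\{i,i+1\})$ can perfectly well be a \emph{valid} block even though $\underline{F}_i(\sigma)=0$; so within the index pair $\{i,i+1\}$ one composition $\underline{F}_i\underline{F}_{i+1}(\sigma)$ is a genuine valid block while its naive partner $\underline{F}_{i}\underline{F}_i(\sigma)$ is zero, and the pair does \emph{not} cancel internally. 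The paper's proof shows that these orphaned terms cancel only \emph{across} index pairs: when this happens one also has $\underline{F}_{i-1}(\sigma)$ valid and the block identity $\sigma(\emptyset,\{i-1,i\})=\sigma(\emptyset,\{i,i+1\})$ (because $(1,a)a^{r+s}\equiv a^{r+s}(1,a)$), and the signs $(-1)^{2i+1}$ and $(-1)^{2i-1}$ make the surviving term from the pair $(i,i+1)$ cancel against the surviving term from the pair $(i-1,i)$. Your proposal contains neither this identification of which configurations produce orphans (only $j=i+1$ does) nor the cross-pair matching that kills them, and your stated conclusion for this case is incorrect as written; as it stands the argument does not close.
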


\begin{proof}[Proof of Lemma \ref{lemma:partial_partial_zero}]
    If $m\leq 1$, the result is trivial since $\partial_0\equiv 0$. Hence we may assume $m\geq 2$ and write $\sigma=x_0(1,a_1)x_1\cdots (1,a_m)x_m$ in canonical form. Observe applying the linearity of $\partial$ and reorganizing the terms of the sums we obtain
    \begin{align*}
        \partial\partial\sigma&=\sum_{i=1}^m (-1)^{i+1}\partial\left[\bar{F}_i(\sigma)-\underline{F}_i(\sigma)\right]=\\
        &=\sum_{i=1}^m(-1)^{i+1}\left[\sum_{j=1}^{m-1} (-1)^{j+1}\left(\bar{F}_j(\bar{F}_i(\sigma)) -\underline{F}_j(\bar{F}_i(\sigma)) \right) - \sum_{j=1}^{m-1} (-1)^{j+1}\left(\bar{F}_j(\underline{F}_i(\sigma)) -\underline{F}_j(\underline{F}_i(\sigma)) \right)\right]=\\
        &= \sum_{i=1}^m\sum_{j=1}^{m-1}(-1)^{i+j} F(j,i) =\sum_{i=1}^m\sum_{j=1}^{i-1} (-1)^{i+j} F(j,i) + \sum_{i=1}^m\sum_{j=i}^{m-1}(-1)^{i+j}F(j,i)=\\
        &=\sum_{i=1}^m\sum_{j=1}^{i-1} (-1)^{i+j} F(j,i) + \sum_{i=1}^m\sum_{j=i+1}^{m}(-1)^{i+j-1}F(j-1,i)=\\
        &=\sum_{1\leq j<i\leq m}(-1)^{i+j} F(j,i) + \sum_{1\leq i<j\leq m}(-1)^{i+j-1} F(j-1,i)=\\
        &=\sum_{1\leq i<j\leq m}(-1)^{i+j} F(i,j) + \sum_{1\leq i<j\leq m}(-1)^{i+j-1} F(j-1,i)= \sum_{1\leq i<j\leq m} (-1)^{i+j}\left[  F(i,j)- F(j-1, i)\right]
    \end{align*}
    where $F(i,j)=\bar{F}_i(\bar{F}_j(\sigma))-\underline{F}_i(\bar{F}_j(\sigma))-\bar{F}_i(\underline{F}_j(\sigma))+\underline{F}_i(\underline{F}_j(\sigma))$. Combining like terms we get:

    \begin{align*}
        \partial\partial\sigma &= \overbrace{\sum_{1\leq i <j\leq m}(\text{-}1)^{i+j} \left(\bar{F}_i\bar{F}_j(\sigma)-\bar{F}_{j-1}\bar{F}_i(\sigma) \right)}^A-\overbrace{\sum_{1\leq i <j\leq m}(\text{-}1)^{i+j} \left(\underline{F}_i\bar{F}_j(\sigma)-\bar{F}_{j-1}\underline{F}_i(\sigma) \right)}^B+\\
        &-\underbrace{\sum_{1\leq i <j\leq m}(\text{-}1)^{i+j} \left(\bar{F}_{i}\underline{F}_{j}(\sigma)-\underline{F}_{j-1}\bar{F}_i(\sigma)\right)}_C + \underbrace{\sum_{1\leq i <j\leq m}(\text{-}1)^{i+j} \left(\underline{F}_i\underline{F}_j(\sigma)-\underline{F}_{j-1}\underline{F}_i(\sigma)\right)}_D\\
        &= A-B-C+D
    \end{align*}

    We prove now that $A=B=C=D=0$. 
    \begin{enumerate}
        \item Note that for each $1\leq i<j\leq m$,  $\bar{F}_i\bar{F}_j(\sigma)=\sigma(\{i,j\},\emptyset)= \bar{F}_{j-1}\bar{F}_i(\sigma)$, which is a well defined valid block, since, by Proposition \ref{prop:facets}, taking upper-facets always produces valid blocks. Thus $$\bar{F}_i\bar{F}_j(\sigma)- \bar{F}_{j-1}\bar{F}_i(\sigma)=\sigma(\{i,j\},\emptyset)-\sigma(\{i,j\},\emptyset)=0,$$
        and hence $A=0$. 

        \item For each $1\leq i<j\leq m$, note that $\underline{F}_i\bar{F}_j(\sigma) = \sigma(\{j\}, \{i\})$ is a valid block whenever $\sigma(\emptyset, \{i\})$ is valid; otherwise, $\underline{F}_i\bar{F}_j(\sigma) = 0$. Similarly, $\bar{F}_{j-1}\underline{F}_i(\sigma) = \sigma(\{j\}, \{i\})$, provided it is valid; otherwise, it is zero. It is straightforward to check that $\sigma(\{j\},\{i\})$ is valid if and only if $\sigma(\emptyset,\{i\})$ is valid. Therefore, both terms are either $\sigma(\{i\},\{j\})$ or zero simultaneously. Thus,  
        $$\underline{F}_i\bar{F}_j(\sigma)-\bar{F}_{j-1}\underline{F}_i(\sigma)=0.$$
        A similar argument shows that
        $$\underline{F}_{j-1}\bar{F}_i(\sigma)-\bar{F}_{i}\underline{F}_{j}(\sigma)=0.$$
        Hence $B=C=0$.
        
        \item Observe $\underline{F}_i\underline{F}_j(\sigma) =\sigma(\emptyset, \{i, j\})$ provided that both $\sigma(\emptyset, \{i, j\})$ and $\underline{F}_j(\sigma)$ are valid, and it is zero otherwise. Similarly, $\underline{F}_{j-1}\underline{F}_i(\sigma) = \sigma(\emptyset, \{j, i\})$ if both $\sigma(\emptyset, \{j, i\})$ and $\underline{F}_i(\sigma)$ are valid, and zero otherwise. Since $\underline{F}_i\underline{F}_j(\sigma) - \underline{F}_{j-1}\underline{F}_i(\sigma) = 0$ when both are valid or zero, we only need to consider the cases where one is invalid, but the other is not. 

        Let $I, J$ be the sets of indices where one term is a valid block and the other is zero, 
        \begin{align*}
            I&=\{(i,j): 1\leq i<j\leq m, \underline{F}_i\underline{F}_j(\sigma)\text{ is valid, but }\underline{F}_{j-1}\underline{F}_i(\sigma)=0\}\\
            J&=\{(i,j): 1\leq i<j\leq m, \underline{F}_{j-1}\underline{F}_i(\sigma)\text{ is valid, but }\underline{F}_{i}\underline{F}_j(\sigma)=0\}.
        \end{align*}

        Let $(i,j)\in I$, thus $\sigma(\emptyset, \{i, j\})$ and $\sigma(\emptyset, \{j\})$ are both valid, but $\sigma(\emptyset, \{i\})$ is invalid. Since the expression for $\sigma(\emptyset,\{i\})$ matches that of $\sigma$ except in between $(1,a_{i-1})$ and $(1,a_{i+1})$ where we get $$\sigma(\emptyset,\{i\})=\cdots (1,a_{i-1})x_{i-1}x_{i}(1,a_{i+1})\cdots,$$
        and $\sigma$ is valid, we must have $a_{i-1}=a_{i+1}=a$ and $x_i=a^s, x_{i-1}=a^r$ for some $s, r\geq 0$. However, the expression of $\sigma(\emptyset, \{i,j\})$ will include the same expression, making it invalid, unless $j=i+1$, in which case we will have 
        \begin{align*}
            \sigma(\emptyset,\{i,j\})=\sigma(\emptyset, \{i,i+1\})&=\cdots x_{i-2}(1,a_{i-1})x_{i-1}x_{i}x_{i+1}(1,a_{i+2})\cdots=\\
            &=\cdots x_{i-2}(1,a)a^{r+s}x_{i+1}(1,a_{i+2})\cdots
        \end{align*}
    
        which is valid provided that $a_{i+2}\neq a$, or $x_{i+1}\neq a^t$ for every $t\in\N$. Furthermore, we observe that the expression of $\sigma(\emptyset, \{i-1,i\})$ matches that of $\sigma$ except for the part in between $(1,a_{i-2})$ and $(1,a_{i+2})$, where we get
        \begin{align*}
            \sigma(\emptyset,\{i-1, i\})&=\cdots (1,a_{i-2})x_{i-2}x_{i-1}x_i(1,a_{i+1})x_{i+1}(1,a_{i+2})\cdots\\
            &=\cdots (1,a_{i-2})x_{i-2}a^{r+s}(1,a)x_{i+1}(1,a_{i+2})\cdots
        \end{align*}
        So $\sigma(\emptyset,\{i-1,i\})=\sigma(\emptyset,\{i,i+1\})$, and it is a valid block. Moreover, we can easily check that $\sigma(\emptyset, \{i-1\})$ will also be valid, so $\underline{F}_{i-1}\underline{F}_{i-1}(\sigma)=\sigma(\emptyset,\{i-1,i\})$ is a valid block, but $\underline{F}_{i-1}\underline{F}_{i}=0$, so $(i,i-1)\in J$. 

        Therefore, if $(i,j)\in I$ it must be that $(i,j)=(i,i+1)$, which implies $(i,i-1)\in J$, and satisfies 
        \begin{align}\label{eq:F_iF_j}
            \underline{F}_i\underline{F}_{i+1}(\sigma)=\sigma(\emptyset,\{i,i+1\})=\sigma(\emptyset,\{i,i-1\})=\underline{F}_{i-1}\underline{F}_{i-1}(\sigma).
        \end{align}
        A similar argument shows that if $(j,i)\in J$, then $(j,i)=(i-1,j)$, so $(i,i+1)\in I$, and the equation (\ref{eq:F_iF_j}) also holds. Thus, there exists a set $K\subset [m]$, such that $I=\{(i,i+1): i\in K\}$ and $J=\{(i-1,i): i\in K\}$. Therefore, we can compute $D$ as:
        \begin{align*}
            D&=\sum_{1\leq i <j\leq m}(\text{-}1)^{i+j} \left(\underline{F}_i\underline{F}_j(\sigma)-\underline{F}_{j-1}\underline{F}_i(\sigma)\right)\\
            &=\sum_{(i,i+1)\in I}(\text{-}1)^{i+j}\underline{F}_i\underline{F}_{i+1}(\sigma) -\sum_{(i-1,i)\in I}(\text{-}1)^{i+j}\underline{F}_{i-1}\underline{F}_{i-1}(\sigma)=\\
            &=\sum_{i\in K}(\text{-}1)^{i+j} \underline{F}_i\underline{F}_{i+1}(\sigma)-\underline{F}_{i-1}\underline{F}_{i-1}(\sigma) =0. 
        \end{align*}
        Therefore $D=0$ as desired, finishing the proof. 
    \end{enumerate}
\end{proof}

\subsection{Homology}

\hypertarget{proof:classification_cycles_1dim} Classifying all nontrivial 1-dimensional cycles of length 4 in Theorem \ref{thm:classification_cycles_1dim} involves translating the relationships between vertices into word equations. For each possible \textit{insertion pattern} that arises, the same set of word equations consistently appears, thus we present them as a separate lemma.

\begin{lemma}\label{lemma:system_word_equations}
	Let $a,b\in\Sigma$ be two, possibly equal, known symbols, and $x,x',y,y',z,z'\in\Sigma^*$ be variables representing strings. Then, up to a common prefix for $x$ and $x'$, or a common suffix for $y$ and $y'$, we have:
	\begin{enumerate}
		\item All the simultaneous solutions to $xaybz=x'ay'bz'\text{  and  } xyz=x'y'z'$ are\\
			
        $S_1\!:\!\Solutions{1}{a^r\gamma b^s}{1}{a^r}{\gamma}{b^s}$%
        $S_2\!:\!\Solutions{1}{a^r}{(ab)^tb^s}{a^r(ab)^t}{b^s}{1}$ and %
        $S_3\!:\!\Solutions{1}{a^r\gamma}{b^s}{a^r}{\gamma b^s}{1}$ 
        
        \item  All the simultaneous solutions to $xaybz=x'by'az'\text{  and  } xyz=x'y'z'$ are\\
        
        $S'_1\!:\!\Solutions{1}{a^r\gamma a^s}{1}{a^r}{\gamma}{a^s}$%
        $S'_2\!:\!\Solutions{1}{a^r}{(ab)^ta^{s+1}}{a^{r+1}(ba)^t}{a^s}{1}$ and %
        $S'_3\!:\!\Solutions{1}{a^r\gamma}{a^s}{a^r}{\gamma a^s}{1}$.
			
	\end{enumerate} 
		For non-negative integers $r,s,t$. 
\end{lemma}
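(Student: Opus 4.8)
The plan is to read each system as a statement about one ambient word that carries two pairs of ``insertion marks'', and to resolve the finitely many ways those marks can overlap, in each case collapsing to one of the elementary word equations of Corollary~\ref{cor:simple_word_equations}.

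Take part~1. Set $W:=xaybz=x'ay'bz'$. The first equation records $W$ together with the positions of the inserted $a$ and $b$ in the unprimed factorization and in the primed one; the second equation says that deleting the two unprimed marks from $W$ produces the same word as deleting the two primed marks. Both equations are symmetric under exchanging primed and unprimed variables, so one may assume $|x|\le|x'|$. Then $x$ and $x'$ are prefixes of $xyz=x'y'z'$ with $|x|\le|x'|$, hence $x$ is a prefix of $x'$; writing $x'=x\xi$ and cancelling the common prefix $x$ from both equations --- this is exactly the ``common prefix for $x$ and $x'$'' in the statement, and a symmetric cancellation on $z,z'$ together with a common suffix on $y,y'$ where one appears are handled the same way --- reduces to $x=1$ and
\[
aybz \;=\; \xi\, a y' b z', \qquad yz \;=\; \xi\, y' z'.
\]
I would then branch on where the second $a$-mark sits relative to $y$ (whether $\xi$ is a prefix of $y$ or $y$ is a prefix of $\xi$), and inside each branch on the relative order of the two $b$-marks. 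This yields a bounded case tree; at each leaf, cancelling the now-forced common affixes leaves a single equation of the form $au=ua$, $abu=uab$, or $abu=uba$ (with $a\neq b$ in the last two), handled by parts~1,~2,~3 of Corollary~\ref{cor:simple_word_equations}. The case $au=ua$ forces $u=a^t$ and, on reassembly, gives the family $S_1$ (with $\gamma$ the leftover middle factor); $abu=uab$ forces $u=(ab)^t$ and gives $S_2$; $abu=uba$ forces $u=(ab)^ta$ and gives $S_3$. Conversely, substituting each of $S_1,S_2,S_3$ into the two equations checks directly that they are solutions, so these are exactly the solutions.

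Part~2 runs in parallel: the inner inserted symbols are now $b$ then $a$, so the equations that surface at the leaves are of the forms $au=ua$ and $abu=uba$, the first carrying the extra constraint $a=b$ coming from a mismatch of leading symbols (this is the $a=b$ that accompanies $S_1'$ and $S_3'$). These yield $S_1',S_2',S_3'$ after reassembly, and a direct substitution again confirms each family is a simultaneous solution.

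The genuine difficulty --- and the only place the argument can break --- is the bookkeeping: one must enumerate every overlap pattern of the four marks and verify that no branch produces a family outside the three claimed ones. I expect that always taking the length-WLOG and always cancelling the shorter of two competing prefixes keeps the case tree finite, prevents double-counting, and makes every appeal to Corollary~\ref{cor:simple_word_equations} uniform; the remaining content is then the explicit resolution of the handful of configurations, which I would present via the line diagrams that accompany the statement.
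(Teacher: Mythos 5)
Your proposal follows essentially the same route as the paper's own proof: after cancelling common affixes, you enumerate the finitely many relative positions of the two insertion pairs (the paper does this bookkeeping with the stacked line diagrams), reduce each configuration to one of the elementary equations $au=ua$, $abu=uab$, $abu=uba$, or $au=ub$ of Corollary~\ref{cor:simple_word_equations}, and reassemble to obtain exactly the families $S_1$--$S_3$ and $S_1'$--$S_3'$. The only difference is presentational — you defer the explicit case resolution that the paper carries out diagram by diagram — but the argument and its key reductions are the same.
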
 

\begin{proof}
Let $w = xyz = x'y'z'$, then the equation $xaybz = x'cy'dz'$ with $\{c,d\} = \{a,b\}$ can be interpreted as stating that, starting from $w$, we can obtain the same resulting word by inserting $a$ and $b$ in two different ways. Let $w = c_1\cdots c_n$ for symbols $c_i \in \Sigma$. Visually, we can represent the word $w$ with a horizontal line $w = \vcenter{\hbox{\rule{1cm}{0.5pt}}}$, and inserting $a$ and $b$ at positions $i$ and $j$, respectively, can be represented as

$$c_1\cdots c_i\ a\ c_{i+1} \cdots c_j\ b\ c_{j+1} \cdots c_n = \vcenter{\hbox{\rule{0.3cm}{0.5pt}}} a \vcenter{\hbox{\rule{0.4cm}{0.5pt}}} b \vcenter{\hbox{\rule{0.3cm}{0.5pt}}}.$$

Then, to signify that $a$ and $b$ can be inserted in two different ways resulting in the same word, we stack both diagrams and interpret that the symbol corresponding to each vertical position must match in both lines. For instance, if there are indices $1 \leq i \leq k \leq j \leq n$ such that inserting $a, b$ at positions $0, j$ and $i, k$ produces the same word, we can depict this with the following diagram:

\noindent 
\begin{center}
    \begin{minipage}[c]{0.45\textwidth}
    \begin{flushright}
    \begin{align*}
        &\ a\ c_1c_2\cdots \cdots\cdots\cdots\cdots\cdots c_j\ b\ c_{j+1}\cdots c_n\\
        &c_1c_2\cdots c_i\ a\ c_{i+1}\cdots c_j\ b\ c_{k+1}\cdots\cdots\cdots c_n
    \end{align*}
    \end{flushright}
\end{minipage}
= 
\begin{minipage}[c]{0.45\textwidth}
    \begin{flushleft}
    \begin{align*}
        \lineword{a}{3}{b}{1}{1}{a}{1}{b}{2}=\ExampleDiagramI
    .\end{align*}
    \end{flushleft}
    \end{minipage}
\end{center}
Since the same symbols $c$ appear above and below, we can add vertical lines so that each parallelogram formed contains matching symbols on both sides. Every time we observe a rectangle, it indicates a factor shared identically between the two words. This allows us to break the equation into several smaller equations. In this particular example, the leftmost parallelogram involving $a$ gives us the equation $a\hat{x} = \hat{x}a$. The rectangle in the middle indicates a common factor for both equations, without any further conditions. The parallelogram involving $b$ results in the equation $\hat{y}b = b\hat{y}$. Finally, the last rectangle represents another common factor, but since we're interested in solutions up to common affixes, we can ignore rectangles at the beginning or end of the words. Just as in this example, for each of the cases we need to consider, the smaller equations that arise are those in Lemma \ref{prop:simple_equation_words}. We can then incorporate their solutions into the diagram, and thus we can determine the values for $x, y, z$ and $x', y', z'$ by considering their relative positions to $a$ and $b$.

We now apply this diagrammatic process  to solve the equations as follows. Consider the equation $xaybz = x'ay'bz'$. By removing common affixes, we can assume, without loss of generality, that $x = 1$. To represent this as a diagram, we need to analyze all possible relative positions of $a$ and $b$ in $x'ay'bz'$ compared to their positions in $xaybz$. Up to symmetries by reflection, this leads to the following three cases:

\begin{enumerate}
    \item $\CaseIb = \CaseIc =\CaseId$.
    
    Here Corollary \ref{cor:simple_word_equations} gives the solutions to $a\hat{x}=\hat{x}a$ and $\hat{y}b=b\hat{y}$, which we included in the diagram. $\gamma\in\Sigma^*$ can be any word. Then, we get $x, y,z$ and $x', y', z'$ by their relative positions to $a$ and $b$, giving the solution 
    $$S_1: \Solutions{1}{a^r\gamma b^s}{1}{a^r}{\gamma}{b^s}.$$

    \item $\CaseIIb = \CaseIIc = \CaseIId$. 
    
    Similarly to the previous case, we obtain 
    $S_2: \Solutions{1}{a^r}{(ab)^tb^s}{a^r(ab)^t}{b^s}{1}$.

    \item $\CaseIIIb =\CaseIIIc =\CaseIIId$. 
    
    Gives $S_3:\Solutions{1}{a^r\gamma}{b^s}{a^r}{\gamma b^s}{1}$.
\end{enumerate}

Similarly, the equation $xaybz=x'by'az'$ can be solved, up to reverse and common affixes, by considering three cases:

\begin{enumerate}
    \item $\CaseIVa = \CaseIVb =\CaseIVc$.
    
    Thus $a=b$ and we get
    $S_4:\Solutions{1}{a^r\gamma a^s}{1}{a^r}{\gamma}{a^s}.$
    
    \item
    $\begin{aligned}[t]
        \CaseVa &=\CaseVb =\CaseVc\\
        &=\CaseVd =\CaseVe.
    \end{aligned}$
    
    Where we used the fact that $aa^r=a^ra$ in order to reduce the parallelogram in the middle to equation $ab\hat{x}=\hat{x}ba$. Hence  $S_5:\Solutions{1}{a^r}{(ab)^ta^{s+1}}{a^{r+1}(ba)^t}{a^s}{1}$

    \item $\CaseVIa =\CaseVIb =\CaseVIc$. 
    
    Thus $a=b$ and we get $S_6: \Solutions{1}{a^r\gamma}{a^s}{a^r}{\gamma a^s}{1}.$
\end{enumerate}    
\end{proof}

\begin{theorem}[Restatement of Theorem \ref{thm:classification_cycles_1dim}]
    Let $\Sigma$ be a finite alphabet, and $W\subset\Sigma^*$, with $|W|=4$, be a set of 4 vertices. Then, if $H_1(\ChainBlock{W})\neq 0$, then $\ChainBlock{W}$ is isomorphic via reflection, symbol permutation or affixes to $\ChainBlock{V}$ where $V\subset \{a,b\}^*$ is one of the following:
    \begin{enumerate}
        \item $V_1=\left\{(ab)^t, a(ab)^t, (ab)^ta,(ab)^{t+1} \right\}$, for some $t\geq 1$. 
        \item $V_2=\left\{(ab)^t, a(ab)^t, (ab)^tb,(ab)^{t+1} \right\}$, for some $t\geq 2$. 
        \item $V_3=\left\{(ab)^t,(ab)^ta,b(ab)^t,(ab)^{t+1}\right\}$, for some $t\geq 1$. 
         \item $V_4=\left\{(ab)^ta,b(ab)^ta,(ab)^{t+1},b(ab)^t\right\}$, for some $t\geq 0$.
        \item $V_5=\left\{(ab)^{t+1},(ab)^{t+1}a,b(ab)^{t+1},b(ab)^ta\right\}$, for some $t\geq 0$.
         \item $V_6=\left\{(ab)^{t+1},a(ab)^{t+1},(ab)^{t+1}b,a(ab)^b\right\}$, for some $t\geq 1$.              
    \end{enumerate}
\end{theorem}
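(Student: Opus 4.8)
The plan is to reduce the problem to a finite case analysis driven by the combinatorics of the insertion graph $\Gins{W}$ and the word equations packaged in Lemma \ref{lemma:system_word_equations}. First I would observe that since $H_1(\ChainBlock{W})\neq 0$ and $|W|=4$, the $1$-skeleton $\Gins{W}$ must contain a nontrivial $1$-cycle, and since $\Gins{W}$ is bipartite and acyclic as a \emph{directed} graph (Lemma \ref{lemma:g_bipartite}), the only possibility for a cycle on four vertices is a $4$-cycle $w_0\to w_1\leftarrow w_2\to w_3\leftarrow w_0$ (as an undirected cycle), where the lengths alternate by $1$ around the cycle. Up to relabelling there are essentially two length patterns: the ``diamond'' where one vertex is the minimum, one is the maximum and two are in the middle (so $|w_0|<|w_1|=|w_3|<|w_2|$, giving two paths of length $2$ from $w_0$ to $w_2$), and the ``zigzag'' where the four lengths are $\ell,\ell+1,\ell,\ell+1$ traversed cyclically. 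For the cycle to survive in homology it must not bound a $2$-block, i.e.\ in the diamond case the two length-$2$ paths must be \emph{incompatible}, and by Theorem \ref{thm:block_classification1_3} together with Theorem \ref{thm:unique_block_by_vertices} the only $2$-blocks are squares, so a diamond bounds exactly when its vertex set is $V(\sigma)$ for a valid $2$-block $\sigma$.

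Next I would set up coordinates. In the diamond case write $w_0 = xyz$ with the two middle vertices obtained by inserting a symbol $a$ and a symbol $b$ respectively, and the maximum being the result of both insertions; compatibility/incompatibility is then exactly the question of whether two insertion positions can be chosen to commute, which is precisely the system $xaybz = x'ay'bz'$, $xyz=x'y'z'$ (when the two inserted letters occur in the same order along both paths) or $xaybz = x'by'az'$, $xyz = x'y'z'$ (when they occur in opposite orders). Lemma \ref{lemma:system_word_equations} lists all solutions of both systems (the families $S_1,\dots,S_6$ and $S_1',\dots,S_6'$), up to common affixes. I would go through these solution families one at a time: the families $S_1,S_3$ (resp. $S_1',S_3'$) are exactly the cases where the square \emph{is} valid, hence the diamond bounds and contributes nothing to $H_1$; the remaining families, after substituting back the explicit words for $x,y,z,x',y',z'$, normalising by symbol permutation so that $\{a,b\}$ is the alphabet used, stripping common prefixes and suffixes (this is the ``affixes'' isomorphism of the preceding Proposition) and possibly reversing, yield precisely the four-element vertex sets $V_1,V_2$ from the $S_2$-type solution and, via the zigzag configurations, $V_3,V_4,V_5,V_6$. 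The bookkeeping is organised in the table environment (the \texttt{TableA}/\texttt{CasesItoIII}/\texttt{CasesIVtoVI} displays): for each insertion pattern one reads off the candidate $V$, checks which of the six edges of $\Gins{W}$ it actually contains (the key subtlety: a set of four words may accidentally contain a fifth or sixth edge beyond the four of the putative cycle, which could kill the homology), and confirms that for the surviving patterns no $2$-block is supported on $V$ so that $H_1(\ChainBlock{V})=\Z\neq 0$.

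The zigzag case requires a parallel but separate treatment: here no single vertex is the global minimum, so there is no $2$-block on the four vertices for dimension reasons (a valid $2$-block has $|w_M|-|w_m|=2$, impossible when the length spread is only $1$), and the task is purely to enumerate which length-$(\ell,\ell+1,\ell,\ell+1)$ four-cycles actually close up, i.e.\ which systems of the form ``delete-then-insert equals insert-then-delete'' are solvable; these again reduce to the equations of Lemma \ref{lemma:system_word_equations} read in a slightly rearranged way (a deletion of $a$ followed by an insertion of $b$ at a higher vertex), and produce the families $V_4,V_5,V_6$. Finally I would verify the stated lower bounds on $t$ in each family: these come from requiring that the four words be \emph{distinct} and that the blocks involved be \emph{valid} (e.g.\ in $V_2$ one needs $t\geq 2$ because $t=1$ would force a repeated word or an invalid block), which is a direct check on each family.

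The main obstacle I anticipate is the sheer number of sub-cases in the ``which extra edges appear'' step: once a candidate $V$ is written down in terms of $(ab)^t$ and its one-letter perturbations, one must check all $\binom{4}{2}=6$ pairs of words to see exactly which differ by a single insertion, and rule out the configurations where spurious edges (or, worse, a whole square) appear and trivialise the homology. Keeping this finite but large enumeration honest — and making sure the normalisations by reflection, symbol permutation and affixes are applied consistently so that genuinely distinct cases are not conflated — is where the real work lies, which is presumably why the authors relegate the full argument to the \hyperlink{proof:classification_cycles_1dim}{appendix}.
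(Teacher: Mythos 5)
Your proposal follows essentially the same route as the paper's appendix proof: reduce to the two length patterns (diamond and zigzag), translate each resulting insertion pattern into the simultaneous word equations of Lemma \ref{lemma:system_word_equations}, and then sort the solution families up to affixes, symbol permutation and reversal, discarding those that either degenerate or bound a valid $2$-block. The only slips are bookkeeping ones that a careful execution of your plan would fix: $V_3$ actually arises from the diamond configuration with crossed insertions (the $xaybz=x'by'az'$ system), so the zigzag case yields only $V_4,V_5,V_6$, and in the same-order diamond the families $S_1,S_3$ are excluded because they force the two middle vertices to coincide (fewer than four distinct words), not because the square bounds.
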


\begin{proof}[Proof of Theorem \ref{thm:classification_cycles_1dim}]
    Note that for an edge $v\sim w$, the lengths of the words $v$ and $w$ must satisfy $||v|-|w||=1.$ Hence, the lengths of the 4 vertices of a cycle, must follow one of two possible sequences $(s,s+1,s+1,s+2)$ or $(s,s,s+1,s+1)$, as in Figure \ref{fig:4-cycles1}. The arrows in the figure indicate the direction of symbol insertion, and $a,b,c,d\in\Sigma$ represent the inserted symbols. 

\begin{figure}
   \begin{center}
\begin{tikzpicture}
    \node at (0,0) (A) {$s$} ; 
    \node at (2,0) (B) {$s+1$} ; 
    \node at (0,2) (D) {$s+1$} ; 
    \node at (2,2) (C) {$s+2$} ; 
    
    \path[->] (A) edge node[auto=left] {$a$} (B);
    \path[->] (A) edge node[auto=left] {$b$} (D);
    \path[->] (B) edge node[auto=left] {$c$} (C);
    \path[->] (D) edge node[auto=left] {$d$} (C);
\end{tikzpicture}
\begin{tikzpicture}
    \node at (0,0) (A) {$s$} ; 
    \node at (2,0) (B) {$s+1$} ; 
    \node at (0,2) (D) {$s+1$} ; 
    \node at (2,2) (C) {$s$} ; 
    
    \path[->] (A) edge node[auto=left] {$a$} (B);
    \path[->] (A) edge node[auto=left] {$b$} (D);
    \path[<-] (B) edge node[auto=left] {$c$} (C);
    \path[<-] (D) edge node[auto=left] {$d$} (C);
\end{tikzpicture}%
\end{center}
    \caption{1-dimensional 4-cycles in proof of Theorem \ref{thm:classification_cycles_1dim}}
    \label{fig:4-cycles1}
\end{figure}
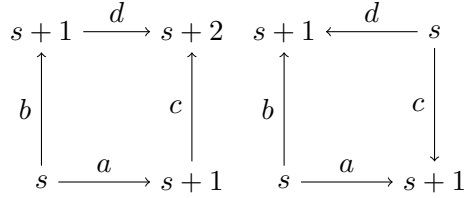

In the first case, let $u,v_1,v_2,w$ be the vertices, such that $|u|=s, |v_1|=|v_1|=s+1$ and $|w|=s+2$. Thus $w$ is obtained either from inserting $a$ and $c$ to $u$, or from inserting $b$ and $d$ to $u$, and hence $\{a,c\}=\{b,d\}$ as multisets. In the second case, name the vertices such that $|u|=|w|=s$ and $|v_1|=|v_1|=s+1$. Thus, we get $v_1$ from inserting $a$ to $u$, or inserting $c$ to $w$. Similarly, $v_2$ is $u$ with $b$ inserted, or $w$ with $d$ inserted. Then, for any symbol $e\in\Sigma$, denote by $|x|_e$  the number of times the symbol $e$ appears in $x$, and observe
\begin{align*}
    |v_1|_e=|u|_e+|a|_e=|w|_e+|c|_e &\Rightarrow |w|_e=|u|_e+|a|_e-|c|_e\\
    |v_2|_e=|u|_e+|b|_e=|w|_e+|d|_e &\Rightarrow |w|_e=|u|_e+|b|_e-|d|_e
\end{align*}
Thus $|a|_e+|d|_e=|b|_e+|c|_e$. 
By setting $e=a$, we get that $a=b$ or $a=c$. If $a\neq c$, then $a=b$, and setting $e=d$ produces that $c=d$. If $a\neq b$, then $a=c$, and setting $e=b$ produces that $b=d$. Thus, again, $\{a,c\}=\{b,d\}$ as multisets. 

Therefore, there are exactly 4 possible \textit{insertion patterns}, and we only need to care about two, possible equal, symbols $a$ and $b$. 
\begin{center}
A)\begin{tikzpicture}
    \node at (0,0) (A) {$u$} ; 
    \node at (2,0) (B) {$v_1$} ; 
    \node at (0,2) (D) {$v_2$} ; 
    \node at (2,2) (C) {$w$} ; 
    
    \path[->] (A) edge node[auto=left] {$a$} (B);
    \path[->] (A) edge node[auto=left] {$a$} (D);
    \path[->] (B) edge node[auto=left] {$b$} (C);
    \path[->] (D) edge node[auto=left] {$b$} (C);
\end{tikzpicture}
B)\begin{tikzpicture}
    \node at (0,0) (A) {$u$} ; 
    \node at (2,0) (B) {$v_1$} ; 
    \node at (0,2) (D) {$v_2$} ; 
    \node at (2,2) (C) {$w$} ; 
    
    \path[->] (A) edge node[auto=left] {$a$} (B);
    \path[->] (A) edge node[auto=left] {$b$} (D);
    \path[->] (B) edge node[auto=left] {$b$} (C);
    \path[->] (D) edge node[auto=left] {$a$} (C);
\end{tikzpicture}%
C)\begin{tikzpicture}
    \node at (0,0) (A) {$u$} ; 
    \node at (2,0) (B) {$v_1$} ; 
    \node at (0,2) (D) {$v_2$} ; 
    \node at (2,2) (C) {$w$} ; 
    
    \path[->] (A) edge node[auto=left] {$a$} (B);
    \path[->] (A) edge node[auto=left] {$a$} (D);
    \path[<-] (B) edge node[auto=left] {$b$} (C);
    \path[<-] (D) edge node[auto=left] {$b$} (C);
\end{tikzpicture}%
D)\begin{tikzpicture}
    \node at (0,0) (A) {$u$} ; 
    \node at (2,0) (B) {$v_1$} ; 
    \node at (0,2) (D) {$v_2$} ; 
    \node at (2,2) (C) {$w$} ; 
    
    \path[->] (A) edge node[auto=left] {$a$} (B);
    \path[->] (A) edge node[auto=left] {$b$} (D);
    \path[<-] (B) edge node[auto=left] {$a$} (C);
    \path[<-] (D) edge node[auto=left] {$b$} (C);
\end{tikzpicture}%
\end{center}
We now show that finding all possible words $\{u,v_1,v_2,w\}$ satisfying these insertion patterns, means solving the equations in Lemma \ref{lemma:system_word_equations}, and checking which solutions form a nontrivial cycle, for each of the patterns.  Since we only care about identifying solutions up to common affixes, permutation of symbols, or reverse, Lemma \ref{lemma:system_word_equations} will indeed give all the needed solutions. Moreover, since solutions $S'_1$ and $S'_3$ are precisely solutions $S_1$ and $S_3$ making $a=b$, respectively, we only need to check when the cycle is nontrivial for the solutions $S_1, S_2, S_3$ and $S'_2$. 

In the following analysis, $x,y,z, x',y',z'\in\Sigma^*$ are supposed to be variable words. 

\begin{enumerate}
    \item[A)] We get $w$ from $u$ by inserting symbols $a$ and $b$ in two different ways, thus, up to the isomorphisms already mentioned, we may write $u=xyz=x'y'z'$ and $w=xaybz=x'ay'bz'$ or $w=xaybz=x'by'az'$. Hence $v_1=xayz$ and $v_2=x'ay'z'$ or $v_2=x'y'az'$, respectively. Notice all possible solutions will be nontrivial cycles, as long as $v_1\neq v_2$. From Lemma \ref{lemma:system_word_equations} we get the solutions:
    \begin{itemize}
        \item Both solutions $S_1$ and $S_3$ result in  $$\SolVertices{a^r\gamma b^s}{a^{r+1}\gamma b^s}{a^{r+1}\gamma b^s}{a^{r+1}\gamma b^{s+1}},$$ where $v_1=v_2$, so they don't produce nontrivial cycles.
        \item Solution $S_2$ produces $$\SolVertices{a^r(ab)^tb^s}{a^{r+1}(ab)^tb^s}{a^r(ab)^tab^s}{a^r(ab)^tab^{s+1}},$$ which is a valid solution since $v_1\neq v_2$ given $t\geq1$. After removing common affixes we get: $$\SolVertices{(ab)^t}{a(ab)^t}{(ab)^ta}{(ab)^{t+1}}, t\geq1$$ 
        \item Solution $S'_2$ produces, after removing common affixes, the same nontrivial cycle $$\SolVertices{(ab)^t}{a(ab)^t}{(ab)^ta}{(ab)^{t+1}}, t\geq1$$ 
    \end{itemize}
    Thus, only the set $V_1$, for $t\geq 1$ produces nontrivial cycles with insertion pattern A. 
      
    \item[B)] Similar to pattern A), we get $w$ from $u$ by inserting $a,b$ in two different ways. So $u$ and $w$ have the same expressions as in pattern A), but here $v_1=xayz$ and $v_2=x'y'bz'$ or $v_2=x'by'z'$. All solutions from Lemma \ref{lemma:system_word_equations} will be valid as long as we don't get a 2-cell, so we must verify $v_2\neq xybz$. It is easy to see that solutions $S_1$ and $S_3$ are invalid since they produce $v_2=xybz$. Thus, it only remains to examine solutions $S_2$ and $S'_2$, which, after deleting common affixes, produce:
    \begin{align*}
        S_2&:\SolVertices{(ab)^t}{a(ab)^t}{(ab)^tb}{(ab)^{t+1}},\\
        S'_2&:\SolVertices{(ab)^t}{b(ab)^t}{(ab)^ta}{(ab)^{t+1}}.
    \end{align*}

    Where it is straightforward that the cycles are nontrivial, provided $t\geq 2$ for $S_2$ and $t\geq 1$ for $S_2'$. Therefore, pattern B) generates nontrivial cycles only for the sets $V_2$ and $V_3$ (for the latter case, we need to swap the roles of $a$ and $b$, which is acceptable as we are interested only in solutions up to symbol permutation).
    
    Notice that these are indeed two different solutions, even after any affixes, reversing, or symbol permutation, since in the first solution, given $t\geq 2$, $v_1$ has $aa$ and $v_2$ has a $bb$ in their expression. However, in the second solution, neither $v_1$ nor $v_2$ has any symbol twice in a row. And there is no way of getting $aa$ in $v_1$ and $bb$ in $v_2$ simultaneously by adding common affixes, reversing, or swapping the symbols. 
    
    \item[C)] Notice that $v_1$ is obtained from inserting $a$ in $u$ and also from inserting $b$ in $w$. Let $i$ be the position in $v_1$ of the instance of $a$ inserted to $u$, and similarly, let $j$ be the position of the $b$ inserted to $w$. Notice that since $u\neq w$, then $i\neq j$, even if we had $a=b$. Let $\hat{v}$ be the subword of $v_1$ obtained by deleting positions $i$ and $j$. 
    Then, we may write $\hat{v}=xyz$ and, without loss of generality, $u=xybz$ and $v_2=xaybz$, because we know $u$ is obtained from $\hat{v}$ by inserting one $b$, and $v_2$ is obtained from $u$ by inserting one $a$. At the same time, we can also write $\hat{v}=x'y'z'$, $w=x'ay'z'$ or $w=x'y'az'$ and $v_2=x'ay'bz'$ or $v_2=x'by'az'$, respectively. 
    Thus we need to solve $xaybz=x'ay'bz'$ or $xaybz=x'by'az'$, with $xyz=x'y'z'$. The solution produces $u, w$ and $v_2$, then, it gives a nontrivial cycle if a vertex $v_1\neq v_2$ that completes the cycle exists. Checking this for the solutions from Lemma \ref{lemma:system_word_equations}, only produces nontrivial cycles for $S_2$ and $S'_2$, for $t\geq0$, which, after deleting common affixes, both correspond to set $V_4$. 
    
    \item[D)] Here $v_1$ is obtained from $u$ or $w$ by inserting $a$. So $v_1[i]=v_1[j]=a$, where $i$ and $j$ are the positions where the insertion from $u$ and $v$ happens, respectively. Since $u\neq w$, then $i\neq j$. Let $\hat{v}$ be equal to $v_1$ without these two instances of $a$. Then, without loss of generality, we may assume $\hat{v}=xyz$, $u=xayz$ and $v_2=xaybz$. Then, we must be able to also write $\hat{v}=x'y'z'$ and  $w=x'ay'z'$ or $w=x'y'az'$, and $v_2=x'ay'bz'$ or $v_2=x'by'az'$, respectively. Thus we need to solve $xaybz=x'ay'bz'$ or $xaybz=x'by'az'$, with $xyz=x'y'z'$. These, as for pattern C, gives expressions for $u, w$ and $v_2$, then, it will give a nontrivial cycle, if a vertex $v_1$ fitting the pattern exists. Checking this for the solutions from Lemma \ref{lemma:system_word_equations}, only produces nontrivial cycles for $S_2$ with $t\geq0$, and $S'_2$ with $t\geq1$, which, after deleting common affixes, correspond to sets $V_5$ and $V_6$, respectively. Similar to the distinction between sets $V_1$ and $V_2$, we notice these two are distinct even if we swap symbols, add common affixes, or reverse the ordering, since $v_1$ and $v_2$ in $V_6$ have repeated symbols in a row, while for $V_5$ this is not the case.  

\end{enumerate}
\end{proof}

\subsection{Minimal Homological 2-Sphere}
We now conclude the proof of Theorem \ref{thm:min_cycles_d}. We present a computational approach demonstrating that $\mu(\S^2) \geq 8$. This result, along with the explicit set of words \( W = \{a, a^2, b, b^2, ab, ba, bab, aba\} \) with associated Block Complex shown in Figure \ref{fig:min_2_cycle}, finishes the proof. We start with a couple of key lemmas for our computations. 

\begin{lemma}\label{lemma:squares_sharing_edges}
    Let $\sigma_1, \sigma_2 \in \Ev_2$ be two distinct valid 2-dimensional blocks that share two edges. Then, there exist symbols $a, b \in \Sigma$ and words $x, y \in \Sigma^*$ such that the blocks are given by one of the following forms:
\begin{enumerate}
    \item $\sigma_1 = x(1, a)(1, b)y$ and $\sigma_2 = x(1, b)(1, a)y$; or
    \item $\sigma_1 = x(1, a)(1, b)ay$ and $\sigma_2 = xa(1, b)(1, a)y$, or vice versa.
\end{enumerate}
\end{lemma}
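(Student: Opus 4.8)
The plan is to first pin down, purely combinatorially, how two distinct squares can share two edges, and then convert each surviving configuration into a system of word equations solved by Corollary~\ref{cor:simple_word_equations} and Proposition~\ref{prop:simple_equation_words}. Recall from Theorem~\ref{thm:block_classification1_3} that a valid $2$-block $\sigma$ has exactly four $1$-dimensional faces, which form a $4$-cycle in the insertion graph on the four vertices of $\sigma$: the minimum $w_m$, the maximum $w_M$, and the two middle vertices. First I would dispose of the case in which the two shared edges are vertex-disjoint, i.e.\ ``opposite'' in the $4$-cycle of each square: a direct check shows that the four endpoints of any two opposite edges of a $2$-block are \emph{all four} of its vertices, so if $\sigma_1$ and $\sigma_2$ shared two such edges we would have $V(\sigma_1)=V(\sigma_2)$, hence $\sigma_1=\sigma_2$ by Theorem~\ref{thm:unique_block_by_vertices}---a contradiction. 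Hence the two shared edges meet at a common vertex $c$, forming a $2$-path with vertices $p,c,q$ and center $c$; and $\sigma_1,\sigma_2$ share exactly the three vertices $\{p,c,q\}$, since their fourth vertices must differ (again by Theorem~\ref{thm:unique_block_by_vertices}).

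Next I would branch on the length of $c$ relative to $p$ and $q$. Since the insertion graph is bipartite (Lemma~\ref{lemma:g_bipartite}) and the four vertices of a $2$-block have lengths $n,n+1,n+1,n+2$, there are three possibilities: \textbf{(A)} $|p|=|q|=|c|+1$, so $c$ is the common minimum and $p,q$ the common pair of middle vertices of $\sigma_1$ and $\sigma_2$; \textbf{(B)} $|p|=|q|=|c|-1$, so $c$ is the common maximum; or \textbf{(C)} $\{|p|,|q|\}=\{|c|-1,|c|+1\}$, in which case, renaming so that $p$ is the shorter one, $p$ is the common minimum, $q$ the common maximum, and $c$ a middle vertex of each square. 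A remark used repeatedly: along any fixed edge the inserted symbol is the unique symbol whose multiplicity increases, so it is intrinsic to the edge; writing $\sigma_1=x_0(1,a_1)x_1(1,a_2)x_2$ and $\sigma_2=y_0(1,b_1)y_1(1,b_2)y_2$ in canonical form, the two shared edges therefore determine which of $a_1,a_2$ (resp.\ $b_1,b_2$) is inserted along each.

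Then I would treat case \textbf{(C)}, which yields form~2. Up to replacing both blocks by their reverses (an allowed operation, by Example~\ref{ex:isomorphic_blocks}, which swaps the roles of the two middle vertices), we may assume $c=v_{\{1\}}(\sigma_1)$, so the two shared edges, written from $\sigma_1$, are the $1$-blocks $x_0(1,a_1)x_1x_2$ and $x_0a_1x_1(1,a_2)x_2$. Since $c$ is also a middle vertex of $\sigma_2$, either $c=v_{\{1\}}(\sigma_2)$ or $c=v_{\{2\}}(\sigma_2)$. Equating, in each sub-case, the corresponding $1$-block representatives of the two shared edges and feeding them to Proposition~\ref{prop:simple_equation_words} introduces nonnegative exponents $s,t$ of the inserted symbols; the canonical-form conditions $x_i[1]\neq a_i$, $y_i[1]\neq b_i$ together with validity then pin these exponents down. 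In the sub-case $c=v_{\{1\}}(\sigma_2)$ one gets $a_1=b_1$, $a_2=b_2$ and the equations collapse to $\sigma_1=\sigma_2$, a contradiction; so $c=v_{\{2\}}(\sigma_2)$, which forces $a_1=b_2=:a$ and $a_2=b_1=:b$, i.e.\ $\sigma_1=x_0(1,a)x_1(1,b)x_2$ and $\sigma_2=y_0(1,b)y_1(1,a)y_2$, and the same equation analysis shows these are $x(1,a)(1,b)ay$ and $xa(1,b)(1,a)y$ for suitable $x,y$ (after, if necessary, rewriting in an equivalent non-canonical representation by sliding copies of $a$ or $b$ across an edge), with $a\neq b$ since the corresponding middle word is trivial and $\sigma_1$ is valid---exactly form~2. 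Cases \textbf{(A)} and \textbf{(B)} go the same way: in \textbf{(A)}, equating the two edges joining $c$ to $p$ and $q$ shows $\{a_1,a_2\}=\{b_1,b_2\}$; if $a_1=a_2$ the canonicity/validity squeeze again forces $\sigma_1=\sigma_2$, so $a_1\neq a_2$, and matching which middle vertex carries which symbol leaves only $x(1,a)(1,b)y$ and $x(1,b)(1,a)y$, which is form~1. In \textbf{(B)} the symmetric computation forces $\sigma_1=\sigma_2$, so this case does not occur. Since these exhaust the possibilities, the lemma follows.

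The only genuine difficulty is the bookkeeping in the repeated ``exponent-killing'' steps: one starts from two $1$-block equalities (the two shared edges, read inside $\sigma_1$ and inside $\sigma_2$), applies Proposition~\ref{prop:simple_equation_words} to each, obtaining exponents $s,t\ge 0$, and must then carefully track which direction each length inequality runs and which words may begin with the symbols $a$ or $b$ in order to conclude $s=t=0$ (equivalently, to reduce to the canonical form of the target block). Each individual sub-case is short, but there are several of them and the case split must be complete; there is no conceptual hurdle beyond this.
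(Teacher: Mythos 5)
Your proposal is correct and takes essentially the same route as the paper's proof: rule out vertex-disjoint shared edges via Theorem~\ref{thm:unique_block_by_vertices}, split according to whether the common vertex of the two shared edges is the common minimum, the common maximum, or a middle vertex, and resolve each subcase with the word-equation tools (Proposition~\ref{prop:simple_equation_words}, Corollary~\ref{cor:simple_word_equations}), obtaining form~1 from the common-minimum case, form~2 from the middle-vertex case, and no solutions from the common-maximum case. The only minor discrepancy is cosmetic: in the common-maximum case the paper's crosswise subcase ends in an outright inconsistent word equation rather than forcing $\sigma_1=\sigma_2$, but either way that case is excluded, so the conclusions agree.
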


\begin{proof}
   Write $\sigma_1 = x(1, a)y(1, b)z$ and $\sigma_2 = \alpha(1, c)\beta(1, d)\gamma$ in canonical form, with $a, b, c, d \in \Sigma$ and $x, y, z, \alpha, \beta, \gamma \in \Sigma^*$. Denote the vertices of $\sigma_i$ by $u_i, v_i, v_i'$ and $w_i$, for $i = 1, 2$, ordered by their length such that $|u_i| < |v_i| = |v_i'| < |w_i|$. Since the two blocks are distinct, they cannot share all four vertices (by Theorem \ref{thm:unique_block_by_vertices}), therefore, the two edges they share must have one vertex in common. Up to analogous cases, this produces three main cases, each of which further subdivides into two subcases. Because the treatment of each main case is very similar, we provide the complete details for the first one only.

    \begin{enumerate}
        \item The common edges intersect at vertex $u_1=u_2$. So $v_1=v_2$ and $v_1'=v_2'$, or, $v_1=v_2'$ and $v_1'=v_2$. 
        \begin{enumerate}
            \item If $u_1=u_2$,  $v_1=v_2$ and $v_1'=v_2'$, then we get the edge equalitites $$x(1,a)yz=\alpha(1,c)\beta\gamma \text{ and } xy(1,b)z=\alpha\beta(1,d)\gamma.$$ 
            Clearly, this implies $a=c$ and $b=d$. Moreover, note the edges in the second equation are in canonical form, so $z=\gamma$. Thus, the first equation reduces to $x(1,a)y=\alpha(1,a)\beta$, which is in canonical form, and thus $x=\alpha$ and $y=\beta$. Therefore, $\sigma_1=\sigma_2$, which is a contradiction. 
           \item If $u_1=u_2$,  $v_1=v_2'$ and $v_1'=v_2$, then we get the edge equalitites $$x(1,a)yz=\alpha\beta(1,d)\gamma \text{ and } xy(1,b)z=\alpha(1,c)\beta\gamma.$$ Clearly this means $a=d$ and $b=c$. Suppose first $y\neq 1$, so the first equation above is in canonical form, and hence $x=\alpha\beta$ and $\gamma=yz$. Then $$xy(1,b)z=\alpha(1,b)\beta\gamma=\alpha(1,b)\beta yz\Rightarrow xy(1,b)=\alpha(1,b)\beta y$$
        \end{enumerate}
        But, by Proposition \ref{prop:simple_equation_words}, this implies $\beta y=b^t$ for some $t\geq 0$, and because $\sigma_2$ is in canonical form, it must be that $\beta=1$ and $y=b^t$. So $x=\alpha$ and $\gamma=b^tz$, thus $$\sigma_1=x(1,a)b^t(1,b)z\text{ and }\sigma_2=x(1,b)(1,a)b^tz.$$
        By redefining $x, y$, this solution can be written in the form 
        \begin{equation}\label{eq:solution_squares_case1}
            \sigma_1=x(1,a)(1,b)y\text{ and }\sigma_2=x(1,b)(1,a)y.
        \end{equation}
        The case $\beta\neq 1$ is analogous, and produces the same solution (\ref{eq:solution_squares_case1}). Suppose now $y=\beta=1$, by Proposition \ref{prop:simple_equation_words}, assuming without loss of generality that $|\alpha|>|x|$, we must have $\alpha=xa^t$ and $z=a^t\gamma$ for some $t\in\N$. But then, since the second edge equation reduces to $x(1,b)a^t\gamma=xa^t(1,b)\gamma$, we must have $t=0$, so $x=\alpha$ and $z=\gamma$, which again produces the same solution as in equation (\ref{eq:solution_squares_case1}). 
        
        \item The common edges intersect at vertex $w_1=w_2$. So $v_1=v_2$ and $v_1'=v_2'$, or, $v_1=v_2'$ and $v_1'=v_2$. 

        \begin{enumerate}
            \item If $w_1=w_2$, $v_1=v_2$ and $v_1'=v_2'$, then we get the edge equations: $$xay(1,b)z=\alpha c\beta (1,d)\gamma \text{ and } x(1,a)ybz=\alpha(1,c)\beta d\gamma.$$
            Solving these equations produces $\sigma_1=\sigma_2$, which is a contradiction. 
            \item If $w_1=w_2$, $v_1=v_2'$ and $v_1'=v_2$, then we get the edge equations: $$xay(1,b)z=\alpha(1,c)\beta \gamma \text{ and } x(1,a)ybz=\alpha c\beta(1, d)\gamma.$$
            Which imply $\alpha=\alpha a\beta ay$, also producing a contradiction.
        \end{enumerate}

        \item The common edges intersect at $v_1$ or $v_1'$. Assume, without loss of generality, that they intersect at $v_1$, so $v_1=v_2$ or $v_1=v_2'$. 

        \begin{enumerate}
            \item If $u_1=u_2$, $v_1=v_2$ and $w_1=w_2$, we get the edge equations $$x(1,a)yz=\alpha(1,c)\beta\gamma\text{ and } xay(1,b)z=\alpha c\beta (1,d)\gamma.$$
            Solving these equations produces $\sigma_1=\sigma_2$, which is a contradiction. 

            \item If $u_1=u_2$, $v_1=v_2'$ and $w_1=w_2$, we get the edge equations $$x(1,a)yz=\alpha\beta(1,d)\gamma\text{ and } xay(1,b)z=\alpha (1,c)\beta \gamma.$$
            Solving this equations produces the final solution $$\sigma_1=x(1,a)(1,b)a\gamma\text{ and }\sigma_2=xa(1,b)(1,a)\gamma.$$\qedhere
        \end{enumerate}
    \end{enumerate}
\end{proof}

We leverage Lemma \ref{lemma:squares_sharing_edges} to identify a short list of forbidden patterns. To describe them precisely, we adopt the following notation for a valid 2-block $\sigma$, using its vertices. We write $\sigma = (v_0, \{v_1, v_2\}, v_3)$ if  
\begin{itemize}
    \item $V_\emptyset(\sigma) = v_0$,  
    \item $V_{[2]}(\sigma) = v_3$, and  
    \item $\{V_{\{1\}}(\sigma), V_{\{2\}}(\sigma)\} = \{v_1, v_2\}$.
\end{itemize}

\begin{lemma}\label{lemma:forbidden_patterns}
There does not exist any set of words $W \subset \Sigma^*$ such that its insertion complex $\Cins{W}$ satisfies any of the following conditions:
\begin{enumerate}
    \item It contains two distinct 2-blocks $\sigma_1, \sigma_2$ such that $\sigma_1 = (v_0, \{v_1, v_2\}, v_3)$ and $\sigma_2 = (u, \{v_1, v_2\}, v_3)$ for some $u \ne v_0$.
    \item It contains three distinct 2-blocks given by \begin{align*}
        \sigma_1&=(v_0, \{v_1, v_2\}, v_3),\\
        \sigma_1&=(v_0, \{v_1, u\}, v_3), \text{ and}\\
        \sigma_1&=(v_0, \{u, v_2\}, v_3),
    \end{align*} 
    For some $u\neq v_1, v_2$. 
    \item It contains four distinct 2-blocks given by 
    \begin{align*}
        \sigma_1&=(v_0, \{v_1, v_2\}, v_3),\\
        \sigma_2&=(v_0, \{v_1, u\}, v_3),\\
        \sigma_3&=(v_0, \{w, v_2\}, v_3),\text{ and} \\
        \sigma_4&=(v_0, \{u,w\}, v_3). 
    \end{align*}
    For some $u,w, u_1, u_2$ distinct. 
\end{enumerate}
\end{lemma}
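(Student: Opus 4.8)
The plan is to derive all three impossibilities from Lemma \ref{lemma:squares_sharing_edges}. The key consequence I will use: if $\sigma,\sigma'$ are distinct valid $2$-blocks with the same maximal vertex that share an intermediate vertex $m$, then they share the two edges $w_-\to m$ and $m\to w_+$ joining $m$ to their common minimal vertex $w_-$ and common maximal vertex $w_+$, so Lemma \ref{lemma:squares_sharing_edges} applies; form~(1) is impossible, since $x(1,a)(1,b)y$ and $x(1,b)(1,a)y$ have distinct maximal vertices $xaby\ne xbay$ whenever $a\ne b$ (and $a\ne b$ by validity), so $\{\sigma,\sigma'\}$ has form~(2). Reading the vertices of $x(1,a)(1,b)ay$ and $xa(1,b)(1,a)y$, this forces $w_-=xay$, $w_+=xabay$, $m=xa^2y$, and the two non-shared intermediate vertices are $xbay$ and $xaby$, for suitable $x,y\in\Sigma^*$ and distinct $a,b\in\Sigma$. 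I will also use the uniqueness of canonical forms (Theorem \ref{thm:unique_canonical_form}) and the word equations of Corollary \ref{cor:simple_word_equations}. This already handles Pattern~1: the squares $\sigma_1,\sigma_2\in\Cins{W}$ share the edges $v_1v_3$ and $v_2v_3$, hence the three vertices $v_1,v_2,v_3$, i.e.\ two intermediate vertices together with the common maximal vertex; but by Lemma \ref{lemma:squares_sharing_edges} a pair of distinct valid $2$-blocks sharing two edges shares exactly three vertices, which in form~(1) are the common minimal vertex and two common intermediate vertices, and in form~(2) the common minimal vertex, one common intermediate vertex and the common maximal vertex — neither matching, so no such pair exists.

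For Pattern~2, apply Lemma \ref{lemma:squares_sharing_edges} to $\sigma_1,\sigma_2$: they share the intermediate vertex $v_1$ and the common maximal vertex $v_3$, so $\{\sigma_1,\sigma_2\}$ has form~(2) and we may fix coordinates so that $v_0=xay$, $v_3=xabay$, $v_1=xa^2y$, and $\{v_2,u\}=\{xbay,\,xaby\}$. Then $\sigma_3$ must be a valid $2$-block with minimal vertex $v_0=xay$, maximal vertex $v_3=xabay$, and set of intermediate vertices $\{v_2,u\}=\{xbay,xaby\}$. But $xbay$ and $xaby$ both arise from $xay$ by inserting the single letter $b$ (just before, respectively just after, the displayed $a$), so the two edge-insertions defining $\sigma_3$ both insert $b$; writing $\sigma_3$ in canonical form and requiring its maximal vertex to be $xabay$ then produces, by Corollary \ref{cor:simple_word_equations}, a contradiction (the maximal vertex of such a block cannot coincide with $xabay$, since past the common prefix $x$ it would begin with $b$, not $a$). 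Hence no such triple $\sigma_1,\sigma_2,\sigma_3$ exists.

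For Pattern~3, the sub-configuration $\sigma_1,\sigma_2,\sigma_3$ yields, exactly as above, form~(2) for $\{\sigma_1,\sigma_2\}$ and coordinates $v_0=xay$, $v_3=xabay$, $v_1=xa^2y$, $\{v_2,u\}=\{xbay,xaby\}$. Since $\sigma_3$ has minimal vertex $v_0$, maximal vertex $v_3$, and intermediate vertices $\{w,v_2\}$, its two insertions must be the letter producing $v_2$ and the letter producing $w$; solving these word equations determines $\sigma_3$, and hence $w$, uniquely in terms of $x,y,a,b$. Finally $\sigma_4$ must be a valid $2$-block with minimal vertex $v_0$, maximal vertex $v_3$, and intermediate vertices $\{u,w\}$, and repeating the analysis (equivalently, applying Lemma \ref{lemma:squares_sharing_edges} to $\sigma_2,\sigma_4$, which share $u$, and to $\sigma_3,\sigma_4$, which share $w$) shows no such $\sigma_4$ exists: in each resolution either the two required insertion positions are incompatible with the divider structure of a $2$-block, or $\sigma_4$ is forced to coincide with one of $\sigma_1,\sigma_2,\sigma_3$, i.e.\ two of $v_1,v_2,u,w$ coincide, contradicting the assumed distinctness. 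Hence no such quadruple exists.

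The main obstacle is the word-equation bookkeeping in Patterns~2 and~3: Lemma \ref{lemma:squares_sharing_edges} returns block representatives that need not be in canonical form, and the possibilities for whether the prefix $x$ ends, or the suffix $y$ begins, with $a$ or $b$ spawn several subcases when these representatives are matched against one another. Reducing every block to its unique canonical form before comparing (Theorem \ref{thm:unique_canonical_form}) keeps the analysis finite, and each subcase collapses to one of the identities of Corollary \ref{cor:simple_word_equations}, typically to the excluded conclusion $a=b$ or to a forced coincidence among blocks already assumed distinct.
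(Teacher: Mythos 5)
Your proposal is correct in substance and runs on the same engine as the paper, namely Lemma \ref{lemma:squares_sharing_edges}; Pattern 1 is handled exactly as in the paper (neither of the two forms produced by that lemma shares both upper edges). For Pattern 2 you finish differently, and in fact more directly: the paper applies Lemma \ref{lemma:squares_sharing_edges} a second time to the pair $(\sigma_1,\sigma_3)$ (after normalizing away the common prefix) and concludes $\sigma_3=\sigma_2$, whereas you pin down the four vertices $\sigma_3$ would have to have and show no valid $2$-block has them, because both of its lower edges would insert the letter $b$. That step is sound, but your parenthetical justification is not: a word obtained from $xay$ by two $b$-insertions need not ``begin with $b$ past $x$'' (e.g.\ $xaybb$), and Corollary \ref{cor:simple_word_equations} is not the relevant tool; the correct one-line reason is letter counting --- such a word has the same number of $a$'s as $xay$, while $xabay$ has one more. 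For Pattern 3 your argument is a sketch at roughly the level of the paper (which leaves Case 3 to the reader): the uniqueness of $\sigma_3$ and of $w$ is true but hides a genuine computation via canonical forms (Theorem \ref{thm:unique_canonical_form}) --- for instance with $v_2=xbay$ it forces $x=x'b$, $\sigma_3=x'(1,b)(1,a)bay$ and $w=x'abay$ --- and the nonexistence of $\sigma_4$ is asserted rather than derived. It does work out: applying Lemma \ref{lemma:squares_sharing_edges} to $(\sigma_2,\sigma_4)$ and comparing canonical forms either forces $\sigma_4=\sigma_1$, contradicting distinctness, or forces the fourth vertex of $\sigma_4$ to be a word different from $w$ (or rules the configuration out altogether); note the middle outcome is not literally one of the two alternatives you list, since there $\sigma_4$ is a perfectly valid $2$-block, just with the wrong vertex set, so the case analysis still needs to be written out.
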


\begin{proof}
 Note that Case 1 follows directly from Lemma~\ref{lemma:squares_sharing_edges}, which states that no two distinct squares in the complex can share both of their upper facets (edges). 
 Cases 2 and 3 are similar in structure and can be analyzed using the same reasoning. We provide the proof for Case 2 and leave Case 3 to the reader.

Observe that each pair of squares shares exactly two consecutive edges (in the sense of the underlying digraph), allowing us to relabel the squares without loss of generality. Since all 2-blocks are of the form $x_0(1,a_1)x_1(1,a_2)x_2$, we may assume that the words involved do not share any nontrivial common prefix. After possibly renaming, we assume that $\sigma_1$ begins with $x_0 = 1$.

Applying Lemma~\ref{lemma:squares_sharing_edges} to $\sigma_1$ and $\sigma_2$, we get  
$$\sigma_1 = (1,a)(1,b)ay \quad \text{and} \quad \sigma_2 = a(1,b)(1,a)y,$$
for some distinct symbols $a \neq b$ and a word $y \in \Sigma^*$.  

Now, applying the lemma again to $\sigma_1$ and $\sigma_3$, we find that  
$$\sigma_3 = a(1,b)(1,a)y = \sigma_2,$$ 
which contradicts the assumption that $\sigma_2$ and $\sigma_3$ are distinct, completing the proof of Case 2.
\end{proof}

Our code is implemented using the open-source software SageMath \cite{sagemath} and is hosted on the GitHub repository \cite{minimial2spheregithub}. 

Our code examines all possible 2-dimensional complexes with at most 7 vertices, and checks various properties to determine whether they could exist as the Chain Complex of a set of words, discarding all but 5 possible patterns with $n=7$ vertices. Finally, a manual analysis of these 5 patterns, shows they are also impossible, proving $\mu(\S^2)\geq 8$. We outline these steps at a high level.

\begin{enumerate}
    \item We generate all possible 1-skeletons (graphs) with $n=5,6,7$ vertices using the function\\ \texttt{generate\_1\_skeletons(n)}. 
    \item We generate all possible 2-skeletons with the given list of possible 1-skeletons (\texttt{Possible\_Graphs}) that have nontrivial second homology, and that don't contain any of the forbidden patterns from Lemma \ref{lemma:forbidden_patterns}, using the function \texttt{generate\_2\_skeletons(Possible\_Graphs, forbidden\_patterns)}. This function only returns the corresponding sub-complexes with non-zero second homology, as pairs of the form \texttt{(Graph, squares)}, where \texttt{Graph} is the 1-skeleton, and \texttt{squares} is the list of 2-faces. 
    \item For the 2-complexes that still remain after the previous two steps, we count how many different symbols must be inserted in the squares using the function \texttt{count\_inserted\_symbols(G,squares)} and discard those that require only one single symbol inserted, since the desired complex cannot be obtained in this way. To see this, note first that the link of the longest word (and in fact, of any word) needs to be a cycle. Such cycle can only be of sizes 4 or 6, since the 1-skeleton is bipartite and $n\leq 7$. If it was of size 4, it would need to be as described in Theorem \ref{thm:classification_cycles_1dim}, but none of those cycles can be obtained with $a=b$. If it was instead of size 6, it would mean the longest word has 3 incident edges, meaning it gets an insertion of a symbol $a$ at three designated positions. It is straightforward to see then that, up to common affixes, the vertices would be of the form $\{axaya, axay,axya,xaya,axy,xay,xya\}$, for some words $x,y\in\Sigma^*$ with $x,y\not\in \{a\}^*$. But this is also a contradiction, since the second homology of the corresponding Insertion Complex is zero. 
    \item Finally, 5 possible 2-skeletons survive, but applying Lemma \ref{lemma:squares_sharing_edges} to the pairs of squares sharing 2 edges, ultimately shows that these are either impossible or that they have vanishing 2-homology. The complete details may be found in the Jupyter Notebook document. 
\end{enumerate}
	
\end{document}